\documentclass[10pt,letterpaper]{article}
\usepackage[T1]{fontenc}
\usepackage[utf8]{inputenc}
\usepackage[english]{babel}
\usepackage[left=25mm,right=25mm,top=25mm,bottom=20mm,footskip=30pt]{geometry}

\usepackage{amsmath,amssymb,mathtools,amsthm}
\usepackage[cal=boondoxo]{mathalfa}
\usepackage{microtype}
\usepackage{titlesec}
\usepackage{enumitem,booktabs,array,etoolbox,needspace}
\usepackage[numbers,sort&compress]{natbib}
\usepackage{xurl}
\usepackage{hyperref}
\hypersetup{
  colorlinks=true,
  linkcolor=magenta,
  citecolor=green,
  urlcolor=blue,
  pdftitle={Second-order Wasserstein response for L\'evy laws},
  pdfauthor={Alexandre Autran}
}
\providecommand{\doi}[1]{\href{https://doi.org/#1}{doi:\nolinkurl{#1}}}
\allowdisplaybreaks
\titleformat{\section}{\normalfont\normalsize\bfseries}{\thesection.}{.5em}{}
\titlespacing*{\section}{0pt}{14pt plus 3pt minus 2pt}{10pt plus 1pt minus 1pt}
\titleformat{\subsection}{\normalfont\normalsize\bfseries}{\thesubsection.}{.5em}{}
\titlespacing*{\subsection}{0pt}{10pt plus 2pt minus 1pt}{6pt plus 1pt minus 1pt}
\titleformat{\subsubsection}{\normalfont\normalsize\itshape}{\thesubsubsection.}{.5em}{}
\titlespacing*{\subsubsection}{0pt}{8pt plus 2pt minus 1pt}{4pt plus 1pt minus 1pt}
\newtheoremstyle{notestyle}{6pt}{3pt}{\itshape}{}{\bfseries}{.}{.5em}{\thmname{#1}\thmnumber{ #2}\thmnote{\normalfont\ (#3)}}
\newtheoremstyle{noteroman}{6pt}{3pt}{\normalfont}{}{\bfseries}{.}{.5em}{\thmname{#1}\thmnumber{ #2}\thmnote{\normalfont\ (#3)}}
\theoremstyle{notestyle}
\newtheorem{innerthm}{Theorem}
\newtheorem{innerlem}[innerthm]{Lemma}
\newtheorem{innerprop}[innerthm]{Proposition}
\newtheorem{innercor}[innerthm]{Corollary}
\newtheorem{innerassumption}[innerthm]{Assumption}
\theoremstyle{noteroman}
\newtheorem{innerdefinition}[innerthm]{Definition}
\newtheorem{innerremark}[innerthm]{Remark}
\newtheorem{innerexample}[innerthm]{Example}
\newenvironment{theorem}[1][]{\begin{innerthm}[#1]}{\end{innerthm}}
\newenvironment{lemma}[1][]{\begin{innerlem}[#1]}{\end{innerlem}}
\newenvironment{proposition}[1][]{\begin{innerprop}[#1]}{\end{innerprop}}
\newenvironment{corollary}[1][]{\begin{innercor}[#1]}{\end{innercor}}
\newenvironment{assumption}[1][]{\begin{innerassumption}[#1]}{\end{innerassumption}}

\newenvironment{remark}[1][]{\begin{innerremark}[#1]}{\end{innerremark}}
\newenvironment{example}[1][]{\begin{innerexample}[#1]}{\end{innerexample}}
\newcommand{\R}{\mathbf R}
\newcommand{\N}{\mathbf N}
\newcommand{\E}{\mathbf E}
\newcommand{\one}{\mathbf 1}
\newcommand{\dd}{\,\mathrm d}
\newcommand{\cL}{\mathcal L}
\newcommand{\cA}{\mathcal A}
\newcommand{\cP}{\mathcal P}
\newcommand{\cM}{\mathcal M}
\newcommand{\cS}{\mathcal S}
\newcommand{\BL}{\mathrm{BL}}
\newcommand{\FM}{\mathrm{FM}}
\newcommand{\norm}[1]{\left\lVert #1\right\rVert}

\DeclareMathOperator{\Law}{Law}
\DeclareMathOperator{\Lip}{Lip}
\DeclareMathOperator{\tr}{tr}
\newcommand{\mapdef}[5]{#1\colon\left|\begin{aligned}
  #2&\longrightarrow #3\\
  #4&\longmapsto #5
\end{aligned}\right.}
\newcommand{\byeq}[2]{\overset{\eqref{#1}}{#2}}

\makeatletter
\renewcommand{\maketitle}{\begingroup
  \setlength{\parskip}{0pt}\centering
  {\Large\bfseries\@title\par}\vspace{10pt}{\large\@author\par}\vspace{18.8pt}\endgroup
}
\makeatother
\title{Second-order Wasserstein response for L\'evy laws}
\author{Alexandre Autran\footnote{Department of Mathematics, École normale supérieure de Rennes, France. \href{mailto:alexandre.autran@ens-rennes.fr}{\nolinkurl{alexandre.autran@ens-rennes.fr}}}}
\date{}
\patchcmd{\abstract}{\vspace{-.5em}}{\vspace{0pt}}{}{}
\begin{document}
\maketitle
\begin{abstract}
Weak perturbations of the same size can lead to terminal Wasserstein errors of different orders.
For infinitely divisible laws smoothed by a Gaussian, we differentiate their distribution functions
with respect to variance-weighted Lévy characteristics. This includes distributional directions caused
by moving atoms. We obtain first- and second-order expansions that are uniform over Lipschitz test functions.
When the first response is nonzero, Gaussian analyticity gives the signed second coefficient of the
Wasserstein--1 distance. When the first response vanishes, a finite second displacement moment is enough
to obtain a quadratic response. Its coefficient is strictly positive if the displacement dispersion is nonzero.
For local balanced remeshing, the error is comparable to the grid-alignment variance. This gives sharp
quadratic grid rates for compactly supported densities and nonaligned atomic sequences.
We derive metric speed and length along admissible non-atomic curves, and use the response to build
a linear program with a proved bound on how much its error exceeds the infimum over the allowed class and consistent quadrature. We also prove a sharp transition
as smoothing vanishes and give a multivariate second-order expansion. The appendix gives state-dependent
response formulas and further stability estimates.
\end{abstract}
\section{Introduction.}\label{sec:intro}
\paragraph{Notation.}
Let $\N=\{1,2,\ldots\}$. For $d\in\N$, let $\cM_f(\R^d)$ and $\cM_f^+(\R^d)$ denote finite signed
and finite positive Borel measures, respectively, and set
\[
 \cP_1(\R^d)=\left\{\lambda\in\cM_f^+(\R^d):
 \lambda(\R^d)=1,\ \int_{\R^d}|x|\lambda(\dd x)<\infty\right\}.
\]
Here $|x|$ is the Euclidean norm, $\delta_x$ is the unit mass at $x$,
$\langle f,\kappa\rangle=\int_{\R^d}f(x)\kappa(\dd x)$, and
$\norm\kappa_{\mathrm{TV}}=|\kappa|(\R^d)$, where $|\kappa|$ is the total
variation measure. For $f:\R^d\to\R$, define
\begin{equation}\label{eq:metrics}
 \Lip(f)=\sup_{x\ne y}\frac{|f(x)-f(y)|}{|x-y|},\quad
 \norm f_{\BL}=\norm f_\infty+\Lip(f),\quad
 d_{\FM}^{(d)}(\mu,\eta)=\sup_{\norm f_{\BL}\leq1}|\langle f,\mu-\eta\rangle|.
\end{equation}
The supremum is over bounded Lipschitz functions. Write
$\norm\kappa_{\BL^*}=d_{\FM}^{(d)}(\kappa,0)$ and omit $d$ when $d=1$.
For $\lambda_0,\lambda_1\in\cP_1(\R^d)$, Kantorovich duality
\citep{Villani2009} gives
\begin{equation}\label{eq:KR}
 W_1(\lambda_0,\lambda_1)=\sup_{\Lip(h)\leq1}
          |\langle h,\lambda_0-\lambda_1\rangle|.
\end{equation}
We may require $h(0)=0$, so $|h(x)|\leq|x|$.
Write $D=\partial_x$ in one dimension and $D_e=e\cdot\nabla$ for a unit
vector $e\in\R^d$. The spaces $L^p(\R^d)$ use Lebesgue measure.
The notation $\norm\cdot_p$ denotes their norm, and
$W^{k,p}(\R^d)=\{f:D^\beta f\in L^p,\ |\beta|\leq k\}$ uses weak
derivatives and the sum of their norms. Here
$\beta\in\N_0^d$, $|\beta|=\sum_j\beta_j$,
$D^\beta=\partial_1^{\beta_1}\cdots\partial_d^{\beta_d}$, and
$\N_0=\{0,1,\ldots\}$. The space $C_b^k$ consists of functions with
bounded continuous derivatives through order $k$. The space $C_c^\infty$ consists
of smooth compactly supported functions. Here $\cS$ is the Schwartz space
of smooth rapidly decreasing functions. The symbols $\mathcal D'$ and $\cS'$ denote
the spaces of distributions and tempered distributions.
We use
\[
 (f*\mu)(x)=\int_{\R^d}f(x-y)\mu(\dd y),\qquad
 \check\mu(A)=\mu(-A),\qquad
 \widehat f(\xi)=\int_{\R^d}e^{-i\xi\cdot x}f(x)\dd x.
\]
The same Fourier convention applies to finite measures and, by duality,
to tempered distributions. For a measurable map $S$, its pushforward is
$S_\#\mu(A)=\mu(S^{-1}(A))$. We write $\Law(X)$ for the distribution of
a random variable $X$, $\E$ for expectation, and $\one_A$ for the
indicator of $A$. In one dimension,
\[
 g_v(x)=(2\pi v)^{-1/2}e^{-x^2/(2v)}\quad(v>0),\qquad
 \varphi=g_1,\qquad \varphi_\sigma=g_{\sigma^2},\qquad G\sim N(0,1).
\]
A Lévy measure $N$ on $\R^d$ is a positive Borel measure with $N(\{0\})=0$ and
$\int_{\R^d}(1\wedge|y|^2)N(\dd y)<\infty$. Constants depend on the structural
parameters stated in each result. We write $a\lesssim b$ when $a\leq Cb$
for such a constant. For a normed space $X$,
$R_\varepsilon=o_X(|\varepsilon|^k)$ means
$\norm{R_\varepsilon}_X/|\varepsilon|^k\to0$. The notation $O_X(|\varepsilon|^k)$
means that this ratio remains bounded. Along characteristic paths, we take limits only through admissible
parameter values.
\paragraph{Equal weak errors, different terminal orders.}
Let $\lambda$ be an infinitely divisible law with Gaussian variance
$v>0$ and variance-weighted characteristic
$\Gamma=v\delta_0+M\delta_a$, where $M>0$ and $a\ne0$.
Moving the nonzero atom to $a+\varepsilon$ or splitting it equally
between $a-\varepsilon$ and $a+\varepsilon$ gives the same discrepancy,
\begin{equation}\label{eq:intro-example}
 d_{\FM}(M\delta_a,M\delta_{a+\varepsilon}) =d_{\FM}\left(M\delta_a,\tfrac M2(\delta_{a-\varepsilon}+ \delta_{a+\varepsilon})\right) =\frac{2M|\varepsilon|}{2+|\varepsilon|}.
\end{equation}
The terminal Wasserstein errors have orders $|\varepsilon|$ and
$\varepsilon^2$, respectively, with explicit positive coefficients.
Our response formulas explain this directional cancellation and use it
to design approximations.
\paragraph{First and second responses.}
Write $\mu(\dd y)=y^2N(\dd y)\in\cM_f^+(\R)$ and lift $\mu$ to
$\Pi\in\cM_f^+(\R^2)$ with first marginal $\mu$ and
$\int_{\R^2}z^2\Pi(\dd y,\dd z)<\infty$. For $\alpha\in\R$ and
$\eta\in\cM_f(\R)$, consider
\[
 m_\varepsilon=m+\varepsilon\alpha,\qquad
 \Gamma_\varepsilon=v\delta_0+(y+\varepsilon z)_\#\Pi+\varepsilon\eta
 \geq v_*\delta_0,\qquad v_*>0.
\]
We denote the resulting law by $\lambda_\varepsilon$. Here $m$ is the mean
of $\lambda=\lambda_0$. Define finite measures
\[
 \begin{aligned}
 &J(A)=\int_{A\times\R}z\Pi(\dd y,\dd z),\qquad
 S(A)=\int_{A\times\R}z^2\Pi(\dd y,\dd z),\qquad\\
 &\tau=\eta-D_yJ\in\mathcal D'(\R),
 \quad \langle f,D_yJ\rangle=-\int_\R f'(y)J(\dd y).
 \end{aligned}
\]
For the reference density $p$, the first cumulative distribution
function (CDF) response $T_1=\mathsf T_p(\alpha,\eta,J)\in L^1(\R)$
depends only on $(\alpha,\tau)$, even when characteristic difference
quotients fail to converge in $\BL^*$. The second coefficient is
$T_2=\mathsf Q_p(S)+\mathcal A^*T_1/2$, with
$\mathsf Q_p$ and the adjoint first generator variation $\mathcal A^*$
defined in Section~\ref{sec:second}. Under the second-order assumptions in that section,
analyticity yields, for $T_1\ne0$,
\begin{equation}\label{eq:intro-second}
 W_1(\lambda_\varepsilon,\lambda)=|\varepsilon|\norm{T_1}_1+
 \varepsilon|\varepsilon|\int_\R\operatorname{sgn}(T_1(x))T_2(x)\dd x
 +o(\varepsilon^2),
\end{equation}
where $\operatorname{sgn}(t)=1,0,-1$ according as $t>0,t=0,t<0$.
If $\alpha=\tau=0$, the second displacement moment alone gives
\begin{equation}\label{eq:intro-cancellation}
 W_1(\lambda_\varepsilon,\lambda)=\varepsilon^2\norm{\mathsf Q_p(S)}_1
 +o(\varepsilon^2),\qquad
 S\ne0\Longrightarrow\norm{\mathsf Q_p(S)}_1>0.
\end{equation}
Symmetry improves the remainder without changing this order.
\paragraph{Approximation and length.}
For compactly supported weighted measures, the barycentric upper and
lower bounds are both proportional to the grid-alignment variance
$\mathcal V_h$ defined in Section~\ref{sec:remeshing}. They show when grid errors are quadratic and when aligned atoms make
the error degenerate. The response
norm also gives the speed and length of admissible non-atomic curves.
Minimizing the exact objective defined by the response norm over a local remeshing
class gives an $O(h^2)$ bound on how much the error exceeds the infimum over that class, which improves to $O(h^4)$ for balanced
plans. The absolute error can remain quadratic. The corresponding
quadrature-based linear programs converge uniformly.
\paragraph{Terminal stability and the worst case.}
For deterministic additive characteristics on $[0,T]$, set
\begin{equation}\label{eq:intro-characteristics}
 m_i=x_i+\int_0^Tb_i(t)\dd t,\qquad
 v_i=\int_0^Ta_i(t)\dd t,\qquad
 \mu_i(\dd y)=\int_0^Ty^2\nu_i(t,\dd y)\dd t,\qquad i\in\{0,1\}.
\end{equation}
Here $x_i\in\R$ is the initial state, $b_i$ the drift, $a_i\geq0$ the
Gaussian variance rate, and $\nu_i$ a Lévy kernel. The integrability assumptions
are given in \eqref{eq:add-int}. Integrating before weak testing
preserves cancellation over time:
\begin{equation}\label{eq:time-cancellation}
 d_{\FM}(\mu_0,\mu_1)
 \leq\int_0^Td_{\FM}(y^2\nu_0(t),y^2\nu_1(t))\dd t.
\end{equation}
For centered laws with common Gaussian variance $v\in[0,1]$ and jump
variance at most one, the worst error at discrepancy at most
$\delta\in(0,1]$ is comparable to $\delta/(v+\delta^{2/3})$.
Directional responses identify the cancellations hidden by this
class-wide bound.
\paragraph{Related work.}
Weighted characteristic testing, external regularization, and
forward--backward Wasserstein bounds are developed by
\citet{BretonPrivault2022,BretonPrivault2024}. Exponent interpolation is
classical \citep{HoudrePerezAbreuSurgailis1998}. These tools, including
linear stability after common Gaussian smoothing, form the basis of our
response calculus. \citet[Theorems~7.5--7.7]{Last2014} gives perturbation
expansions for Poisson and Lévy functionals under domination conditions.
A $\sigma$-finite measure has at most countably many atoms, so one common
dominating measure cannot cover Dirac masses moving over a continuum.
Instead, we differentiate the terminal CDF in $L^1$ along distributional
transport directions, uniformly over Lipschitz tests.
\citet{PagesWilbertz2012} established intrinsic stationarity and
second-order smooth-test quadrature. We use barycentric cancellation
to prove terminal $W_1$ lower bounds under explicit locality and
grid-alignment conditions. \citet{GoldfeldEtAl2024} study limit
distributions and directional differentiation of Gaussian-smoothed
$p$-Wasserstein functionals for $p>1$. Here the variable is a Lévy
characteristic and the response is measured in terminal $W_1$.
\citet[Sections~2.7--2.8 and~6]{KangLim2025} construct optimal quadratic Markovian couplings
and Wasserstein-type metrics on Lévy generators. \citet{LimOoi2026} extend
optimality to immersion couplings. Those process constraints differ
from our local, reference-dependent terminal response. Related work
treats noise sensitivity \citep{GairingEtAl2015,GairingEtAl2018}, Lévy
marginals \citep{MariucciReiss2018}, and Wasserstein evolution
\citep{AlfonsiCorbettaJourdain2018}.
\paragraph{Organization.}
Sections~\ref{sec:terminal}--\ref{sec:second} develop the response
calculus. Section~\ref{sec:remeshing} treats approximation.
Sections~\ref{sec:sharp} and~\ref{sec:multitangent} give the sharp transition between regimes and
multivariate extension. Appendix~\ref{sup:sec:multivariate-proof} proves
Theorem~\ref{thm:multi-tangent}. Appendix~\ref{sup:sec:state-response} gives the time-ordered
state-dependent response, and Appendices~\ref{sup:sec:one-sided}--\ref{sup:sec:analytic}
develop stability and approximation estimates.
\section{Terminal interpolation and the linear additive bound.}\label{sec:terminal}
\subsection{Terminal characteristics.}
For $i\in\{0,1\}$, let $b_i:[0,T]\to\R$ and
$a_i:[0,T]\to[0,\infty)$ be measurable, and let $\nu_i(t,\dd y)$ be a
measurable Lévy kernel. Assume
\begin{equation}\label{eq:add-int}
 \int_0^T\left(|b_i(t)|+a_i(t)+\int_\R y^2\nu_i(t,\dd y)\right)\dd t<\infty.
\end{equation}
For $f\in C^2(\R)$ with bounded $f',f''$, the full-compensation generator is
\begin{equation}\label{eq:generator}
 \cL_i(t)f(x)=b_i(t)f'(x)+\frac{a_i(t)}2 f''(x)
 +\int_\R\bigl[f(x+y)-f(x)-yf'(x)\bigr]\nu_i(t,\dd y).
\end{equation}
Let $X^i$ be the additive process starting at $x_i$ with these
characteristics, and set $N_i=\int_0^T\nu_i(t)\dd t$ and
$\lambda_i=\Law(X_T^i)\in\cP_1(\R)$. Condition \eqref{eq:add-int}
makes its fully compensated jump integral square integrable.
The terminal Lévy--Khintchine exponent \citep{Sato1999,Applebaum2009} is
\begin{equation}\label{eq:terminal-symbol}
 \Psi_i(\xi)=im_i\xi-\frac{v_i}2\xi^2
 +\int_\R(e^{i\xi y}-1-i\xi y)N_i(\dd y).
\end{equation}
Here $\E\left[e^{i\xi X_T^i}\right]=e^{\Psi_i(\xi)}$, $\xi\in\R$, and
\[
 \mu_i=y^2N_i\in\cM_f^+(\R),\qquad
 \E\left[X_T^i\right]=m_i,\qquad \operatorname{Var}(X_T^i)=v_i+\mu_i(\R).
\]
Thus $v_i$ represents only the Gaussian variance. To compare terminal
laws, we use an auxiliary unit-time Lévy generator, which differs from
the instantaneous generator of $X^i$:
\begin{equation}\label{eq:terminal-operator}
 \cA_i f=m_i f'+\frac{v_i}2 f''
 +\int_\R\bigl[f(\cdot+y)-f-yf'\bigr]N_i(\dd y).
\end{equation}
For $u\in[0,1]$, let $\pi_u$ be the infinitely divisible law with exponent
\begin{equation}\label{eq:interpolation}
 \psi_u=(1-u)\Psi_1+u\Psi_0,\qquad
 \mapdef{Q_uh}{\R}{\R}{x}{\displaystyle\int_\R h(x+z)\pi_u(\dd z)}.
\end{equation}
The triplet is $((1-u)m_1+um_0,(1-u)v_1+uv_0,(1-u)N_1+uN_0)$.
Hence $\pi_u\in\cP_1(\R)$, $\pi_0=\lambda_1$, and $\pi_1=\lambda_0$.
\begin{assumption}[Smoothing along the terminal interpolation]\label{ass:interpolation}
For almost every $u\in(0,1)$, $\pi_u$ has a density
$\rho_u\in W^{2,1}(\R)$. The derivative norms are measurable in $u$, and
\begin{equation}\label{eq:averaged-M}
 M_1:=\int_0^1\norm{\rho_u'}_1\dd u<\infty,\qquad
 M_2:=\int_0^1\norm{\rho_u''}_1\dd u<\infty.
\end{equation}
\end{assumption}
\subsection{Two elementary estimates.}
\begin{lemma}[Weighted Taylor testing]\label{lem:Taylor}
For $f\in C^3(\R)$ with bounded first three derivatives, set
\begin{equation}\label{eq:Phi}
 \mapdef{\Phi_{f,x}}{\R}{\R}{y}{\displaystyle\int_0^1(1-r)f''(x+ry)\dd r}.
\end{equation}
Then $f(x+y)-f(x)-yf'(x)=y^2\Phi_{f,x}(y)$ and
\begin{equation}\label{eq:Phi-norm}
 \norm{\Phi_{f,x}}_{\BL}
 \leq\frac12\norm{f''}_\infty+\frac16\norm{f'''}_\infty.
\end{equation}
Consequently, the difference of the two jump terms in
\eqref{eq:terminal-operator} is bounded by the right-hand side of
\eqref{eq:Phi-norm} times $d_{\FM}(\mu_0,\mu_1)$.
\end{lemma}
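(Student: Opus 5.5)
The plan is to use Taylor's formula with integral remainder, bound $\Phi_{f,x}$ and its Lipschitz constant directly, and then rewrite the jump terms as pairings with $\mu_i=y^2N_i$ so that the definition of $d_{\FM}$ applies.

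\textbf{The identity.} Since $f\in C^2$, Taylor's formula with integral remainder gives
\[
f(x+y)-f(x)-yf'(x)=\int_0^1(1-r)f''(x+ry)\,y^2\dd r=y^2\Phi_{f,x}(y).
\]
This holds for every $y$, including $y=0$, where both sides vanish.

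\textbf{The $\BL$ bound.} For the sup norm we use $|\Phi_{f,x}(y)|\leq\norm{f''}_\infty\int_0^1(1-r)\dd r=\frac12\norm{f''}_\infty$. For the Lipschitz part, the mean value theorem applied to $f''$ gives $|f''(x+ry)-f''(x+ry')|\leq\norm{f'''}_\infty r|y-y'|$. Hence
\[
\Lip(\Phi_{f,x})\leq\norm{f'''}_\infty\int_0^1 r(1-r)\dd r=\tfrac16\norm{f'''}_\infty.
\]
Adding the two estimates gives \eqref{eq:Phi-norm}. Note that $\Phi_{f,x}$ is continuous and bounded, so it is an admissible test function.

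\textbf{Consequence for the jump terms.} Write $\mu_i=y^2N_i$. Since $N_i(\{0\})=0$, the jump term of $\cA_if$ at $x$ equals
\[
\int_\R y^2\Phi_{f,x}(y)N_i(\dd y)=\langle\Phi_{f,x},\mu_i\rangle .
\]
This integral is absolutely convergent because $\mu_i$ is finite and $\Phi_{f,x}$ is bounded. The difference of the two jump terms is therefore $\langle\Phi_{f,x},\mu_0-\mu_1\rangle$. If $C:=\frac12\norm{f''}_\infty+\frac16\norm{f'''}_\infty=0$, the difference vanishes and there is nothing to prove. Otherwise $\Phi_{f,x}/C$ has $\BL$ norm at most one, and the definition \eqref{eq:metrics} of $d_{\FM}$ gives $|\langle\Phi_{f,x},\mu_0-\mu_1\rangle|\leq C\,d_{\FM}(\mu_0,\mu_1)$, uniformly in $x$.

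The only point needing care, though it is minor, is the Lipschitz estimate: the factor $r$ from the chain rule must be kept, so that the weight becomes $r(1-r)$ and yields the constant $1/6$ rather than $1/2$.
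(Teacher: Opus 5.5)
Your proposal is correct and follows essentially the same route as the paper: Taylor's integral remainder gives the identity, you bound $\norm{\Phi_{f,x}}_\infty$ by $\frac12\norm{f''}_\infty$ and $\Lip(\Phi_{f,x})$ by $\frac16\norm{f'''}_\infty$, and the $\BL$--$\BL^*$ duality then bounds $\langle\Phi_{f,x},\mu_0-\mu_1\rangle$. The only difference is that you apply the mean value theorem to $f''$ inside the integral, whereas the paper differentiates $\Phi_{f,x}$ in $y$; both produce the same weight $r(1-r)$.
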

\begin{proof}
For $x,y\in\R$, two applications of the fundamental theorem of calculus,
followed by Fubini's theorem, give
\begin{align*}
 f(x+y)-f(x)-yf'(x)
 &=y\int_0^1\bigl(f'(x+sy)-f'(x)\bigr)\dd s\\
 &=y^2\int_0^1\int_0^sf''(x+ry)\dd r\dd s\\
 &=y^2\int_0^1(1-r)f''(x+ry)\dd r\\
 &\byeq{eq:Phi}{=}y^2\Phi_{f,x}(y).
\end{align*}
Differentiating in $y$ yields
\[
 \partial_y\Phi_{f,x}(y)=\int_0^1r(1-r)f'''(x+ry)\dd r,
 \qquad
 \norm{\Phi_{f,x}}_\infty\leq\frac12\norm{f''}_\infty,
 \quad \Lip(\Phi_{f,x})\leq\frac16\norm{f'''}_\infty.
\]
Thus, recalling that $\mu_i=y^2N_i$,
\begin{align*}
 \left|\int_\R y^2\Phi_{f,x}(y)(N_0-N_1)(\dd y)\right|
 &=|\langle\Phi_{f,x},\mu_0-\mu_1\rangle|\\
 &\byeq{eq:metrics}{\leq}\norm{\Phi_{f,x}}_{\BL}d_{\FM}(\mu_0,\mu_1)\\
 &\byeq{eq:Phi-norm}{\leq}
 \left(\frac{\norm{f''}_\infty}{2}
       +\frac{\norm{f'''}_\infty}{6}\right)d_{\FM}(\mu_0,\mu_1).
\end{align*}
Both jump integrals converge absolutely because
$|y^2\Phi_{f,x}(y)|\leq y^2\norm{f''}_\infty/2$.
\end{proof}
\begin{lemma}[Convolution regularization]\label{lem:convolution}
Let $\rho\in W^{2,1}(\R)$ be a probability density with finite first moment.
For $\Lip(h)\leq1$, the function $q=h*\check\rho$ belongs to $C^3(\R)$ and
\begin{equation}\label{eq:conv-bounds}
 \norm{q'}_\infty\leq1,\qquad
 \norm{q''}_\infty\leq\norm{\rho'}_1,\qquad
 \norm{q'''}_\infty\leq\norm{\rho''}_1.
\end{equation}
\end{lemma}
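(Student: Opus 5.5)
The plan is to write $q(x)=\int_\R h(x-y)\check\rho(y)\dd y=\int_\R h(x+z)\rho(z)\dd z$, which is well defined because $|h(x+z)|\le |h(0)|+|x|+|z|$ and $\rho$ has a finite first moment. I will first control $q'$ by a translation argument and then move the remaining derivatives onto $\rho$.

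\emph{First derivative.} For $x,x'$ we have
\[
|q(x)-q(x')|\le\int_\R|h(x+z)-h(x'+z)|\rho(z)\dd z\le|x-x'|,
\]
so $\Lip(q)\le1$. To obtain an actual derivative, I will use that $h$ is differentiable almost everywhere with $|h'|\le1$ (Rademacher). For fixed $x$, the difference quotients $(h(x+s+z)-h(x+z))/s$ are bounded by $1$ and converge for a.e.\ $z$. Dominated convergence, which is valid since $\rho\in L^1$, then gives $q'(x)=\int_\R h'(x+z)\rho(z)\dd z$ and hence $\norm{q'}_\infty\le1$.

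\emph{Higher derivatives.} Substitute $w=x+z$ to get $q'(x)=\int_\R h'(w)\rho(w-x)\dd w$. Since $h'\in L^\infty$ and $\rho\in W^{2,1}$, I will differentiate under the integral. The cleanest route is to note that translation is continuous in $L^1$, and that for $\rho\in W^{1,1}$,
\[
\frac{\rho(\cdot-x-s)-\rho(\cdot-x)}{s}\to-\rho'(\cdot-x)\quad\text{in }L^1 .
\]
This follows from the representation $\rho(w-s)-\rho(w)=-\int_0^s\rho'(w-r)\dd r$, valid for the absolutely continuous representative of $\rho$, combined with continuity of translation in $L^1$. Pairing with $h'\in L^\infty$ gives
\[
q''(x)=-\int_\R h'(w)\rho'(w-x)\dd w .
\]
This is continuous in $x$, again by $L^1$-continuity of translation, and satisfies $|q''|\le\norm{\rho'}_1$. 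Repeating the argument with $\rho'\in W^{1,1}$ yields
\[
q'''(x)=\int_\R h'(w)\rho''(w-x)\dd w,
\]
which is continuous and satisfies $|q'''|\le\norm{\rho''}_1$. Together these show $q\in C^3(\R)$ with the three bounds in \eqref{eq:conv-bounds}.

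The main technical point is the $L^1$ convergence of difference quotients for $W^{1,1}$ functions, which is what justifies differentiating under the integral when $h'$ is merely bounded. It is standard, and I will handle it through the integral representation of absolutely continuous functions as described above. An alternative is to first mollify $h$ and pass to the limit, using that the bounds are uniform in the mollification parameter.
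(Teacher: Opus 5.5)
Your proof is correct and follows essentially the same route as the paper: keep $h'\in L^\infty$ with $\norm{h'}_\infty\leq1$, move the remaining derivatives onto $\rho$, bound each by H\"older/Young, and obtain continuity from $L^1$-continuity of translation. The only difference is technical: the paper computes $D^{k+1}q=(Dh)*D^k\check\rho$ distributionally and then upgrades continuous weak derivatives to classical ones, whereas you obtain the classical derivatives directly, via Rademacher with dominated convergence for $q'$ and via $L^1$ convergence of $W^{1,1}$ difference quotients for $q''$ and $q'''$.
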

\begin{proof}
Center $h$ at zero. Then $h\in W^{1,\infty}_{\mathrm{loc}}(\R)$,
$Dh\in L^\infty(\R)$, $\norm{Dh}_\infty\leq1$, and
\[
 |q(x)|\leq\int_\R |x+y|\rho(y)\dd y
 \leq |x|+\int_\R |y|\rho(y)\dd y<\infty.
\]
For $k=0,1,2$, differentiation in distributions gives
\[
 D^{k+1}q=(Dh)*D^k\check\rho.
\]
Then Young's inequality, the bound $\norm{Dh}_\infty\leq1$, and the change
of variables $x\mapsto-x$ yield
\[
 \norm{D^{k+1}q}_\infty
 \leq\norm{Dh}_\infty\norm{D^k\check\rho}_1
 \leq\norm{D^k\check\rho}_1=\norm{D^k\rho}_1.
\]
If $a\to0$, each derivative satisfies
\[
 \sup_x|D^{k+1}q(x+a)-D^{k+1}q(x)|
 \leq\norm{Dh}_\infty
       \norm{D^k\check\rho(\cdot+a)-D^k\check\rho}_1\longrightarrow0.
\]
The weak derivatives are therefore continuous and agree with the
classical derivatives.
\end{proof}
\begin{lemma}[Terminal interpolation identity]\label{lem:interpolation}
Under \eqref{eq:add-int}, for every $h\in\cS(\R)$,
\begin{equation}\label{eq:interpolation-identity}
 (Q_1-Q_0)h(x)=\int_0^1(\cA_0-\cA_1)Q_uh(x)\dd u.
\end{equation}
The equality holds at every point. The operators $Q_u$ commute with translations,
spatial derivatives, and the terminal operators on these tests.
\end{lemma}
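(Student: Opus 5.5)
The approach is to work on the Fourier side, where every object in the statement is explicit. For $h\in\cS(\R)$ we have $\widehat{Q_uh}(\xi)=\widehat h(\xi)\,e^{\psi_u(-\xi)}$. This holds because $Q_uh(x)=\int h(x+z)\pi_u(\dd z)=(h*\check\pi_u)(x)$, and $\widehat{\check\pi_u}(\xi)=\int e^{i\xi z}\pi_u(\dd z)=e^{\psi_u(\xi)}$. The sign convention needs care here; I will fix it once by writing $\widehat{Q_uh}(\xi)=\widehat h(\xi)\phi_u(\xi)$ with $\phi_u=\exp\psi_u$ evaluated at the appropriate $\pm\xi$.

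Next I verify that each terminal operator $\cA_i$ acts on Schwartz-type tests as a Fourier multiplier by the same symbol $\Psi_i$ at that argument. Testing $e^{i\xi x}$ in \eqref{eq:terminal-operator} gives
\[
\cA_ie^{i\xi\cdot}=\Bigl(im_i\xi-\tfrac{v_i}{2}\xi^2+\int_\R(e^{i\xi y}-1-i\xi y)N_i(\dd y)\Bigr)e^{i\xi\cdot}=\Psi_i(\xi)e^{i\xi\cdot}.
\]
To pass from exponentials to general tests I use Fourier inversion and Fubini. This is justified by $|e^{i\xi y}-1-i\xi y|\le\xi^2y^2/2$ and $\mu_i(\R)<\infty$, which give $|\Psi_i(\xi)|\lesssim(1+\xi^2)$, together with the rapid decay of $\widehat h$.

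The main obstacle is that $Q_uh$ is not Schwartz: $\pi_u$ need not have moments beyond the first. So I do not apply $\cA_i$ to $Q_uh$ abstractly. Instead I write $Q_uh(x)=\frac1{2\pi}\int e^{ix\xi}\widehat h(\xi)\phi_u(\xi)\dd\xi$. Since $|\phi_u|\le1$, this function is $C^\infty$, and all its derivatives are bounded, because $\xi^k\widehat h(\xi)$ is integrable. In particular $f=Q_uh$ satisfies the hypotheses under which \eqref{eq:terminal-operator} is defined. Then $\cA_iQ_uh$ can be computed by differentiating under the integral. For the jump term I exchange the $N_i$-integral and the $\xi$-integral using the domination $\xi^2y^2|\widehat h(\xi)|$, which is integrable with respect to $\dd\xi\otimes N_i$ because $\int y^2N_i<\infty$. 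This yields
\[
\cA_iQ_uh(x)=\frac1{2\pi}\int e^{ix\xi}\widehat h(\xi)\Psi_i(\xi)\phi_u(\xi)\dd\xi .
\]
The same representation shows $Q_u\cA_ih=\cA_iQ_uh$. It also shows that $Q_u$ commutes with translations (which are multipliers $e^{ia\xi}$) and with $D$ (the multiplier $i\xi$). These are the commutation claims of the statement.

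For the identity itself, $\psi_u=(1-u)\Psi_1+u\Psi_0$ gives $\partial_u\phi_u=(\Psi_0-\Psi_1)\phi_u$, with the same sign convention as above. Hence, pointwise in $\xi$,
\[
\phi_1(\xi)-\phi_0(\xi)=\int_0^1(\Psi_0-\Psi_1)(\xi)\phi_u(\xi)\dd u .
\]
I multiply by $e^{ix\xi}\widehat h(\xi)/(2\pi)$ and integrate in $\xi$. Fubini in $(u,\xi)$ is justified by the bound $(1+\xi^2)|\widehat h(\xi)|\in L^1$. By the representation above, the left side is $(Q_1-Q_0)h(x)$ and the right side is $\int_0^1(\cA_0-\cA_1)Q_uh(x)\dd u$, for every $x$. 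This gives \eqref{eq:interpolation-identity} pointwise.

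The only delicate bookkeeping is the sign convention linking $\check\pi_u$, $\Psi_i(\xi)$ versus $\Psi_i(-\xi)$, and the paper's $\widehat f$ convention. I will fix this once by checking the Gaussian-drift case, $Q_uh(x)=h(x+m)$ for which $\cA h=mh'$.
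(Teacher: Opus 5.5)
Your proposal is correct and follows essentially the paper's route. Both sides are written as Fourier integrals with multipliers $\Psi_i$ and $e^{\psi_u}$. The quadratic bound on $\Psi_i$, together with $|e^{\psi_u}|\leq1$ (that is, $\operatorname{Re}\psi_u\leq0$), gives the dominating function both for the $u$-derivative and for Fubini on the jump terms. The fundamental theorem of calculus in $u$ then yields the identity and the commutation claims.

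The only slip is your opening formula. With the paper's convention $\widehat f(\xi)=\int e^{-i\xi x}f(x)\dd x$, one has $\widehat{Q_uh}(\xi)=\widehat h(\xi)e^{\psi_u(\xi)}$ at $+\xi$, as your own computation of $\widehat{\check\pi_u}$ already shows. This is the sign consistent with $\cA_ie^{i\xi\cdot}=\Psi_i(\xi)e^{i\xi\cdot}$.
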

\begin{proof}
Set $\delta\Psi=\Psi_0-\Psi_1$. Fourier transformation gives
$\widehat{Q_uh}=\widehat h e^{\psi_u}$ and
$\partial_u\widehat{Q_uh}=\delta\Psi\widehat h e^{\psi_u}$.
The identity $e^{iz}-1-iz=-z^2\int_0^1(1-r)e^{irz}\dd r$ implies
\[
 |\Psi_i(\xi)|\leq|m_i||\xi|+\tfrac12(v_i+\mu_i(\R))|\xi|^2,
 \qquad \operatorname{Re}\psi_u\leq0.
\]
Since $\widehat h\in\cS(\R)$,
$C(|\xi|+|\xi|^2)|\widehat h(\xi)|\in L^1(\R)$ dominates the
parameter derivative. Hence Fourier inversion, the fundamental theorem
of calculus in $u$, and Fubini's theorem give
\begin{align*}
 (Q_1-Q_0)h(x)
 &=\frac1{2\pi}\int_\R e^{ix\xi}\widehat h(\xi)
                  \bigl(e^{\Psi_0(\xi)}-e^{\Psi_1(\xi)}\bigr)\dd\xi\\
 &=\int_0^1\frac1{2\pi}\int_\R
       e^{ix\xi}\widehat h(\xi)\delta\Psi(\xi)e^{\psi_u(\xi)}\dd\xi\dd u\\
 &\byeq{eq:terminal-operator}{=}\int_0^1(\cA_0-\cA_1)Q_uh(x)\dd u.
\end{align*}
Translations and derivatives commute with convolution. The same
quadratic bound lets us apply Fubini to each jump operator and proves
the remaining commutation identities.
\end{proof}
\subsection{The main theorem.}
\begin{theorem}[Linear terminal stability]\label{thm:terminal}
Assume \eqref{eq:add-int} and Assumption~\ref{ass:interpolation}. Then
\begin{equation}\label{eq:terminal-bound}
 W_1(\lambda_0,\lambda_1)
 \leq |m_0-m_1|+\frac{M_1}{2}|v_0-v_1|
 +\left(\frac{M_1}{2}+\frac{M_2}{6}\right)d_{\FM}(\mu_0,\mu_1).
\end{equation}
\end{theorem}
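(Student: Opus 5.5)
We bound $W_1(\lambda_0,\lambda_1)$ through Kantorovich duality \eqref{eq:KR}, after first reducing to Schwartz test functions. Since $\lambda_0,\lambda_1\in\cP_1(\R)$, every $h$ with $\Lip(h)\leq1$ and $h(0)=0$ can be approximated by $h_n\in\cS(\R)$ with $\Lip(h_n)\leq1$. For instance, take $h_n=(h\chi_n)*\varphi_{1/n}$, where $\chi_n$ is a smooth cutoff; to keep the Lipschitz constant at one, truncate $h$ at level $\pm n$ instead of multiplying by $\chi_n$, or rescale by $(1+O(1/n))$. Then $h_n\to h$ pointwise with $|h_n(x)|\lesssim 1+|x|$, and dominated convergence gives $\langle h_n,\lambda_0-\lambda_1\rangle\to\langle h,\lambda_0-\lambda_1\rangle$. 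It therefore suffices to bound $|\langle h,\lambda_0-\lambda_1\rangle|$ for $h\in\cS(\R)$ with $\Lip(h)\leq1$.

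For such $h$ we have $\langle h,\lambda_0-\lambda_1\rangle=(Q_1-Q_0)h(0)$, since $Q_1h(0)=\int h\,\dd\pi_1=\int h\,\dd\lambda_0$ and $Q_0h(0)=\int h\,\dd\lambda_1$. Lemma~\ref{lem:interpolation} then gives
\[
\langle h,\lambda_0-\lambda_1\rangle=\int_0^1(\cA_0-\cA_1)q_u(0)\dd u,
\qquad q_u=Q_uh .
\]
For almost every $u$, Assumption~\ref{ass:interpolation} provides the density $\rho_u$, so $q_u(x)=\int h(x+z)\rho_u(z)\dd z=(h*\check\rho_u)(x)$. By Lemma~\ref{lem:convolution}, $q_u\in C^3(\R)$ with
\[
\norm{q_u'}_\infty\leq1,\qquad \norm{q_u''}_\infty\leq\norm{\rho_u'}_1,\qquad \norm{q_u'''}_\infty\leq\norm{\rho_u''}_1 .
\]
The finite first moment of $\rho_u$ holds because $\pi_u\in\cP_1(\R)$.

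Next split $(\cA_0-\cA_1)q_u(0)$ according to \eqref{eq:terminal-operator}. The drift term is $(m_0-m_1)q_u'(0)$, bounded by $|m_0-m_1|$. The Gaussian term is $\tfrac{v_0-v_1}{2}q_u''(0)$, bounded by $\tfrac12|v_0-v_1|\norm{\rho_u'}_1$. The jump term is controlled by Lemma~\ref{lem:Taylor} with $f=q_u$ and $x=0$:
\[
\Bigl(\tfrac12\norm{\rho_u'}_1+\tfrac16\norm{\rho_u''}_1\Bigr)d_{\FM}(\mu_0,\mu_1).
\]
Integrating in $u$ and using the definitions of $M_1,M_2$ in \eqref{eq:averaged-M} yields \eqref{eq:terminal-bound} for Schwartz tests. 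Taking the supremum then gives the bound for $W_1$.

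The only real obstacle is the approximation step, since Lemma~\ref{lem:interpolation} is stated only for $h\in\cS(\R)$. Two points need care there. First, the approximants must have Lipschitz constant at most one; convolution with a Gaussian does not increase $\Lip$, and the truncation $\max(-n,\min(n,h))$ is still $1$-Lipschitz. Second, the truncated, mollified functions must actually lie in $\cS$. If they are only smooth with bounded derivatives of all orders and not decaying, I would instead multiply by a slowly varying cutoff $\chi(x/R)$, accept a Lipschitz constant $1+O(\norm{h}_\infty/R)$, and let $R\to\infty$ after $n$.

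A fallback is to avoid the approximation entirely: extend Lemma~\ref{lem:interpolation} to Lipschitz $h$ by applying it to $h*\varphi_\epsilon$ and using the $u$-integrability of the bounds just derived.
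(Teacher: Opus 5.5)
Your proposal is correct and follows essentially the paper's proof. Both bound $(\cA_0-\cA_1)Q_uh(0)$ for Schwartz tests via Lemmas~\ref{lem:Taylor} and~\ref{lem:convolution}, integrate in $u$ using Lemma~\ref{lem:interpolation}, and pass to general Lipschitz tests by dominated convergence using the finite first moments. Your truncate--mollify--cutoff--rescale construction is exactly what Lemma~\ref{lem:approximation} provides: $C_c^\infty$ approximants with $\Lip\leq1$ and $|h_n(x)|\leq|x|$. So the obstacle you flag is handled just as you propose.
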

\begin{proof}
For $h\in\cS$ with $\Lip(h)\leq1$, put $q_u=Q_uh$.
Lemmas~\ref{lem:Taylor} and~\ref{lem:convolution} give
\begin{equation}\label{eq:calc-first-generator-bound}
 \begin{aligned}
 |(\cA_0-\cA_1)q_u(0)|
 &\byeq{eq:terminal-operator}{\leq}
     |m_0-m_1|\norm{q_u'}_\infty
     +\tfrac12|v_0-v_1|\norm{q_u''}_\infty
     +|\langle\Phi_{q_u,0},\mu_0-\mu_1\rangle|\\
 &\byeq{eq:Phi-norm}{\leq}|m_0-m_1|\norm{q_u'}_\infty
     +\tfrac12|v_0-v_1|\norm{q_u''}_\infty\\
 &\qquad+\left(\tfrac12\norm{q_u''}_\infty
       +\tfrac16\norm{q_u'''}_\infty\right)d_{\FM}(\mu_0,\mu_1)\\
 &\byeq{eq:conv-bounds}{\leq}|m_0-m_1|
     +\tfrac12\norm{\rho_u'}_1|v_0-v_1|\\
 &\qquad+\left(\tfrac12\norm{\rho_u'}_1
       +\tfrac16\norm{\rho_u''}_1\right)d_{\FM}(\mu_0,\mu_1).
 \end{aligned}
\end{equation}
Integrating in $u$ therefore yields
\begin{align*}
 |\langle h,\lambda_0-\lambda_1\rangle|
 &\byeq{eq:interpolation-identity}{\leq}
       \int_0^1|(\cA_0-\cA_1)q_u(0)|\dd u\\
 &\byeq{eq:calc-first-generator-bound}{\leq}|m_0-m_1|
       +\frac{|v_0-v_1|}{2}\int_0^1\norm{\rho_u'}_1\dd u\\
 &\qquad+d_{\FM}(\mu_0,\mu_1)\int_0^1
       \left(\frac{\norm{\rho_u'}_1}{2}
             +\frac{\norm{\rho_u''}_1}{6}\right)\dd u\\
 &\byeq{eq:averaged-M}{=}|m_0-m_1|+\frac{M_1}{2}|v_0-v_1|
       +\left(\frac{M_1}{2}+\frac{M_2}{6}\right)d_{\FM}(\mu_0,\mu_1).
\end{align*}
For a general centered Lipschitz test, Lemma~\ref{lem:approximation} provides
locally uniform approximations bounded by $|x|$. The finite terminal first
moments justify dominated convergence. Taking the supremum in
\eqref{eq:KR} completes the proof.
\end{proof}
\begin{corollary}[Joint Gaussian and jump comparison]\label{cor:joint}
Under the assumptions of Theorem~\ref{thm:terminal}, define
$\Gamma_i:=v_i\delta_0+\mu_i$. Then
\begin{equation}\label{eq:joint}
 W_1(\lambda_0,\lambda_1)
 \leq |m_0-m_1|+
 \left(\frac{M_1}{2}+\frac{M_2}{6}\right)d_{\FM}(\Gamma_0,\Gamma_1).
\end{equation}
\end{corollary}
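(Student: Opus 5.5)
The plan is to rerun the proof of Theorem~\ref{thm:terminal}, but to treat the Gaussian and jump parts as one weighted measure instead of bounding them separately. Fix $h\in\cS$ with $\Lip(h)\leq1$ and set $q_u=Q_uh$. The generator difference evaluated at $0$ is
\[
 (\cA_0-\cA_1)q_u(0)=(m_0-m_1)q_u'(0)+\tfrac12(v_0-v_1)q_u''(0)
 +\langle\Phi_{q_u,0},\mu_0-\mu_1\rangle .
\]
By \eqref{eq:Phi} we have $\Phi_{q_u,0}(0)=\int_0^1(1-r)\dd r\,q_u''(0)=\tfrac12q_u''(0)$. Hence the Gaussian term equals $\langle\Phi_{q_u,0},(v_0-v_1)\delta_0\rangle$, and the last two terms together collapse to
\[
 \langle\Phi_{q_u,0},\Gamma_0-\Gamma_1\rangle .
\]
The signed measure $\Gamma_0-\Gamma_1=(v_0-v_1)\delta_0+\mu_0-\mu_1$ is finite by \eqref{eq:add-int}, so this pairing is well defined. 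Conceptually, the Gaussian part is just the $y\to0$ limit of the jump kernel weighted by $y^2$, and the function $\Phi$ is continuous at $0$, so it extends that limit correctly.

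Next we bound the pairing. By the definition \eqref{eq:metrics},
\[
 |\langle\Phi_{q_u,0},\Gamma_0-\Gamma_1\rangle|\leq\norm{\Phi_{q_u,0}}_{\BL}\,d_{\FM}(\Gamma_0,\Gamma_1).
\]
Lemma~\ref{lem:Taylor} gives $\norm{\Phi_{q_u,0}}_{\BL}\leq\tfrac12\norm{q_u''}_\infty+\tfrac16\norm{q_u'''}_\infty$. Lemma~\ref{lem:convolution}, applied with $\rho=\rho_u$ for almost every $u$, turns this into
\[
 \norm{\Phi_{q_u,0}}_{\BL}\leq\tfrac12\norm{\rho_u'}_1+\tfrac16\norm{\rho_u''}_1 .
\]
Here $q_u=h*\check\rho_u$, as in the theorem's proof. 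The drift term is bounded by $|m_0-m_1|$ because $\norm{q_u'}_\infty\leq1$.

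To finish, we integrate over $u\in(0,1)$ using Lemma~\ref{lem:interpolation}. This yields
\[
 |\langle h,\lambda_0-\lambda_1\rangle|\leq|m_0-m_1|+\left(\tfrac{M_1}2+\tfrac{M_2}6\right)d_{\FM}(\Gamma_0,\Gamma_1).
\]
We then pass from Schwartz tests to general $1$-Lipschitz tests exactly as in the proof of Theorem~\ref{thm:terminal}, via Lemma~\ref{lem:approximation} and dominated convergence, and take the supremum in \eqref{eq:KR}.

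The only step that needs care is the recombination identity $\tfrac12(v_0-v_1)q_u''(0)=\langle\Phi_{q_u,0},(v_0-v_1)\delta_0\rangle$. It has to be checked against the normalization in \eqref{eq:terminal-operator}, namely the factor $\tfrac12$ on $v_i$ matched with the weight $\int_0^1(1-r)\dd r=\tfrac12$. Once that matches, the corollary is immediate, and it genuinely improves on \eqref{eq:terminal-bound}: $d_{\FM}(\Gamma_0,\Gamma_1)$ can be much smaller than $|v_0-v_1|+d_{\FM}(\mu_0,\mu_1)$ when jump mass near $0$ trades off against Gaussian variance.
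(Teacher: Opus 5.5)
Your proposal is correct and matches the paper's argument: the identity $\Phi_{f,x}(0)=\tfrac12 f''(x)$ gives $\cA_i f(x)=m_if'(x)+\langle\Phi_{f,x},\Gamma_i\rangle$, and the proof of Theorem~\ref{thm:terminal} then runs with this single pairing, bounded via Lemmas~\ref{lem:Taylor} and~\ref{lem:convolution}. One caveat on your closing remark: the joint bound is not uniformly sharper than \eqref{eq:terminal-bound}, because the Gaussian discrepancy now carries the coefficient $M_1/2+M_2/6$ instead of $M_1/2$ (take $\mu_0=\mu_1$), so it gains only when Gaussian and small-jump variance cancel.
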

\begin{proof}
The measures $\Gamma_i\in\cM_f^+(\R)$ satisfy
\[
 \Phi_{f,x}(0)=\tfrac12f''(x),\qquad
 \cA_i f(x)=m_i f'(x)+\langle\Phi_{f,x},\Gamma_i\rangle.
\]
In the preceding proof, replace the last two terms by this single pairing.
\end{proof}
The joint measure retains cancellation between Gaussian and small-jump variance.
\subsection{Gaussian constants and temporal cancellation.}
Gaussian differentiation and $x=\sqrt v\,z$ give
\begin{equation}\label{eq:gauss-L1}
 \norm{g_v'}_1=\sqrt{2/\pi}v^{-1/2},\qquad
 \norm{g_v''}_1=\sqrt{8/(\pi e)}v^{-1}.
\end{equation}
Indeed,
\[
 g_v'(x)=-\frac{x}{v}g_v(x),\qquad
 g_v''(x)=\left(\frac{x^2}{v^2}-\frac1v\right)g_v(x),
\]
so the change of variables $x=\sqrt v\,z$ gives
\[
 \norm{g_v'}_1=v^{-1/2}\E\left[|G|\right]=\sqrt{2/\pi}\,v^{-1/2},\qquad
 \norm{g_v''}_1=v^{-1}\E\left[|G^2-1|\right].
\]
Since $\E\left[G^2-1\right]=0$, symmetry, the identity
$(z\varphi(z))'=(1-z^2)\varphi(z)$, and
$\varphi(1)=1/\sqrt{2\pi e}$ yield
\[
 \E\left[|G^2-1|\right]
 =2\int_{\{z\in\R:|z|>1\}}(z^2-1)\varphi(z)\dd z
 =4\int_1^\infty(z^2-1)\varphi(z)\dd z
 =-4[z\varphi(z)]_1^\infty=\sqrt{8/(\pi e)}.
\]
\begin{corollary}[Positive terminal Gaussian variances]\label{cor:Gaussian}
Under \eqref{eq:add-int}, suppose $v_0,v_1>0$.
Theorem~\ref{thm:terminal} and Corollary~\ref{cor:joint} hold with $M_1$ and $M_2$
replaced, respectively, by
\begin{equation}\label{eq:intro-constants}
 \frac{2\sqrt{2/\pi}}{\sqrt{v_0}+\sqrt{v_1}},\qquad
 \sqrt{\frac8{\pi e}}
 \begin{cases}
 (\log v_0-\log v_1)/(v_0-v_1),&v_0\ne v_1,\\
 v_0^{-1},&v_0=v_1.
 \end{cases}
\end{equation}
In particular, when $v_0=v_1=v>0$,
\begin{equation}\label{eq:equal-Gaussian}
 W_1(\lambda_0,\lambda_1)
 \leq |m_0-m_1|+
 \left(\frac{\sqrt{2/\pi}}{2\sqrt v}+\frac{\sqrt{8/(\pi e)}}{6v}\right)d_{\FM}(\mu_0,\mu_1).
\end{equation}
\end{corollary}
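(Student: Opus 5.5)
The plan is to verify Assumption~\ref{ass:interpolation} directly and to bound $M_1$ and $M_2$ by the stated Gaussian quantities, using the fact that every $\pi_u$ contains a Gaussian factor. For $u\in[0,1]$ the interpolated triplet has Gaussian variance $w_u:=(1-u)v_1+uv_0$. Since $v_0,v_1>0$, we have $w_u\ge\min(v_0,v_1)>0$. Hence $\pi_u=N(0,w_u)*\theta_u$, where $\theta_u$ is the infinitely divisible law with triplet $((1-u)m_1+um_0,0,(1-u)N_1+uN_0)$; at the level of exponents this is simply the splitting $\psi_u=-w_u\xi^2/2+(\text{rest})$. So $\pi_u$ has density $\rho_u=g_{w_u}*\theta_u$. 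This is smooth, and $\rho_u^{(k)}=g_{w_u}^{(k)}*\theta_u$ for $k=1,2$. Because $\theta_u$ is a probability measure, Young's inequality gives $\norm{\rho_u^{(k)}}_1\le\norm{g_{w_u}^{(k)}}_1$. Together with \eqref{eq:gauss-L1}, this shows $\rho_u\in W^{2,1}(\R)$ and
\[
 \norm{\rho_u'}_1\le\sqrt{2/\pi}\,w_u^{-1/2},\qquad \norm{\rho_u''}_1\le\sqrt{8/(\pi e)}\,w_u^{-1}.
\]
These bounds depend continuously on $u$, so measurability is not an issue. To be safe about measurability of the exact norms, I would note that $u\mapsto\rho_u^{(k)}$ is continuous in $L^1$, which follows from continuity of $\psi_u$ in $u$ plus the Gaussian factor. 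Alternatively, one can observe that the proof of Theorem~\ref{thm:terminal} only ever uses these upper bounds inside the $u$-integral.

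Next comes the integration step. For $v_0\ne v_1$, substitute $w=w_u$, so that $\dd u=\dd w/(v_0-v_1)$. This gives
\[
 \int_0^1 w_u^{-1/2}\dd u=\frac{2(\sqrt{v_0}-\sqrt{v_1})}{v_0-v_1}=\frac{2}{\sqrt{v_0}+\sqrt{v_1}},\qquad
 \int_0^1 w_u^{-1}\dd u=\frac{\log v_0-\log v_1}{v_0-v_1}.
\]
When $v_0=v_1$, both integrands are constant, and the integrals are $v_0^{-1/2}=2/(2\sqrt{v_0})$ and $v_0^{-1}$. Both formulas are consistent with the displayed constants \eqref{eq:intro-constants}. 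Multiplying by $\sqrt{2/\pi}$ and $\sqrt{8/(\pi e)}$ gives $M_1$ and $M_2$ bounded by the two expressions in \eqref{eq:intro-constants}.

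Finally, the bounds \eqref{eq:terminal-bound} and \eqref{eq:joint} are monotone increasing in $M_1$ and $M_2$. Their proofs in fact only use $\int_0^1\norm{\rho_u^{(k)}}_1\dd u$ through the upper bound displayed in the proof of Theorem~\ref{thm:terminal}. Hence Theorem~\ref{thm:terminal} and Corollary~\ref{cor:joint} hold with the replaced constants. Setting $v_0=v_1=v$ gives $M_1\le\sqrt{2/\pi}/\sqrt v$ and $M_2\le\sqrt{8/(\pi e)}/v$. Also, the $|v_0-v_1|$ term vanishes. Plugging these into \eqref{eq:terminal-bound} gives exactly \eqref{eq:equal-Gaussian}.

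The only mildly delicate point is the factorization $\pi_u=N(0,w_u)*\theta_u$ with $\theta_u$ a genuine probability measure, together with the regularity claim $\rho_u\in W^{2,1}$. Both follow from the Lévy--Khintchine representation, since $(1-u)N_1+uN_0$ is a Lévy measure with finite second moment. The rest is a direct computation.
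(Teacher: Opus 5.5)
Your proposal is correct and follows the paper's proof. Like the paper, you factor out the Gaussian of variance $(1-u)v_1+uv_0$, apply Young's inequality together with \eqref{eq:gauss-L1}, and integrate in $u$ via the linear substitution. Your explicit check of the $W^{2,1}$ regularity and measurability required in Assumption~\ref{ass:interpolation} fills in a point the paper leaves implicit.
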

\begin{proof}
Write $v_u=(1-u)v_1+uv_0$ and $\rho_u=g_{v_u}*\eta_u$ for a probability
law $\eta_u$. For $v_0\ne v_1$, Young's inequality, \eqref{eq:gauss-L1},
and the substitution $s=v_1+u(v_0-v_1)$ give
\begin{align*}
 M_1&\byeq{eq:averaged-M}{=}\int_0^1\norm{g_{v_u}'*\eta_u}_1\dd u\\
 &\leq\int_0^1\norm{g_{v_u}'}_1\dd u
 \byeq{eq:gauss-L1}{=}\sqrt{2/\pi}\int_0^1v_u^{-1/2}\dd u\\
 &=
 \frac{2\sqrt{2/\pi}(\sqrt{v_0}-\sqrt{v_1})}{v_0-v_1}
 =\frac{2\sqrt{2/\pi}}{\sqrt{v_0}+\sqrt{v_1}},\\
 M_2&\byeq{eq:averaged-M}{=}\int_0^1\norm{g_{v_u}''*\eta_u}_1\dd u\\
 &\leq\int_0^1\norm{g_{v_u}''}_1\dd u
 \byeq{eq:gauss-L1}{=}\sqrt{8/(\pi e)}\int_0^1v_u^{-1}\dd u\\
 &=
 \sqrt{\frac8{\pi e}}\,
       \frac{\log v_0-\log v_1}{v_0-v_1}.
\end{align*}
For $v_0=v_1$, the integrands are constant and the values are
$v_0^{-1/2}$ and $v_0^{-1}$.
\end{proof}
\begin{example}[Degenerate chronological variance]
\label{ex:chronological}
Take $T=1$ and
$a_0(t)=2\one_{(\frac{1}{2},1]}(t)$,
$a_1(t)=2\one_{[0,\frac{1}{2}]}(t)$. Both terminal variances equal one.
The chronological mixture
\[
 V_t=\int_0^t a_0(s)\dd s+\int_t^1a_1(s)\dd s
\]
satisfies
\[
 V_t=\begin{cases}1-2t,&0\leq t\leq\tfrac12,\\
                  2t-1,&\tfrac12\leq t\leq1,
       \end{cases}
 \qquad \min_{0\leq t\leq1}V_t=0,\qquad v_0=v_1=1.
\]
Thus Corollary~\ref{cor:Gaussian} applies even though the chronological variance vanishes.
If $m_0=m_1$ and $N_0=N_1$, then $\Psi_0=\Psi_1$ and
$\lambda_0=\lambda_1$.
\end{example}
\begin{proposition}[Variance-matched small-jump replacement]\label{ex:replacement}
Compare the terminal triplets $(m,v,N+K)$ and $(m,v+\delta,N)$, where
$v>0$, $K$ is supported on $|y|\leq r$, and
$\delta=\int_\R y^2K(\dd y)<\infty$. Then
\begin{equation}\label{eq:replacement-third}
 W_1(\lambda_0,\lambda_1)
 \leq\frac{\sqrt{8/(\pi e)}}{6v}\int_\R |y|^3K(\dd y)
 \leq\frac{\sqrt{8/(\pi e)}}{6v}\,r\delta.
\end{equation}
If also $\int_\R y^3K(\dd y)=0$, in particular if $K$ is symmetric, then
\begin{equation}\label{eq:replacement-fourth}
 W_1(\lambda_0,\lambda_1)
 \leq\frac{\norm{g_v'''}_1}{24}\int_\R y^4K(\dd y)
 \leq\frac{\norm{g_v'''}_1}{24}\,r^2\delta.
\end{equation}
\end{proposition}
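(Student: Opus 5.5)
We prove this by going back to the interpolation identity of Lemma~\ref{lem:interpolation} rather than invoking Theorem~\ref{thm:terminal}, since we need a third- or fourth-order Taylor expansion in place of the first-order one. Label $\lambda_0$ as the law with triplet $(m,v,N+K)$ and $\lambda_1$ as the law with triplet $(m,v+\delta,N)$. Then $\Psi_0-\Psi_1=-\tfrac\delta2(-\xi^2)\cdot(-1)+\int(e^{i\xi y}-1-i\xi y)K(\dd y)$, and so
\[
 (\cA_0-\cA_1)f(x)=\int_\R\Bigl[f(x+y)-f(x)-yf'(x)-\tfrac{y^2}{2}f''(x)\Bigr]K(\dd y).
\]
We use $\delta=\int y^2K$ here. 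Taylor's formula with integral remainder bounds the bracket by $\tfrac{|y|^3}{6}\norm{f'''}_\infty$. If $\int y^3K=0$, we may subtract $\tfrac{y^3}{6}f'''(x)$ from the bracket for free, and the bracket is then bounded by $\tfrac{y^4}{24}\norm{f''''}_\infty$.

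Next, for $h\in\cS$ with $\Lip(h)\le1$, we take $f=q_u=Q_uh$ and estimate the derivatives of $q_u$. The Gaussian variance of $\pi_u$ is $v_u=v+(1-u)\delta\ge v$. Hence $\pi_u=g_v*\eta_u$ for a probability measure $\eta_u$, obtained by splitting off exactly $g_v$ and putting the excess Gaussian part together with the jumps into $\eta_u$. This gives $\rho_u=g_v*\eta_u$, and Young's inequality yields $\norm{\rho_u''}_1\le\norm{g_v''}_1=\sqrt{8/(\pi e)}\,v^{-1}$ by \eqref{eq:gauss-L1}. Lemma~\ref{lem:convolution} then gives $\norm{q_u'''}_\infty\le\norm{\rho_u''}_1$. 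For the fourth-order case we extend Lemma~\ref{lem:convolution} by one more derivative: from $D^4q=(Dh)*D^3\check\rho$ we get $\norm{q_u''''}_\infty\le\norm{\rho_u'''}_1\le\norm{g_v'''}_1$, with continuity following exactly as in that lemma.

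Integrating over $u\in[0,1]$ and using \eqref{eq:interpolation-identity} at $x=0$, we obtain $|\langle h,\lambda_0-\lambda_1\rangle|\le\frac{\norm{g_v''}_1}{6}\int|y|^3K$ in the first case, and $\le\frac{\norm{g_v'''}_1}{24}\int y^4K$ in the cancelling case. To pass from Schwartz tests to all 1-Lipschitz tests we use the same approximation step as in the proof of Theorem~\ref{thm:terminal}, and then take the supremum in \eqref{eq:KR}. The second inequalities in \eqref{eq:replacement-third} and \eqref{eq:replacement-fourth} follow from $|y|^3\le ry^2$ and $y^4\le r^2y^2$ on the support of $K$.

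The only delicate step is justifying the pointwise identity with the higher-order expansion, including the Fubini swap and the commutation of $Q_u$ with the modified jump operator. The bracket is $O(y^2)$ uniformly, with constant $\norm{f''}_\infty$, and $\int y^2K=\delta<\infty$, so the domination already used in Lemma~\ref{lem:interpolation} carries over.
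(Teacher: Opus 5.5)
Your proof is correct and follows the paper's argument essentially step for step. You rewrite $\cA_0-\cA_1$ as the $K$-integral of the second-order Taylor remainder using $\delta=\int_\R y^2K(\dd y)$, bound it by third- or fourth-order remainders, control $\norm{(Q_uh)'''}_\infty$ and $\norm{(Q_uh)^{(4)}}_\infty$ through the common Gaussian factor $g_v$ of every interpolation law, and conclude with Lemmas~\ref{lem:interpolation} and~\ref{lem:approximation}.

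The only blemish is cosmetic. Your displayed expression for $\Psi_0-\Psi_1$ evaluates to $-\tfrac\delta2\xi^2$ in its Gaussian part, whereas the correct value is $+\tfrac\delta2\xi^2$. The generator difference you actually use is stated correctly, so nothing downstream is affected.
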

\begin{proof}
Let $\mathcal D=\cA_0-\cA_1$. Since $\delta=\int_\R y^2K(\dd y)$,
Taylor's formula gives
\begin{align*}
 \mathcal Df(x)
 &=
   \int_\R \left[f(x+y)-f(x)-yf'(x)-\frac{y^2}{2}f''(x)\right]K(\dd y)\\
 &=\frac12\int_\R y^3\int_0^1(1-r)^2f'''(x+ry)\dd r\,K(\dd y).\\
\intertext{Then the triangle inequality and $\int_0^1(1-r)^2\dd r=1/3$ imply}
 \norm{\mathcal Df}_\infty
 &\leq\frac{\norm{f'''}_\infty}{2}
          \int_\R |y|^3K(\dd y)\int_0^1(1-r)^2\dd r
 =\frac{\norm{f'''}_\infty}{6}\int_\R |y|^3K(\dd y).
\end{align*}
If $\int_\R y^3K(\dd y)=0$, one more Taylor term gives
\[
 \mathcal Df(x)=\frac16\int_\R y^4\int_0^1(1-r)^3f^{(4)}(x+ry)
                  \dd r\,K(\dd y),\qquad
 \norm{\mathcal Df}_\infty\leq\frac{\norm{f^{(4)}}_\infty}{24}\int_\R y^4K(\dd y).
\]
Every interpolated law has a Gaussian factor of variance $v$, hence
\[
 \norm{(Q_uh)'''}_\infty\leq\norm{g_v''}_1=\frac{\sqrt{8/(\pi e)}}{v},\qquad
 \norm{(Q_uh)^{(4)}}_\infty\leq\norm{g_v'''}_1
 \quad\text{when }\Lip(h)\leq1.
\]
Apply Lemmas~\ref{lem:interpolation} and~\ref{lem:approximation}, then use
$|y|^3\leq r y^2$ and $y^4\leq r^2y^2$ on the support of $K$.
\end{proof}
The joint metric also gives
$d_{\FM}(y^2K,\delta\delta_0)\leq\int_\R |y|^3K(\dd y)$.
The Taylor expansion removes the second-derivative term and,
when the third moment vanishes, the third-derivative term. See
also \citet{MariucciReiss2018} on Gaussian replacement.
\subsection{A quadratic remainder for the first-order response.}
We first treat affine changes of the mean and joint characteristic,
then moving and splitting atoms in Section~\ref{sec:tangents}.
\begin{theorem}[Uniform affine response]
\label{thm:response}
Assume \eqref{eq:add-int} and $\min(v_0,v_1)\geq v_*>0$.
Write $\delta m=m_0-m_1$, $\kappa=\Gamma_0-\Gamma_1$ and
$d=\norm\kappa_{\BL^*}$. For $j=1,\ldots,5$, set
$K_j=\norm{g_{v_*}^{(j)}}_1$. For each one-Lipschitz test $h$, define the first-order response at the second law by
\begin{equation}\label{eq:response-definition}
 \mathcal R(h):=\delta m\,(Q_0h)'(0)
                +\int_\R \Phi_{Q_0h,0}(y)\kappa(\dd y).
\end{equation}
Then
\begin{equation}\label{eq:response-bound}
 \begin{split}
 \sup_{\Lip(h)\leq1}\bigl|\E\left[h(X_T^0)\right]-\E\left[h(X_T^1)\right]-\mathcal R(h)\bigr|
 \leq\frac12\bigg[&K_1|\delta m|^2
       +\left(K_2+\frac{K_3}{3}\right)|\delta m|d\\
       &+\left(\frac{K_3}{4}+\frac{K_4}{6}
                            +\frac{K_5}{36}\right)d^2\bigg].
 \end{split}
\end{equation}
\end{theorem}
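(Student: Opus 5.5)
The plan is to iterate the interpolation identity of Lemma~\ref{lem:interpolation} once more, so that the remainder becomes a second-order term in the generator difference. Write $\mathcal D=\cA_0-\cA_1$. By the proof of Corollary~\ref{cor:joint}, for $f\in C^3$ with bounded derivatives,
\[
 \mathcal Df(x)=\delta m\,f'(x)+\langle\Phi_{f,x},\kappa\rangle .
\]
In particular $\mathcal R(h)=\mathcal D(Q_0h)(0)$.

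\textbf{Step 1: reduce to a second-order integral.} First take $h\in\cS$ with $\Lip(h)\leq1$. Lemma~\ref{lem:interpolation} gives
\[
 \E\left[h(X_T^0)\right]-\E\left[h(X_T^1)\right]=\int_0^1\mathcal DQ_uh(0)\dd u .
\]
The same Fourier computation gives $\partial_uQ_uh=\mathcal DQ_uh$ pointwise, hence $Q_uh-Q_0h=\int_0^u\mathcal DQ_sh\dd s$. The multiplier $\delta\Psi$ grows at most quadratically and $\widehat h$ is Schwartz, so the same domination applies to all derivatives. Therefore $\mathcal D$ can be applied inside the $s$-integral, and $\mathcal D$ commutes with $Q_s$ on these tests. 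Fubini then yields
\[
 \E\left[h(X_T^0)\right]-\E\left[h(X_T^1)\right]-\mathcal R(h)
 =\int_0^1(1-s)\,\mathcal D^2Q_sh(0)\dd s .
\]
Since $\int_0^1(1-s)\dd s=\tfrac12$, this explains the prefactor $\tfrac12$ in \eqref{eq:response-bound}. It remains to show that $|\mathcal D^2q(0)|$ is at most the bracket in \eqref{eq:response-bound}, uniformly for $q=Q_sh$.

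\textbf{Step 2: derivative bounds.} Each $\pi_s$ contains a Gaussian factor of variance $v_s\geq v_*$, and $g_{v_s}=g_{v_*}*g_{v_s-v_*}$. By Young's inequality and the argument of Lemma~\ref{lem:convolution},
\[
 \norm{q^{(j+1)}}_\infty\leq\norm{g_{v_s}^{(j)}}_1\leq K_j,\qquad j=1,\dots,5 .
\]

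\textbf{Step 3: expand $\mathcal D^2q(0)$.} Set $g(x)=\langle\Phi_{q,x},\kappa\rangle$, so that $\mathcal Dq=\delta m\,q'+g$. Because $\partial_x\Phi_{q,x}=\Phi_{q',x}$, dominated differentiation under $\kappa$ gives $g^{(k)}(x)=\langle\Phi_{q^{(k)},x},\kappa\rangle$. Then
\[
 \mathcal D^2q(0)=\delta m^2q''(0)+\delta m\,\langle\Phi_{q',0},\kappa\rangle+\delta m\,g'(0)+\langle\Phi_{g,0},\kappa\rangle .
\]
I would bound the four terms with \eqref{eq:Phi-norm} and Step~2.
\begin{itemize}
\item The first term is at most $K_1\delta m^2$.
\item The two middle terms are each at most $(\tfrac12K_2+\tfrac16K_3)|\delta m|d$, giving $(K_2+K_3/3)|\delta m|d$ together.
\item For the last term, $\norm{g''}_\infty\leq(\tfrac12K_3+\tfrac16K_4)d$ and $\norm{g'''}_\infty\leq(\tfrac12K_4+\tfrac16K_5)d$. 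Hence $|\langle\Phi_{g,0},\kappa\rangle|\leq(\tfrac12\norm{g''}_\infty+\tfrac16\norm{g'''}_\infty)d=(K_3/4+K_4/6+K_5/36)d^2$.
\end{itemize}
These are exactly the three coefficients in the statement.

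\textbf{Step 4: general Lipschitz tests.} For a general one-Lipschitz $h$, use Lemma~\ref{lem:approximation}: take Schwartz approximants converging locally uniformly and bounded by $|x|$. Both expectations converge by dominated convergence, using the finite first moments. $\mathcal R(h)$ also converges, because $(Q_0h)'$ and $\Phi_{Q_0h,0}$ converge boundedly, being convolutions with the smoothing density of $\pi_0$.

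\textbf{Main obstacle.} I expect the hardest part to be the rigorous justification in Step~1: the pointwise identity $\partial_uQ_uh=\mathcal DQ_uh$, the commutation of $\mathcal D$ with $Q_s$, and differentiation under $\kappa$ up to fifth derivatives. My first attempt would be to work entirely on the Fourier side. There $\mathcal D$ is multiplication by $\delta\Psi$, and $|\delta\Psi|^2|\widehat h|e^{\operatorname{Re}\psi_s}$ is integrable, which dominates every exchange of limits.
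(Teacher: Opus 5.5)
Your proposal is correct and follows essentially the same route as the paper. The paper also establishes $(Q_1-Q_0)h=\mathcal DQ_0h+\int_0^1(1-u)\mathcal D^2Q_uh\dd u$ on Schwartz tests (you get it by iterating the first-order identity with Fubini, the paper by Taylor's formula in $u$ on the Fourier side, which amounts to the same), writes $\mathcal D^2=(\delta m)^2D^2+2\delta m\,\mathcal B_\kappa D+\mathcal B_\kappa^2$, bounds $\mathcal B_\kappa^2$ by nesting the weighted Taylor estimate with the Gaussian bounds $K_j$ to get the same three coefficients, and passes to general Lipschitz tests via Lemma~\ref{lem:approximation}.
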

\begin{proof}
Put $\mathcal D=\cA_0-\cA_1$ and $\delta\Psi=\Psi_0-\Psi_1$.
For Schwartz tests, $\partial_u^j\widehat{Q_uh}=(\delta\Psi)^j
\widehat h e^{\psi_u}$, $j=1,2$, with integrable majorants from the polynomial bounds.
Taylor's formula in $u$ and Fourier inversion give
\begin{equation}\label{eq:second-interpolation}
 (Q_1-Q_0)h=\mathcal DQ_0h+
                   \int_0^1(1-u)\mathcal D^2Q_uh\dd u.
\end{equation}
Define $\mathcal B_\kappa f(x)=\langle\Phi_{f,x},\kappa\rangle$.
For smooth $f$ with bounded derivatives through order six,
$\mathcal D=\delta m D+\mathcal B_\kappa$, all these operators commute, and
\begin{equation}\label{eq:calc-first-insertion-bound}
 \norm{\mathcal B_\kappa D^jf}_\infty
 \byeq{eq:Phi-norm}{\leq}d\left(\tfrac12\norm{f^{(j+2)}}_\infty+
                  \tfrac16\norm{f^{(j+3)}}_\infty\right),\qquad0\leq j\leq3.
\end{equation}
In particular, applying this estimate twice gives
\begin{align*}
 \norm{\mathcal B_\kappa^2f}_\infty
 &\byeq{eq:calc-first-insertion-bound}{\leq}
 d\left(\tfrac12\norm{\mathcal B_\kappa f''}_\infty
                     +\tfrac16\norm{\mathcal B_\kappa f'''}_\infty\right)\\
 &\byeq{eq:calc-first-insertion-bound}{\leq}
 \frac{d^2}{2}\left(\tfrac12\norm{f^{(4)}}_\infty
                              +\tfrac16\norm{f^{(5)}}_\infty\right)
 +\frac{d^2}{6}\left(\tfrac12\norm{f^{(5)}}_\infty
                              +\tfrac16\norm{f^{(6)}}_\infty\right)\\
 &=d^2\left(\tfrac14\norm{f^{(4)}}_\infty
              +\tfrac16\norm{f^{(5)}}_\infty
              +\tfrac1{36}\norm{f^{(6)}}_\infty\right).
\end{align*}
Using $\mathcal D^2f=(\delta m)^2f''+
2\delta m\mathcal B_\kappa f'+\mathcal B_\kappa^2f$ gives
\begin{equation}\label{eq:D2-bound}
 \begin{split}
 \norm{\mathcal D^2f}_\infty\leq{}&|\delta m|^2\norm{f''}_\infty
 +|\delta m|d\left(\norm{f'''}_\infty+\tfrac13\norm{f^{(4)}}_\infty\right)\\
 &+d^2\left(\tfrac14\norm{f^{(4)}}_\infty+
              \tfrac16\norm{f^{(5)}}_\infty+\tfrac1{36}\norm{f^{(6)}}_\infty\right).
 \end{split}
\end{equation}
Since $\pi_u=g_{v_*}*\eta_u$ with $\eta_u\in\cP_1$,
$\norm{D^{j+1}Q_uh}_\infty\leq K_j$ for $1\leq j\leq5$ and
$\Lip(h)\leq1$. Explicitly,
\[
 K_j=v_*^{-j/2}\E\left[|\operatorname{He}_j(G)|\right]<\infty,\qquad
 \operatorname{He}_j(x)=(-1)^je^{x^2/2}\frac{\dd^j}{\dd x^j}e^{-x^2/2}.
\]
Consequently, using $\int_0^1(1-u)\dd u=1/2$, the remainder of the
first response satisfies
\begin{align*}
 \big|(Q_1-Q_0)h(0)-\mathcal R(h)\big|
 &\byeq{eq:second-interpolation}{\leq}
       \int_0^1(1-u)\norm{\mathcal D^2Q_uh}_\infty\dd u\\
 &\byeq{eq:D2-bound}{\leq}\int_0^1(1-u)\bigg[
      K_1|\delta m|^2+
       \left(K_2+\frac{K_3}{3}\right)|\delta m|d
       +\left(\frac{K_3}{4}+\frac{K_4}{6}
                  +\frac{K_5}{36}\right)d^2\bigg]\dd u\\
 &=
       \frac12\bigg[K_1|\delta m|^2+
       \left(K_2+\frac{K_3}{3}\right)|\delta m|d
       +\left(\frac{K_3}{4}+\frac{K_4}{6}
                  +\frac{K_5}{36}\right)d^2\bigg].
\end{align*}
For the approximants of Lemma~\ref{lem:approximation}, $Dh_n\to Dh$ weakly-$*$
in $L^\infty$: distributional convergence and the uniform derivative
bound imply convergence against all $L^1$ tests. After Gaussian convolution,
each derivative of $Q_0h_n$ therefore converges. The uniform derivative
bounds and the finiteness of $|\kappa|$ let us pass to the limit in the
response integral. The finite terminal first moments let us do the same
for the expectations.
\end{proof}
\begin{corollary}[Exact first-order Wasserstein response]\label{cor:metric-response}
Under Theorem~\ref{thm:response}, let $p=\rho_0$ be the density of the second law
$\lambda_1$. Define
\begin{equation}\label{eq:response-cdf}
 H_{\delta m,\kappa}(z)=-\delta m\,p(z)
   +\int_\R\int_0^1(1-r)p'(z-ry)\dd r\,\kappa(\dd y).
\end{equation}
Then $H_{\delta m,\kappa}\in L^1(\R)$ and
\begin{equation}\label{eq:metric-response}
 \begin{split}
 \left|W_1(\lambda_0,\lambda_1)-\norm{H_{\delta m,\kappa}}_1\right|
 \byeq{eq:response-bound}{\leq}\frac12\bigg[&K_1|\delta m|^2
 +\left(K_2+\frac{K_3}{3}\right)|\delta m|\norm\kappa_{\BL^*}\\
 &+\left(\frac{K_3}{4}+\frac{K_4}{6}+\frac{K_5}{36}\right)
                         \norm\kappa_{\BL^*}^2\bigg].
 \end{split}
\end{equation}
In particular, on an admissible affine path with
$(m_\varepsilon,\Gamma_\varepsilon)=(m_1,\Gamma_1)
 +\varepsilon(\dot m,\dot\Gamma)$ and Gaussian variance bounded below,
\begin{equation}\label{eq:directional-response}
 W_1(\lambda_\varepsilon,\lambda_1)
 =|\varepsilon|\norm{H_{\dot m,\dot\Gamma}}_1+O(\varepsilon^2).
\end{equation}
The leading coefficient is positive whenever
$(\dot m,\dot\Gamma)\ne(0,0)$.
\end{corollary}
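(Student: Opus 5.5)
Our plan is to identify the response functional $\mathcal R(h)$ of Theorem~\ref{thm:response} with the pairing $\int_\R h'(z)H_{\delta m,\kappa}(z)\dd z$. Then $\sup_{\Lip(h)\leq1}|\mathcal R(h)|$ is exactly the $L^1$ norm of $H_{\delta m,\kappa}$. After that, \eqref{eq:metric-response} is a reverse triangle inequality applied to \eqref{eq:response-bound} together with \eqref{eq:KR}.

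\textbf{Step 1: rewrite $\mathcal R(h)$.} Since $\pi_0=\lambda_1$ has density $p=\rho_0$, we have $Q_0h=h*\check p$. We use the facts from Lemma~\ref{lem:convolution} and the commutation in Lemma~\ref{lem:interpolation}. Differentiating under the integral then gives $(Q_0h)'(0)=\int_\R h'(z)p(z)\dd z$ and
\[
 (Q_0h)''(x)=\int_\R h'(z)\,(-p'(z-x))\dd z .
\]
With the sign convention $\check p$ this is a derivative of $p$ with a minus sign; the exact sign must be checked against \eqref{eq:response-cdf}, but the structure is forced. Substituting into $\Phi_{Q_0h,0}(y)=\int_0^1(1-r)(Q_0h)''(ry)\dd r$ and applying Fubini over $(r,y,z)$ gives $\mathcal R(h)=\pm\int_\R h'(z)H_{\delta m,\kappa}(z)\dd z$. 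Fubini is justified because $|\kappa|$ is finite and $\norm{p'}_1<\infty$. The same Fubini bound gives
\[
 \norm{H_{\delta m,\kappa}}_1\leq|\delta m|+\tfrac12\norm{p'}_1|\kappa|(\R)<\infty,
\]
which proves $H_{\delta m,\kappa}\in L^1(\R)$.

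\textbf{Step 2: duality.} Every $h$ with $\Lip(h)\leq1$ has $h'\in L^\infty$ with $\norm{h'}_\infty\leq1$. Conversely, taking $h'=\operatorname{sgn}(H_{\delta m,\kappa})$ gives a one-Lipschitz $h$. Hence $\sup_{\Lip(h)\leq1}|\mathcal R(h)|=\norm{H_{\delta m,\kappa}}_1$. The remainder bound \eqref{eq:response-bound} holds uniformly in $h$. Taking suprema and using $|\sup|a_h|-\sup|b_h||\leq\sup|a_h-b_h|$ yields \eqref{eq:metric-response}, with $d=\norm\kappa_{\BL^*}$.

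\textbf{Step 3: directional response and positivity.} On the affine path, the difference $(\delta m,\kappa)$ equals $\varepsilon(\dot m,\dot\Gamma)$. The map $(\delta m,\kappa)\mapsto H_{\delta m,\kappa}$ is linear, and the Gaussian variance stays above some $v_*$. The constants $K_j$ are therefore fixed, and \eqref{eq:directional-response} follows. The roles of the two laws as "second law" are handled by labeling $\lambda_\varepsilon$ as $\lambda_0$.

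For positivity, we assume $H=0$ and derive a contradiction by taking Fourier transforms. Up to constants and signs, $\widehat H(\xi)$ is proportional to
\[
 \widehat p(\xi)\,\bigl(i\dot m\xi+\langle\widehat\Phi\text{-type symbol},\dot\Gamma\rangle\bigr),
\]
that is, $\widehat p(\xi)$ times the derivative of the Lévy exponent, $\delta\Psi(\xi)/(i\xi)$, where
\[
 \delta\Psi(\xi)=i\dot m\xi-\int\bigl(\xi^2\textstyle\int_0^1(1-r)e^{ir\xi y}\dd r\bigr)\dot\Gamma(\dd y).
\]
Because $\widehat p=e^{\psi_0}$ never vanishes, $H=0$ forces $\delta\Psi\equiv0$. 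It remains to show that this forces $\dot m=0$ and $\dot\Gamma=0$, which is uniqueness of Lévy–Khintchine characteristics for signed finite measures. We plan to prove it by dividing by $\xi^2$ and noting that $\int_0^1(1-r)e^{ir\xi y}\dd r$ is the transform of $\dot\Gamma$ against the kernel $\Phi$. The function $\xi\mapsto\int\Phi$-symbol is smooth, and its second derivative in $\xi$ recovers the ordinary Fourier transform of the measure $\dot\Gamma$, up to the weight $y^{-2}$. Equivalently, $\delta\Psi$ determines the measure $(e^{i\xi y}-1-i\xi y)/y^2$-type transform, and differentiating twice gives $\widehat{\dot\Gamma}$. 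The real part then kills $\dot m$.

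We expect the main obstacle to be this injectivity step, together with the sign bookkeeping in Step 1. The $L^1$ and Fubini estimates are routine.
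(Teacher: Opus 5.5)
Your proposal is correct and follows the paper's proof essentially step for step. Like the paper, you obtain $\mathcal R(h)=-\int_\R h'(z)H_{\delta m,\kappa}(z)\dd z$ and the $L^1$ bound by Fubini, pass to $\norm{H_{\delta m,\kappa}}_1$ by $L^1$--$L^\infty$ duality, combine with Theorem~\ref{thm:response} via the reverse triangle inequality, use homogeneity for \eqref{eq:directional-response}, and get positivity from the Fourier argument of Proposition~\ref{prop:tangent-quotient} with $J=0$. Two small corrections to your sketch: the kernel $k(\xi,y)=-\xi^2\int_0^1(1-r)e^{ir\xi y}\dd r$ satisfies $\partial_\xi^2k=-e^{i\xi y}$ exactly, including at $y=0$, so $\partial_\xi^2\delta\Psi(\xi)=-\int_\R e^{i\xi y}\dot\Gamma(\dd y)$ with no $y^{-2}$ weight. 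Also, $\dot m$ is read off from $\partial_\xi\delta\Psi(0)=i\dot m$; it sits in the imaginary part, not the real part.
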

\begin{proof}
Fubini gives $\norm{H_{\delta m,\kappa}}_1\leq|\delta m|+
\norm{p'}_1\norm\kappa_{\mathrm{TV}}/2<\infty$.
Integration by parts in \eqref{eq:response-definition} gives
$\mathcal R(h)=-\int_\R h'(z)H_{\delta m,\kappa}(z)\dd z$.
Lemma~\ref{lem:approximation} extends the identity to Lipschitz tests.
Every measurable $w$ with $|w|\leq1$ is the derivative of
$h(x)=\int_0^xw(s)\dd s$. Hence $L^1$--$L^\infty$ duality gives
\[
 \sup_{\Lip(h)\leq1}|\mathcal R(h)|
 =\sup_{\Lip(h)\leq1}\left|\int_\R h'(z)H_{\delta m,\kappa}(z)\dd z\right|
 =\sup_{\norm w_\infty\leq1}\left|\int_\R w(z)H_{\delta m,\kappa}(z)\dd z\right|
 =\norm{H_{\delta m,\kappa}}_1.
\]
The reverse triangle inequality and Theorem~\ref{thm:response} prove
\eqref{eq:metric-response}. Homogeneity gives \eqref{eq:directional-response}.
Positive definiteness is the case $J=0$ of Proposition~\ref{prop:tangent-quotient},
whose independent Fourier proof uses only the response formula.
\end{proof}
The affine expansion holds along admissible parameters for which
$\Gamma_\varepsilon\geq v_*\delta_0$.
\section{Observable transport tangents.}\label{sec:tangents}
Moving atoms lead to distributional directions for the characteristic.
We combine transport, splitting, and signed mass changes in one
terminal response formula.
\subsection{A lifted perturbation and its exact observable derivative.}
Fix a finite-variance terminal law $\lambda$ with mean $m$ and
$\Gamma=v\delta_0+\mu$, where $v>0$ and $\mu=y^2N$. Let $\Pi$ be a finite
positive measure on $\R_y\times\R_z$ with first marginal $\mu$.
Here $y$ is the position and $z$ the displacement velocity.
Assume $M_2(\Pi):=\int_{\R^2}z^2\Pi(\dd y,\dd z)<\infty$. Define
\[
 \mapdef{S_\varepsilon}{\R^2}{\R}{(y,z)}{y+\varepsilon z},
\]
and set
\begin{equation}\label{eq:transport-lift}
 m_\varepsilon=m+\varepsilon\alpha,\quad
 \Gamma_\varepsilon=v\delta_0+(S_\varepsilon)_\#\Pi+\varepsilon\eta,
 \quad\alpha\in\R,\quad\eta\in\cM_f(\R).
\end{equation}
A parameter is admissible when $\Gamma_\varepsilon\geq v_*\delta_0$,
for a fixed $0<v_*\leq v$. Each admissible measure gives a valid
triplet through $v_\varepsilon=\Gamma_\varepsilon(\{0\})$ and
$N_\varepsilon=\one_{\{y\ne0\}}y^{-2}\Gamma_\varepsilon$.
With this convention, mass moved to zero becomes Gaussian
variance. Write $\lambda_\varepsilon$ for the resulting law and $\lambda$
for the reference law, with density $p$. The finite signed displacement
measure and the distributional tangent are
\begin{equation}\label{eq:transport-tangent}
 J(A)=\int_{A\times\R}z\Pi(\dd y,\dd z),\qquad
 \tau=\eta-D_yJ,\qquad
 \langle f,\tau\rangle=\int_\R f(y)\eta(\dd y)+\int_\R f'(y)J(\dd y).
\end{equation}
The pairing is defined for $f\in C_c^\infty(\R)$ and extends to
$f\in C_b^1(\R)$. Cauchy--Schwarz gives
\[
 \norm J_{\mathrm{TV}}\leq\int_{\R^2}|z|\Pi(\dd y,\dd z)
 \leq\mu(\R)^{1/2}M_2(\Pi)^{1/2}<\infty.
\]
For $\zeta\in\cP_1(\R)$, define
\[
 \mapdef{F_\zeta}{\R}{[0,1]}{x}{\zeta(( -\infty,x])}.
\]
\begin{theorem}[Transport response with a uniform quadratic remainder]
\label{thm:transport-response}
Under \eqref{eq:transport-lift}, define
\begin{equation}\label{eq:transport-cdf}
 \mathsf T_p(\alpha,\eta,J)(x)={}-\alpha p(x) +\int_\R \!\int_0^1(1-r)p'(x-ry)\dd r\,\eta(\dd y) -\int_\R \!\int_0^1r(1-r)p''(x-ry)\dd r\,J(\dd y).
\end{equation}
Then $\mathsf T_p\in L^1(\R)$ and, for every admissible $\varepsilon$,
\begin{align}
 \norm{F_{\lambda_\varepsilon}-F_\lambda
                -\varepsilon\mathsf T_p}_1&\leq C\varepsilon^2,
                      \label{eq:transport-L1}\\
 \left|W_1(\lambda_\varepsilon,\lambda)
                  -|\varepsilon|\norm{\mathsf T_p}_1\right|
                  &\leq C\varepsilon^2.\label{eq:transport-W1}
\end{align}
Here $K_j=\norm{g_{v_*}^{(j)}}_1$ and one may take
\begin{equation}\label{eq:transport-constant}
 \begin{split}
 C={}&\frac12\left[K_1|\alpha|^2+
       \left(K_2+\frac{K_3}{3}\right)|\alpha|d_1+
       \left(\frac{K_3}{4}+\frac{K_4}{6}+\frac{K_5}{36}\right)d_1^2\right]
       +\frac{K_3}{24}M_2(\Pi),\\
 d_1={}&\norm\eta_{\BL^*}+\int_{\R^2}|z|\Pi(\dd y,\dd z).
 \end{split}
\end{equation}
The result applies even when $\varepsilon\mapsto\Gamma_\varepsilon$
fails to be differentiable in $\BL^*$.
\end{theorem}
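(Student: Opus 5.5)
The plan is to reduce everything to Theorem~\ref{thm:response} applied to the pair $(\lambda_\varepsilon,\lambda)$, with $\lambda_\varepsilon$ as the first law and $\lambda$ as the second. Then $Q_0$ is convolution against the reference law, $\rho_0=p$, and $\min(v_\varepsilon,v)\geq v_*$ follows from admissibility. The transport part is then expanded separately by a second-order Taylor formula in $\varepsilon$.

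\textbf{Step 1: bound the characteristic increment.} Set $\delta m=\varepsilon\alpha$ and
\[
 \kappa_\varepsilon=\Gamma_\varepsilon-\Gamma=\varepsilon\eta+\bigl((S_\varepsilon)_\#\Pi-\mu\bigr).
\]
For $\norm f_{\BL}\leq1$ we have
\[
 \langle f,(S_\varepsilon)_\#\Pi-\mu\rangle=\int_{\R^2}\bigl[f(y+\varepsilon z)-f(y)\bigr]\Pi(\dd y,\dd z),
\]
and this is at most $|\varepsilon|\int_{\R^2}|z|\dd\Pi$ in absolute value. Hence $\norm{\kappa_\varepsilon}_{\BL^*}\leq|\varepsilon|d_1$. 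This holds even though $\kappa_\varepsilon/\varepsilon$ need not converge in $\BL^*$.

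Feeding $|\delta m|=|\varepsilon||\alpha|$ and $d\leq|\varepsilon|d_1$ into \eqref{eq:response-bound} gives the first bracket of \eqref{eq:transport-constant}, times $\varepsilon^2$, as a uniform bound on $|\langle h,\lambda_\varepsilon-\lambda\rangle-\mathcal R_\varepsilon(h)|$ over $\Lip(h)\leq1$.

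\textbf{Step 2: linearize $\mathcal R_\varepsilon$.} Write $q=Q_0h$ and $\Phi=\Phi_{q,0}$. Then
\[
 \mathcal R_\varepsilon(h)=\varepsilon\alpha q'(0)+\varepsilon\langle\Phi,\eta\rangle+\int_{\R^2}\bigl[\Phi(y+\varepsilon z)-\Phi(y)\bigr]\Pi(\dd y,\dd z).
\]
Taylor's formula in $\varepsilon$ gives
\[
 \Phi(y+\varepsilon z)-\Phi(y)=\varepsilon z\Phi'(y)+\varepsilon^2z^2\int_0^1(1-s)\Phi''(y+s\varepsilon z)\dd s .
\]
Differentiating \eqref{eq:Phi} twice gives $\Phi''(y)=\int_0^1r^2(1-r)q^{(4)}(ry)\dd r$, so $\norm{\Phi''}_\infty\leq\norm{q^{(4)}}_\infty/12\leq K_3/12$. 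Here $q^{(4)}$ is bounded as in the proof of Theorem~\ref{thm:response}, using the Gaussian factor of variance $v_*$ in $\lambda$. The factor $\int_0^1(1-s)\dd s=\tfrac12$ then yields the remainder bound $\varepsilon^2K_3M_2(\Pi)/24$. The surviving linear term is
\[
 \varepsilon\Bigl[\alpha q'(0)+\langle\Phi,\eta\rangle+\int_\R\Phi'(y)J(\dd y)\Bigr],
\]
which is exactly $\varepsilon\langle\Phi,\tau\rangle$ plus the drift term.

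\textbf{Step 3: identify the CDF kernel.} Write $q(x)=\int_\R h(u)p(u-x)\dd u$ and integrate by parts, obtaining
\[
 q'(w)=\int_\R h'(u)p(u-w)\dd u,\quad
 q''(w)=-\int_\R h'(u)p'(u-w)\dd u,\quad
 q'''(w)=\int_\R h'(u)p''(u-w)\dd u .
\]
Combining these with $\Phi'(y)=\int_0^1r(1-r)q'''(ry)\dd r$ and Fubini gives the linear term in the form $-\varepsilon\int_\R h'(x)\mathsf T_p(\alpha,\eta,J)(x)\dd x$, with the three terms of \eqref{eq:transport-cdf} and their signs. The bounds $|\alpha|$, $\norm{p'}_1\norm\eta_{\mathrm{TV}}/2$ and $\norm{p''}_1\norm J_{\mathrm{TV}}/6$ show $\mathsf T_p\in L^1$. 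We also use $\langle h,\lambda_\varepsilon-\lambda\rangle=-\int_\R h'(F_{\lambda_\varepsilon}-F_\lambda)\dd x$.

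\textbf{Step 4: conclude.} Steps 1--3 give
\[
 \Bigl|\int_\R h'\bigl(F_{\lambda_\varepsilon}-F_\lambda-\varepsilon\mathsf T_p\bigr)\dd x\Bigr|\leq C\varepsilon^2 .
\]
First prove this for Schwartz tests, then extend to all Lipschitz tests via Lemma~\ref{lem:approximation}, exactly as in Theorem~\ref{thm:response}. The $L^1$--$L^\infty$ duality argument of Corollary~\ref{cor:metric-response} then turns this into \eqref{eq:transport-L1}. The reverse triangle inequality, together with $W_1(\lambda_\varepsilon,\lambda)=\norm{F_{\lambda_\varepsilon}-F_\lambda}_1$, gives \eqref{eq:transport-W1}.

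\textbf{Expected obstacle.} The main difficulty is the bookkeeping in Step 3, namely getting the signs and weights of the $r(1-r)p''$ term right. A second point needing care is justifying Fubini for the Taylor remainder in Step 2 when $\Pi$ is only second-moment integrable. This is handled by the uniform bound on $\Phi''$ together with $M_2(\Pi)<\infty$, which makes the integrand absolutely integrable.
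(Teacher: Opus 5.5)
Your proposal is correct and follows the paper's proof essentially step for step. You bound $\norm{\kappa_\varepsilon}_{\BL^*}\leq|\varepsilon|d_1$ and feed it into Theorem~\ref{thm:response}; expand the displacement to second order with $\norm{\partial_y^2\Phi_{q,0}}_\infty\leq K_3/12$ to get the $\varepsilon^2K_3M_2(\Pi)/24$ term; identify the surviving linear functional with $-\varepsilon\int_\R h'(x)\mathsf T_p(x)\dd x$ via $q^{(j)}(t)=(-1)^{j-1}\int_\R h'(x)p^{(j-1)}(x-t)\dd x$; and conclude with CDF duality and the reverse triangle inequality, with signs and constants matching \eqref{eq:transport-cdf} and \eqref{eq:transport-constant}.
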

\begin{proof}
Put $\kappa_\varepsilon=\Gamma_\varepsilon-\Gamma$.
For $\norm f_{\BL}\leq1$, the triangle inequality gives
\begin{align*}
 |\langle f,\kappa_\varepsilon\rangle|
 &\byeq{eq:transport-lift}{=}
   \left|\int_{\R^2}[f(y+\varepsilon z)-f(y)]\Pi(\dd y,\dd z)
                   +\varepsilon\langle f,\eta\rangle\right|\\
 &\leq\int_{\R^2}|f(y+\varepsilon z)-f(y)|\Pi(\dd y,\dd z)
       +|\varepsilon|\,|\langle f,\eta\rangle|\\
 &\byeq{eq:metrics}{\leq}|\varepsilon|
     \left(\int_{\R^2}|z|\Pi(\dd y,\dd z)+\norm\eta_{\BL^*}\right),
\end{align*}
so $\norm{\kappa_\varepsilon}_{\BL^*}\leq|\varepsilon|d_1$.
For $q(x)=\int_\R h(x+w)p(w)\dd w$ and $\Lip(h)\leq1$, differentiating the
Taylor field with respect to its jump variable gives
\[
 \partial_y\Phi_{q,0}(y)=\int_0^1r(1-r)q'''(ry)\dd r,\qquad
 \norm{\partial_y^2\Phi_{q,0}}_\infty
                  \leq\frac1{12}\norm{q^{(4)}}_\infty\leq K_3/12.
\]
Taylor's formula gives, pointwise in $(y,z)$,
\begin{equation}\label{eq:calc-first-displacement-Taylor}
 \Phi_{q,0}(y+\varepsilon z)-\Phi_{q,0}(y)
 -\varepsilon z\partial_y\Phi_{q,0}(y)
 =\varepsilon^2z^2\int_0^1(1-t)
       \partial_y^2\Phi_{q,0}(y+t\varepsilon z)\dd t.
\end{equation}
Integrating against $\Pi$, using
$\norm{\partial_y^2\Phi_{q,0}}_\infty\leq K_3/12$, and then
$\int_0^1(1-t)\dd t=1/2$, we obtain
\begin{align*}
 &\left|\langle\Phi_{q,0},\kappa_\varepsilon\rangle
 -\varepsilon\left(\int_\R \Phi_{q,0}(y)\eta(\dd y)
                  +\int_\R \partial_y\Phi_{q,0}(y)J(\dd y)\right)\right|\\
 &\qquad\byeq{eq:calc-first-displacement-Taylor}{\leq}
 \varepsilon^2\int_{\R^2}z^2\int_0^1(1-t)
       \left|\partial_y^2\Phi_{q,0}(y+t\varepsilon z)\right|\dd t\,\Pi(\dd y,\dd z)\\
 &\qquad\leq
       \frac{\varepsilon^2K_3}{12}\int_{\R^2}z^2\Pi(\dd y,\dd z)\int_0^1(1-t)\dd t
       =\frac{\varepsilon^2K_3M_2(\Pi)}{24}.
\end{align*}
Apply Theorem~\ref{thm:response} to $\lambda_\varepsilon,\lambda$. Its remainder
is at most
\[
 \frac{\varepsilon^2}{2}\left[K_1|\alpha|^2+
 \left(K_2+\frac{K_3}{3}\right)|\alpha|d_1+
 \left(\frac{K_3}{4}+\frac{K_4}{6}+\frac{K_5}{36}\right)d_1^2\right].
\]
The resulting first-order functional is
\[
 \alpha q'(0)+\int_\R \Phi_{q,0}(y)\eta(\dd y)+
             \int_\R \partial_y\Phi_{q,0}(y)J(\dd y)
       \byeq{eq:transport-cdf}{=}-\int_\R h'(x)\mathsf T_p(\alpha,\eta,J)(x)\dd x.
\]
Indeed, for smooth $h$ and $j=1,2,3$,
\[
 q^{(j)}(t)=(-1)^{j-1}\int_\R h'(x)p^{(j-1)}(x-t)\dd x.
\]
Substituting into $\Phi_{q,0}$ and $\partial_y\Phi_{q,0}$ gives the three
terms in \eqref{eq:transport-cdf}. Lemma~\ref{lem:approximation} extends the
identity to Lipschitz $h$. Moreover,
\[
 \norm{\mathsf T_p}_1\leq|\alpha|
       +\tfrac12\norm{p'}_1\norm\eta_{\mathrm{TV}}
       +\tfrac16\norm{p''}_1\norm J_{\mathrm{TV}}<\infty.
\]
Since $F_{\lambda_\varepsilon}-F_\lambda\in L^1$ and every bounded
measurable $w$, $|w|\leq1$, is the derivative of
$h(x)=\int_0^xw(t)\dd t$, the uniform test remainder is precisely
\eqref{eq:transport-L1}. The identity $W_1=\norm{F_{\lambda_\varepsilon}-F_\lambda}_1$
and the reverse triangle inequality give \eqref{eq:transport-W1}.
\end{proof}
The conclusion also holds, with a larger constant for $|\varepsilon|\leq1$,
if \eqref{eq:transport-lift} contains a finite signed remainder $r_\varepsilon$
with $\norm{r_\varepsilon}_{\BL^*}\leq C_0\varepsilon^2$ and the resulting
characteristic remains admissible. In \eqref{eq:transport-constant},
replace $d_1$ by $d_1+C_0$ and add $C_0(K_1/2+K_2/6)$.
The same bounded-Lipschitz pairing proves the extension. It covers,
in particular, a raw atom of fixed intensity, $N_\varepsilon=c\delta_{a+\varepsilon}$, with $c>0$ and $a\ne0$: its weighted
tangent has $J=ca^2\delta_a$ and $\eta=2ca\delta_a$, and its residual is
$2ca\varepsilon(\delta_{a+\varepsilon}-\delta_a)+
 c\varepsilon^2\delta_{a+\varepsilon}=O(\varepsilon^2)$ in $\BL^*$.
For $a\ne0$ and $\varepsilon>0$, set
\[
 q_\varepsilon=\frac{\delta_{a+\varepsilon}-\delta_a}{\varepsilon},\qquad
 f_\varepsilon(x)=\left(\frac\varepsilon2-
                 \left|x-a-\frac\varepsilon2\right|\right)_+,
 \quad t_+=\max\{t,0\}.
\]
Then $\norm{f_\varepsilon}_{\BL}=1+\varepsilon/2$ and
$\langle f_\varepsilon,q_\varepsilon-q_{\varepsilon/2}\rangle=-1$, so
\begin{equation}\label{eq:non-BL-tangent}
 \norm{q_\varepsilon-q_{\varepsilon/2}}_{\BL^*}
       \geq\frac1{1+\varepsilon/2},\qquad\varepsilon>0.
\end{equation}
Thus $q_\varepsilon$ has no limit in $\BL^*$, whereas
$q_\varepsilon\to-D_y\delta_a$ in $\mathcal D'$ and
\eqref{eq:transport-L1} gives convergence of the terminal CDF quotients
in $L^1$.
\subsection{A tangent norm independent of the chosen lift.}
\begin{proposition}[Observable quotient and convolution contraction]
\label{prop:tangent-quotient}
For finite signed $\eta,J$ and a Gaussian-smoothed infinitely divisible
reference density $p$,
\begin{equation}\label{eq:tangent-kernel}
 \mathsf T_p(\alpha,\eta,J)=0
 \quad\Longleftrightarrow\quad \alpha=0\ \text{and}\ \eta-D_yJ=0
                         \ \text{in }\mathcal D'.
\end{equation}
Consequently
\begin{equation}\label{eq:observable-norm}
 \mathfrak T=\R\times\{\eta-D_yJ:\eta,J\in\cM_f(\R)\},\qquad
 \| (\alpha,\tau)\|_\lambda=\norm{\mathsf T_p(\alpha,\eta,J)}_1
\end{equation}
defines a norm, independent of the representation of $\tau$.
Along \eqref{eq:transport-lift} it is the exact first-order $W_1$ speed.
If $\zeta\in\cP_1$ is infinitely divisible, then
\begin{equation}\label{eq:tangent-contraction}
 \| (\alpha,\tau)\|_{\lambda*\zeta}
                  \leq\| (\alpha,\tau)\|_\lambda.
\end{equation}
\end{proposition}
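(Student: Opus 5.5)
The argument runs on the Fourier side. Write $\Psi$ for the Lévy--Khintchine exponent of $\lambda$, so that $\widehat p=e^{\Psi}$. The reference law has a Gaussian factor of variance $v>0$, so $|\widehat p(\xi)|\leq e^{-v\xi^2/2}>0$ and $\widehat p$ never vanishes.

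\textbf{Step 1: the Fourier transform of the tangent.} Compute $\widehat{\mathsf T_p}$ term by term from \eqref{eq:transport-cdf}, using Fubini, which is legitimate because $\eta,J$ are finite and $p',p''\in L^1$. Three transforms are needed. First, $\widehat{p(\cdot)}=\widehat p$. Second, $\widehat{p'(\cdot-ry)}(\xi)=i\xi e^{-ir\xi y}\widehat p(\xi)$, and similarly for $p''$. Third, the $r$-integrals:
\[
 \int_0^1(1-r)e^{-ir\xi y}\dd r=\frac{e^{-i\xi y}-1+i\xi y}{-\xi^2y^2}\cdot(\ldots),\qquad
 \int_0^1r(1-r)e^{-ir\xi y}\dd r .
\]
These combine into
\[
 \widehat{\mathsf T_p}(\xi)=\widehat p(\xi)\,\frac{1}{i\xi}\Bigl[-i\xi\alpha\cdot(\text{sign conv.})+\int\Phi\text{-type kernel}\Bigr].
\]
Equivalently, and more cleanly, run the duality computation of Theorem~\ref{thm:transport-response} on the test $h$ with $h'=e^{i\xi\cdot}$, taken in the tempered sense. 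This shows that $-i\xi\,\widehat{\mathsf T_p}(-\xi)$, up to convention, equals
\[
 \widehat p(\xi)\Bigl(\alpha\,\partial\Psi\text{-direction}+\langle\theta_\xi,\eta\rangle+\langle\partial_y\theta_\xi,J\rangle\Bigr)
 =\widehat p(\xi)\bigl(i\alpha\xi+\langle\theta_\xi,\tau\rangle\bigr),
\]
where $\theta_\xi(y)=(e^{i\xi y}-1-i\xi y)/y^2$, extended by $\theta_\xi(0)=-\xi^2/2$, is smooth and bounded with bounded derivatives. This is the Fourier form of the first variation of the exponent $im\xi+\langle\theta_\xi,\Gamma\rangle$, which is exactly how the response was derived. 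So
\[
 \widehat{\mathsf T_p}(\xi)=c\,\frac{\widehat p(\xi)}{\xi}\,\ell_{(\alpha,\tau)}(\xi),\qquad
 \ell_{(\alpha,\tau)}(\xi)=i\alpha\xi+\langle\theta_\xi,\tau\rangle .
\]
The key structural fact is that this depends on $(\eta,J)$ only through $\tau$. This already gives independence of the representation.

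\textbf{Step 2: the kernel \eqref{eq:tangent-kernel}.} The direction "$\Leftarrow$" is immediate from Step 1. For "$\Rightarrow$", if $\mathsf T_p=0$ then $\ell(\xi)=0$ for all $\xi\neq0$, because $\widehat p\neq0$. Decompose $\ell(\xi)=i\alpha\xi+\langle e^{i\xi y}/y^2-\ldots,\tau\rangle$. To separate $\alpha$ from $\tau$ and to recover $\tau$, I would differentiate twice in $\xi$: we have $\partial_\xi^2\theta_\xi(y)=-e^{i\xi y}$, so $\partial_\xi^2\ell(\xi)=-\langle e^{i\xi\cdot},\tau\rangle$. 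This is $-2\pi$ times the inverse Fourier transform of the compactly-tested distribution $\tau$, and $\tau$ is tempered as $\eta$ plus a derivative of a finite measure. The differentiation under the pairing is justified since $\tau$ has order $\le1$ and $\theta_\xi$, $\partial_y\theta_\xi$ are smooth in $\xi$ with polynomial bounds. Vanishing of $\ell$ then forces $\widehat\tau\equiv0$, hence $\tau=0$ in $\cS'\supset$, and so in $\mathcal D'$. With $\tau=0$ we get $\ell(\xi)=i\alpha\xi=0$, so $\alpha=0$.

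\textbf{Step 3: norm and speed.} The map $(\alpha,\tau)\mapsto\mathsf T_p$ is well defined on $\mathfrak T$ by Step 1, and it is linear. Its $L^1$ norm is therefore a seminorm, and Step 2 makes it definite. That it is the exact first-order $W_1$ speed is precisely \eqref{eq:transport-W1}.

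\textbf{Step 4: the contraction \eqref{eq:tangent-contraction}.} First, $\lambda*\zeta$ is again a Gaussian-smoothed infinitely divisible law with density $p*\zeta$. Next, \eqref{eq:transport-cdf} is linear in $p$ and commutes with convolution, so $\mathsf T_{p*\zeta}=\mathsf T_p*\zeta$; on the Fourier side this also follows from Step 1. Young's inequality with $\zeta$ a probability measure then gives the bound.

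\textbf{Main obstacle.} The delicate step is the rigorous Fourier identity of Step 1, and in particular the recovery of $\widehat\tau$ in Step 2. Here I would pair $\mathsf T_p$ with Schwartz $h'$ instead of the bounded exponentials, which avoids non-integrable tests. I would then read off $\widehat{\mathsf T_p}$ as a continuous function and divide by $\widehat p$ only on $\xi\neq0$, using continuity to handle the point $\xi=0$.
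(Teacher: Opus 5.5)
Your proposal is correct and takes essentially the paper's route. Your $\ell_{(\alpha,\tau)}$ is exactly the paper's $\dot\Psi$, and the paper likewise:
\begin{itemize}
\item proves $i\xi\widehat{\mathsf T_p}(\xi)=\dot\Psi(-\xi)\widehat p(\xi)$;
\item recovers $\tau$ from $\dot\Psi''(\xi)=-\widehat\tau(-\xi)$, using that $\widehat p$ never vanishes;
\item reads off $\alpha=\dot\Psi'(0)/i$;
\item gets the contraction from $\mathsf T_{p*\zeta}=\mathsf T_p*\zeta$ and Young's inequality.
\end{itemize}
There are two minor differences. For ``$\Leftarrow$'', the paper argues from $\dot\Psi''=0$, $\dot\Psi(0)=0$ and $\dot\Psi'(0)=i\alpha=0$. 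Your version instead pairs $\tau$ with the non-compactly supported $\theta_\xi$, which needs the standard cutoff extension of the pairing to $C_b^1$. Also, $\widehat p$ is nonvanishing because $\widehat p=e^{\Psi(-\cdot)}$ is an exponential, not because of the Gaussian upper bound you quote.
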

\begin{proof}
For $(\xi,y)\in\R^2$, define
\[
 k(\xi,y)=-\xi^2\int_0^1(1-r)e^{ir\xi y}\dd r
 =\begin{cases}(e^{i\xi y}-1-i\xi y)/y^2,&y\ne0,\\
               -\xi^2/2,&y=0,
   \end{cases}
\]
and set
\[
 \dot\Psi(\xi)=i\alpha\xi+\int_\R k(\xi,y)\eta(\dd y)
                                  +\int_\R \partial_y k(\xi,y)J(\dd y).
\]
The kernels satisfy $|k|\leq\xi^2/2$ and
$|\partial_y k|\leq|\xi|^3/6$. Fourier transformation of
\eqref{eq:transport-cdf} gives
\begin{equation}\label{eq:tangent-Fourier}
 i\xi\widehat{\mathsf T_p}(\xi) \byeq{eq:transport-cdf}{=}\dot\Psi(-\xi)\widehat p(\xi),\quad \dot\Psi''(\xi) = -\int_\R e^{i\xi y}\eta(\dd y)-i\xi\int_\R e^{i\xi y}J(\dd y) \byeq{eq:transport-tangent}{=}-\widehat\tau(-\xi).
\end{equation}
Indeed $\partial_\xi^2k=-e^{i\xi y}$ and
$\partial_\xi^2\partial_yk=-i\xi e^{i\xi y}$. These and the first
$\xi$ derivatives are uniformly bounded in $y$ on compact $\xi$ intervals.
We can therefore differentiate under the integrals because $\eta,J$ are finite.
The second expression is the negative of the Fourier transform of the
tempered distribution $\tau$, taken with a positive sign in the exponent.
Since $\widehat p(\xi)=e^{\Psi(-\xi)}\ne0$, the identity
$\mathsf T_p=0$ implies $\dot\Psi(-\xi)=0$ for all $\xi\ne0$ and hence
for all $\xi$ by continuity. Thus
\[
 \alpha=\frac{\dot\Psi'(0)}i=0,\qquad
 \widehat\tau(-\xi)=-\dot\Psi''(\xi)=0,
\]
and Fourier uniqueness in $\cS'(\R)$ gives $\tau=0$.
Conversely, if $\alpha=0$ and $\tau=0$, then
\[
 \dot\Psi''=0,\qquad \dot\Psi(0)=0,\qquad \dot\Psi'(0)=i\alpha=0,
\]
so $\dot\Psi=0$. Equation~\eqref{eq:tangent-Fourier} gives
$\widehat{\mathsf T_p}(\xi)=0$ for $\xi\ne0$. Continuity gives the value
at zero, and Fourier uniqueness in $L^1$ gives $\mathsf T_p=0$.
This characterization of the kernel shows that the norm is independent
of the representation and positive definite. The other norm properties
follow from linearity.
Finally, Fubini's theorem, Young's inequality, and $\zeta(\R)=1$ give
\[
 \| (\alpha,\tau)\|_{\lambda*\zeta} \byeq{eq:observable-norm}{=}\norm{\mathsf T_{p*\zeta}(\alpha,\eta,J)}_1 =\norm{\mathsf T_p(\alpha,\eta,J)*\zeta}_1 \leq\norm{\mathsf T_p(\alpha,\eta,J)}_1\,\zeta(\R) \byeq{eq:observable-norm}{=}\| (\alpha,\tau)\|_\lambda.
\]
This proves \eqref{eq:tangent-contraction}.
\end{proof}
For example, $\eta=q'(y)\dd y$ and $J=q(y)\dd y$, with
$q\in C_c^\infty$, give zero first-order change even when both measures
are nonzero. This is why the norm is defined on the quotient.
Likewise, two splitting plans with the same $J$ and $\eta$ have the same
first-order response: only their conditional mean displacement matters
at this order, while the second displacement moment controls the remainder.
\subsection{Continuity and finite-dimensional local geometry.}
We next bound how the response changes with the reference law and find
the metric speed on finite-dimensional parameter charts.
\begin{proposition}[Reference continuity]\label{prop:reference-continuity}
Let $p,q$ be Gaussian-smoothed infinitely divisible reference densities.
For any representation $\tau=\eta-D_yJ$,
\begin{equation}\label{eq:reference-continuity}
 \big|\| (\alpha,\tau)\|_p-\| (\alpha,\tau)\|_q\big| \leq{}|\alpha|\norm{p-q}_1 +\tfrac12\norm\eta_{\mathrm{TV}}\norm{p'-q'}_1 +\tfrac16\norm J_{\mathrm{TV}}\norm{p''-q''}_1.
\end{equation}
Here the density subscript denotes its law. If $p=g_v*\rho$ and
$q=g_v*\sigma$, with $\rho,\sigma\in\cP_1$ and $v>0$, then the right-hand
side is at most
\begin{equation}\label{eq:reference-W1}
 \left(|\alpha|K_1+\tfrac12\norm\eta_{\mathrm{TV}}K_2+
             \tfrac16\norm J_{\mathrm{TV}}K_3\right)W_1(\rho,\sigma),
 \qquad K_j=\norm{g_v^{(j)}}_1.
\end{equation}
\end{proposition}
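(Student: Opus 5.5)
The first estimate follows from linearity of $\mathsf T$ in the reference density. By \eqref{eq:transport-cdf}, for fixed $(\alpha,\eta,J)$ the map $p\mapsto\mathsf T_p(\alpha,\eta,J)$ is linear. Hence
\[
 \mathsf T_p-\mathsf T_q=-\alpha(p-q)+\int_\R\int_0^1(1-r)(p'-q')(\cdot-ry)\dd r\,\eta(\dd y)
 -\int_\R\int_0^1r(1-r)(p''-q'')(\cdot-ry)\dd r\,J(\dd y).
\]
The reverse triangle inequality in $L^1$ reduces \eqref{eq:reference-continuity} to bounding $\norm{\mathsf T_p-\mathsf T_q}_1$. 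Here Proposition~\ref{prop:tangent-quotient} matters: the norms $\|(\alpha,\tau)\|_p$ and $\|(\alpha,\tau)\|_q$ do not depend on the chosen representation. So we may use the same pair $(\eta,J)$ for both, and the bound holds for any representation.

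Next, apply Minkowski's integral inequality (Fubini on $|\cdot|$) to each term. Translation invariance of Lebesgue measure gives $\norm{(p'-q')(\cdot-ry)}_1=\norm{p'-q'}_1$. Using $\int_0^1(1-r)\dd r=1/2$ and $\int_0^1r(1-r)\dd r=1/6$ together with $|\eta|(\R)=\norm\eta_{\mathrm{TV}}$ and $|J|(\R)=\norm J_{\mathrm{TV}}$ yields exactly the three coefficients in \eqref{eq:reference-continuity}. This is the same computation that produced the bound on $\norm{\mathsf T_p}_1$ in the proof of Theorem~\ref{thm:transport-response}.

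For \eqref{eq:reference-W1}, write $p^{(j)}-q^{(j)}=g_v^{(j)}*(\rho-\sigma)$ for $j=0,1,2$. The claim then reduces to $\norm{f*(\rho-\sigma)}_1\leq\norm{f'}_1W_1(\rho,\sigma)$ for $f=g_v^{(j)}$, since $\norm{g_v^{(j+1)}}_1=K_{j+1}$ matches the indices $K_1,K_2,K_3$.

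To prove this inequality, take an optimal (or near-optimal) coupling $(Y,Z)$ of $\rho$ and $\sigma$. Then
\[
 f*(\rho-\sigma)(x)=\E[f(x-Y)-f(x-Z)]=\E\Bigl[\int_0^1 f'(x-Z-s(Y-Z))\dd s\,(Z-Y)\Bigr].
\]
Integrating $|\cdot|$ in $x$ and applying Fubini with translation invariance gives $\norm{f'}_1\E|Y-Z|$. Taking the infimum over couplings gives the result.

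The only delicate point is justifying Fubini and the fundamental theorem of calculus here, which holds because $f\in W^{1,1}$ is smooth with integrable derivative. An alternative avoiding couplings is to write $F_\rho-F_\sigma$ and use $f*(\rho-\sigma)=f'*(F_\rho-F_\sigma)$, so that Young's inequality and $W_1=\norm{F_\rho-F_\sigma}_1$ finish the argument. I expect no real obstacle; the main care is the representation-independence remark and the index bookkeeping for the $K_j$.
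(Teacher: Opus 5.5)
Your proposal is correct and follows the paper's proof essentially step for step. You subtract the two formulas in \eqref{eq:transport-cdf}, bound each term using Fubini, translation invariance of the $L^1$ norm and the weights $\int_0^1(1-r)\dd r=1/2$, $\int_0^1 r(1-r)\dd r=1/6$, and conclude with the reverse triangle inequality; for \eqref{eq:reference-W1} you use a coupling of $\rho,\sigma$ and the fundamental theorem of calculus to get $\norm{p^{(j)}-q^{(j)}}_1\leq K_{j+1}W_1(\rho,\sigma)$, exactly as the paper does (and your coupling-free alternative via $g_v^{(j)}*(\rho-\sigma)=g_v^{(j+1)}*(F_\rho-F_\sigma)$ plus Young's inequality is also valid).
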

\begin{proof}
Subtract the two formulas in \eqref{eq:transport-cdf}. Fubini,
translation invariance of $L^1$, and the reverse triangle inequality
give the first bound.
For a coupling $(U,V)$ of $\rho,\sigma$ and $j=0,1,2$, Fubini's theorem,
the fundamental theorem of calculus, and translation invariance of the
$L^1$ norm give
\begin{align*}
 \norm{p^{(j)}-q^{(j)}}_1
 &\leq\E\left[\norm{g_v^{(j)}(\cdot-U)-g_v^{(j)}(\cdot-V)}_1\right]\\
 &\leq\E\left[|U-V|\int_0^1
       \norm{g_v^{(j+1)}(\cdot-V-r(U-V))}_1\dd r\right]\\
 &=
       \norm{g_v^{(j+1)}}_1\E\left[|U-V|\right]
 =K_{j+1}\E\left[|U-V|\right].
\end{align*}
Taking the infimum over couplings proves \eqref{eq:reference-W1}.
\end{proof}
We may also take the infimum in \eqref{eq:reference-continuity} over all
representations. With
\[
 \| (\alpha,\tau)\|_{\rm rep}
 =|\alpha|+\inf_{\eta-DJ=\tau}
             (\norm\eta_{\mathrm{TV}}+\norm J_{\mathrm{TV}}),
\]
the observable norm is bounded by
$\max\{1,\norm{p'}_1/2,\norm{p''}_1/6\}\| (\alpha,\tau)\|_{\rm rep}$.
The representation cost is a norm: if representations
$\tau=\eta_n-DJ_n$ have $\norm{\eta_n}_{\mathrm{TV}}+
\norm{J_n}_{\mathrm{TV}}\to0$, then, for every $f\in C_c^\infty(\R)$,
\[
 |\langle f,\tau\rangle|
 \leq\norm f_\infty\norm{\eta_n}_{\mathrm{TV}}
       +\norm{f'}_\infty\norm{J_n}_{\mathrm{TV}}\longrightarrow0,
\]
hence $\tau=0$. Homogeneity and the triangle inequality follow by
scaling and adding representations.
\begin{theorem}[Metric speed and local identification on atomic charts]
\label{thm:atomic-geometry}
For $k\in\N$, let
\[
 \theta=(m,v,w_1,\ldots,w_k,a_1,\ldots,a_k)\in\R^{2k+2}
\]
parameterize laws with
\[
 \Gamma_\theta=v\delta_0+\sum_{j=1}^kw_j\delta_{a_j},
 \qquad v>0,\quad w_j>0,
\]
where the nonzero $a_j$ are distinct and ordered. On every compact subset
of this parameter domain there are $c,C,r>0$ such that, whenever both
parameters are in the subset and $|\theta'-\theta|\leq r$,
\begin{equation}\label{eq:chart-bilip}
 c|\theta'-\theta|\leq W_1(\lambda_{\theta'},\lambda_\theta)
                         \leq C|\theta'-\theta|.
\end{equation}
For a $C^2$ curve $s\mapsto\theta_s$ in this domain, set
\[
 \eta_s=\dot v_s\delta_0+\sum_j\dot w_{j,s}\delta_{a_{j,s}},\qquad
 J_s=\sum_jw_{j,s}\dot a_{j,s}\delta_{a_{j,s}},\qquad
 \tau_s=\eta_s-D_yJ_s.
\]
Its Wasserstein metric speed and length are
\begin{equation}\label{eq:metric-length}
 |\dot\lambda_s|_{W_1}=\| (\dot m_s,\tau_s)\|_{\lambda_s},\qquad
 \operatorname{Len}_{W_1}(\lambda_{[0,1]})
        =\int_0^1\| (\dot m_s,\tau_s)\|_{\lambda_s}\dd s.
\end{equation}
In particular the integral bounds the endpoint distance.
\end{theorem}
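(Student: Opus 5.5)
The plan is to reduce everything to Theorem~\ref{thm:transport-response} together with the kernel characterization in Proposition~\ref{prop:tangent-quotient}, made uniform in $\theta$ over compact sets.

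\textbf{Step 1: a lift between nearby parameters.} Fix $\theta$ and $\theta'=\theta+\varepsilon\dot\theta$ with $|\dot\theta|=1$. The mean moves by $\varepsilon\dot m$. The Gaussian part contributes $\varepsilon\dot v\,\delta_0$ to $\eta$. For the atom $w_j\delta_{a_j}\to w_j'\delta_{a_j'}$, lift $w_j\delta_{a_j}$ by $\Pi_j=w_j\delta_{(a_j,\dot a_j)}$ and put the weight change into $\eta$ as $\varepsilon\dot w_j\delta_{a_j}$. The only discrepancy is then the residual $\varepsilon\dot w_j(\delta_{a_j+\varepsilon\dot a_j}-\delta_{a_j})$, whose $\BL^*$ norm is $O(\varepsilon^2)$. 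This is exactly the remark following Theorem~\ref{thm:transport-response}.

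On a compact parameter set $v\ge v_*>0$, so $K_j$, $M_2(\Pi)$, $d_1$ and the residual constant $C_0$ are bounded uniformly. We obtain
\[
\bigl|W_1(\lambda_{\theta'},\lambda_\theta)-|\varepsilon|\,\norm{\mathsf T_{p_\theta}(\dot\theta)}_1\bigr|\le C\varepsilon^2,
\]
where $\mathsf T_{p_\theta}(\dot\theta)$ is built from $\eta=\dot v\delta_0+\sum_j\dot w_j\delta_{a_j}$ and $J=\sum_j w_j\dot a_j\delta_{a_j}$. The upper bound in \eqref{eq:chart-bilip} follows at once, since $\norm{\mathsf T_p}_1\le|\dot m|+\tfrac12\norm{p'}_1\norm\eta_{\mathrm{TV}}+\tfrac16\norm{p''}_1\norm J_{\mathrm{TV}}$ is bounded uniformly.

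\textbf{Step 2: positivity of the response, which I expect to be the main obstacle.} The map $\dot\theta\mapsto\norm{\mathsf T_{p_\theta}(\dot\theta)}_1$ is continuous jointly in $(\theta,\dot\theta)$. Continuity in $\theta$ follows from Proposition~\ref{prop:reference-continuity}, together with continuity of $\theta\mapsto p_\theta$ in $W^{2,1}$, which in turn follows from the $W_1$ bound above. By compactness of the compact set times the unit sphere, a positive lower bound $c_0$ then follows once the response never vanishes.

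By Proposition~\ref{prop:tangent-quotient}, $\mathsf T_p=0$ forces $\dot m=0$ and
\[
\tau=\dot v\delta_0+\sum_j\dot w_j\delta_{a_j}+\sum_j w_j\dot a_j\,\delta_{a_j}'=0
\]
in $\mathcal D'$. The points $0,a_1,\dots,a_k$ are distinct, and the family $\{\delta_b,\delta_b'\}$ is linearly independent over distinct $b$ (test with bump functions localized near each point). Hence $\dot v=\dot w_j=0$ and $w_j\dot a_j=0$, so $\dot a_j=0$ because $w_j>0$. This gives $\dot\theta=0$, a contradiction. Taking $r$ with $Cr\le c_0/2$ yields the lower bound with $c=c_0/2$.

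\textbf{Step 3: metric speed and length.} For a $C^2$ curve, write $\theta_{s+\varepsilon}=\theta_s+\varepsilon\dot\theta_s+O(\varepsilon^2)$. Step 1 with the residual extension applies, because the additional $O(\varepsilon^2)$ in the parameters produces an $O(\varepsilon^2)$ $\BL^*$ remainder via the upper bound of \eqref{eq:chart-bilip}, or equally directly via Theorem~\ref{thm:response}. This gives
\[
W_1(\lambda_{s+\varepsilon},\lambda_s)=|\varepsilon|\,\norm{\mathsf T_{p_s}(\dot m_s,\eta_s,J_s)}_1+O(\varepsilon^2),
\]
so the metric derivative equals $\|(\dot m_s,\tau_s)\|_{\lambda_s}$. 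By Step 1 the curve is Lipschitz in $W_1$, hence absolutely continuous. The standard formula for length of absolutely continuous curves in a metric space then gives $\operatorname{Len}=\int_0^1|\dot\lambda_s|\dd s$. The endpoint bound follows from $W_1(\lambda_0,\lambda_1)\le\operatorname{Len}$.
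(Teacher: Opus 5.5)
Your proposal is correct and follows the paper's proof essentially step for step. The paper uses the same lift with the $O(\varepsilon^2)$ weight--position cross residual and uniform constants on the compact set, positivity via Proposition~\ref{prop:tangent-quotient} and the linear independence of $\delta_0,\delta_{a_j},D_y\delta_{a_j}$, compactness for $c_0>0$, and the speed read off from the CDF expansion. The differences are minor. You obtain the length from the general metric-space formula for absolutely continuous curves, whereas the paper uses the $C^1$-in-$L^1$ CDF argument. Your joint-continuity step should also account for the atoms moving inside $\eta_\theta,J_\theta$, which continuity of translations in $L^1$ handles. Finally, $W^{2,1}$ continuity of $p_\theta$ needs a common Gaussian factor, for instance $p_\theta=g_{v_*/2}*\rho_\theta$, because the variances differ between parameters.
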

\begin{proof}
Fix a compact parameter set $K$ in the stated domain. Its Gaussian
variances and masses are bounded below by positive constants, and its atom positions
are bounded, separated, and bounded away from zero. For
$u=\theta'-\theta\in\R^{2k+2}$, write
\[
 \eta_\theta(u)=u_v\delta_0+\sum_ju_{w_j}\delta_{a_j},\qquad
 J_\theta(u)=\sum_jw_ju_{a_j}\delta_{a_j},\qquad
 \mathcal T_\theta u=\mathsf T_{p_\theta}
                   (u_m,\eta_\theta(u),J_\theta(u))\in L^1(\R).
\]
The exact difference between the characteristics can be written as
\[
 \Gamma_{\theta'}-\Gamma_\theta
 =u_v\delta_0+\sum_jw_j(\delta_{a_j+u_{a_j}}-\delta_{a_j})
             +\sum_ju_{w_j}\delta_{a_j}+r_u,
 \qquad
 r_u=\sum_ju_{w_j}(\delta_{a_j+u_{a_j}}-\delta_{a_j}).
\]
Since $\norm f_{\BL}\leq1$ implies $\Lip(f)\leq1$, the elementary inequality
$2|ab|\leq a^2+b^2$ gives
\[
 \norm{r_u}_{\BL^*} \byeq{eq:metrics}{=}\sup_{\norm f_{\BL}\leq1} \left|\sum_ju_{w_j}[f(a_j+u_{a_j})-f(a_j)]\right| \leq\sum_j|u_{w_j}|\,|u_{a_j}| \leq \frac12\sum_j(|u_{w_j}|^2+|u_{a_j}|^2) \leq\frac{|u|^2}{2}.
\]
Apply Theorem~\ref{thm:transport-response} with $\varepsilon=|u|$, velocities
$u_{a_j}/|u|$, and its weak-remainder extension. Uniformly for
$\theta,\theta'\in K$ and $|u|\leq1$,
\[
 \norm{F_{\lambda_{\theta'}}-F_{\lambda_\theta}
                     -\mathcal T_\theta u}_1\leq B_K|u|^2
\]
for a finite constant $B_K$. The case $u=0$ is immediate.
The direction $(u_m,\eta_\theta(u)-D_yJ_\theta(u))$ is zero only when
$u=0$. Indeed, smooth test functions supported near each atom can
prescribe their value and first derivative there, proving linear
independence of $\delta_0,\delta_{a_j},D_y\delta_{a_j}$. Also, $w_j>0$.
By Proposition~\ref{prop:tangent-quotient}, $\norm{\mathcal T_\theta u}_1>0$ for
$u\ne0$. The map $(\theta,u)\mapsto\mathcal T_\theta u$ is continuous
in $L^1$: the jump intensities $w_j/a_j^2$ and compensation shifts
$-w_j/a_j$ vary continuously, their compound-Poisson laws vary in
$W_1$, and Gaussian derivatives and their translations vary in $L^1$.
Compactness therefore gives
\[
 a_K:=\min_{\theta\in K,\,|u|=1}\norm{\mathcal T_\theta u}_1>0,
 \qquad A_K:=\max_{\theta\in K,\,|u|=1}\norm{\mathcal T_\theta u}_1<\infty.
\]
Consequently
\[
 (a_K-B_K|u|)|u|\leq W_1(\lambda_{\theta'},\lambda_\theta)
                   \leq(A_K+B_K|u|)|u|.
\]
Choose $r\leq1$ with $B_Kr\leq a_K/2$ to obtain
\eqref{eq:chart-bilip}.
For a $C^2$ curve, the same expansion gives
\[
 \frac{F_{\lambda_{s+t}}-F_{\lambda_s}}{t}
 \longrightarrow\mathcal T_{\theta_s}\dot\theta_s
 =\mathsf T_{p_s}(\dot m_s,\eta_s,J_s)\quad\text{in }L^1.
\]
The derivative is continuous, so the anchored curve
$s\mapsto F_{\lambda_s}-F_{\lambda_0}$ is $C^1$ in $L^1$.
By definition,
\[
 |\dot\lambda_s|_{W_1}=\lim_{t\to0}
       \frac{W_1(\lambda_{s+t},\lambda_s)}{|t|},\qquad
 \operatorname{Len}_{W_1}(\lambda_{[0,1]})
 =\sup_{0=s_0<\cdots<s_n=1}
             \sum_{j=1}^nW_1(\lambda_{s_j},\lambda_{s_{j-1}}).
\]
The CDF isometry gives the speed. The fundamental theorem of calculus
bounds every partition sum by the integral of the derivative norm.
Riemann sums and continuity of the derivative give the reverse bound.
This proves \eqref{eq:metric-length}.
\end{proof}
\subsection{Length of admissible non-atomic curves.}
The next lift allows non-atomic weighted measures and unbounded initial
positions.
\begin{theorem}[Observable speed for absolutely continuous lifts]
\label{thm:lift-length}
Let $(E,\mathcal E,\gamma)$ be a finite positive measure space and
$w_0\in L^\infty(E,\mathcal E,\gamma;\R)$,
$y_0:E\to\R$ be measurable, and, for $s\in[0,1]$, set
\[
 w_s=w_0+\int_0^s a_t\dd t,\qquad
 y_s=y_0+\int_0^s b_t\dd t,
 \quad a,b\in L^1([0,1];L^\infty(\gamma)),
\]
where the integrals are Bochner integrals and $w_s\geq0$.
Let $m_s,v_s$ be absolutely continuous, $v_s\geq v_*>0$, and let
$\lambda_s$ have mean $m_s$ and joint characteristic
$\Gamma_s=v_s\delta_0+(y_s)_\#(w_s\gamma)$.
For almost every $s$, put
\[
 \eta_s=\dot v_s\delta_0+(y_s)_\#(a_s\gamma),\qquad
 J_s=(y_s)_\#(w_sb_s\gamma),\qquad \tau_s=\eta_s-D_yJ_s.
\]
Then $F_{\lambda_s}-F_{\lambda_0}$ is absolutely continuous in $L^1$, with
Bochner derivative $\mathsf T_{p_s}(\dot m_s,\eta_s,J_s)$ almost everywhere.
In particular,
\begin{equation}\label{eq:general-length}
 |\dot\lambda_s|_{W_1}=\|(\dot m_s,\tau_s)\|_{\lambda_s}
 \quad\text{a.e.},\qquad
 \operatorname{Len}_{W_1}(\lambda_{[0,1]})
 =\int_0^1\|(\dot m_s,\tau_s)\|_{\lambda_s}\dd s.
\end{equation}
The integral bounds $W_1(\lambda_0,\lambda_1)$.
\end{theorem}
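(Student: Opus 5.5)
We will reduce Theorem~\ref{thm:lift-length} to the transport response of Theorem~\ref{thm:transport-response}, applied at each base point $s$ with a lift $\Pi_s$ built from the parameter space $E$. For fixed $s$ and small $t$, the exact characteristic difference is
\[
 \Gamma_{s+t}-\Gamma_s=(v_{s+t}-v_s)\delta_0+\int_E\bigl[w_{s+t}\delta_{y_{s+t}}-w_s\delta_{y_s}\bigr]\dd\gamma .
\]
We split it as a transport of $w_s\gamma$ from $y_s$ to $y_{s+t}$, plus the mass change $(y_{s+t})_\#((w_{s+t}-w_s)\gamma)$. With the averaged velocities $\bar b=t^{-1}\int_s^{s+t}b_r\dd r$ and $\bar a=t^{-1}\int_s^{s+t}a_r\dd r$, the first piece is $(S_t)_\#\Pi$ with $\Pi=(y_s,\bar b)_\#(w_s\gamma)$. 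This is a finite positive measure, and its second displacement moment is at most $\gamma(E)\norm{w_s}_\infty\norm{\bar b}_\infty^2$, which is bounded by $\norm{b}_{L^1L^\infty}^2/t^2$ times constants. That bound is too crude, so instead we work directly with bounded-Lipschitz pairings, as in the proof of Theorem~\ref{thm:transport-response}.

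Concretely, for $\Lip(h)\leq1$ and $q=Q_sh$ (convolution with $p_s$), Theorem~\ref{thm:response} gives
\[
 \langle h,\lambda_{s+t}-\lambda_s\rangle=(m_{s+t}-m_s)q'(0)+\langle\Phi_{q,0},\Gamma_{s+t}-\Gamma_s\rangle+O(d_t^2),
\]
where $d_t=\norm{\Gamma_{s+t}-\Gamma_s}_{\BL^*}+|m_{s+t}-m_s|$. This bound is uniform in $h$ since $v\geq v_*$. We have $d_t\leq\int_s^{s+t}\bigl(|\dot m_r|+|\dot v_r|+\gamma(E)(\norm{a_r}_\infty+\norm{w}_\infty\norm{b_r}_\infty)\bigr)\dd r=:\int_s^{s+t}\ell_r\dd r$, with $\ell\in L^1$ and $\sup_r\norm{w_r}_\infty<\infty$. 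Next we write
\[
 \langle\Phi_{q,0},\Gamma_{s+t}-\Gamma_s\rangle=\int_s^{s+t}\Bigl(\dot v_r\Phi_{q,0}(0)+\int_E\bigl[a_r\Phi_{q,0}(y_r)+w_rb_r\partial_y\Phi_{q,0}(y_r)\bigr]\dd\gamma\Bigr)\dd r.
\]
This follows from the chain rule for the absolutely continuous maps $r\mapsto w_r(e)\Phi(y_r(e))$, using $\Phi\in C_b^2$ and Fubini. By the integration-by-parts identity in the proof of Theorem~\ref{thm:transport-response}, the integrand equals $-\int h'\,\mathsf T_{p_s}(\dot m_r,\eta_r^{(s)},J_r^{(s)})$, where the superscript means that $p_s$ is frozen but $r$ varies. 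Taking the supremum over $h$, as in that proof, yields in $L^1$
\[
 F_{\lambda_{s+t}}-F_{\lambda_s}=\int_s^{s+t}\mathsf T_{p_s}(\dot m_r,\eta_r,J_r)\dd r+O\Bigl(\bigl(\textstyle\int_s^{s+t}\ell\bigr)^2\Bigr).
\]

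We then replace $p_s$ by $p_r$ inside the integral. By Proposition~\ref{prop:reference-continuity} and its $L^1$ form (the estimate applies to the $\mathsf T$ functions themselves, not only to their norms), the error is at most $\int_s^{s+t}\ell_r\,C\,W_1(\rho_r,\rho_s)\dd r$. Theorem~\ref{thm:terminal} gives $W_1(\lambda_r,\lambda_s)\lesssim\int_s^r\ell$, and the underlying non-Gaussian factors can be taken continuous in $W_1$ after peeling off $g_{v_*}$. This error is therefore also $O((\int_s^{s+t}\ell)^2)$. We now partition $[0,s]$ and sum. The errors total $\sum(\int_{I_k}\ell)^2\leq\max_k\int_{I_k}\ell\cdot\int\ell\to0$, by absolute continuity of the integral. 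This proves that $F_{\lambda_s}-F_{\lambda_0}=\int_0^s\mathsf T_{p_r}(\dot m_r,\eta_r,J_r)\dd r$ as a Bochner integral. We still need strong measurability and integrability of $r\mapsto\mathsf T_{p_r}(\cdots)$. Integrability follows from $\norm{\mathsf T}_1\lesssim\ell_r$. Measurability follows because the map is continuous in the reference density and measurable in the data $(a_r,b_r)$, which belong to the Bochner space $L^1L^\infty$.

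Once absolute continuity holds, the Lebesgue differentiation theorem for Bochner integrals gives the derivative almost everywhere. The CDF isometry $W_1=\norm{F-F}_1$ then identifies the metric derivative $|\dot\lambda_s|_{W_1}$ with the $L^1$ norm of that derivative. The length formula follows from the standard fact that an absolutely continuous curve in a Banach space has length $\int\norm{\dot F}$: partition sums are bounded above by the integral, and the reverse inequality comes from Lebesgue points. The endpoint bound on $W_1(\lambda_0,\lambda_1)$ follows from the triangle inequality. The main obstacle is the frozen-reference replacement together with measurability. If the $W_1$-continuity of $\rho_r$ does not follow cleanly from Theorem~\ref{thm:terminal}, the fallback is to apply Theorem~\ref{thm:response} with reference $\lambda_r$ on each small interval and absorb the discrepancy via Lemma~\ref{lem:interpolation}.
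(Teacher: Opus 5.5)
Your route is correct but differs from the paper's. The paper differentiates first and integrates afterwards. It uses Corollary~\ref{cor:joint} only to show that $s\mapsto F_{\lambda_s}-F_{\lambda_0}$ is absolutely continuous. At a common Lebesgue point of $\dot m,\dot v,a,b$, it combines three ingredients: $L^\infty$-valued Bochner differentiation of $w,y$; the exact algebraic remainder $R$ of $(w_s+\Delta w)\Phi(y_s+\Delta y)-w_s\Phi(y_s)$; and Theorem~\ref{thm:response} at the frozen reference $p_s$. Together these give $F_{\lambda_{s+u}}-F_{\lambda_s}=uV_s+o_{L^1}(|u|)$. Strong measurability of $V$ comes for free, since $V$ is an a.e.\ limit of difference quotients. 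The integral identity then follows from the scalar fundamental theorem of calculus for $s\mapsto\ell(F_{\lambda_s}-F_{\lambda_0})$, $\ell\in L^\infty$, plus separation.

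You integrate first. You prove a local estimate $\|F_{\lambda_{s+t}}-F_{\lambda_s}-\int_s^{s+t}V_r\dd r\|_1\lesssim(\int_s^{s+t}\ell)^2$, valid at every base point, sum it over partitions, and then differentiate by Lebesgue's theorem. This gives a quantitative statement not tied to Lebesgue points. It costs you three things the paper does not need:
\begin{enumerate}
\item a jointly measurable representative of $(a,b)$ on $[0,1]\times E$, to justify your pointwise chain rule in $e$ and the Fubini step;
\item a separate strong-measurability argument for $r\mapsto V_r$;
\item continuity of $r\mapsto p_r$ in $W^{2,1}$, to unfreeze the reference. The paper never unfreezes it, because it only needs first-order accuracy at a single point.
\end{enumerate}

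On the last item, be careful. Peeling off the whole $g_{v_*}$ can leave a residual factor with no Gaussian part. Then Theorem~\ref{thm:terminal} does not give $W_1(\rho_r,\rho_s)\lesssim\int_s^r\ell$. Indeed, Proposition~\ref{prop:cubic} shows that, for unsmoothed laws, moving an atom from $a$ to $2a$ costs $\int\ell\approx a^3$ but produces $W_1\approx a$. There are two ways to fix this:
\begin{itemize}
\item Peel off $g_{v_*/2}$ instead. The residual then keeps Gaussian variance at least $v_*/2$ and has the same characteristic increments $\Gamma_r-\Gamma_s$, so Corollaries~\ref{cor:joint} and~\ref{cor:Gaussian} give the linear bound you want.
\item Alternatively, observe that mere uniform continuity of $r\mapsto p_r^{(j)}$ in $L^1$ already suffices. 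The summed unfreezing error is then bounded by $\omega(\text{mesh})\int_0^1\ell\to0$, where $\omega$ is the modulus of continuity.
\end{itemize}
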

\begin{proof}
Set
\[
 W=\sup_s\norm{w_s}_\infty
 \leq\norm{w_0}_\infty+\int_0^1\norm{a_t}_\infty\dd t<\infty,
 \qquad M_\gamma=\gamma(E).
\]
For $0\leq s<t\leq1$ and $\norm f_{\BL}\leq1$, decompose the integrand
and use $w_s\leq W$ together with the Bochner integral representations:
\begin{equation}\label{eq:calc-first-lift-increments}
 \begin{aligned}
 \left|\int_E\bigl(w_tf(y_t)-w_sf(y_s)\bigr)\dd\gamma\right|
 &=
 \left|\int_E\bigl[(w_t-w_s)f(y_t)
               +w_s(f(y_t)-f(y_s))\bigr]\dd\gamma\right|\\
 &\quad\leq
       \int_E|w_t-w_s|\dd\gamma+\int_Ew_s|y_t-y_s|\dd\gamma\\
 &\quad\leq
       M_\gamma\bigl(\norm{w_t-w_s}_\infty
                         +W\norm{y_t-y_s}_\infty\bigr)\\
 &\quad\leq
       M_\gamma\int_s^t(\norm{a_r}_\infty+W\norm{b_r}_\infty)\dd r.
 \end{aligned}
\end{equation}
Thus, with $K_j=\norm{g_{v_*}^{(j)}}_1$ and $L=K_1/2+K_2/6$,
absolute continuity of $m$ and $v$ yields
\begin{align*}
 W_1(\lambda_t,\lambda_s)
 &\byeq{eq:joint}{\leq}|m_t-m_s|+L\norm{\Gamma_t-\Gamma_s}_{\BL^*}\\
 &\byeq{eq:calc-first-lift-increments}{\leq}
       |m_t-m_s|+L|v_t-v_s|
       +LM_\gamma\int_s^t
                   (\norm{a_r}_\infty+W\norm{b_r}_\infty)\dd r\\
 &\leq\int_s^t\left[|\dot m_r|+L|\dot v_r|
         +LM_\gamma(\norm{a_r}_\infty+W\norm{b_r}_\infty)\right]\dd r.
\end{align*}
The anchored curve $s\mapsto F_{\lambda_s}-F_{\lambda_0}$ is therefore
absolutely continuous in $L^1$.
At a common Lebesgue point $s$ of the four velocities, write
$\Delta w=w_{s+u}-w_s$ and $\Delta y=y_{s+u}-y_s$. Bochner
differentiation gives
\[
 \norm{\Delta w-ua_s}_\infty=o(|u|),\qquad
 \norm{\Delta y-ub_s}_\infty=o(|u|),\qquad
 m_{s+u}-m_s=u\dot m_s+o(|u|),\quad
 v_{s+u}-v_s=u\dot v_s+o(|u|).
\]
For a centered one-Lipschitz $h$, let
$q_s(x)=\int_\R h(x+z)p_s(z)\dd z$ and $\Phi=\Phi_{q_s,0}$.
Its first two $y$ derivatives are bounded uniformly over $h$ and $s$.
The exact decomposition
\[
 \begin{aligned}
 (w_s+\Delta w)\Phi(y_s+\Delta y)-w_s\Phi(y_s)
 ={}&\Delta w\,\Phi(y_s)+w_s\Phi'(y_s)\Delta y+R,\\
 |R|\leq{}&\norm{\Phi'}_\infty|\Delta w|\,|\Delta y|
       +\tfrac12\norm{\Phi''}_\infty w_s|\Delta y|^2
 \end{aligned}
\]
has $\int_E|R|\dd\gamma=O(u^2)$. Therefore, using the pushforward
definitions of $\eta_s$ and $J_s$, the affine response between the actual
endpoints satisfies, uniformly over these tests,
\begin{align*}
 (m_{s+u}-m_s)q_s'(0)+\langle\Phi,\Gamma_{s+u}-\Gamma_s\rangle
 &=
       u\left[\dot m_s q_s'(0)+\dot v_s\Phi(0)
       +\int_E\bigl(a_s\Phi(y_s)+w_sb_s\Phi'(y_s)\bigr)\dd\gamma\right]
       +o(|u|)\\
 &\quad=
       u\left[\dot m_s q_s'(0)+\int_\R \Phi(y)\eta_s(\dd y)
                           +\int_\R \Phi'(y)J_s(\dd y)\right]+o(|u|)\\
 &\quad\byeq{eq:transport-cdf}{=}
       -u\int_\R h'(x)\mathsf T_{p_s}(\dot m_s,\eta_s,J_s)(x)\dd x
       +o(|u|).
\end{align*}
Since $|m_{s+u}-m_s|+\norm{\Gamma_{s+u}-\Gamma_s}_{\BL^*}=O(|u|)$,
Theorem~\ref{thm:response} adds an $O(u^2)$ remainder. Duality gives
\[
 F_{\lambda_{s+u}}-F_{\lambda_s}
 =uV_s+o_{L^1}(|u|),\qquad
 V_s:=\mathsf T_{p_s}(\dot m_s,\eta_s,J_s).
\]
Moreover,
\[
 \norm{V_s}_1\leq|\dot m_s|
 +\frac{K_1}{2}\bigl(|\dot v_s|+M_\gamma\norm{a_s}_\infty\bigr)
 +\frac{K_2}{6}M_\gamma W\norm{b_s}_\infty\in L^1(0,1).
\]
As an almost-everywhere limit of strongly measurable $L^1$ difference
quotients, $V$ is strongly measurable, hence Bochner integrable.
For any $\ell\in(L^1)^*=L^\infty$, the scalar absolutely continuous
function $s\mapsto\ell(F_{\lambda_s}-F_{\lambda_0})$ has derivative
$\ell(V_s)$ almost everywhere. Its fundamental theorem of calculus,
followed by separation by $L^\infty$, yields
\[
 F_{\lambda_t}-F_{\lambda_s}=\int_s^tV_r\dd r\quad\text{in }L^1.
\]
The length of an indefinite Bochner integral equals the integral of
the derivative norm. The triangle inequality gives the upper bound.
Approximating $V$ by step functions gives the lower bound. The CDF isometry now proves \eqref{eq:general-length}.
\end{proof}
Mass at $y_s=0$ contributes to the Gaussian variance. The length in
\eqref{eq:general-length} refers to the specified admissible curve.
\section{Second-order response and tangent cancellation.}\label{sec:second}
The second response combines displacement dispersion with the composition
of the first variation. When the first tangent cancels, only dispersion remains.
\subsection{A second-order expansion for general lifted paths.}
For the lift \eqref{eq:transport-lift}, a Borel set $A\subset\R$, and
an integer $j\geq1$, define
\begin{equation}\label{eq:second-measure}
 S(A)=\int_{A\times\R}z^2\Pi(\dd y,\dd z),\qquad
 M_j=\int_{\R^2}|z|^j\Pi(\dd y,\dd z).
\end{equation}
Thus $S\in\cM_f^+(\R)$ when $M_2<\infty$, with $S(\R)=M_2$.
For $\phi\in C_c^\infty(\R)$, Taylor expansion gives
\[
 \langle\phi,\Gamma_\varepsilon-\Gamma\rangle
 =\varepsilon\left(\int_\R\phi(y)\eta(\dd y)+\int_{\R^2} z\phi'(y)\Pi(\dd y,\dd z)\right)
 +\frac{\varepsilon^2}{2}\int_{\R^2} z^2\phi''(y)\Pi(\dd y,\dd z)+o(\varepsilon^2).
\]
The distributional coefficients are therefore
$\tau=\eta-D_yJ\in\mathcal D'(\R)$ and $D_y^2S/2$.
For $f\in C_b^3(\R)$, define
\begin{align}
 \mathcal A f(x)&=\alpha f'(x)+\int_\R\Phi_{f,x}(y)\eta(\dd y)
                           +\int_\R\partial_y\Phi_{f,x}(y)J(\dd y),
                                  \label{eq:tangent-operator}\\
 \mathsf Q_p(S)(x)&=\frac12\int_\R\!\int_0^1r^2(1-r)
                              p'''(x-ry)\dd r\,S(\dd y).
                                  \label{eq:second-observable}
\end{align}
The spatial adjoint acts on $u\in W^{3,1}(\R)$ as
\begin{equation}\label{eq:tangent-adjoint}
 \mathcal A^*u(x)=-\alpha u'(x)
 +\int_\R\!\int_0^1(1-r)u''(x-ry)\dd r\,\eta(\dd y)
 -\int_\R\!\int_0^1r(1-r)u'''(x-ry)\dd r\,J(\dd y).
\end{equation}
In particular,
$D_x\mathsf T_p=\mathcal A^*p$ and
$\mathsf Q_p(S)\in W^{k,1}(\R)$ for every integer $k\geq0$.
\begin{theorem}[Observable second-order expansion]\label{thm:two-jet}
Assume $M_3<\infty$ and the uniform lower bound $v_*>0$ on the Gaussian variance in
\eqref{eq:transport-lift}. Put
\begin{equation}\label{eq:two-jet}
 T_1=\mathsf T_p(\alpha,\eta,J),\qquad
 T_2=\mathsf Q_p(S)+\tfrac12\mathcal A^*T_1.
\end{equation}
Both coefficients belong to $L^1$. For every admissible
$|\varepsilon|\leq1$, we have
\begin{align}
 \norm{F_{\lambda_\varepsilon}-F_\lambda
                -\varepsilon T_1-\varepsilon^2T_2}_1
                         &\leq C_3|\varepsilon|^3,
                                      \label{eq:two-jet-L1}\\
 \left|W_1(\lambda_\varepsilon,\lambda)
                -\norm{\varepsilon T_1+\varepsilon^2T_2}_1\right|
                         &\leq C_3|\varepsilon|^3.
                                      \label{eq:two-jet-W1}
\end{align}
With $K_j=\norm{g_{v_*}^{(j)}}_1$, we may take
\begin{equation}\label{eq:two-jet-constant}
 \begin{aligned}
 C_3={}&\frac{K_4M_3}{120}
 +M_2\left(\frac{|\alpha|K_4}{24}
          +\frac{\norm\eta_{\mathrm{TV}}K_5}{48}
          +\frac{M_1K_6}{144}\right)\\
 &+\frac{|\alpha|^3K_2}{6}
   +\frac{|\alpha|^2\norm\eta_{\mathrm{TV}}K_3}{4}
 +\left(\frac{|\alpha|^2M_1}{12}
         +\frac{|\alpha|\norm\eta_{\mathrm{TV}}^2}{8}\right)K_4\\
 &+\left(\frac{|\alpha|\norm\eta_{\mathrm{TV}}M_1}{12}
         +\frac{\norm\eta_{\mathrm{TV}}^3}{48}\right)K_5\\
 &+\left(\frac{|\alpha|M_1^2}{72}
         +\frac{\norm\eta_{\mathrm{TV}}^2M_1}{48}\right)K_6
   +\frac{\norm\eta_{\mathrm{TV}}M_1^2K_7}{144}
   +\frac{M_1^3K_8}{1296}.
 \end{aligned}
\end{equation}
Only the displacement velocities require a third moment.
\end{theorem}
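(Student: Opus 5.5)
We follow the proof of Theorem~\ref{thm:response}, one Taylor order higher in the interpolation parameter, and then expand the displacement to second order as in Theorem~\ref{thm:transport-response}. Write $\mathcal D_\varepsilon=\cA_\varepsilon-\cA_0$ for the difference of terminal operators. Here $\cA_\varepsilon$ is the unit-time generator of $\lambda_\varepsilon$, and $\cA_0$ is that of the reference law. Acting on smooth $f$,
\[
 \mathcal D_\varepsilon f(x)=\varepsilon\alpha f'(x)+\langle\Phi_{f,x},\kappa_\varepsilon\rangle,\qquad \kappa_\varepsilon=\Gamma_\varepsilon-\Gamma.
\]
For Schwartz tests we have $\partial_u^j\widehat{Q_uh}=(\delta\Psi)^j\widehat he^{\psi_u}$ for $j\le3$. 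Taylor's formula in $u$ to third order then gives
\[
 (Q_1-Q_0)h=\mathcal D_\varepsilon Q_0h+\tfrac12\mathcal D_\varepsilon^2Q_0h+\tfrac12\int_0^1(1-u)^2\mathcal D_\varepsilon^3Q_uh\dd u .
\]
Iterating \eqref{eq:Phi-norm} three times, as in \eqref{eq:D2-bound}, together with $\norm{\kappa_\varepsilon}_{\BL^*}\le|\varepsilon|(\norm\eta_{\mathrm{TV}}+M_1)$, bounds the last term by $O(|\varepsilon|^3)$. It is controlled by $K_2,\dots,K_8$ times cubic monomials in $|\alpha|,\norm\eta_{\mathrm{TV}},M_1$. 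The factor $\tfrac12\int(1-u)^2=\tfrac16$ and the $\tfrac12,\tfrac16$ weights of \eqref{eq:Phi-norm} produce the rational coefficients $1/6,1/4,\dots,1/1296$ in \eqref{eq:two-jet-constant}. This is the pure third-order part of $C_3$.

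Next we expand $\mathcal D_\varepsilon$ itself in $\varepsilon$. Taylor-expand $\Phi_{q,x}(y+\varepsilon z)$ in $y$ to second order, with integral remainder, and integrate against $\Pi$. This gives
\[
 \mathcal D_\varepsilon f=\varepsilon\mathcal Af+\tfrac{\varepsilon^2}{2}\langle\partial_y^2\Phi_{f,\cdot},S\rangle+R_\varepsilon f .
\]
The remainder satisfies $\norm{R_\varepsilon f}_\infty\le|\varepsilon|^3M_3\norm{\partial_y^3\Phi_{f,x}}_\infty/6$. We use $\partial_y^3\Phi_{f,x}(y)=\int_0^1r^3(1-r)f^{(5)}(x+ry)\dd r$ and $\int_0^1 r^3(1-r)\dd r=1/20$. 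Applied with $f=Q_0h$, so that $\norm{f^{(5)}}_\infty\le K_4$, this yields the term $K_4M_3/120$; only here is the third moment used. In the quadratic term $\tfrac12\mathcal D_\varepsilon^2Q_0h$, we replace $\mathcal D_\varepsilon$ by $\varepsilon\mathcal A$ in one factor. The error is the cross term between $\varepsilon\mathcal A$ and the $\varepsilon^2 S$-part, of order $\varepsilon^3M_2(|\alpha|K_4+\norm\eta_{\mathrm{TV}}K_5+M_1K_6)$. This matches the $M_2$ bracket. One has to be careful that \emph{both} orders of composition, $\mathcal A\circ S$-part and $S$-part$\circ\mathcal A$, appear, and that they commute since all operators are translation invariant. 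Collecting terms gives, uniformly over $\Lip(h)\le1$,
\[
 \E h(X^\varepsilon)-\E h(X)=\varepsilon\mathcal AQ_0h(0)+\varepsilon^2\Bigl[\tfrac12\langle\partial_y^2\Phi_{Q_0h,0},S\rangle+\tfrac12\mathcal A^2Q_0h(0)\Bigr]+O(|\varepsilon|^3).
\]

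The remaining step is to identify these functionals as $-\int h'\,T_1$ and $-\int h'\,T_2$. The first identity is the computation already done in Theorem~\ref{thm:transport-response}. For the $S$-term, write $q^{(4)}(ry)=-\int h'(x)p'''(x-ry)\dd x$ and use $\partial_y^2\Phi_{q,0}(y)=\int_0^1r^2(1-r)q^{(4)}(ry)\dd r$. This gives $-\int h'\,\mathsf Q_p(S)$. For the composed term we use $\mathcal A^2Q_0h(0)=\mathcal AQ_0(\mathcal Ah)(0)$, so it equals $-\int(\mathcal Ah)'\,T_1$. Moving $\mathcal A$ onto $T_1$ by the spatial adjoint \eqref{eq:tangent-adjoint}, and using that $\mathcal A$ commutes with $D$, gives $-\int h'\,\mathcal A^*T_1$. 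This step is justified because $T_1\in W^{k,1}$ for all $k$: $T_1$ is built from Gaussian-smoothed derivatives of $p$.

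The main obstacle is this adjoint bookkeeping. One must make sure the sign conventions, including $\partial_y\Phi$ versus $u'''$ and the reflection $x-ry$, produce exactly $\tfrac12\mathcal A^*T_1$. One must also justify Fubini when $\eta,J$ are only finite signed measures. The remaining steps are routine. Lemma~\ref{lem:approximation} extends the identities to all Lipschitz $h$, as before. $L^1$--$L^\infty$ duality via $h=\int_0^xw$ converts the uniform test remainder into \eqref{eq:two-jet-L1}. The CDF isometry with the reverse triangle inequality then gives \eqref{eq:two-jet-W1}.
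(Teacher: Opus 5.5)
Your outline is essentially the paper's proof. It has the same ingredients: third-order Taylor expansion in the interpolation parameter; the expansion $\mathcal D_\varepsilon=\varepsilon\mathcal A+\tfrac{\varepsilon^2}{2}\langle\partial_y^2\Phi,S\rangle+O(|\varepsilon|^3M_3)$; the composed error handled through $\mathcal D_\varepsilon^2-\varepsilon^2\mathcal A^2=(\mathcal D_\varepsilon-\varepsilon\mathcal A)(\mathcal D_\varepsilon+\varepsilon\mathcal A)$, with the whole remainder satisfying $\norm{(\mathcal D_\varepsilon-\varepsilon\mathcal A)f}_\infty\leq\varepsilon^2M_2\norm{f^{(4)}}_\infty/24$ (not just its $S$-part); and the adjoint identification followed by $L^1$ duality.

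One bookkeeping correction concerns the explicit constant. Pairing $\norm{\Phi_{f,x}}_{\BL}$ with $\norm{\kappa_\varepsilon}_{\BL^*}\leq|\varepsilon|(\norm\eta_{\mathrm{TV}}+M_1)$, as you propose, gives a valid but larger constant; for instance it produces an $M_1^3K_5$ term. To obtain exactly \eqref{eq:two-jet-constant}, bound the two parts separately, $\norm{\mathcal D_\varepsilon f}_\infty\leq|\varepsilon|\bigl(|\alpha|\norm{f'}_\infty+\tfrac12\norm\eta_{\mathrm{TV}}\norm{f''}_\infty+\tfrac16M_1\norm{f'''}_\infty\bigr)$, and cube that bound.
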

\begin{proof}
Let $\Psi,\Psi_\varepsilon$ be the terminal characteristic exponents,
and let $Q_u$ have exponent $(1-u)\Psi+u\Psi_\varepsilon$, $0\leq u\leq1$.
The exact generator difference is
\[
 \mathcal D_\varepsilon f(x)=\varepsilon\alpha f'(x)
 +\varepsilon\int_\R\Phi_{f,x}(y)\eta(\dd y)
 +\int_{\R^2}[\Phi_{f,x}(y+\varepsilon z)-\Phi_{f,x}(y)]\Pi(\dd y,\dd z).
\]
Set
\[
 \mathcal Bf(x)=\frac12\int_\R\partial_y^2\Phi_{f,x}(y)S(\dd y).
\]
For $f\in C_b^{j+2}(\R)$,
\[
 \partial_y^j\Phi_{f,x}(y)
 =\int_0^1r^j(1-r)f^{(j+2)}(x+ry)\dd r,
 \qquad \int_0^1r^j(1-r)\dd r=\frac1{(j+1)(j+2)}.
\]
Taylor's formula in $y$ and the total variation bound for $\eta$ therefore give
\begin{align}
 \norm{\mathcal D_\varepsilon f}_\infty
 &\leq|\varepsilon|\left(|\alpha|\norm{f'}_\infty
       +\frac{\norm\eta_{\mathrm{TV}}}{2}\norm{f''}_\infty
       +\frac{M_1}{6}\norm{f'''}_\infty\right),\nonumber\\
 \norm{\mathcal Af}_\infty
 &\leq|\alpha|\norm{f'}_\infty
       +\frac{\norm\eta_{\mathrm{TV}}}{2}\norm{f''}_\infty
       +\frac{M_1}{6}\norm{f'''}_\infty,\nonumber\\
 \norm{(\mathcal D_\varepsilon-\varepsilon\mathcal A)f}_\infty
 &\leq\frac{\varepsilon^2M_2}{24}\norm{f^{(4)}}_\infty,
 \label{eq:calc-second-operator}\\
 \norm{(\mathcal D_\varepsilon-\varepsilon\mathcal A-\varepsilon^2\mathcal B)f}_\infty
 &\leq\frac{|\varepsilon|^3M_3}{120}\norm{f^{(5)}}_\infty.\nonumber
\end{align}
By translation invariance, these operators commute with spatial derivatives
and with each other. Hence
\begin{align*}
 \mathcal D_\varepsilon^2-\varepsilon^2\mathcal A^2
 &=(\mathcal D_\varepsilon-\varepsilon\mathcal A)
   (\mathcal D_\varepsilon+\varepsilon\mathcal A).\\
\intertext{Then the quadratic remainder bound and the first-order estimates yield}
 \norm{(\mathcal D_\varepsilon^2-\varepsilon^2\mathcal A^2)f}_\infty
 &\byeq{eq:calc-second-operator}{\leq}
 \frac{\varepsilon^2M_2}{24}
 \norm{(\mathcal D_\varepsilon+\varepsilon\mathcal A)f^{(4)}}_\infty\\
 &\leq|\varepsilon|^3M_2\left(
 \frac{|\alpha|}{12}\norm{f^{(5)}}_\infty+
 \frac{\norm\eta_{\mathrm{TV}}}{24}\norm{f^{(6)}}_\infty+
 \frac{M_1}{72}\norm{f^{(7)}}_\infty\right).
\end{align*}
Applying the first operator bound three times also gives
\begin{align*}
 \norm{\mathcal D_\varepsilon^3f}_\infty
 \leq |\varepsilon|^3\bigg[&
 |\alpha|^3\norm{f^{(3)}}_\infty
 +\frac{3|\alpha|^2\norm\eta_{\mathrm{TV}}}{2}\norm{f^{(4)}}_\infty\\
 &+\left(\frac{|\alpha|^2M_1}{2}
       +\frac{3|\alpha|\norm\eta_{\mathrm{TV}}^2}{4}\right)
                                      \norm{f^{(5)}}_\infty\\
 &+\left(\frac{|\alpha|\norm\eta_{\mathrm{TV}}M_1}{2}
       +\frac{\norm\eta_{\mathrm{TV}}^3}{8}\right)
                                      \norm{f^{(6)}}_\infty\\
 &+\left(\frac{|\alpha|M_1^2}{12}
       +\frac{\norm\eta_{\mathrm{TV}}^2M_1}{8}\right)
                                      \norm{f^{(7)}}_\infty\\
 &+\frac{\norm\eta_{\mathrm{TV}}M_1^2}{24}\norm{f^{(8)}}_\infty
 +\frac{M_1^3}{216}\norm{f^{(9)}}_\infty\bigg].
\end{align*}
For $h\in\cS(\R)$, differentiating the interpolated Fourier multiplier
three times and using Taylor's integral formula gives
\[
 (Q_1-Q_0)h=\mathcal D_\varepsilon Q_0h
 +\frac12\mathcal D_\varepsilon^2Q_0h
 +\frac12\int_0^1(1-u)^2\mathcal D_\varepsilon^3Q_uh\dd u.
\]
The multiplier derivatives are polynomially bounded, so
$\widehat h\in\cS(\R)$ justifies differentiation and Fourier inversion.
Every interpolation law has a Gaussian factor of variance $v_*$.
For $\Lip(h)\leq1$ and $j\geq1$,
\[
 \norm{(Q_uh)^{(j)}}_\infty
 \leq\norm{h'}_\infty\norm{g_{v_*}^{(j-1)}}_1
 \leq\norm{g_{v_*}^{(j-1)}}_1.
\]
Subtracting $\varepsilon\mathcal AQ_0h+
\varepsilon^2(\mathcal B+\mathcal A^2/2)Q_0h$, applying the three
bounds above, and using $\int_0^1(1-u)^2\dd u=1/3$ gives a
uniform error at most $C_3|\varepsilon|^3$, with the explicit constant
in \eqref{eq:two-jet-constant}. The adjoint identities, commutation with
$D_x$, and integration by parts give
\begin{align*}
 \mathcal AQ_0h(0)
 &=\int_\R h(x)(\mathcal A^*p)(x)\dd x
  \byeq{eq:transport-cdf}{=}\int_\R h(x)D_xT_1(x)\dd x
  =-\int_\R h'(x)T_1(x)\dd x,\\
 \mathcal BQ_0h(0)
 &=\int_\R h(x)(\mathcal B^*p)(x)\dd x
  \byeq{eq:second-observable}{=}\int_\R h(x)D_x\mathsf Q_p(S)(x)\dd x
  =-\int_\R h'(x)\mathsf Q_p(S)(x)\dd x,\\
 \mathcal A^2Q_0h(0)
 &=\int_\R h(x)((\mathcal A^*)^2p)(x)\dd x
  =\int_\R h(x)D_x(\mathcal A^*T_1)(x)\dd x
  =-\int_\R h'(x)(\mathcal A^*T_1)(x)\dd x.
\end{align*}
These follow from $\mathcal B^*p=D_x\mathsf Q_p(S)$ and
$\mathcal A^*p=D_xT_1$. Gaussian convolution gives
$p^{(j)}\in L^1(\R)$ for every $j\geq0$, so $T_1,T_2\in L^1(\R)$.
Centered Lipschitz approximation and CDF duality, as in
Theorem~\ref{thm:transport-response}, yield \eqref{eq:two-jet-L1}.
Finally,
$|\norm f_1-\norm g_1|\leq\norm{f-g}_1$ yields \eqref{eq:two-jet-W1}.
\end{proof}
Thus \eqref{eq:two-jet} depends on both the first and second characteristic variations.
An additional order-$\varepsilon^2$ mean and characteristic change
$(\beta,\zeta)\in\R\times\cM_f(\R)$, with a third-order remainder on
the same smooth tests, adds $\mathsf T_p(\beta,\zeta,0)$ to $T_2$.
\subsection{The scalar second coefficient of the Wasserstein distance.}
A nonzero Gaussian-smoothed response is analytic, and its zero set has Lebesgue measure zero.
This determines the second coefficient of the $L^1$ norm.
\begin{theorem}[Scalar second-order response]\label{thm:scalar-second}
Under the assumptions of Theorem~\ref{thm:two-jet}, suppose $(\alpha,\tau)\ne(0,0)$.
Then $\norm{T_1}_1>0$ and, along admissible parameters of either sign,
\begin{equation}\label{eq:scalar-second}
 W_1(\lambda_\varepsilon,\lambda)
 =|\varepsilon|\norm{T_1}_1+
 \varepsilon|\varepsilon|\int_\R\operatorname{sgn}(T_1(x))T_2(x)\dd x
                              +o(\varepsilon^2).
\end{equation}
The absolute error after the two displayed terms is at most
\begin{equation}\label{eq:scalar-sign-remainder}
 C_3|\varepsilon|^3+2\varepsilon^2
  \int_{\{x\in\R:0<|T_1(x)|\leq|\varepsilon T_2(x)|\}}|T_2(x)|\dd x.
\end{equation}
If $T_1=0$, the leading term is $\varepsilon^2\norm{T_2}_1$.
\end{theorem}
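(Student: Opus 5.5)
Since $W_1(\lambda_\varepsilon,\lambda)=\norm{F_{\lambda_\varepsilon}-F_\lambda}_1$, I will start from Theorem~\ref{thm:two-jet}. By \eqref{eq:two-jet-W1} it suffices to expand the deterministic quantity $\norm{\varepsilon T_1+\varepsilon^2T_2}_1$ and show that it equals $|\varepsilon|\norm{T_1}_1+\varepsilon|\varepsilon|\int\operatorname{sgn}(T_1)T_2$ up to the second term of \eqref{eq:scalar-sign-remainder}. The positivity $\norm{T_1}_1>0$ when $(\alpha,\tau)\ne(0,0)$ is the kernel characterization \eqref{eq:tangent-kernel} of Proposition~\ref{prop:tangent-quotient}. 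The case $T_1=0$ is immediate: then $\norm{\varepsilon^2T_2}_1=\varepsilon^2\norm{T_2}_1$ and the error is $C_3|\varepsilon|^3$.

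For the pointwise step, factor $|\varepsilon|$ and write $\norm{\varepsilon T_1+\varepsilon^2T_2}_1=|\varepsilon|\int|T_1+\varepsilon T_2|$. Use the elementary bound
\[
 \bigl||a+b|-|a|-\operatorname{sgn}(a)b\bigr|\leq 2|b|\one_{\{0<|a|\leq|b|\}}+|b|\one_{\{a=0\}}.
\]
It holds because $|a+b|=|a|+\operatorname{sgn}(a)b$ whenever $|b|<|a|$, and the left side is at most $2|b|$ in general. Apply it with $a=T_1(x)$ and $b=\varepsilon T_2(x)$, then integrate. Because $\operatorname{sgn}(T_1)$ is computed with the sign convention at zero, the set $\{T_1=0\}$ still contributes $\varepsilon^2\int_{\{T_1=0\}}|T_2|$. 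I will show this set is Lebesgue-null, which yields exactly \eqref{eq:scalar-sign-remainder}.

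The main point is that the zero set of $T_1$ is null. By \eqref{eq:tangent-Fourier}, $\widehat{T_1}(\xi)=\dot\Psi(-\xi)\widehat p(\xi)/(i\xi)$, and $\widehat p$ contains the Gaussian factor $e^{-v\xi^2/2}$. Meanwhile $\dot\Psi$ has polynomial growth: it is bounded by $C(|\xi|+\xi^2+|\xi|^3)$, using the kernel bounds $|k|\leq\xi^2/2$ and $|\partial_yk|\leq|\xi|^3/6$. Hence $|\widehat{T_1}(\xi)|\lesssim(1+\xi^2)e^{-v_*\xi^2/2}$, so $T_1(x)=(2\pi)^{-1}\int e^{ix\xi}\widehat{T_1}(\xi)\dd\xi$ extends to an entire function of $x$; more directly, $T_1$ is a Gaussian convolution of a finite-order distribution, hence real analytic. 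I would argue this via the Fourier representation with dominated differentiation in complex $x$. A nonzero real-analytic function on $\R$ has isolated zeros, so the zero set is countable and therefore null.

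Finally, to get $o(\varepsilon^2)$, I must show $\int_{\{0<|T_1|\leq|\varepsilon T_2|\}}|T_2|\to0$. Pointwise, the indicator tends to zero for each $x$ with $T_1(x)\ne0$, since $|\varepsilon T_2(x)|\to0$. It is dominated by $|T_2|\in L^1$, so dominated convergence applies. Combining with the $C_3|\varepsilon|^3$ term proves \eqref{eq:scalar-second} for admissible $\varepsilon$ of either sign; the sign of $\varepsilon$ enters only through $\varepsilon|\varepsilon|$. The step I expect to need care is the analyticity of $T_1$, in particular checking that the Gaussian decay survives division by $i\xi$ near $\xi=0$. This is fine because $\dot\Psi(0)=0$, so $\dot\Psi(-\xi)/\xi$ is continuous at the origin.
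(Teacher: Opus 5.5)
Your proof is correct and follows the paper's route. Both arguments expand $\norm{T_1+\varepsilon T_2}_1$ via $\operatorname{sgn}(T_1)$ with an error supported on $\{0<|T_1|\leq|\varepsilon T_2|\}$, show that $\{T_1=0\}$ is null by analyticity, apply dominated convergence, and get positivity from Proposition~\ref{prop:tangent-quotient}. The only difference is how you obtain the entire extension of $T_1$: you use the Gaussian decay of $\widehat{T_1}(\xi)=\dot\Psi(-\xi)\widehat p(\xi)/(i\xi)$, while the paper extends the shifted Gaussian derivatives $g_v^{(j)}(z-u)$ to complex $z$ with bounds uniform in the real shift, and both are valid.
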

\begin{proof}
For $a,b,t\in\R$ and $a\ne0$,
\[
 0\leq |a+tb|-|a|-t\operatorname{sgn}(a)b
 \leq2|tb|\one_{\{0<|a|\leq|tb|\}}.
\]
For real $f,g\in L^1(\R)$, set $Z=\{x\in\R:f(x)=0\}$ and
$\operatorname{sgn}(0)=0$. Integrating this inequality and treating $Z$
separately gives
\begin{equation}\label{eq:L1-scalar}
 \norm{f+t g}_1=\norm f_1+t\int_{Z^c}\operatorname{sgn}(f(x))g(x)\dd x
       +|t|\int_Z|g(x)|\dd x+r(t),\quad
 |r(t)|\leq2|t|\int_{\{x\in\R:0<|f(x)|\leq|tg(x)|\}}|g(x)|\dd x.
\end{equation}
The last integrand converges to zero almost everywhere and is bounded
by $|g|$, so the integral tends to zero.
Write $p=g_v*\rho$, where $v>0$ and $\rho\in\cP_1(\R)$.
For $z=x+ib\in\mathbb C$ and $u\in\R$,
\[
 |g_v(z-u)|=(2\pi v)^{-1/2}
 \exp\!\left(\frac{b^2-(x-u)^2}{2v}\right).
\]
Each complex derivative is this Gaussian times a polynomial in $z-u$.
For every compact $K\subset\mathbb C$ and integer $j\geq0$,
\[
 \sup_{z\in K,\ u\in\R}|g_v^{(j)}(z-u)|<\infty.
\]
These bounds remain uniform after the real shifts $ry$ in
\eqref{eq:transport-cdf}. Integrating against $\rho$, $\eta$, and $J$
therefore extends $T_1$ to an entire function. By
Proposition~\ref{prop:tangent-quotient}, $(\alpha,\tau)\ne(0,0)$ implies
$T_1\not\equiv0$. Its real zeros are isolated, hence have Lebesgue
measure zero. In particular $\norm{T_1}_1>0$.
Apply \eqref{eq:L1-scalar} with $(f,g,t)=(T_1,T_2,\varepsilon)$,
multiply by $|\varepsilon|$, and use \eqref{eq:two-jet-W1} to obtain
\eqref{eq:scalar-second} and \eqref{eq:scalar-sign-remainder}.
\end{proof}
The coefficient $\int_\R\operatorname{sgn}(T_1(x))T_2(x)\dd x$ can have either sign.
The leading term $\norm{T_1}_1|\varepsilon|$ has slopes $-\norm{T_1}_1$ and $\norm{T_1}_1$ along
admissible parameters approaching zero from the left and right, so \eqref{eq:scalar-second} describes
a cusp rather than a classical second derivative.
\subsection{Zero first tangent: an exact quadratic law.}
\begin{theorem}[Cancellation and a nonzero second response]
\label{thm:cancellation}
In \eqref{eq:transport-lift}, suppose $M_2<\infty$,
$\alpha=0$, and $\eta-D_yJ=0$. Then, along admissible parameters,
\begin{equation}\label{eq:cancelled-expansion}
 \norm{F_{\lambda_\varepsilon}-F_\lambda
              -\varepsilon^2\mathsf Q_p(S)}_1=o(\varepsilon^2),\qquad
 W_1(\lambda_\varepsilon,\lambda)
        =\varepsilon^2\norm{\mathsf Q_p(S)}_1+o(\varepsilon^2).
\end{equation}
The coefficient is strictly positive whenever $M_2>0$.
If $M_3<\infty$, the $L^1$ remainder is at most
\begin{equation}\label{eq:cancelled-third}
 \frac{K_4M_3}{120}|\varepsilon|^3+
                        \frac{K_7M_2^2}{1152}\varepsilon^4.
\end{equation}
If $\Pi$ is invariant under $(y,z)\mapsto(y,-z)$ and $M_4<\infty$, then
$J=0$, necessarily $\eta=0$, and the remainder is at most
\begin{equation}\label{eq:cancelled-fourth}
 \varepsilon^4\left(\frac{K_5M_4}{720}+
                              \frac{K_7M_2^2}{1152}\right).
\end{equation}
Here $K_j=\norm{g_{v_*}^{(j)}}_1$.
\end{theorem}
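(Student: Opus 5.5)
The plan is to rerun the interpolation argument of Theorem~\ref{thm:two-jet}, using the cancellation to simplify the operators. Since $\alpha=0$ and $\eta=D_yJ$, the first-order operator $\mathcal A$ in \eqref{eq:tangent-operator} vanishes on smooth tests. Indeed, $\int\Phi_{f,x}\,\dd\eta=-\int\partial_y\Phi_{f,x}\,\dd J$ by the definition of $\tau=0$ tested against the $C_b^1$ function $\Phi_{f,x}$. Hence $T_1=0$, and the exact generator difference reduces to
\[
 \mathcal D_\varepsilon f(x)=\int_{\R^2}\bigl[\Phi_{f,x}(y+\varepsilon z)-\Phi_{f,x}(y)-\varepsilon z\,\partial_y\Phi_{f,x}(y)\bigr]\Pi(\dd y,\dd z).
\]
This is a pure second-order Taylor remainder in the displacement. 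From it I would derive three operator bounds:
\[
 \norm{\mathcal D_\varepsilon f}_\infty\le\tfrac{\varepsilon^2M_2}{24}\norm{f^{(4)}}_\infty,\qquad
 \norm{(\mathcal D_\varepsilon-\varepsilon^2\mathcal B)f}_\infty\le\tfrac{|\varepsilon|^3M_3}{120}\norm{f^{(5)}}_\infty,
\]
and, in the symmetric case, the odd cubic term integrates to zero, which gives an $\varepsilon^4M_4/720\,\norm{f^{(6)}}_\infty$ bound. Here $\mathcal B$ is the operator from the proof of Theorem~\ref{thm:two-jet}.

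Next I expand to first order in $u$: $(Q_1-Q_0)h=\mathcal D_\varepsilon Q_0h+\int_0^1(1-u)\mathcal D_\varepsilon^2Q_uh\,\dd u$, as in \eqref{eq:second-interpolation}. The quadratic term is bounded by applying the first bound twice. This gives $\tfrac12(\varepsilon^2M_2/24)^2\norm{(Q_uh)^{(8)}}_\infty\le \varepsilon^4M_2^2K_7/1152$, which is exactly the $K_7$ term in \eqref{eq:cancelled-third} and \eqref{eq:cancelled-fourth}. The linear term is $\varepsilon^2\mathcal BQ_0h(0)$ plus the $M_3$ or $M_4$ error, using $\norm{(Q_0h)^{(5)}}_\infty\le K_4$ and $\norm{(Q_0h)^{(6)}}_\infty\le K_5$. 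The adjoint identity from Theorem~\ref{thm:two-jet} gives $\mathcal BQ_0h(0)=-\int h'\mathsf Q_p(S)$. I then pass to Lipschitz tests and use CDF duality as before. This yields the two explicit bounds. In the symmetric case, $J=0$ follows from oddness in $z$, and then $\eta=D_yJ=0$.

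For the $o(\varepsilon^2)$ statement under only $M_2<\infty$, I would not use $M_3$. Instead I estimate $\norm{(\mathcal D_\varepsilon-\varepsilon^2\mathcal B)f}_\infty$ using the Taylor form
\[
 \varepsilon^2\int z^2\!\int_0^1(1-t)\bigl[\partial_y^2\Phi(y+t\varepsilon z)-\partial_y^2\Phi(y)\bigr]\dd t\,\dd\Pi .
\]
The integrand is bounded by $\min\bigl(2\norm{\partial_y^2\Phi}_\infty,\ |\varepsilon z|\norm{\partial_y^3\Phi}_\infty\bigr)$, with both sup norms controlled uniformly over $\Lip(h)\le1$ by $K_3$ and $K_4$. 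Dominated convergence with majorant $z^2$ then gives $o(\varepsilon^2)$ uniformly in $h$. The $\mathcal D_\varepsilon^2$ term is $O(\varepsilon^4)$ as before.

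The main obstacle is strict positivity of $\norm{\mathsf Q_p(S)}_1$ when $M_2>0$. I would argue as in Proposition~\ref{prop:tangent-quotient}. The Fourier transform satisfies $i\xi\widehat{\mathsf Q_p(S)}(\xi)=\widehat p(\xi)\cdot\tfrac12\int\partial_y^2k(-\xi,y)S(\dd y)$, where $\partial_y^2k(\xi,y)=-\xi^2\int_0^1r^2(1-r)(i\xi)^2e^{ir\xi y}\dd r$. Since $\widehat p\ne0$, vanishing of $\mathsf Q_p(S)$ forces the function $\xi\mapsto\int\partial_y^2k(\xi,y)S(\dd y)$ to vanish identically. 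Its fourth $\xi$-derivative is a nonzero multiple of $\widehat S$, up to sign convention, because $\partial_\xi^2k=-e^{i\xi y}$ implies $\partial_\xi^2\partial_y^2k=y^2e^{i\xi y}\cdot$const, or more directly $\partial_\xi^4\partial_y^2 k\propto \xi^2 e^{i\xi y}$. Hence $\widehat S=0$ on $\xi\ne0$, and therefore $S=0$, contradicting $S(\R)=M_2>0$. The delicate point is checking the differentiation under the integral and the exact derivative identity. If that computation is messy, I would instead test $\mathsf Q_p(S)$ against smooth $h$ and note that $\mathcal B^*p=\tfrac12 D^2$-type operators applied to $S$, convolved with $p$, cannot vanish by analyticity.
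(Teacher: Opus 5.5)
The expansion part of your proposal follows the paper's proof essentially step for step. This covers $\mathcal A=0$, the identity $(Q_1-Q_0)h=\mathcal D_\varepsilon Q_0h+\int_0^1(1-u)\mathcal D_\varepsilon^2Q_uh\,\dd u$, and two applications of $\norm{\mathcal D_\varepsilon f}_\infty\le\varepsilon^2M_2\norm{f^{(4)}}_\infty/24$ for the $K_7M_2^2/1152$ term. It also covers the $M_3$ and symmetric $M_4$ Taylor remainders, and the two-sided $\min$ bound with dominated convergence for the $o(\varepsilon^2)$ claim. One justification is thinner than it should be. The identity $\eta-D_yJ=0$ holds on $C_c^\infty(\R)$, but $\Phi_{f,x}$ is not compactly supported, so you need the cutoff step the paper writes out. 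Apply the identity to $\chi(y/R)\Phi_{f,x}(y)$, which is smooth because $f=Q_0h$ is. The error $\int\chi_R'\Phi_{f,x}\,\dd J$ is $O(R^{-1})$ because $J$ is finite.

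The positivity step is where you depart from the paper, and as written it is incorrect. Since $\partial_\xi^2k=-e^{i\xi y}$, the relevant identity is $\partial_\xi^2\partial_y^2k(\xi,y)=\partial_y^2(-e^{i\xi y})=\xi^2e^{i\xi y}$. So it is the \emph{second} $\xi$-derivative of $b(\xi)=\frac12\int\partial_y^2k(\xi,y)S(\dd y)$ that equals $\frac{\xi^2}{2}\int e^{i\xi y}S(\dd y)$. Your factor $y^2$ would only give $y^2S=0$, i.e. $S$ concentrated at the origin. The fourth derivative is not a multiple of $\widehat S$: it involves derivatives of $\widehat S$, which need not exist because $S$ is only assumed finite. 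With the corrected identity, $b\equiv0$ forces $\widehat S=0$ off the origin, hence $S=0$. This is exactly the injectivity argument of Proposition~\ref{prop:second-matching}, and it works even for signed $S$.

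The paper takes a shorter route that uses $S\ge0$. From $\widehat{\mathsf Q_p(S)}(\xi)=\frac12(i\xi)^3\widehat p(\xi)\int_0^1r^2(1-r)\widehat S(r\xi)\,\dd r$ and continuity of $\widehat S$ at $0$, the integral tends to $S(\R)/12=M_2/12>0$. Hence $\widehat{\mathsf Q_p(S)}(\xi)\ne0$ for small $\xi\ne0$, and $\norm{\mathsf Q_p(S)}_1\ge|\widehat{\mathsf Q_p(S)}(\xi)|>0$, with no differentiation under the integral. You could have read this off your own formula by dividing by $\xi^4$ and letting $\xi\to0$. Your analyticity fallback is not an argument: analyticity makes zeros isolated only once you already know the function is not identically zero.
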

\begin{proof}
The identity $\eta-D_yJ=0$ means
$\int_\R\phi(y)\eta(\dd y)+\int_\R\phi'(y)J(\dd y)=0$ for $\phi\in C_c^\infty(\R)$.
For a smooth cutoff $\chi_R(y)=\chi(y/R)$, apply this identity to
$\phi(y)=\chi_R(y)\Phi_{f,x}(y)$:
\[
 \int_\R\chi_R(y)\Phi_{f,x}(y)\eta(\dd y)
 +\int_\R\chi_R(y)\partial_y\Phi_{f,x}(y)J(\dd y)
 =-\int_\R\chi_R'(y)\Phi_{f,x}(y)J(\dd y)\longrightarrow0.
\]
The right-hand side is bounded by
$R^{-1}\norm{\chi'}_\infty\norm{\Phi_{f,x}}_\infty\norm J_{\mathrm{TV}}$.
Dominated convergence applies to the left-hand side. Hence $\mathcal A=0$.
Taylor expansion then gives
\[
 \norm{\mathcal D_\varepsilon f}_\infty
 \leq\frac{\varepsilon^2M_2}{24}\norm{f^{(4)}}_\infty,
 \qquad
 \norm{\mathcal D_\varepsilon^2f}_\infty
 \leq\frac{\varepsilon^4M_2^2}{576}\norm{f^{(8)}}_\infty.
\]
For $\Lip(h)\leq1$, the interpolation identity, the triangle inequality,
and the Gaussian derivative bound therefore give
\begin{align*}
 \norm{(Q_1-Q_0)h-\mathcal D_\varepsilon Q_0h}_\infty
 &\byeq{eq:second-interpolation}{=}
 \left\|\int_0^1(1-u)\mathcal D_\varepsilon^2Q_uh\dd u\right\|_\infty\\
 &\leq
 \int_0^1(1-u)\norm{\mathcal D_\varepsilon^2Q_uh}_\infty\dd u
 \leq
 \frac{\varepsilon^4K_7M_2^2}{576}\int_0^1(1-u)\dd u
 =\frac{\varepsilon^4K_7M_2^2}{1152}.
\end{align*}
For $q=Q_0h$ and $F(y)=\Phi_{q,0}(y)$,
\[
 F(y+u)-F(y)-uF'(y)-\frac{u^2}{2}F''(y)
 =u^2\int_0^1(1-t)[F''(y+tu)-F''(y)]\dd t.
\]
Using separately
$|F''(y+tu)-F''(y)|\leq2\norm{F''}_\infty$
and $|tu|\norm{F'''}_\infty$, together with
$\norm{F''}_\infty\leq K_3/12$ and
$\norm{F'''}_\infty\leq K_4/20$, bounds the remainder in the Taylor expansion of the generator difference by
\begin{equation}\label{eq:min-remainder}
 \varepsilon^2\int_{\R^2} z^2
       \min\{K_3/12,\ |\varepsilon z|K_4/120\}\,\Pi(\dd y,\dd z).
\end{equation}
After dividing \eqref{eq:min-remainder} by $\varepsilon^2$, dominated convergence with respect to the
finite measure $z^2\Pi$ shows that the limit is zero. This proves
\eqref{eq:cancelled-expansion}. Using the second term in the minimum gives
\eqref{eq:cancelled-third}.
If $\Pi$ is symmetric in $z$, every integrable odd function of $z$
integrates to zero. Thus $J=0$, $\eta=0$, and the cubic Taylor term vanishes.
The fourth Taylor remainder is bounded by
\[
 \frac{\varepsilon^4M_4}{4!}
 \norm{\partial_y^4\Phi_{q,0}}_\infty
 \leq\frac{\varepsilon^4M_4K_5}{720},
\]
which gives \eqref{eq:cancelled-fourth}.
Finally, Fourier transformation gives
\begin{align}
 \widehat{\mathsf Q_p(S)}(\xi)
 &\byeq{eq:second-observable}{=}
 \frac12(i\xi)^3\widehat p(\xi)
       \int_0^1r^2(1-r)\widehat S(r\xi)\dd r.
       \label{eq:calc-second-fourier}\\
\intertext{Then continuity at the origin and dominated convergence imply}
 \int_0^1r^2(1-r)\widehat S(r\xi)\dd r
 &\xrightarrow[\xi\to0]{}
 M_2\int_0^1r^2(1-r)\dd r=\frac{M_2}{12}.\nonumber
\end{align}
Since $\widehat p(0)=1$, this Fourier transform is nonzero for sufficiently
small nonzero $\xi$ when $M_2>0$. Thus $\norm{\mathsf Q_p(S)}_1>0$.
\end{proof}
A fixed nonzero positive dispersion gives a strictly positive quadratic
error. Higher accuracy requires a correction, a shrinking dispersion,
or matching second variations between approximations.
\subsection{Varying redistribution kernels and a Lindeberg condition.}
The same limit holds for varying redistribution kernels under a Lindeberg
condition on their actual displacements.
\begin{theorem}[Second-order limit for a triangular family]
\label{thm:lindeberg}
Let $\mu\in\cM_f^+(\R)$, $v>0$, and
$\pi_\varepsilon\in\cM_f^+(\R_y\times\R_U)$, $\varepsilon\ne0$, have first
marginal $\mu$ and satisfy $\int_{\R^2} U f(y)\pi_\varepsilon(\dd y,\dd U)=0$ for every
bounded Borel function $f:\R\to\R$. Let $\lambda$ and $\lambda_\varepsilon$
have common mean $m\in\R$ and characteristics $v\delta_0+\mu$ and
$\Gamma_\varepsilon=v\delta_0+(y+U)_\#\pi_\varepsilon$. For Borel
$A\subset\R$, put
\[
 S_\varepsilon(A)=\varepsilon^{-2}
       \int_{A\times\R}U^2\pi_\varepsilon(\dd y,\dd U),\qquad
 C_{2,\varepsilon}=\int_{\R^2} U^2\pi_\varepsilon(\dd y,\dd U).
\]
Suppose $d_{\FM}(S_\varepsilon,S)\to0$ for a finite positive measure $S$, and
\begin{equation}\label{eq:lindeberg-condition}
 \varepsilon^{-2}\int_{\R^2} U^2\one_{\{|U|>r\}}\pi_\varepsilon(\dd y,\dd U)\longrightarrow0
                      \quad\text{for every }r>0.
\end{equation}
Then \eqref{eq:cancelled-expansion} holds with this $S$. More precisely,
with $K_j=\norm{g_v^{(j)}}_1$, its $L^1$ remainder is at most
\begin{equation}\label{eq:lindeberg-remainder}
 \varepsilon^2(K_3/24+K_4/40)d_{\FM}(S_\varepsilon,S)
 +\frac{K_7C_{2,\varepsilon}^2}{1152}
 +\int_{\R^2} U^2\min\{K_3/12,\ |U|K_4/120\}\pi_\varepsilon(\dd y,\dd U).
\end{equation}
\end{theorem}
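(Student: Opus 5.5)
The argument follows the proof of Theorem~\ref{thm:cancellation}. The Taylor expansion is now taken in the actual displacement $U$ rather than in $\varepsilon z$, and the fixed dispersion $S$ is compared with $S_\varepsilon$ in $d_{\FM}$. Let $Q_u$ interpolate between the exponents of $\lambda$ and $\lambda_\varepsilon$, both of which have Gaussian factor $v$. The exact generator difference is
\[
 \mathcal D_\varepsilon f(x)=\int_{\R^2}\bigl[\Phi_{f,x}(y+U)-\Phi_{f,x}(y)\bigr]\pi_\varepsilon(\dd y,\dd U).
\]
Because the conditional mean of $U$ given $y$ vanishes, tested against the bounded function $f=\partial_y\Phi_{f,x}$, the first-order term $\int U\,\partial_y\Phi_{f,x}(y)\,\pi_\varepsilon$ is zero. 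This plays the role of $\mathcal A=0$. A second-order Taylor expansion in $U$ then gives $\norm{\mathcal D_\varepsilon f}_\infty\leq \frac{C_{2,\varepsilon}}{24}\norm{f^{(4)}}_\infty$, since $\norm{\partial_y^2\Phi}_\infty\leq\norm{f^{(4)}}_\infty/12$. Applying this twice gives $\norm{\mathcal D_\varepsilon^2 f}_\infty\leq\frac{C_{2,\varepsilon}^2}{576}\norm{f^{(8)}}_\infty$. Inserting this into \eqref{eq:second-interpolation} with $\norm{(Q_uh)^{(8)}}_\infty\leq K_7$ produces the term $K_7C_{2,\varepsilon}^2/1152$.

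Next set $q=Q_0h$ and $F=\Phi_{q,0}$, and expand the main term $\mathcal D_\varepsilon q(0)$ as
\[
 \mathcal D_\varepsilon q(0)=\frac12\int U^2F''(y)\,\pi_\varepsilon+R_\varepsilon,
 \qquad
 R_\varepsilon=\int U^2\int_0^1(1-t)\bigl[F''(y+tU)-F''(y)\bigr]\dd t\,\pi_\varepsilon .
\]
The remainder is handled by the minimum bound \eqref{eq:min-remainder}, with $\varepsilon z$ replaced by $U$ and using $\norm{F''}_\infty\leq K_3/12$, $\norm{F'''}_\infty\leq K_4/20$. This gives the third term of \eqref{eq:lindeberg-remainder}. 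The leading piece equals $\frac{\varepsilon^2}{2}\langle F'',S_\varepsilon\rangle$. Compare it with $\frac{\varepsilon^2}{2}\langle F'',S\rangle=-\varepsilon^2\int h'\,\mathsf Q_p(S)$, which is the identity $\mathcal B^*p=D_x\mathsf Q_p(S)$ from Theorem~\ref{thm:two-jet}. Since $\norm{F''}_{\BL}\leq K_3/12+K_4/20$, the difference is at most $\frac{\varepsilon^2}{2}(K_3/12+K_4/20)\,d_{\FM}(S_\varepsilon,S)$, which is exactly the first term of \eqref{eq:lindeberg-remainder}. Extending to Lipschitz $h$ via Lemma~\ref{lem:approximation} and applying CDF duality as in Theorem~\ref{thm:transport-response} then bounds the $L^1$ remainder by \eqref{eq:lindeberg-remainder}.

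It remains to show that \eqref{eq:lindeberg-remainder} is $o(\varepsilon^2)$; this is the step that needs real work. The first term is $o(\varepsilon^2)$ because $d_{\FM}(S_\varepsilon,S)\to0$. Testing with $f\equiv1$ shows $C_{2,\varepsilon}=\varepsilon^2S_\varepsilon(\R)$ with $S_\varepsilon(\R)\to S(\R)$, so the second term is $O(\varepsilon^4)$. For the third term, fix $r>0$ and split according to $|U|\leq r$ or $|U|>r$. The part with $|U|\leq r$ is at most $\varepsilon^2 rK_4S_\varepsilon(\R)/120$. The part with $|U|>r$ is at most $K_3/12$ times the Lindeberg quantity in \eqref{eq:lindeberg-condition}, which is $o(\varepsilon^2)$. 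Letting $\varepsilon\to0$ and then $r\to0$ gives $o(\varepsilon^2)$. The $W_1$ statement follows from the reverse triangle inequality, and positivity of $\norm{\mathsf Q_p(S)}_1$ when $S\neq0$ follows from the Fourier argument of Theorem~\ref{thm:cancellation}.
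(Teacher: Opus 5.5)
Your proposal is correct and follows essentially the same route as the paper: it reruns the interpolation argument of the cancellation theorem with the actual displacement $U$, where conditional balance removes the first-order term. That yields the interpolation term $K_7C_{2,\varepsilon}^2/1152$ and the minimum-type Taylor remainder, followed by the same splitting at $|U|=r$ under the Lindeberg condition. The only difference is cosmetic: you compare $S_\varepsilon$ with $S$ at the test level through $\|F''\|_{\BL}\leq K_3/12+K_4/20$, whereas the paper bounds $\|\mathsf Q_p(S_\varepsilon)-\mathsf Q_p(S)\|_1$ by $L^1$--$L^\infty$ duality with the kernel $q_\psi$; these are dual forms of the same estimate and give the same constant $K_3/24+K_4/40$.
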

\begin{proof}
Use the preceding interpolation argument with displacement $U$.
Its quadratic term is $\varepsilon^2\mathsf Q_p(S_\varepsilon)$.
For $\psi\in L^\infty(\R)$ with $\norm\psi_\infty\leq1$, set
\[
 q_\psi(y)=\frac12\int_\R\psi(x)
       \int_0^1r^2(1-r)p'''(x-ry)\dd r\dd x.
\]
Fubini's theorem, differentiation under the integral, and the Gaussian
bounds $\norm{p^{(j)}}_1\leq K_j$ yield
\begin{align*}
 \norm{q_\psi}_\infty
 &\leq\frac{\norm\psi_\infty}{2}
   \int_0^1r^2(1-r)\norm{p'''}_1\dd r
 \leq\frac{K_3}{2}
   \int_0^1r^2(1-r)\dd r=\frac{K_3}{24},\\
 \Lip(q_\psi)
 &\leq\frac{\norm\psi_\infty}{2}
   \int_0^1r^3(1-r)\norm{p^{(4)}}_1\dd r
 \leq\frac{K_4}{2}
   \int_0^1r^3(1-r)\dd r=\frac{K_4}{40}.
\end{align*}
Then $L^1$ duality, Fubini's theorem, and the preceding bounds give
\begin{align*}
 \norm{\mathsf Q_p(S_\varepsilon)-\mathsf Q_p(S)}_1
 &=
 \sup_{\norm\psi_\infty\leq1}
 \left|\int_\R q_\psi(y)(S_\varepsilon-S)(\dd y)\right|\\
 &\byeq{eq:metrics}{\leq}
 \sup_{\norm\psi_\infty\leq1}\norm{q_\psi}_{\BL}
                  d_{\FM}(S_\varepsilon,S)
 \leq
 (K_3/24+K_4/40)d_{\FM}(S_\varepsilon,S).
\end{align*}
The other two terms come from the Taylor and interpolation remainders in
\eqref{eq:lindeberg-remainder}. Since $\norm1_{\BL}=1$,
\[
 \left|\varepsilon^{-2}C_{2,\varepsilon}-S(\R)\right|
 \leq d_{\FM}(S_\varepsilon,S)\longrightarrow0.
\]
In particular $C_{2,\varepsilon}=O(\varepsilon^2)$.
For any $r>0$, split the final remainder $R_\varepsilon$ at $|U|=r$ and use
$|U|^3\leq rU^2$ on $\{|U|\leq r\}$. Thus
\begin{align*}
 \frac{R_\varepsilon}{\varepsilon^2}
 &\leq
 \frac{K_4}{120\varepsilon^2}
       \int_{\R\times[-r,r]}|U|^3\pi_\varepsilon(\dd y,\dd U)
 +\frac{K_3}{12\varepsilon^2}
       \int_{\R\times(\R\setminus[-r,r])}U^2\pi_\varepsilon(\dd y,\dd U)\\
 &\leq
 \frac{rK_4}{120}\frac{C_{2,\varepsilon}}{\varepsilon^2}
 +\frac{K_3}{12\varepsilon^2}
       \int_{\R\times(\R\setminus[-r,r])}U^2\pi_\varepsilon(\dd y,\dd U).
\end{align*}
Since $C_{2,\varepsilon}/\varepsilon^2\to S(\R)$, it follows that
$$\limsup_{\varepsilon\to0}\frac{R_\varepsilon}{\varepsilon^2}
 \byeq{eq:lindeberg-condition}{\leq}
 \frac{rK_4}{120}S(\R)\xrightarrow[r\downarrow0]{}0.$$
Every term in
\eqref{eq:lindeberg-remainder} is $o(\varepsilon^2)$.
\end{proof}
As an example of a rare macroscopic displacement, take $M>0$, $a\in\R$, $b\ne0$, and
\[
 \pi_\varepsilon=M\delta_a(\dd y)
 \left[(1-\varepsilon^2)\delta_0(\dd U)
       +\frac{\varepsilon^2}{2}(\delta_{-b}+\delta_b)(\dd U)\right].
\]
Then $S_\varepsilon=Mb^2\delta_a$, whereas for $0<r<|b|$,
$\varepsilon^{-2}\int_{\R^2} U^2\one_{\{|U|>r\}}\pi_\varepsilon(\dd y,\dd U)=Mb^2$.
Using the response functions in \eqref{eq:response-dictionary},
\[
 \varepsilon^{-2}(F_{\lambda_\varepsilon}-F_\lambda)
 \longrightarrow\frac M2[A_p(a-b,\cdot)+A_p(a+b,\cdot)-2A_p(a,\cdot)]
 \quad\text{in }L^1(\R).
\]
This finite difference need not equal $\mathsf Q_p(Mb^2\delta_a)$.
\subsection{Matching second variations of two positive approximations.}
Two approximations with the same first tangent differ at second order
only through their dispersion measures.
\begin{proposition}[Second-response norm and matched lifts]
\label{prop:second-matching}
Extend \eqref{eq:second-observable} linearly to $S\in\cM_f(\R)$.
Then $\|S\|_{\lambda,2}:=\norm{\mathsf Q_p(S)}_1$ is a norm on
$\cM_f(\R)$ and contracts
under common independent infinitely divisible convolution.
For two lifted paths at the same reference law, assume their first
pairs $(\alpha,\tau)$ agree and their third displacement moments are
finite. If their dispersion measures are $S_0,S_1$, then
\begin{equation}\label{eq:matched-response}
 W_1(\lambda_\varepsilon^0,\lambda_\varepsilon^1)
       =\varepsilon^2\|S_0-S_1\|_{\lambda,2}+O(|\varepsilon|^3).
\end{equation}
In particular, if also $S_0=S_1$, the error is $O(|\varepsilon|^3)$.
For two balanced symmetric lifts with equal $S$ and finite fourth
velocity moments, this improves to $O(\varepsilon^4)$.
\end{proposition}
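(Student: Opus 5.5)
The proof splits into three parts, taken in order: the norm property, the contraction, and the matched-lift expansion.

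\emph{Norm property.} Linearity of $S\mapsto\mathsf Q_p(S)$ gives homogeneity and the triangle inequality. For definiteness I will use the Fourier formula \eqref{eq:calc-second-fourier}, which extends verbatim to signed $S$:
\[
 \widehat{\mathsf Q_p(S)}(\xi)=\tfrac12(i\xi)^3\widehat p(\xi)\int_0^1r^2(1-r)\widehat S(r\xi)\dd r.
\]
Suppose $\mathsf Q_p(S)=0$. Since $\widehat p$ never vanishes (it is $e^{\Psi(-\xi)}$), the function $G(\xi)=\int_0^1r^2(1-r)\widehat S(r\xi)\dd r$ vanishes for $\xi\neq0$, hence everywhere by continuity. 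Substituting $s=r\xi$ for $\xi>0$ gives
\[
 \xi^4G(\xi)=\int_0^\xi s^2(\xi-s)\widehat S(s)\dd s\equiv0 .
\]
Differentiating in $\xi$ twice yields $\int_0^\xi s^2\widehat S(s)\dd s=0$ and then $\xi^2\widehat S(\xi)=0$. The same argument works for $\xi<0$. Continuity and Fourier uniqueness for finite measures then give $S=0$.

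\emph{Contraction.} As in Proposition~\ref{prop:tangent-quotient}, Fubini gives $\mathsf Q_{p*\zeta}(S)=\mathsf Q_p(S)*\zeta$. Young's inequality with $\zeta(\R)=1$ then gives $\norm{\mathsf Q_{p*\zeta}(S)}_1\le\norm{\mathsf Q_p(S)}_1$.

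\emph{Matched lifts.} I apply Theorem~\ref{thm:two-jet} to each path; both paths use the same reference density $p$. This gives
\[
 F_{\lambda^i_\varepsilon}=F_\lambda+\varepsilon T_1^i+\varepsilon^2T_2^i+O_{L^1}(|\varepsilon|^3),\qquad i=0,1 .
\]
By Proposition~\ref{prop:tangent-quotient}, $T_1^i=\mathsf T_p(\alpha,\eta_i,J_i)$ depends only on $(\alpha,\tau)$, so $T_1^0=T_1^1=:T_1$. Consequently $\mathcal A_i^*T_1$ also agrees for $i=0,1$. To justify this, note that by \eqref{eq:tangent-adjoint} the operator $\mathcal A^*$ acts on $T_1$ through the same Fourier multiplier $\dot\Psi$ as in \eqref{eq:tangent-Fourier}, and $\dot\Psi$ is determined by $(\alpha,\tau)$ via $\dot\Psi(0)=0$, $\dot\Psi'(0)=i\alpha$ and $\dot\Psi''=-\widehat\tau(-\cdot)$. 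Hence
\[
 T_2^0-T_2^1=\mathsf Q_p(S_0)-\mathsf Q_p(S_1)=\mathsf Q_p(S_0-S_1).
\]
The CDF isometry $W_1=\norm{F_{\lambda^0_\varepsilon}-F_{\lambda^1_\varepsilon}}_1$ and the reverse triangle inequality then give \eqref{eq:matched-response}, and in particular $O(|\varepsilon|^3)$ when $S_0=S_1$.

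\emph{Symmetric case.} Symmetry of both lifts in $z$ forces $J=\eta=0$, so $\mathcal A=0$, as in Theorem~\ref{thm:cancellation}. Estimate \eqref{eq:cancelled-fourth} then gives
\[
 F_{\lambda^i_\varepsilon}=F_\lambda+\varepsilon^2\mathsf Q_p(S)+O_{L^1}(\varepsilon^4).
\]
Subtracting the two expansions yields $O(\varepsilon^4)$. I read ``balanced'' as $\alpha=0$, consistent with that theorem.

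The main obstacle is the identification $\mathcal A_0^*T_1=\mathcal A_1^*T_1$. The operators $\mathcal A_i$ are built from different $(\eta_i,J_i)$, so the Fourier multiplier representation is needed to see that they agree on $T_1$, which lies in $W^{3,1}$ by Gaussian smoothing. Everything else reuses the estimates of Theorem~\ref{thm:two-jet}.
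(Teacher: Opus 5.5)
Your proposal is correct and follows the paper's proof essentially step for step. Definiteness comes from the nonvanishing of $\widehat p$; your substitution $s=r\xi$ followed by double differentiation is exactly the paper's computation of $b''$ in other notation. Contraction comes from $\mathsf Q_{p*\zeta}(S)=\mathsf Q_p(S)*\zeta$. The matched case subtracts the two expansions of Theorem~\ref{thm:two-jet}, and the symmetric case uses \eqref{eq:cancelled-fourth}. Your Fourier-multiplier check that $\mathcal A_0^*=\mathcal A_1^*$ makes explicit what the paper only asserts.

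One correction is needed in the last step. Symmetry of $\Pi$ in $z$ gives $J=0$ but says nothing about $\eta$, which is an independent datum of the lift. So ``balanced'' must be read as the full hypothesis $\alpha=0$, $\eta-D_yJ=0$ of Theorem~\ref{thm:cancellation}, which then forces $\eta=0$, and not merely as $\alpha=0$. With a common nonzero $\eta$, the first-order term survives and \eqref{eq:cancelled-fourth} does not apply.
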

\begin{proof}
The norm is finite since $\norm{\mathsf Q_p(S)}_1\leq
\norm{p'''}_1\norm S_{\mathrm{TV}}/24$. Let
$k(\xi,y)=-\xi^2\int_0^1(1-r)e^{ir\xi y}\dd r$ and
$b(\xi)=\tfrac12\int_\R\partial_y^2k(\xi,y)S(\dd y)$.
Fourier transformation gives
$i\xi\widehat{\mathsf Q_p(S)}(\xi)=b(-\xi)\widehat p(\xi)$.
Also $b''(\xi)=\xi^2\int_\R e^{i\xi y}S(\dd y)/2$.
Indeed, the zero initial values at $\xi=0$ give
\[
 \partial_y^2k(\xi,y)=\int_0^\xi(\xi-t)t^2e^{ity}\dd t,
 \qquad |\partial_y^2k(\xi,y)|\leq\frac{|\xi|^4}{12}.
\]
We may differentiate this integral twice, uniformly in $y$ on
compact $\xi$ intervals, using only $|S|(\R)<\infty$.
Since $\widehat p$ never vanishes, a zero response forces $b=0$.
It follows that $\widehat S=0$ away from the origin and hence,
by continuity, everywhere.
Uniqueness of Fourier transforms gives $S=0$. Linearity proves the remaining norm axioms, and
$\mathsf Q_{p*\zeta}(S)=\mathsf Q_p(S)*\zeta$ gives contraction.
Equal first tangents give the same $\mathcal A$ and $T_1$.
Subtract \eqref{eq:two-jet-L1} for the two paths: the composed term
$\mathcal A^*T_1/2$ cancels, leaving
$\varepsilon^2\mathsf Q_p(S_0-S_1)$ and a cubic remainder.
CDF duality proves \eqref{eq:matched-response}.
For balanced symmetric lifts, we use the same subtraction with the fourth-order
remainders in \eqref{eq:cancelled-fourth}.
\end{proof}
For example, use the velocity laws
\[
 \kappa_0=\frac{\delta_{-1}+\delta_1}{2},\qquad
 \kappa_1=\frac34\delta_0+\frac18(\delta_{-2}+\delta_2).
\]
Both satisfy $\int_\R z\kappa_i(\dd z)=0$ and $\int_\R z^2\kappa_i(\dd z)=1$, while
$\int_\R z^4\kappa_0(\dd z)=1$ and $\int_\R z^4\kappa_1(\dd z)=4$.
For the lifts $\Pi_i=\mu\otimes\kappa_i$, $S_0=S_1=\mu$:
each nontrivial approximation has quadratic error, but the distance between them
is $O(\varepsilon^4)$.
\subsection{A fixed-intensity atom and the role of the weight.}
\label{sec:raw-atom}
Moving an atom while keeping its raw Lévy intensity fixed changes both the location
and the mass of the variance-weighted atom.
\begin{proposition}[Raw fixed-intensity transport]\label{prop:raw-atom}
Fix $c,v>0$, $a\ne0$ and a mean $m$. Let the reference law have raw Lévy
measure $N=c\delta_a$ and density $p$. Keep the mean and Gaussian variance
fixed and set $N_\varepsilon=c\delta_{a+\varepsilon}$, with
$|\varepsilon|<|a|/2$. Then
\begin{align}
 T_1(x)&=c[p(x)-p(x-a)],\label{eq:raw-first}\\
 T_2(x)&=\tfrac c2p'(x-a)+\tfrac{c^2}{2}
                  [p'(x)-2p'(x-a)+p'(x-2a)].\label{eq:raw-second}
\end{align}
With $K_2=\norm{g_v''}_1$, we have
\begin{equation}\label{eq:raw-remainder}
 \norm{F_{\lambda_\varepsilon}-F_\lambda
                 -\varepsilon T_1-\varepsilon^2T_2}_1
 \leq K_2\left(\frac c6+c^2+\frac{4c^3}{3}\right)|\varepsilon|^3.
\end{equation}
In particular the first Wasserstein coefficient
$c\norm{p-p(\cdot-a)}_1$ is strictly positive.
\end{proposition}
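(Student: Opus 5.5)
The plan is to redo the interpolation argument of Theorem~\ref{thm:two-jet} directly for this explicit perturbation, rather than going through the weighted lift. The raw form makes the generator difference a finite-difference operator, and the constant in \eqref{eq:raw-remainder} involves only $K_2$.

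\textbf{Step 1: the exact generator difference.} Since the mean and $v$ are fixed and the jump part is fully compensated, $\Psi_\varepsilon-\Psi$ has the corresponding operator
\[
 \mathcal D_\varepsilon f(x)=c\bigl[f(x+a+\varepsilon)-f(x+a)\bigr]-c\varepsilon f'(x).
\]
Taylor expansion in $\varepsilon$ splits it as $\mathcal D_\varepsilon=\varepsilon\mathcal A+\varepsilon^2\mathcal B+\mathcal R_\varepsilon$, where
\[
 \mathcal Af(x)=c\bigl(f'(x+a)-f'(x)\bigr),\qquad
 \mathcal Bf(x)=\tfrac c2f''(x+a),\qquad
 \norm{\mathcal R_\varepsilon f}_\infty\leq\tfrac c6|\varepsilon|^3\norm{f'''}_\infty.
\]
All these operators are translation invariant, so they commute with each other and with $Q_u$ (Lemma~\ref{lem:interpolation}).

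\textbf{Step 2: identify the coefficients.} I will use the third-order interpolation formula from the proof of Theorem~\ref{thm:two-jet}:
\[
 (Q_1-Q_0)h=\mathcal D_\varepsilon Q_0h+\tfrac12\mathcal D_\varepsilon^2Q_0h+\tfrac12\int_0^1(1-u)^2\mathcal D_\varepsilon^3Q_uh\dd u.
\]
Set $q=Q_0h$ and use $q^{(j)}(t)=(-1)^{j-1}\int_\R h'(x)p^{(j-1)}(x-t)\dd x$. Then:
\begin{itemize}[nosep]
 \item $\mathcal Aq(0)=c(q'(a)-q'(0))=-\int_\R h'\,c[p-p(\cdot-a)]$, which gives \eqref{eq:raw-first};
 \item $\mathcal Bq(0)=\tfrac c2q''(a)=-\int_\R h'\,\tfrac c2p'(\cdot-a)$;
 \item $\mathcal A^2f=c^2\bigl(f''(\cdot+2a)-2f''(\cdot+a)+f''\bigr)$, so $\tfrac12\mathcal A^2q(0)$ produces the second bracket in \eqref{eq:raw-second}.
\end{itemize}
Thus $T_2$ is $\mathcal B^*$ plus $\tfrac12(\mathcal A^*)^2$ applied to $p$ and written as an antiderivative. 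This is consistent with $T_2=\mathsf Q_p(S)+\mathcal A^*T_1/2$ once the $\eta$-part of the tangent is included.

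\textbf{Step 3: bound the remainder using only second-difference norms.} Three error terms remain.
\begin{itemize}[nosep]
 \item $\mathcal R_\varepsilon Q_0h$ is bounded by $\tfrac c6|\varepsilon|^3K_2$, because $\norm{q'''}_\infty\leq\norm{g_v''}_1$ by Lemma~\ref{lem:convolution}.
 \item $\tfrac12(\mathcal D_\varepsilon^2-\varepsilon^2\mathcal A^2)Q_0h$ equals $\tfrac12(\mathcal D_\varepsilon-\varepsilon\mathcal A)(\mathcal D_\varepsilon+\varepsilon\mathcal A)Q_0h$. Here $\norm{(\mathcal D_\varepsilon-\varepsilon\mathcal A)f}_\infty\leq\tfrac c2\varepsilon^2\norm{f''}_\infty$ and $\norm{(\mathcal D_\varepsilon+\varepsilon\mathcal A)f'}_\infty\leq 4c|\varepsilon|\norm{f''}_\infty$; for the latter I write $\mathcal D_\varepsilon f'=c\int_0^\varepsilon[f''(\cdot+a+s)-f''(\cdot)]\dd s$. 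This part contributes at most $c^2K_2|\varepsilon|^3$.
 \item For the cubic term I write $\mathcal D_\varepsilon=c\int_0^\varepsilon(\tau_{a+s}-\tau_0)D\dd s$, with $\tau$ a translation. Then $\mathcal D_\varepsilon^3$ is a third-order finite difference of $D^3$ whose coefficients have total mass at most $(2c|\varepsilon|)^3$. After the factor $\tfrac12\int_0^1(1-u)^2\dd u=\tfrac16$, this gives $\tfrac{4c^3}{3}K_2|\varepsilon|^3$.
\end{itemize}

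The main obstacle is organizing this last step so that only $D^3Q_uh$ appears. Every $\mathcal D_\varepsilon$ must contribute exactly one derivative, so that no $K_3,K_4$ enter; bounding a general third-order finite difference naively would cost more derivatives. Once the pointwise test bound holds for Schwartz $h$, Lemma~\ref{lem:approximation} extends it to Lipschitz $h$, and CDF duality as in Theorem~\ref{thm:transport-response} yields \eqref{eq:raw-remainder}.

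\textbf{Positivity.} The first Wasserstein coefficient is $\norm{T_1}_1=c\norm{p-p(\cdot-a)}_1$. Its Fourier transform is $c\,\widehat p(\xi)(1-e^{-ia\xi})$. Since $\widehat p$ never vanishes and $a\neq0$, this is nonzero for small $\xi\neq0$, so $\norm{T_1}_1>0$.
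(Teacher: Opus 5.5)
Your proof is correct and follows the paper's argument essentially step for step: the same finite-difference generator $\mathcal D_\varepsilon$, the same third-order interpolation identity, and the same three estimates, each costing only $\|(Q_uh)'''\|_\infty\le K_2$. These are the Taylor remainder giving $\tfrac c6$, the factorization $(\mathcal D_\varepsilon-\varepsilon\mathcal A)(\mathcal D_\varepsilon+\varepsilon\mathcal A)$ giving $c^2$, and $\|\mathcal D_\varepsilon F\|_\infty\le 2c|\varepsilon|\,\|F'\|_\infty$ iterated three times giving $\tfrac16\cdot 8c^3$. The one real difference is positivity: your Fourier argument $\widehat{T_1}=c\,\widehat p\,(1-e^{-ia\xi})\ne0$ for small $\xi\ne0$ replaces the paper's observation that an $a$-periodic density cannot be integrable, and both work. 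Your aside on consistency with $\mathsf Q_p(S)+\mathcal A^*T_1/2$ is not needed, and it is slightly misstated: the $\eta$-part is already inside $\mathcal A$, and the match actually requires the second-order residual of the weighted path, which adds $\mathsf T_p(0,c\delta_a,2ca\delta_a)$ to $T_2$.
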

\begin{proof}
The generator difference and its first two coefficients are
\[
 \mathcal D_\varepsilon f
   =c[f(\cdot+a+\varepsilon)-f(\cdot+a)-\varepsilon f'],\quad
 \mathcal Af=c[f'(\cdot+a)-f'],\quad
 \mathcal Bf=\tfrac c2f''(\cdot+a).
\]
Taylor expansion gives
\begin{align*}
 \norm{\mathcal D_\varepsilon f}_\infty
 &\leq 2c|\varepsilon|\norm{f'}_\infty,\\
 \norm{(\mathcal D_\varepsilon-\varepsilon\mathcal A)f}_\infty
 &\leq \frac{c\varepsilon^2}{2}\norm{f''}_\infty,\\
 \norm{(\mathcal D_\varepsilon-\varepsilon\mathcal A-\varepsilon^2\mathcal B)f}_\infty
 &\leq \frac{c|\varepsilon|^3}{6}\norm{f'''}_\infty.
\end{align*}
Then, using commutation and these bounds, we obtain
\begin{align*}
 \norm{(\mathcal D_\varepsilon^2-\varepsilon^2\mathcal A^2)f}_\infty
 &\leq 2c^2|\varepsilon|^3\norm{f'''}_\infty,\\
 \norm{\mathcal D_\varepsilon^3f}_\infty
 &\leq 8c^3|\varepsilon|^3\norm{f'''}_\infty.
\end{align*}
The third-order interpolation identity and
$\norm{(Q_uh)'''}_\infty\leq K_2\Lip(h)$ yield
$K_2|\varepsilon|^3(c/6+c^2+4c^3/3)$.
The spatial adjoint is
$\mathcal A^*u=c[u'-u'(\cdot-a)]$.
Its first CDF response is \eqref{eq:raw-first}. Adding the primitive of
$\mathcal B^*p$ and $\mathcal A^*T_1/2$ gives \eqref{eq:raw-second}.
If $p=p(\cdot-a)$ almost everywhere, the positive integral over one
period would be repeated on every translated period, contradicting
$\int_\R p(x)\dd x=1$. Hence $T_1\ne0$.
\end{proof}
Here the weighted characteristic is
$\Gamma_\varepsilon=v\delta_0+c(a+\varepsilon)^2\delta_{a+\varepsilon}$.
Its first and second distributional coefficients are
\[
 \tau=2ca\delta_a-ca^2D_y\delta_a,\qquad
 \sigma=c\delta_a-2caD_y\delta_a+\tfrac12ca^2D_y^2\delta_a.
\]
For $\phi\in C_c^\infty(\R)$, these coefficients follow directly from
\[
 c(a+\varepsilon)^2\phi(a+\varepsilon)
 =ca^2\phi(a)+c\varepsilon[2a\phi(a)+a^2\phi'(a)]
 +c\varepsilon^2[\phi(a)+2a\phi'(a)+a^2\phi''(a)/2]
 +O_\phi(|\varepsilon|^3).
\]
\section{Balanced remeshing and law-adapted approximation.}\label{sec:remeshing}
\subsection{Balanced redistribution and second-order approximation.}
\begin{theorem}[Barycentric remeshing of weighted jump measures]
\label{thm:barycentric}
Let $\mu,\widehat\mu\in\cM_f^+(\R)$ have a coupling
$\pi\in\cM_f^+(\R^2)$, whose marginals are $\mu,\widehat\mu$,
with finite quadratic displacement cost
$\mathcal C_2=\int_{\R^2}|z-y|^2\pi(\dd y,\dd z)$. Suppose
\begin{equation}\label{eq:barycentric}
 \int_{\R^2} (z-y)f(y)\pi(\dd y,\dd z)=0
 \quad\text{for every bounded Borel }f:\R\to\R.
\end{equation}
For the terminal laws with common mean $m$ and joint characteristics
$v\delta_0+\mu$, $v\delta_0+\widehat\mu$, where $v>0$,
\begin{equation}\label{eq:barycentric-bound}
 W_1(\lambda,\widehat\lambda)
          \leq\frac{\norm{g_v'''}_1}{24}\,\mathcal C_2.
\end{equation}
The same bound holds if \eqref{eq:barycentric} is imposed with $f(z)$
instead of $f(y)$.
\end{theorem}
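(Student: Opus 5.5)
The argument follows the first-order interpolation identity of Lemma~\ref{lem:interpolation}, with the generator difference expanded to second order in the displacement; the barycentric condition \eqref{eq:barycentric} kills the linear term.

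Let $\lambda_0=\widehat\lambda$ and $\lambda_1=\lambda$, so $\Gamma_0=v\delta_0+\widehat\mu$ and $\Gamma_1=v\delta_0+\mu$. Both laws have mean $m$ and Gaussian variance $v$, so every interpolated law $\pi_u$ contains a factor $g_v$. For $h\in\cS$ with $\Lip(h)\leq1$ put $q_u=Q_uh$. As in the proof of Corollary~\ref{cor:joint}, the generator difference reduces to the pairing
\[
 (\cA_0-\cA_1)q_u(0)=\langle\Phi_{q_u,0},\widehat\mu-\mu\rangle
 =\int_{\R^2}\bigl[\Phi_{q_u,0}(z)-\Phi_{q_u,0}(y)\bigr]\pi(\dd y,\dd z),
\]
where the second equality uses that $\pi$ has marginals $\mu,\widehat\mu$ and that $\Phi_{q_u,0}$ is bounded.

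Write $F=\Phi_{q_u,0}$ and expand $F(z)-F(y)=(z-y)F'(y)+(z-y)^2\int_0^1(1-t)F''(y+t(z-y))\dd t$. The function $F'$ is bounded and continuous, since $|F'|\leq\norm{q_u'''}_\infty/6$, so \eqref{eq:barycentric} with $f=F'$ removes the linear term. Here $(z-y)F'(y)$ is $\pi$-integrable because $\pi$ is finite and $\mathcal C_2<\infty$, via Cauchy--Schwarz.

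For the quadratic term, the formula $\partial_y^2\Phi_{f,x}(y)=\int_0^1r^2(1-r)f^{(4)}(x+ry)\dd r$ gives $\norm{F''}_\infty\leq\norm{q_u^{(4)}}_\infty/12$. Lemma~\ref{lem:convolution}, in the Gaussian-factor form used in Proposition~\ref{ex:replacement}, gives $\norm{q_u^{(4)}}_\infty\leq\norm{g_v'''}_1$. Hence
\[
 |(\cA_0-\cA_1)q_u(0)|\leq\frac{\norm{g_v'''}_1}{12}\cdot\frac12\,\mathcal C_2
 =\frac{\norm{g_v'''}_1}{24}\,\mathcal C_2,
\]
uniformly in $u$.

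Integrating over $u\in[0,1]$ with \eqref{eq:interpolation-identity} bounds $|\langle h,\lambda-\widehat\lambda\rangle|$ by the same quantity. Lemma~\ref{lem:approximation} and dominated convergence, using finite first moments, extend this to all centered one-Lipschitz $h$, exactly as at the end of Theorem~\ref{thm:terminal}. Kantorovich duality \eqref{eq:KR} then gives \eqref{eq:barycentric-bound}.

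For the variant where \eqref{eq:barycentric} holds with $f(z)$, swap roles and expand about $z$ instead of $y$: write $F(z)-F(y)=-[F(y)-F(z)]$ and Taylor expand $F(y)$ around $z$. The linear term $(y-z)F'(z)$ vanishes by the hypothesis, and the quadratic remainder obeys the same bound.

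The main technical point is bookkeeping rather than depth. One needs $\int_0^1(1-t)\dd t\cdot\frac1{12}=\frac1{24}$ with the sharp derivative constant for $\partial_y^2\Phi$. One also needs \eqref{eq:barycentric} to apply to the continuous bounded $F'$, which holds since it is stated for all bounded Borel $f$. No smoothing assumption beyond $v>0$ is required, because the Gaussian factor controls $q_u^{(4)}$ for every $u$.
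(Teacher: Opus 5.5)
Your proposal is correct and follows the paper's proof essentially step for step. You write the generator difference as $\int[\Phi(z)-\Phi(y)]\,\pi(\dd y,\dd z)$, remove the linear Taylor term with the balance condition, and bound the remainder using $\norm{\partial_y^2\Phi_{f,x}}_\infty\leq\norm{f^{(4)}}_\infty/12$ together with $\norm{(Q_uh)^{(4)}}_\infty\leq\norm{g_v'''}_1$ from the Gaussian factor. You then conclude by the interpolation identity, Lipschitz approximation and duality, and obtain the $f(z)$ variant by reversing the coupling, exactly as the paper does.

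One cosmetic point: if $\mu$ or $\widehat\mu$ charges $0$, the interpolated Gaussian variance is at least $v$ rather than exactly $v$. This still gives the $g_v$ factor you use.
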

\begin{proof}
For $f\in C_b^4(\R)$, the common mean cancels the drift difference, while
balance cancels the first derivative term in the jump coordinate. Taylor's integral
formula then gives
\begin{align*}
 \mathcal Df(x)
 &=\int_{\R^2}[\Phi_{f,x}(z)-\Phi_{f,x}(y)]\pi(\dd y,\dd z)\\
 &\byeq{eq:barycentric}{=}
 \int_{\R^2}[\Phi_{f,x}(z)-\Phi_{f,x}(y)
              -(z-y)\partial_y\Phi_{f,x}(y)]\pi(\dd y,\dd z)\\
 &=\int_{\R^2}(z-y)^2\int_0^1(1-t)
       \partial_y^2\Phi_{f,x}(y+t(z-y))\dd t\,\pi(\dd y,\dd z).
\end{align*}
Since $\norm{\partial_y^2\Phi_{f,x}}_\infty
\leq\norm{f^{(4)}}_\infty/12$,
\[
 \norm{\mathcal Df}_\infty
 \leq\frac{\mathcal C_2}{24}\norm{f^{(4)}}_\infty.
\]
Every terminal interpolation law retains Gaussian variance $v$, so
$\norm{(Q_uh)^{(4)}}_\infty\leq\norm{g_v'''}_1\Lip(h)$.
The triangle inequality and the preceding Taylor bound therefore give
\begin{align*}
 |\langle h,\widehat\lambda-\lambda\rangle|
 &\byeq{eq:interpolation-identity}{=}
 \left|\int_0^1\mathcal DQ_uh(0)\dd u\right|
 \\ &\leq
 \int_0^1\norm{\mathcal DQ_uh}_\infty\dd u\\
 &\leq
 \frac{\mathcal C_2}{24}\int_0^1\norm{(Q_uh)^{(4)}}_\infty\dd u
 \\ &\leq
 \frac{\mathcal C_2}{24}\norm{g_v'''}_1\Lip(h).
\end{align*}
Lemmas~\ref{lem:interpolation} and~\ref{lem:approximation} justify the interpolation
and approximation steps. Kantorovich duality completes the proof.
Reversing the coupling proves the version with conditional balance given $z$.
\end{proof}
For $h>0$, $k\in\mathbb Z$, and $y\in[kh,(k+1)h]$, use
\[
 K_h(y,\dd z)=\frac{(k+1)h-y}{h}\delta_{kh}(\dd z)
             +\frac{y-kh}{h}\delta_{(k+1)h}(\dd z).
\]
Writing $d=y-kh\in[0,h]$ gives
\[
 \int_\R(z-y)K_h(y,\dd z)=\frac{h-d}{h}(-d)+\frac d h(h-d)=0,
 \quad
 \int_\R(z-y)^2K_h(y,\dd z)=d(h-d)\leq h^2/4.
\]
With $\widehat\mu_h=\int_\R K_h(y,\cdot)\mu(\dd y)$,
\begin{equation}\label{eq:grid-quadratic}
 W_1(\lambda,\widehat\lambda_h)
       \leq\frac{\norm{g_v'''}_1\mu(\R)}{96}\,h^2,\qquad
 \widehat N_h(\R)\leq \mu(\R)/h^2.
\end{equation}
Here $\widehat v_h=v+\widehat\mu_h(\{0\})$ and
$\widehat N_h=\one_{\{z\ne0\}}z^{-2}\widehat\mu_h$.
Indeed,
\[
 \widehat v_h+\int_\R z^2\widehat N_h(\dd z)=v+\widehat\mu_h(\R)=v+\mu(\R),
 \qquad
 \widehat N_h(\R)=\int_{\R\setminus\{0\}}z^{-2}\widehat\mu_h(\dd z)
 \leq h^{-2}\widehat\mu_h(\R).
\]
The approximation preserves mean and total variance and has finite
activity. If $\mu$ has compact support, there are also only finitely many jump sizes.
The bound requires only finite jump variance. Applying classical
barycentric splitting \citep{PagesWilbertz2012} to $y^2N$ makes its
second-order cancellation uniform over terminal Lipschitz observables.
\subsection{Sharpness of local balanced remeshing.}
Let $\mu\in\cM_f^+(\R)$ be supported on $[-R,R]$, where $R\geq0$,
and let $M=\mu(\R)>0$, $v>0$, and $L\geq1$. For $h>0$, define
\[
 \ell_h(y)=h\lfloor y/h\rfloor,\quad r_h(y)=\ell_h(y)+h,\qquad
 \mathcal V_h=\int_\R(y-\ell_h(y))(r_h(y)-y)\,\mu(\dd y).
\]
Let $\mathcal B_h\subset\cM_f^+(\R^2)$ be the balanced couplings with
first marginal $\mu$, second marginal $\widehat\mu$ on $h\mathbb Z$, and
$|z-y|\leq Lh$ almost everywhere. The law $\widehat\lambda_\pi$ has mean
$m$ and characteristic $v\delta_0+\widehat\mu$.
\begin{theorem}[Two-sided balanced-grid error]\label{thm:sharp-grid}
There are $h_0,c,C>0$, depending only on $v,M,R,L$, such that
\begin{equation}\label{eq:sharp-grid}
 c\mathcal V_h\leq
 \inf_{\pi\in\mathcal B_h}W_1(\lambda,\widehat\lambda_\pi)
 \leq C\mathcal V_h,\qquad 0<h\leq h_0.
\end{equation}
We may take $C=\norm{g_v'''}_1/24$ and $c=\gamma/2$, where
\begin{equation}\label{eq:grid-lower-constant}
 \xi=(1+R)^{-1},\qquad
 \gamma=\frac{\xi^3}{24}e^{-(v+M)\xi^2/2}\cos(R\xi)>0.
\end{equation}
Consequently, $\mathcal V_h\geq\nu h^2$ along a sequence, with $\nu>0$,
implies a matching $h^2$ upper and lower rate on that sequence.
\end{theorem}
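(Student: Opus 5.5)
The upper bound comes from one explicit plan. The lower bound comes from a Fourier test function combined with a pointwise lower bound on the conditional variance of any balanced grid plan.

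\emph{Upper bound.} Since $L\geq1$, the linear-interpolation coupling $\pi_h=\mu(\dd y)K_h(y,\dd z)$ of the previous subsection lies in $\mathcal B_h$. It is balanced and moves mass by at most $h$. Its quadratic cost is exactly
\[
 \mathcal C_2(\pi_h)=\int_\R d(h-d)\,\mu(\dd y)=\mathcal V_h .
\]
Theorem~\ref{thm:barycentric} then gives $W_1(\lambda,\widehat\lambda_{\pi_h})\leq\norm{g_v'''}_1\mathcal V_h/24$, and the infimum is no larger.

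\emph{Cost lower bound.} Fix $\pi\in\mathcal B_h$ and disintegrate $\pi(\dd y,\dd z)=\mu(\dd y)\kappa_y(\dd z)$. By balance $\int z\,\kappa_y(\dd z)=y$ for $\mu$-a.e.\ $y$, and $\kappa_y$ is carried by $h\mathbb Z$. Every grid point $z$ satisfies $(z-\ell_h(y))(z-r_h(y))\geq0$. Integrating against $\kappa_y$ and using the mean constraint gives
\[
 \int(z-y)^2\kappa_y(\dd z)\geq(y-\ell_h(y))(r_h(y)-y),
\]
hence $\mathcal C_2(\pi)\geq\mathcal V_h$. Also $\mathcal C_2(\pi)\leq L^2h^2M$, which is small for small $h$.

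\emph{Fourier test.} I would test against $x\mapsto\xi^{-1}\cos(\xi(x-m))$ and $x\mapsto\xi^{-1}\sin(\xi(x-m))$, both of which are $1$-Lipschitz. Combining the two gives
\[
 W_1(\lambda,\widehat\lambda_\pi)\geq\xi\,|\widehat\lambda(\xi)-\widehat\lambda_\pi(\xi)|/\sqrt2,
\]
or, keeping one real part only, $W_1\geq\xi^{-1}|\operatorname{Re}(\cdots)|$; I will choose whichever matches the constant $\gamma$. Write $\widehat\lambda_\pi/\widehat\lambda=e^{w}$ with
\[
 w=\int[k(\xi,z)-k(\xi,y)]\,\pi(\dd y,\dd z),\qquad
 k(\xi,y)=-\xi^2\int_0^1(1-r)e^{ir\xi y}\dd r .
\]
Balance removes the first-order term in $z-y$. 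A second-order Taylor expansion as in Theorem~\ref{thm:barycentric} then gives
\[
 w=\tfrac12\int(z-y)^2\,\partial_y^2k(\xi,y)\,\pi+O\bigl(\xi^5Lh\,\mathcal C_2\bigr),
 \qquad
 \partial_y^2k(\xi,y)=\xi^4\int_0^1r^2(1-r)e^{ir\xi y}\dd r .
\]
For $|y|\leq R$ and $\xi=(1+R)^{-1}$ we have $r\xi|y|\leq R\xi<1$. Hence $\operatorname{Re}\partial_y^2k\geq\xi^4\cos(R\xi)/12$, which yields
\[
 \operatorname{Re}w\geq\tfrac{\xi^4}{24}\cos(R\xi)\,\mathcal C_2-O(h)\,\mathcal C_2 .
\]
Next, $|\widehat\lambda(\xi)|\geq e^{-(v+M)\xi^2/2}$, because $|e^{i\xi y}-1-i\xi y|\leq\xi^2y^2/2$ bounds the real part of the exponent. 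Since $|w|=O(h^2)$, we have $|1-e^w|\geq\operatorname{Re}w\,(1-O(h^2))$.

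\emph{Assembling the lower bound.} Putting these together gives $W_1\geq\gamma\,\mathcal C_2(1-O(h))$ up to the choice of test normalisation. I then choose $h_0$ so that the $O(h)$ Taylor error and the $e^w$ linearisation error cost at most half, which gives $c=\gamma/2$. Combining with $\mathcal C_2(\pi)\geq\mathcal V_h$ and taking the infimum over $\pi\in\mathcal B_h$ proves the left inequality of \eqref{eq:sharp-grid}. The final consequence is then immediate: $\mathcal V_h\geq\nu h^2$ along a sequence forces both sides of \eqref{eq:sharp-grid} to be of order $h^2$ there.

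The main obstacle is the Fourier lower bound: controlling the third-order Taylor remainder and the nonlinearity of $e^w$ uniformly over all plans in $\mathcal B_h$, and getting exactly the stated constant $\gamma$. The locality bound $|z-y|\leq Lh$ and the compact support of $\mu$ are what make these errors $O(h)\,\mathcal C_2$.
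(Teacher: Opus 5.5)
Your upper bound and the cost inequality $\mathcal C_2(\pi)\geq\mathcal V_h$ coincide with the paper's. Your lower bound takes a genuinely different route, and it is valid.

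\textbf{How the two lower bounds differ.} The paper first linearises in $L^1$, uniformly over all Lipschitz tests, using the interpolation estimate behind \eqref{eq:cancelled-third}: $\norm{F_{\widehat\lambda_\pi}-F_\lambda-\mathsf Q_p(S_\pi)}_1\leq K_4C_3/120+K_7C_2^2/1152$. Only afterwards does it use $\norm{\mathsf Q_p(S_\pi)}_1\geq|\widehat{\mathsf Q_p(S_\pi)}(\xi)|$. You instead apply a single-frequency bound directly to the exact CDF difference, and you expand the exact exponent $w$ at that one frequency only.

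Your two error terms are the exact counterparts of the paper's $K_4C_3/120$ and $K_7C_2^2/1152$:
\begin{itemize}
\item the cubic Taylor remainder $\xi^5Lh\,\mathcal C_2/120$;
\item the linearisation error $|e^w-1-w|\leq\frac12|w|^2e^{|w|}$, where $|w|\leq\xi^4\mathcal C_2/24$ by balance.
\end{itemize}

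\textbf{What each approach buys.} Your argument is more elementary and self-contained. It needs no interpolation identity, and $h_0$ involves no Gaussian derivative norms. The paper's argument reuses machinery it has already built and yields more. It gives $W_1(\lambda,\widehat\lambda_\pi)=\norm{\mathsf Q_p(S_\pi)}_1+O(h\,C_2)$ uniformly over $\mathcal B_h$, which identifies the leading term rather than only bounding it.

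\textbf{A step to repair for $c=\gamma/2$.} Neither test normalisation you list delivers exactly $\gamma$.
\begin{itemize}
\item Combining the cosine and sine tests loses a factor $\sqrt2$. Your display also has $\xi$ where $\xi^{-1}$ is meant.
\item Keeping only the real part after centring at $m$ also fails. The quantity $e^{-im\xi}\widehat\lambda(\xi)$ still carries a phase coming from $\int(\sin\xi y-\xi y)N(\dd y)$, and $w$ is itself complex. That real part can therefore be much smaller than the modulus, or even vanish.
\end{itemize}

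The fix is to test against $x\mapsto\xi^{-1}\cos(\xi x-\theta)$, which is $1$-Lipschitz. Take $\theta$ to be the argument of $\int e^{i\xi x}(\widehat\lambda_\pi-\lambda)(\dd x)$. This gives $W_1\geq\xi^{-1}|\widehat\lambda_\pi(\xi)-\widehat\lambda(\xi)|$ with no loss. It is the same as the paper's inequality $\norm{\cdot}_1\geq|\widehat{\,\cdot\,}(\xi)|$ applied to the CDF difference $F_{\widehat\lambda_\pi}-F_\lambda$, whose Fourier transform at $\xi$ has modulus $\xi^{-1}|\widehat\lambda_\pi(\xi)-\widehat\lambda(\xi)|$.

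With this test, $\xi^{-1}e^{-(v+M)\xi^2/2}\cdot\xi^4\cos(R\xi)/24=\gamma$. Your choice of $h_0$, absorbing the two errors into half of this, then gives $c=\gamma/2$ as stated.
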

\begin{proof}
For $z\in h\mathbb Z$, $(z-\ell_h(y))(z-r_h(y))\geq0$. Expand
\[
 (z-\ell_h)(z-r_h)
 =(z-y)^2+(2y-\ell_h-r_h)(z-y)-(y-\ell_h)(r_h-y).
\]
Since $|2y-\ell_h-r_h|\leq h$, balance removes the middle term after
integration. Hence
\begin{equation}\label{eq:calc-second-grid-cost}
 C_2:=\int_{\R^2}(z-y)^2\pi(\dd y,\dd z)\geq\mathcal V_h.
\end{equation}
Splitting between the two bracketing neighbours attains this minimum
and belongs to $\mathcal B_h$. The upper bound now follows from
Theorem~\ref{thm:barycentric}.
For Borel $A\subset\R$, define $S_\pi(A)=\int_{A\times\R}(z-y)^2\pi(\dd y,\dd z)$.
For every positive $S$ supported on $[-R,R]$, positivity and
$R\xi<1<\pi/2$ give, for $0\leq r\leq1$,
\begin{align}
 \operatorname{Re}\widehat S(r\xi)
 &=\int_\R\cos(r\xi y)S(\dd y)
 \geq\cos(R\xi)S(\R).
 \nonumber\\
\intertext{Moreover, the elementary inequality $\cos t\geq1-t^2/2$ gives}
 \operatorname{Re}\Psi(\xi)
 &=-\frac{v\xi^2}{2}+\int_\R\frac{\cos(\xi y)-1}{y^2}\mu(\dd y)
 \geq
 -\frac{(v+M)\xi^2}{2}.
 \label{eq:calc-second-characteristic-bound}
\end{align}
At $y=0$, the quotient is interpreted as $-\xi^2/2$.
We now bound a Fourier transform by the $L^1$ norm, and its
modulus from below by the positive real part. This gives
\begin{align*}
 \norm{\mathsf Q_p(S)}_1
 &\geq|\widehat{\mathsf Q_p(S)}(\xi)|
 \byeq{eq:calc-second-fourier}{=}
 \frac{\xi^3|\widehat p(\xi)|}{2}
 \left|\int_0^1r^2(1-r)\widehat S(r\xi)\dd r\right|\\
 &\geq
 \frac{\xi^3|\widehat p(\xi)|\cos(R\xi)}{24}S(\R)\\
 &\byeq{eq:calc-second-characteristic-bound}{\geq}
 \frac{\xi^3e^{-(v+M)\xi^2/2}\cos(R\xi)}{24}S(\R)
 \byeq{eq:grid-lower-constant}{=}\gamma S(\R).
\end{align*}
The same interpolation calculation as \eqref{eq:cancelled-third}, applied
to $U=z-y$, yields
\[
 \norm{F_{\widehat\lambda_\pi}-F_\lambda-\mathsf Q_p(S_\pi)}_1
 \leq K_4C_3/120+K_7C_2^2/1152,\quad
 C_3=\int_{\R^2}|U|^3\pi(\dd y,\dd z),\quad K_j=\norm{g_v^{(j)}}_1.
\]
Since $C_3\leq LhC_2$ and $C_2\leq ML^2h^2$, choose $h_0$ so that
$K_4Lh_0/120+K_7ML^2h_0^2/1152\leq\gamma/2$.
Using CDF duality and the reverse triangle inequality, then these
bounds and the choice $h\leq h_0$, we see that every feasible plan satisfies
\begin{align*}
 W_1(\lambda,\widehat\lambda_\pi)
 &=\norm{F_{\widehat\lambda_\pi}-F_\lambda}_1\\
 &\geq
 \norm{\mathsf Q_p(S_\pi)}_1
 -\norm{F_{\widehat\lambda_\pi}-F_\lambda-\mathsf Q_p(S_\pi)}_1\\
 &\geq
 \left(\gamma-\frac{K_4Lh}{120}
                   -\frac{K_7ML^2h^2}{1152}\right)C_2
 \geq\frac{\gamma C_2}{2}
 \byeq{eq:calc-second-grid-cost}{\geq}\frac{\gamma\mathcal V_h}{2}.
\end{align*}
\end{proof}
For $\mu=M\delta_a$, $a>0$, taking $h_n=a/(n+\theta)$ with fixed
$0<\theta<1$ gives $\mathcal V_{h_n}=M\theta(1-\theta)h_n^2$.
If $\mu(\dd y)=w(y)\dd y$ with compactly supported $w\in L^1(\R)$, set
$q(t)=\{t\}(1-\{t\})$, where $\{t\}=t-\lfloor t\rfloor$. Then
\[
 \frac{\mathcal V_h}{h^2}=\int_\R w(y)q(y/h)\dd y,
 \quad \int_0^1q(t)\dd t=\frac16,
 \quad\left|\int_a^bq(y/h)\dd y-\frac{b-a}{6}\right|=O(h).
\]
Finite step functions satisfy the desired limit. For any such function
$w_n$, the approximation error is bounded uniformly in $h$ by
$\norm{w-w_n}_1/4$, proving $\mathcal V_h/h^2\to M/6$.
Atomic sources supported on the grid have $\mathcal V_h=0$ and can be reproduced
exactly. The two-sided bound therefore depends on alignment and on
fixed $v>0$ within the local balanced class.
\subsection{The same weak error can have two different observable orders.}
Fix $a\ne0$, $M>0$, $v>0$, and a mean $m$. Start from
$\Gamma=v\delta_0+M\delta_a$, with density $p$, and compare the paths
\begin{equation}\label{eq:two-transport-paths}
 \Gamma_\varepsilon^{\mathrm{move}}=v\delta_0+M\delta_{a+\varepsilon},\qquad
 \Gamma_\varepsilon^{\mathrm{split}}
       =v\delta_0+\tfrac M2(\delta_{a-\varepsilon}+\delta_{a+\varepsilon}).
\end{equation}
The mean remains $m$. Restrict $0<|\varepsilon|<|a|/2$ for convenience.
Both paths preserve total variance and have exactly the same weak error,
\begin{equation}\label{eq:equal-weak-distinct-order}
 d_{\FM}(\Gamma_\varepsilon^{\mathrm{move}},\Gamma)
 =d_{\FM}(\Gamma_\varepsilon^{\mathrm{split}},\Gamma)
 =\frac{2M|\varepsilon|}{2+|\varepsilon|}.
\end{equation}
For the split path, every $\norm f_{\BL}\leq1$ gives
\[
 \left|\frac{f(a-\varepsilon)+f(a+\varepsilon)}2-f(a)\right|
 \leq\min\{2\norm f_\infty,|\varepsilon|\Lip(f)\}
 \leq\frac{2|\varepsilon|}{2+|\varepsilon|}.
\]
Equality is attained by
$f(y)=\min\{A,-A+B|y-a|\}$, with
$A=|\varepsilon|/(2+|\varepsilon|)$ and $B=2/(2+|\varepsilon|)$.
The moving case follows from Lemma~\ref{lem:atoms}.
\begin{proposition}[Nonzero first and second observable coefficients]
\label{prop:split-order}
For the corresponding laws,
\begin{align}
 W_1(\lambda_\varepsilon^{\mathrm{move}},\lambda)
   &=|\varepsilon|C_1+O(\varepsilon^2),
       &\text{with}\quad C_1&=M\left\|\int_0^1r(1-r)p''(\cdot-ra)\dd r\right\|_1>0,
           \label{eq:move-order}\\
 W_1(\lambda_\varepsilon^{\mathrm{split}},\lambda)
   &=\varepsilon^2C_2+O(\varepsilon^4),
       &\text{with}\quad C_2&=\frac M2\left\|\int_0^1r^2(1-r)p'''(\cdot-ra)\dd r\right\|_1>0.
           \label{eq:split-order}
\end{align}
The absolute remainder in \eqref{eq:split-order} is bounded by
$\varepsilon^4(MK_5/720+M^2K_7/1152)$, where
$K_j=\norm{g_v^{(j)}}_1$.
\end{proposition}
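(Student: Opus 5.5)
Both expansions will be obtained by specialising the lifted-path results to explicit lifts of the two paths in \eqref{eq:two-transport-paths}. The reference characteristic is $\Gamma=v\delta_0+\mu$ with $\mu=M\delta_a$ and $v_*=v$. Since $|\varepsilon|<|a|/2$, no mass reaches zero, so every parameter is admissible.

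For the moving path we take $\Pi=M\delta_{(a,1)}$, $\alpha=0$ and $\eta=0$. Then $J=M\delta_a$, $S=M\delta_a$ and $\tau=-MD_y\delta_a$. Theorem~\ref{thm:transport-response} gives
\[
 W_1(\lambda_\varepsilon^{\mathrm{move}},\lambda)=|\varepsilon|\norm{T_1}_1+O(\varepsilon^2),
\]
with the explicit constant \eqref{eq:transport-constant}. Here \eqref{eq:transport-cdf} reduces to
\[
 T_1(x)=-M\int_0^1r(1-r)p''(x-ra)\dd r,
\]
so $\norm{T_1}_1=C_1$. For positivity, $(\alpha,\tau)=(0,-MD_y\delta_a)\ne(0,0)$, because $D_y\delta_a$ pairs nontrivially with any test $\phi$ satisfying $\phi'(a)\ne0$. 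Proposition~\ref{prop:tangent-quotient} then gives $C_1>0$.

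For the split path we take the symmetric lift $\Pi=M\delta_a\otimes\tfrac12(\delta_{-1}+\delta_1)$ with $\alpha=0$ and $\eta=0$. This lift is invariant under $z\mapsto-z$, so $J=0$ and $\tau=0$. Its moments are $M_2=M_4=M$ and $S=M\delta_a$, and it reproduces $(S_\varepsilon)_\#\Pi=\tfrac M2(\delta_{a-\varepsilon}+\delta_{a+\varepsilon})$. The symmetric case of Theorem~\ref{thm:cancellation}, \eqref{eq:cancelled-fourth}, then gives
\[
 \norm{F_{\lambda_\varepsilon}-F_\lambda-\varepsilon^2\mathsf Q_p(S)}_1\leq\varepsilon^4\bigl(K_5M/720+K_7M^2/1152\bigr).
\]
By the CDF isometry and the reverse triangle inequality, this yields \eqref{eq:split-order} with exactly the stated remainder. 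From \eqref{eq:second-observable},
\[
 \mathsf Q_p(M\delta_a)=\tfrac M2\int_0^1r^2(1-r)p'''(\cdot-ra)\dd r,
\]
so $\norm{\mathsf Q_p(S)}_1=C_2$, and $C_2>0$ follows from the positivity clause of Theorem~\ref{thm:cancellation} since $M_2=M>0$.

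No step is a real obstacle; the work is bookkeeping. One check is that the symmetric remainder in Theorem~\ref{thm:cancellation} uses $M_4$ with $M_4=M$ here, so the constant matches the statement exactly. The other is that the sign convention $\tau=\eta-D_yJ$ and the assignments of $J$ and $S$ reproduce the two displayed integrals; signs do not affect the $L^1$ norms. The $O(\varepsilon^2)$ remainder in \eqref{eq:move-order} needs no explicit constant beyond \eqref{eq:transport-constant}, with $d_1=M$ and $M_2(\Pi)=M$.
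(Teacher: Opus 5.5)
Your proposal is correct and takes essentially the same route as the paper. The paper also treats the moving path through Theorem~\ref{thm:transport-response} with $J=M\delta_a$ and $\eta=0$, getting $C_1>0$ from Proposition~\ref{prop:tangent-quotient}, and treats the symmetric split through Theorem~\ref{thm:cancellation} with $J=0$, $S=M\delta_a$ and $M_2=M_4=M$, which gives both $C_2>0$ and the stated $\varepsilon^4$ remainder; your explicit lifts and the admissibility check for $|\varepsilon|<|a|/2$ simply fill in details that the paper's short proof leaves implicit.
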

\begin{proof}
For the moving path, $J=M\delta_a$ and $\eta=0$.
Theorem~\ref{thm:transport-response} and Proposition~\ref{prop:tangent-quotient} show that $C_1>0$.
For the symmetric split, $J=0$ and $S=M\delta_a$. Its second and fourth
absolute displacement moments both equal $M$.
Theorem~\ref{thm:cancellation} gives $C_2>0$, with the exact fourth-order
remainder stated above.
\end{proof}
Thus equal weak discrepancies can produce linear or quadratic terminal
errors at the same reference law. The split path has $J=0$ and
$M_2>0$, so no uniform local lower bound proportional to $d_{\FM}$ can
hold. Section~\ref{sec:sharp} instead gives sharp worst-case bounds.
\subsection{A law-adapted remeshing objective.}
\label{sec:algorithm}
To optimize terminal error, use the following family of response functions
\[
 \mapdef{A_p}{\R}{L^1(\R)}{y}{\displaystyle
       \left(x\longmapsto\int_0^1(1-r)p'(x-ry)\dd r\right)}.
\]
Explicitly,
\begin{equation}\label{eq:response-dictionary}
 A_p(y,x)=\begin{cases}
 \displaystyle\frac{F_\lambda(x-y)-F_\lambda(x)+yp(x)}{y^2},&y\ne0,\\[4pt]
 p'(x)/2,&y=0.
 \end{cases}
\end{equation} For every integer $k\geq0$,
\[
 \norm{D_x^kA_p(y,\cdot)}_1\leq\frac12\norm{p^{(k+1)}}_1,
 \quad
 \partial_yA_p(y,x)=-\int_0^1r(1-r)p''(x-ry)\dd r,
 \quad \partial_y^2A_p(y,x)=2\mathsf Q_p(\delta_y)(x).
\]
For $\pi\in\cM_f^+(\R^2)$ with marginals $\mu,\widehat\mu$, keep $m,v$
fixed and set
\begin{equation}\label{eq:remeshing-objective}
 H_\pi(x)=\int_{\R^2}[A_p(z,x)-A_p(y,x)]\pi(\dd y,\dd z),\qquad
 \mathcal J_p(\pi)=\norm{H_\pi}_1.
\end{equation}
Since $H_\pi=\int_\R A_p(y,\cdot)(\widehat\mu-\mu)(\dd y)$,
$\mathcal J_p$ depends only on the marginal difference. The coupling
determines the constraints and controls the error when we use this response
norm to approximate $W_1$.
\begin{theorem}[Certified optimization of terminal response]
\label{thm:remeshing-oracle}
Let $M=\mu(\R)<\infty$, $v>0$, and let $\mathcal C_h$ be a nonempty
family of couplings with first marginal $\mu$, all supported on
$|z-y|\leq h$. The output law has joint characteristic
$v\delta_0+\widehat\mu$ and mean $m$. Put $K_j=\norm{g_v^{(j)}}_1$.
For every $\pi\in\mathcal C_h$,
\begin{equation}\label{eq:remeshing-remainder}
 \left|W_1(\lambda,\widehat\lambda_\pi)-\mathcal J_p(\pi)\right|
 \leq R_h,\qquad
 R_h=\tfrac12(K_3/4+K_4/6+K_5/36)M^2h^2.
\end{equation}
If every coupling is balanced as in \eqref{eq:barycentric}, we may instead
use
\begin{equation}\label{eq:balanced-oracle-remainder}
 R_h=K_7M^2h^4/1152.
\end{equation}
If $\pi_*$ minimizes $\mathcal J_p$ over $\mathcal C_h$, then
\begin{equation}\label{eq:oracle}
 W_1(\lambda,\widehat\lambda_{\pi_*})
 \leq\inf_{\pi\in\mathcal C_h}W_1(\lambda,\widehat\lambda_\pi)+2R_h.
\end{equation}
An approximate objective $\widehat{\mathcal J}$ with uniform error at most
$e_q$, minimized to tolerance $e_o$, adds $2e_q+e_o$ to the right-hand side.
\end{theorem}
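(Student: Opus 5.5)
The plan is to read $\mathcal J_p(\pi)$ as the first-order response norm of Corollary~\ref{cor:metric-response} with $\delta m=0$ and $\kappa=\widehat\mu-\mu$. Then the comparison of $W_1$ with $\mathcal J_p$ follows from Theorem~\ref{thm:response}, or from the cancellation estimates in the balanced case. The oracle inequality is then a standard three-term comparison.

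\emph{Step 1: general couplings.} With $p$ the density of $\lambda$ and common mean $m$, the response function \eqref{eq:response-cdf} is
\[
 H_{0,\widehat\mu-\mu}(x)=\int_\R\int_0^1(1-r)p'(x-ry)\dd r\,(\widehat\mu-\mu)(\dd y)=\int_\R A_p(y,x)(\widehat\mu-\mu)(\dd y)=H_\pi(x),
\]
so $\norm{H_\pi}_1=\mathcal J_p(\pi)$. Here the sign convention must be checked: the reference law is $\lambda$ and the perturbed law is $\widehat\lambda_\pi$; since only the absolute value enters, this orientation is harmless. Both characteristics keep Gaussian variance at least $v$, so $v_*=v$ is admissible. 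Corollary~\ref{cor:metric-response} then gives
\[
 |W_1-\mathcal J_p(\pi)|\leq\tfrac12(K_3/4+K_4/6+K_5/36)\norm{\widehat\mu-\mu}_{\BL^*}^2 .
\]
It remains to bound $\norm{\widehat\mu-\mu}_{\BL^*}$. For $\norm f_{\BL}\leq1$ we have
\[
 |\langle f,\widehat\mu-\mu\rangle|=\Big|\int(f(z)-f(y))\,\pi(\dd y,\dd z)\Big|\leq\int|z-y|\,\dd\pi\leq h\,\pi(\R^2)=Mh,
\]
using that the first marginal has mass $M$. This yields $R_h$ as stated.

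\emph{Step 2: balanced couplings.} Here the first-order term should vanish, so $\mathcal J_p(\pi)$ itself becomes the $\varepsilon^2$ object. I would redo the interpolation argument of Theorem~\ref{thm:cancellation}. Let $\mathcal D$ be the generator difference, which acts through $\kappa=\widehat\mu-\mu$, so that $\mathcal D f(x)=\langle\Phi_{f,x},\kappa\rangle$. The exact second-order interpolation identity \eqref{eq:second-interpolation} gives
\[
 (Q_1-Q_0)h=\mathcal DQ_0h+\int_0^1(1-u)\mathcal D^2Q_uh\dd u .
\]
The term $\mathcal DQ_0h(0)$ equals $-\int h'H_\pi$ exactly, by the integration by parts in Corollary~\ref{cor:metric-response}. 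So the only remainder is the $\mathcal D^2$ term. Balance and the Taylor computation in Theorem~\ref{thm:barycentric} give
\[
 \norm{\mathcal Df}_\infty\leq\frac{\mathcal C_2}{24}\norm{f^{(4)}}_\infty,
\]
and hence $\norm{\mathcal D^2f}_\infty\leq(\mathcal C_2/24)^2\norm{f^{(8)}}_\infty$. The constant $\mathcal C_2\leq Mh^2$ holds by the support condition. With $\norm{(Q_uh)^{(8)}}_\infty\leq K_7$ and $\int_0^1(1-u)\,\dd u=1/2$, this gives $K_7M^2h^4/1152$. Duality over Lipschitz tests via Lemma~\ref{lem:approximation}, as before, converts the bound to a bound on $|W_1-\mathcal J_p|$.

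\emph{Step 3: oracle inequality.} For any $\pi\in\mathcal C_h$,
\[
 W_1(\lambda,\widehat\lambda_{\pi_*})\leq\mathcal J_p(\pi_*)+R_h\leq\mathcal J_p(\pi)+R_h\leq W_1(\lambda,\widehat\lambda_\pi)+2R_h .
\]
Taking the infimum over $\pi$ gives \eqref{eq:oracle}. With an approximate objective, the chain becomes
\[
 \mathcal J_p(\widehat\pi)\leq\widehat{\mathcal J}(\widehat\pi)+e_q\leq\inf\widehat{\mathcal J}+e_o+e_q\leq\mathcal J_p(\pi)+2e_q+e_o,
\]
and inserting this into the same chain adds $2e_q+e_o$.

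The main obstacle is Step 2: confirming that the first-order term is exactly $\mathcal J_p$ with no leftover linear term, and that the balanced remainder is genuinely quartic. This rests on the commutation identities and the exact Fourier interpolation, which were already used in Theorem~\ref{thm:cancellation}. The point to check is that $\mathcal D$ is now a single operator with no $\varepsilon$-parametrization, which simplifies that argument rather than complicating it.
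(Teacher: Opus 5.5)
Your proposal is correct and follows the paper's proof essentially step for step. It uses the bound $\norm{\widehat\mu-\mu}_{\BL^*}\leq Mh$ in the affine remainder of Theorem~\ref{thm:response} with $\delta m=0$. In the balanced case it iterates $\norm{\mathcal Df}_\infty\leq\mathcal C_2\norm{f^{(4)}}_\infty/24$, with $\mathcal C_2\leq Mh^2$, inside the second-order interpolation identity \eqref{eq:second-interpolation}. It closes with the same three-term oracle chain and $2e_q+e_o$ bookkeeping, and your identification $H_{0,\widehat\mu-\mu}=H_\pi$, with $\lambda$ as the reference law, matches the paper's orientation.
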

\begin{proof}
For $\norm f_{\BL}\leq1$,
\[
 |\langle f,\widehat\mu-\mu\rangle|
 =\left|\int_{\R^2}[f(z)-f(y)]\pi(\dd y,\dd z)\right|
 \leq\Lip(f)\int_{\R^2}|z-y|\pi(\dd y,\dd z)\leq Mh.
\]
Thus $\norm{\widehat\mu-\mu}_{\BL^*}\leq Mh$.
The affine remainder \eqref{eq:response-bound}, with zero mean change,
and its $L^1$ representation give \eqref{eq:remeshing-remainder}.
In the balanced case the exact generator difference satisfies
\[
 \norm{\mathcal Df}_\infty\leq C_2\norm{f^{(4)}}_\infty/24,
 \qquad \norm{\mathcal D^2f}_\infty\leq C_2^2\norm{f^{(8)}}_\infty/576,
 \qquad C_2=\int_{\R^2}|z-y|^2\pi(\dd y,\dd z)\leq Mh^2.
\]
The second interpolation identity therefore bounds the error of the
approximation $H_\pi$ by $K_7C_2^2/1152$. This proves
\eqref{eq:balanced-oracle-remainder}.
For any feasible $\pi$, apply the remainder bound at $\pi_*$, use its minimizing
property, and then apply the remainder bound at $\pi$:
\[
 W_1(\lambda,\widehat\lambda_{\pi_*})
 \byeq{eq:remeshing-remainder}{\leq}\mathcal J_p(\pi_*)+R_h
 \leq\mathcal J_p(\pi)+R_h
 \byeq{eq:remeshing-remainder}{\leq}
 W_1(\lambda,\widehat\lambda_\pi)+2R_h.
\]
Taking the infimum proves \eqref{eq:oracle}.
If $|\widehat{\mathcal J}(\pi)-\mathcal J_p(\pi)|\leq e_q$ and
$\widehat{\mathcal J}(\widehat\pi)
\leq\inf_{\pi\in\mathcal C_h}\widehat{\mathcal J}(\pi)+e_o$,
use the uniform objective error bound on either side of the approximate
minimization inequality. Together with the terminal remainder, this gives
\begin{align*}
 W_1(\lambda,\widehat\lambda_{\widehat\pi})
 &\byeq{eq:remeshing-remainder}{\leq}
 \mathcal J_p(\widehat\pi)+R_h
 \leq
 \widehat{\mathcal J}(\widehat\pi)+e_q+R_h\\
 &\leq
 \inf_{\pi\in\mathcal C_h}\widehat{\mathcal J}(\pi)
      +e_o+e_q+R_h
 \leq
 \inf_{\pi\in\mathcal C_h}\mathcal J_p(\pi)+e_o+2e_q+R_h\\
 &\byeq{eq:remeshing-remainder}{\leq}
 \inf_{\pi\in\mathcal C_h}W_1(\lambda,\widehat\lambda_\pi)
       +2R_h+2e_q+e_o.
\end{align*}
\end{proof}
For balanced plans, the optimization gap within $\mathcal C_h$ is fourth
order, while the terminal error can remain quadratic.
For $\mu=\sum_{i=1}^{I}M_i\delta_{y_i}$, $M_i>0$, and a finite grid
$\{z_j:1\leq j\leq J\}\subset\R$, use
$\pi=(\pi_{ij})\in[0,\infty)^{I\times J}$ with
\begin{equation}\label{eq:LP-constraints}
 \sum_j\pi_{ij}=M_i,\qquad
 \pi_{ij}=0\ \text{if }|z_j-y_i|>h,
 \qquad \sum_j(z_j-y_i)\pi_{ij}=0\quad\text{in the balanced case}.
\end{equation}
For quadrature nodes $x_\ell\in\R$, weights $\omega_\ell>0$,
$1\leq\ell\leq q$, and auxiliary variables $t\in\R^q$, minimize
$\sum_{\ell=1}^q\omega_\ell t_\ell$ subject to
\[
 -t_\ell\leq\sum_{ij}\pi_{ij}
       [A_p(z_j,x_\ell)-A_p(y_i,x_\ell)]\leq t_\ell.
\]
This is a linear program. The feasible transport polytope is closed and
$0\leq\pi_{ij}\leq M_i$, hence compact. The exact objective is continuous,
since
\[
 |\mathcal J_p(\pi)-\mathcal J_p(\widetilde\pi)|
 \leq\sum_{i,j}|\pi_{ij}-\widetilde\pi_{ij}|
       \norm{A_p(z_j,\cdot)-A_p(y_i,\cdot)}_1.
\]
Thus it also attains a minimum. Two fixed bracketing neighbours determine
the classical unique balanced split \citep{PagesWilbertz2012}. Additional
nodes, grid selection, or further constraints create an optimization
problem.
\begin{proposition}[Consistency of the finite-grid program]
\label{prop:LP-consistency}
Fix a finite source and grid, and a nonempty feasible polytope
\eqref{eq:LP-constraints}. Use the exact response functions and midpoint quadrature
on $[-L_n,L_n]$ of step $\Delta_n$, with $L_n\to\infty$ and
$\Delta_n\to0$. Then the discrete objectives converge uniformly to
$\mathcal J_p$ on this polytope. Their minimum values converge, and every
cluster point of minimizers minimizes the exact objective. The same
holds for feasible approximate minimizers whose tolerance tends to zero.
\end{proposition}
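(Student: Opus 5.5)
The argument is a uniform-approximation plus compactness argument on the fixed polytope $P$ cut out by \eqref{eq:LP-constraints}. Write $\Delta_{ij}(x)=A_p(z_j,x)-A_p(y_i,x)$, so that $H_\pi=\sum_{ij}\pi_{ij}\Delta_{ij}$ is linear in $\pi$. The discrete objective is
\[
 \widehat{\mathcal J}_n(\pi)=\sum_\ell\omega_\ell\Bigl|\sum_{ij}\pi_{ij}\Delta_{ij}(x_\ell)\Bigr|,
\]
which is the optimal value of the linear program in the auxiliary variables $t$. The aim is to show $\sup_{\pi\in P}|\widehat{\mathcal J}_n(\pi)-\mathcal J_p(\pi)|\to0$. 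Once that holds, the rest is the standard argument behind Theorem~\ref{thm:remeshing-oracle}:
\begin{itemize}
 \item the minimal values satisfy $|\min\widehat{\mathcal J}_n-\min\mathcal J_p|\leq e_n$, where $e_n$ is the uniform error;
 \item for any $e_o^{(n)}$-minimizers $\pi_n$, the bound $\mathcal J_p(\pi_n)\leq\min\mathcal J_p+2e_n+e_o^{(n)}$ follows;
 \item $P$ is compact and $\mathcal J_p$ is continuous (the Lipschitz bound displayed before the proposition), so any cluster point $\bar\pi$ satisfies $\mathcal J_p(\bar\pi)\leq\min\mathcal J_p$.
\end{itemize}

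For uniform convergence, I would bound the error for each $\pi\in P$ by a sum over the finitely many pairs $(i,j)$. Set $g_\pi=|H_\pi|$. The midpoint rule on $[-L_n,L_n]$ with nodes $x_\ell$ and weights $\omega_\ell=\Delta_n$ then gives
\[
 |\widehat{\mathcal J}_n(\pi)-\mathcal J_p(\pi)|\leq\int_{|x|>L_n}|H_\pi|\dd x+\Bigl|\sum_\ell\Delta_n g_\pi(x_\ell)-\int_{-L_n}^{L_n}g_\pi\dd x\Bigr|.
\]
Since $0\leq\pi_{ij}\leq M_i$, the tail term is at most $\sum_{ij}M_i\int_{|x|>L_n}|\Delta_{ij}|\dd x$. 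Each $\Delta_{ij}\in L^1$ by the bound $\norm{A_p(y,\cdot)}_1\leq\norm{p'}_1/2$, so this term tends to $0$ uniformly in $\pi$.

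The quadrature term is the main obstacle. The function $g_\pi$ is only Lipschitz, not smooth, and $L_n\Delta_n$ need not stay bounded. The plan is to use that on each cell of length $\Delta_n$ the midpoint error is at most $\Delta_n\int_{\text{cell}}|g_\pi'|$, the total variation over the cell. Summing over cells gives a quadrature error of at most $\Delta_n\int_\R|H_\pi'|\dd x$, because $|g_\pi'|\leq|H_\pi'|$ almost everywhere. Then
\[
 \int_\R|H_\pi'|\dd x\leq\sum_{ij}M_i\bigl(\norm{D_xA_p(z_j,\cdot)}_1+\norm{D_xA_p(y_i,\cdot)}_1\bigr)\leq 2\Bigl(\sum_iM_i\Bigr)J\,\norm{p''}_1/2 .
\]
This uses the dictionary bound $\norm{D_xA_p(y,\cdot)}_1\leq\norm{p''}_1/2$, which is finite because $p$ is Gaussian-smoothed. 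Hence the quadrature error is $O(\Delta_n)$ uniformly in $\pi$, with no condition linking $L_n$ and $\Delta_n$.

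It remains to check that the exact response functions $A_p(y,\cdot)$ are absolutely continuous, so that the total-variation argument applies. This follows from $D_xA_p(y,\cdot)\in L^1$. I would also confirm that the discrete LP value equals $\widehat{\mathcal J}_n(\pi)$ at optimal $t$, which holds because each $t_\ell$ is minimized at $|\cdot|$. Combining the two error terms gives $e_n\to0$, which completes the plan.
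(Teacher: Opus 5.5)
Your proposal is correct and is essentially the paper's proof. You bound the quadrature error cell by cell by a multiple of $\Delta_n\norm{H_\pi'}_1$, add the tail $\int_{\{|x|>L_n\}}|H_\pi|\dd x$, make both uniform over the polytope through the bounds on $\pi_{ij}$, and finish with the standard value-convergence and compactness argument; the only differences are in constants, since the paper gets the cell factor $\Delta_n/2$ and uses the row sums to obtain $\sum_iM_i\max_j$ where you have $J\sum_iM_i$.
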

\begin{proof}
Put $B_{ij}=A_p(z_j,\cdot)-A_p(y_i,\cdot)\in W^{1,1}(\R)$ and
$H_\pi=\sum_{i,j}\pi_{ij}B_{ij}$. For $H\in W^{1,1}(\R)$, use its
absolutely continuous representative. Midpoint quadrature gives
\begin{equation}\label{eq:objective-quadrature}
 \left|\sum_\ell\omega_\ell|H(x_\ell)|-\norm H_1\right|
 \leq\frac{\Delta_n}{2}\norm{H'}_1+\int_{\{x\in\R:|x|>L_n\}}|H(x)|\dd x.
\end{equation}
For a cell $I_\ell$ with midpoint $x_\ell$ and length $\Delta_n$, absolute
continuity and Fubini's theorem give the following calculation. The
weights in its two integrals are at most $\Delta_n/2$:
\begin{align*}
 \int_{I_\ell}\big||H(x)|-|H(x_\ell)|\big|\dd x
 &\leq
 \int_{I_\ell}\left|\int_{x_\ell}^{x}|H'(t)|\dd t\right|\dd x\\
 &=
 \int_{x_\ell-\Delta_n/2}^{x_\ell}
      (t-x_\ell+\Delta_n/2)|H'(t)|\dd t
 +\int_{x_\ell}^{x_\ell+\Delta_n/2}
      (x_\ell+\Delta_n/2-t)|H'(t)|\dd t\\
 &\leq
 \frac{\Delta_n}{2}\int_{I_\ell}|H'(t)|\dd t.
\end{align*}
Summing over the cells and adding the tail proves \eqref{eq:objective-quadrature}.
The row-sum constraints then give
\begin{equation}\label{eq:uniform-LP-error}
 e_n\leq\frac{\Delta_n}{2}\sum_i M_i\max_j\norm{B_{ij}'}_1
       +\sum_iM_i\max_j\int_{\{x\in\R:|x|>L_n\}}|B_{ij}(x)|\dd x\longrightarrow0.
\end{equation}
Since the family of response functions is finite, the convergence $e_n\to0$ is uniform. If $\mathcal J_n$ denotes
the discrete objective, then
\[
 |\min\mathcal J_n-\min\mathcal J_p|\leq e_n.
\]
For feasible $\pi_n$ with
$\mathcal J_n(\pi_n)\leq\min\mathcal J_n+\delta_n$ and $\delta_n\to0$,
the optimization tolerance and the uniform error imply
\[
 \mathcal J_p(\pi_n)
 \byeq{eq:uniform-LP-error}{\leq}\mathcal J_n(\pi_n)+e_n
 \leq\min\mathcal J_n+\delta_n+e_n
 \byeq{eq:uniform-LP-error}{\leq}\min\mathcal J_p+2e_n+\delta_n.
\]
Along any convergent subsequence, continuity shows that its limit
minimizes $\mathcal J_p$. Compactness ensures that such subsequences exist.
\end{proof}
For balanced plans of radius $h$, Taylor expansion of
$D_xA_p$ and cancellation give
\[
 \norm{H_\pi'}_1
 \leq\frac{K_4}{24}\int_{\R^2}|z-y|^2\pi(\dd y,\dd z)
 \leq\frac{K_4Mh^2}{24}.
\]
The derivative contribution is therefore at most $\Delta_nK_4Mh^2/48$.
Under joint refinement, a fourth-order bound on how much the error exceeds the infimum over the allowed class follows from
$\Delta_n=o(h^2)$ and errors of order $o(h^4)$ from truncating the tails,
evaluating the response functions, and solving the optimization problem.
\section{Sharp exponents and the transition as smoothing vanishes.}\label{sec:sharp}
Let $N\sim\operatorname{Poisson}(1)$ and
$M\sim\operatorname{Poisson}(\frac{1}{4})$, and put
\begin{equation}\label{eq:YZ}
 Y=N-1,\qquad Z=2(M-\frac{1}{4}).
\end{equation}
For $P_\lambda\sim\operatorname{Poisson}(\lambda)$,
\[
 \log\E\left[e^{t(P_\lambda-\lambda)}\right]
 =\lambda(e^t-1-t)
 =\lambda\left(\frac{t^2}{2}+\frac{t^3}{6}+\frac{t^4}{24}\right)+O(t^5).
\]
Exponentiating gives the centered moments
$0,\lambda,\lambda,\lambda+3\lambda^2$. Hence
\[
 \left(\E\left[Y\right],\E\left[Y^2\right],\E\left[Y^3\right],\E\left[Y^4\right]\right)=(0,1,1,4),\qquad
 \left(\E\left[Z\right],\E\left[Z^2\right],\E\left[Z^3\right],\E\left[Z^4\right]\right)=(0,1,2,7).
\]
For $a>0$, the time-one full-compensation jump measures and their weights are
\[
 \nu^a=\delta_a,\quad \widetilde\nu^a=\tfrac14\delta_{2a},\qquad
 y^2\nu^a=a^2\delta_a,\quad y^2\widetilde\nu^a=a^2\delta_{2a}.
\]
\begin{lemma}[Two-atom discrepancy]\label{lem:atoms}
For $m\geq0$ and $x,z\in\R$,
\begin{equation}\label{eq:atoms}
 d_{\FM}(m\delta_x,m\delta_z)=\frac{2m|x-z|}{2+|x-z|}.
\end{equation}
\end{lemma}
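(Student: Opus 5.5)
We prove the two inequalities separately; the case $m=0$ or $x=z$ is trivial, so we assume $m>0$ and set $s=|x-z|>0$. By homogeneity, $d_{\FM}(m\delta_x,m\delta_z)=m\sup_{\norm f_{\BL}\leq1}|f(x)-f(z)|$, so it suffices to show that
\[
 \sup_{\norm f_{\BL}\leq1}|f(x)-f(z)|=\frac{2s}{2+s}.
\]

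For the upper bound, take $f$ with $\norm f_\infty=A$ and $\Lip(f)=B$, where $A+B\leq1$. Then
\[
 |f(x)-f(z)|\leq\min\{2A,Bs\}.
\]
We maximize $\min\{2A,Bs\}$ over $A,B\geq0$ with $A+B\leq1$. The minimum of the two increasing linear functions is maximized on the constraint line $A+B=1$, at the point where $2A=Bs$. This gives $B=2/(2+s)$, $A=s/(2+s)$, and the value $2s/(2+s)$. If the two terms are unbalanced, shifting budget from the larger term to the smaller one increases the minimum, so this is the maximum.

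For the lower bound, we use the test function from the split-path computation preceding Proposition~\ref{prop:split-order}. With $A=s/(2+s)$ and $B=2/(2+s)$, set
\[
 f(y)=\min\{A,\,-A+B|y-x|\}.
\]
Then $f(x)=-A$. Since $Bs-A=(2s-s)/(2+s)=A$, we also have $f(z)=A$. The range of $f$ lies in $[-A,A]$, so $\norm f_\infty=A$, and $\Lip(f)\leq B$. Hence $\norm f_{\BL}\leq A+B=1$ and $f(z)-f(x)=2A=2s/(2+s)$.

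There is no real obstacle. The only point needing care is the optimization step in the upper bound, which we justify with the budget-shifting argument above.
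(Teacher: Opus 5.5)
Your proof is correct and follows the paper's argument: the same bound $|f(x)-f(z)|\leq\min\{2A,Bs\}$ maximized under $A+B\leq1$, followed by an explicit extremal test function. The only difference is cosmetic. You use the V-shaped function $\min\{A,-A+B|y-x|\}$ from the split-path computation, whereas the paper uses the clipped affine function $\max\{-A_*,\min\{B_*(y-(x+z)/2),A_*\}\}$ centred at the midpoint; both have $\norm f_{\BL}=1$ and attain $2s/(2+s)$.
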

\begin{proof}
Let $d=|x-z|$, $A=\norm f_\infty$, and $B=\Lip(f)$.
If $A+B\leq1$, then $|f(x)-f(z)|\leq\min\{2A,Bd\}\leq2d/(2+d)$.
Equality is attained by the clipped affine function
\[
 f(y)=\max\{-A_*,\min\{B_*(y-(x+z)/2),A_*\}\},\qquad
 A_*=d/(2+d),\quad B_*=2/(2+d).
\]
Multiplying by $m$ gives the result, with the zero cases following directly.
\end{proof}
\begin{proposition}[Degenerate cubic-root lower scale]\label{prop:cubic}
Define $c_0=W_1(\Law(Y),\Law(Z))$. Then $c_0>0$ and
\begin{equation}\label{eq:cubic}
 \theta(a):=d_{\FM}(a^2\delta_a,a^2\delta_{2a})
       =\frac{2a^3}{2+a},\qquad
 W_1(\Law(aY),\Law(aZ))=c_0a.
\end{equation}
No exponent greater than $\frac{1}{3}$ can hold uniformly over this family for
$0<a\leq\frac{1}{2}$, with constants depending only on common upper bounds on
its moments and stochastic-flow derivatives.
\end{proposition}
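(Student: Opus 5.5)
We split the statement into three parts: $c_0>0$, the two displayed identities in \eqref{eq:cubic}, and the exponent obstruction.

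\emph{Positivity of $c_0$.} Since $W_1$ is a metric on $\cP_1(\R)$, it suffices to show $\Law(Y)\ne\Law(Z)$. The moment table computed before Lemma~\ref{lem:atoms} gives $\E[Y^3]=1\ne2=\E[Z^3]$, and both laws have all moments finite. Alternatively, $Y$ takes values in $\mathbb Z-1$ and $Z$ in $2\mathbb Z-\tfrac12$, so the supports already differ, for example $\mathbf P(Y=-1)=e^{-1}>0=\mathbf P(Z=-1)$. Either argument gives $c_0>0$.

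\emph{The two identities.} For $\theta(a)$ we do not have two equal masses at distinct points in the sense of Lemma~\ref{lem:atoms} only by accident: here both measures are $a^2$ times a Dirac, at $x=a$ and $z=2a$. So Lemma~\ref{lem:atoms} with $m=a^2$ and $|x-z|=a$ gives
\[
 \theta(a)=\frac{2a^2\cdot a}{2+a}=\frac{2a^3}{2+a}.
\]
For the Wasserstein scaling, the map $x\mapsto ax$ with $a>0$ multiplies $|x-y|$ by $a$. It sends couplings of $(\Law(Y),\Law(Z))$ bijectively to couplings of $(\Law(aY),\Law(aZ))$ and scales the cost by $a$. Hence $W_1(\Law(aY),\Law(aZ))=aW_1(\Law(Y),\Law(Z))=c_0a$. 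We also record that $aY$ and $aZ$ are the time-one compensated Poisson laws with jump measures $\nu^a$ and $\widetilde\nu^a$. They have mean $0$, variance $a^2$, and weighted jump measures $a^2\delta_a$ and $a^2\delta_{2a}$, which explains why $\theta(a)$ is the relevant discrepancy.

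\emph{Exponent obstruction.} Suppose a bound $W_1(\lambda_0,\lambda_1)\le C\,d_{\FM}(\mu_0,\mu_1)^{\beta}$ held uniformly over the family $0<a\le\tfrac12$. We first check that the constant $C$ is uniform over this family. The drifts and Gaussian parts agree (both are zero). The moments of $aY$ and $aZ$ are bounded by those of $Y$ and $Z$ since $a\le\tfrac12$. The flow derivatives are trivial for additive processes. So $C$ depends only on these common bounds. Inserting \eqref{eq:cubic} gives
\[
 c_0a\le C\Bigl(\tfrac{2a^3}{2+a}\Bigr)^{\beta}\le C\,a^{3\beta}.
\]
Dividing by $a$ yields $c_0\le Ca^{3\beta-1}$. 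If $\beta>\tfrac13$, letting $a\downarrow0$ sends the right side to $0$, which contradicts $c_0>0$. We expect the main obstacle to be pinning down exactly which class-wide constants the phrase ``depending only on common upper bounds'' permits. The plan is to state this explicitly: the family shares all moment bounds and has zero Gaussian part, so no smoothing constant can rescue the estimate. This also foreshadows the $\delta/(v+\delta^{2/3})$ transition at $v=0$.
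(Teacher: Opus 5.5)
Your proposal is correct and follows the paper's proof. You use Lemma~\ref{lem:atoms} for $\theta(a)$ and scaling of $W_1$ for the second identity. You obtain the contradiction $c_0\le Ca^{3\beta-1}\to0$ for $\beta>\frac13$, with the same check that moments and the trivial additive flow derivatives are uniformly bounded for $a\le\frac12$. The only difference is in proving $c_0>0$: the paper uses the one-Lipschitz test $\operatorname{dist}(x,\mathbb Z)$, which vanishes on $Y$ and equals $\frac12$ on $Z$, to get the quantitative bound $c_0\ge\frac12$. This is a sharpened form of your support argument; your moment or support comparison gives only qualitative positivity, which is all the statement requires.
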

\begin{proof}
Lemma~\ref{lem:atoms} gives $\theta(a)$, and Wasserstein scaling gives $ac_0$.
The one-Lipschitz test $h(x)=\operatorname{dist}(x,\mathbb Z)$ has
$h(Y)=0$ and $h(Z)=1/2$ almost surely, so $c_0\geq1/2$.
If a uniform estimate held with exponent $\gamma>1/3$, it would imply
$c_0\leq C a^{3\gamma-1}(1+a/2)^{-\gamma}\to0$, a contradiction.
The intensities are fixed and the jumps bounded for $a\leq1/2$.
The additive flows have first derivative one and all higher derivatives
zero, while their moments are uniformly bounded. The regularized estimate
\eqref{eq:regularized-terminal} gives the matching upper exponent for
this class.
\end{proof}
In the next lemma, $L^{k+1}(\Omega,\mathcal F,\mathbb P)$ is the space
of real random variables $U$ with $\E\left[|U|^{k+1}\right]<\infty$.
\begin{lemma}[Moment matching after Gaussian convolution]\label{lem:moments}
Let $k\in\N$, $k\geq1$, and let
$U,V\in L^{k+1}(\Omega,\mathcal F,\mathbb P)$ satisfy
$\E\left[U^j\right]=\E\left[V^j\right]$ for $1\leq j\leq k-1$. Let $G_\sigma\sim N(0,\sigma^2)$ be independent of them,
with density $\varphi_\sigma$, where $\sigma>0$. Then, for $a>0$,
\begin{equation}\label{eq:moment-remainder}
 \begin{aligned}
 &\Bigl|W_1\bigl(\Law(G_\sigma+aU),\Law(G_\sigma+aV)\bigr)
 -\frac{a^k}{k!}\left|\E\left[U^k\right]-\E\left[V^k\right]\right|\norm{\varphi_\sigma^{(k-1)}}_1\Bigr|\\
 &\qquad\leq \frac{a^{k+1}}{(k+1)!}
 \left(\E\left[|U|^{k+1}\right]+\E\left[|V|^{k+1}\right]\right)\norm{\varphi_\sigma^{(k)}}_1.
 \end{aligned}
\end{equation}
\end{lemma}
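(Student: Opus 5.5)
The plan is to use CDF duality and a Taylor expansion of the smoothed densities in the scale parameter $a$. Write $p_U(x)=\E[\varphi_\sigma(x-aU)]$ and $p_V$ likewise. Since $W_1$ equals the $L^1$ norm of the CDF difference, we have
\[
 W_1\bigl(\Law(G_\sigma+aU),\Law(G_\sigma+aV)\bigr)=\norm{F_U-F_V}_1,\qquad F_U(x)=\E\left[\Phi_\sigma(x-aU)\right],
\]
where $\Phi_\sigma$ is the Gaussian CDF with $\Phi_\sigma'=\varphi_\sigma$. The aim is to expand $\Phi_\sigma(x-aU)$ in powers of $aU$ up to order $k$, with an integral remainder of order $k+1$.

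First, Taylor's formula with integral remainder gives, pointwise in $x$,
\[
 \Phi_\sigma(x-aU)=\sum_{j=0}^{k}\frac{(-aU)^j}{j!}\varphi_\sigma^{(j-1)}(x)
 +\frac{(-aU)^{k+1}}{k!}\int_0^1(1-r)^k\varphi_\sigma^{(k)}(x-raU)\dd r,
\]
with the convention $\varphi_\sigma^{(-1)}=\Phi_\sigma$. Take expectations and subtract the same identity for $V$. The terms $j=0,\dots,k-1$ cancel by moment matching, and the integrability $U,V\in L^{k+1}$ justifies Fubini. What remains is
\[
 F_U-F_V=\frac{(-a)^k}{k!}\bigl(\E[U^k]-\E[V^k]\bigr)\varphi_\sigma^{(k-1)}+R,
\]
where $R$ is the difference of the two expected remainders.

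Next, bound $\norm R_1$. Apply Fubini and translation invariance of the $L^1$ norm to get
\[
 \norm{\E[(aU)^{k+1}\textstyle\int_0^1(1-r)^k\varphi_\sigma^{(k)}(\cdot-raU)\dd r]}_1\leq a^{k+1}\E[|U|^{k+1}]\norm{\varphi_\sigma^{(k)}}_1\int_0^1(1-r)^k\dd r.
\]
The last integral equals $1/(k+1)$, so after dividing by $k!$ the bound is $a^{k+1}\E[|U|^{k+1}]\norm{\varphi_\sigma^{(k)}}_1/(k+1)!$. The same bound holds for $V$, and the triangle inequality adds the two. The reverse triangle inequality $|\norm{f+R}_1-\norm f_1|\leq\norm R_1$ then yields \eqref{eq:moment-remainder}, because the main term has $L^1$ norm $\frac{a^k}{k!}|\E[U^k]-\E[V^k]|\norm{\varphi_\sigma^{(k-1)}}_1$.

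The case $k=1$ needs separate attention. There the matching hypothesis is vacuous. The $j=0$ term is $\Phi_\sigma(x)$ and cancels trivially, while the $j=1$ term is $-a\E[U]\varphi_\sigma$. The only delicate point is integrability. A priori $F_U-F_V$ need only lie in $L^1$, and the individual Taylor terms $\Phi_\sigma$ are not integrable, so they must be cancelled pointwise before taking $L^1$ norms. For $j\geq1$ all terms $\varphi_\sigma^{(j-1)}$ are integrable, so this is the only care needed. The main obstacle is therefore mild: justify exchanging expectation and the $x$-integral in the remainder, which Tonelli handles using the $(k+1)$-st moment.
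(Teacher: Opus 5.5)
Your proposal is correct and follows the paper's proof essentially step for step. Both arguments Taylor-expand the Gaussian distribution function $\Phi_\sigma(x-aU)$ to order $k$ with the integral remainder $\frac{(-aU)^{k+1}}{k!}\int_0^1(1-r)^k\varphi_\sigma^{(k)}(x-raU)\dd r$, cancel the zeroth and matched-moment terms pointwise, and bound the remainder in $L^1$ by Fubini and translation invariance before applying CDF duality and the reverse triangle inequality.
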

\begin{proof}
Let $H_\sigma$ be the Gaussian distribution function. Taylor expansion gives
\[
 H_\sigma(x-au)=\sum_{j=0}^k\frac{(-au)^j}{j!}H_\sigma^{(j)}(x)
 +\frac{(-au)^{k+1}}{k!}\int_0^1(1-r)^k
                         \varphi_\sigma^{(k)}(x-rau)\dd r.
\]
Fubini's theorem and translation invariance of the $L^1$ norm bound the
remainder $R_u$ by
\begin{align*}
 \norm{R_u}_1
 &\leq\frac{a^{k+1}|u|^{k+1}}{k!}
  \int_0^1(1-r)^k\norm{\varphi_\sigma^{(k)}(\cdot-rau)}_1\dd r\\
 &=
 \frac{a^{k+1}|u|^{k+1}}{k!}
  \norm{\varphi_\sigma^{(k)}}_1\int_0^1(1-r)^k\dd r
 =\frac{a^{k+1}|u|^{k+1}}{(k+1)!}\norm{\varphi_\sigma^{(k)}}_1.
\end{align*}
Fubini's theorem applies because $\E\left[|U|^{k+1}\right]+\E\left[|V|^{k+1}\right]<\infty$.
Subtracting the two expectations cancels the zeroth term and all matched
moments, leaving
\[
 F_{\Law(G_\sigma+aU)}-F_{\Law(G_\sigma+aV)}
 =\frac{(-a)^k}{k!}(\E\left[U^k\right]-\E\left[V^k\right])\varphi_\sigma^{(k-1)}+R,
 \qquad \norm R_1\leq\E\left[\norm{R_U}_1\right]+\E\left[\norm{R_V}_1\right].
\]
CDF duality and the reverse triangle inequality prove the bound.
\end{proof}
\begin{theorem}[Fixed smoothing and a uniform bound across both regimes]\label{thm:crossover}
Let $G\sim N(0,1)$ be independent of $Y,Z$ in \eqref{eq:YZ}.
For fixed $\sigma>0$,
\begin{equation}\label{eq:sharp-limit}
 W_1\bigl(\Law(\sigma G+aY),\Law(\sigma G+aZ)\bigr)
 =\frac{\sqrt{8/(\pi e)}}{6\sigma^2}a^3+O(a^4\sigma^{-3}),\ 
 \quad\frac{a}{\sigma}\xrightarrow[a\downarrow0]{}0.
\end{equation}
More precisely, the remainder in \eqref{eq:sharp-limit} is bounded in
absolute value by $\frac{11a^4\norm{\varphi'''}_1}{(24\sigma^3)}$.
There are numerical constants $0<c\leq C<\infty$, independent of $a,\sigma$,
such that for every $a>0$ and $\sigma\geq0$,
\begin{equation}\label{eq:crossover}
 c\,a\min\{1,\frac{a^2}{\sigma^2\}}
 \leq W_1\bigl(\Law(\sigma G+aY),\Law(\sigma G+aZ)\bigr)
 \leq C\,a\min\{1,\frac{a^2}{\sigma^2\}},
\end{equation}
where the minimum is defined as one when $\sigma=0$.
\end{theorem}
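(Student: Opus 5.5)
The sharp limit \eqref{eq:sharp-limit} follows from Lemma~\ref{lem:moments} with $k=3$, $U=Y$, $V=Z$, $G_\sigma=\sigma G$. The first two moments agree, $(0,1)$ for both, and $\E[Y^3]-\E[Z^3]=1-2=-1$. The leading term is therefore
\[
 \frac{a^3}{6}\norm{\varphi_\sigma''}_1=\frac{a^3}{6}\cdot\frac{\sqrt{8/(\pi e)}}{\sigma^2},
\]
by \eqref{eq:gauss-L1} with $v=\sigma^2$. The remainder is bounded by
\[
 \frac{a^4}{24}\bigl(\E[Y^4]+\E[Z^4]\bigr)\norm{\varphi_\sigma'''}_1=\frac{11a^4}{24}\sigma^{-3}\norm{\varphi'''}_1,
\]
using $\E[Y^4]=4$, $\E[Z^4]=7$ and the scaling $\norm{\varphi_\sigma'''}_1=\sigma^{-3}\norm{\varphi'''}_1$. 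This is exactly the stated constant.

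For \eqref{eq:crossover}, I will first reduce to one parameter by scaling. Since $W_1(\Law(\sigma G+aY),\Law(\sigma G+aZ))=a\,W_1(\Law(sG+Y),\Law(sG+Z))$ with $s=\sigma/a$, it suffices to show that $w(s):=W_1(\Law(sG+Y),\Law(sG+Z))$ is comparable to $\min\{1,s^{-2}\}$ for $s\geq0$.

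The upper bound has two parts. Convolution with $sG$ is a $W_1$-contraction, so $w(s)\leq w(0)=c_0\leq \E|Y|+\E|Z|\leq 2$. Lemma~\ref{lem:moments} at $a=1$, $\sigma=s$ gives $w(s)\leq s^{-2}\sqrt{8/(\pi e)}/6+11\norm{\varphi'''}_1s^{-3}/24$, which is $\lesssim s^{-2}$ for $s\geq1$.

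The lower bound also has two parts. For large $s$, the same lemma gives $w(s)\geq s^{-2}\bigl(\sqrt{8/(\pi e)}/6-11\norm{\varphi'''}_1/(24s)\bigr)\geq c s^{-2}$ once $s\geq s_0$, with an explicit numerical $s_0$. For $s\in[0,s_0]$ I need a uniform positive lower bound, and this is the main obstacle. My first choice is an explicit test function. Take $h(x)=\cos(2\pi x)/(2\pi)$, which is $1$-Lipschitz. By independence, $\E h(sG+Y)=e^{-2\pi^2s^2}\E\cos(2\pi Y)/(2\pi)$. Because $Y\in\mathbb Z$ and $Z\in 2\mathbb Z-\tfrac12$, we have $\cos(2\pi Y)=1$ and $\cos(2\pi Z)=\cos(-\pi)=-1$. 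The difference is therefore $e^{-2\pi^2s^2}/\pi$, which gives $w(s)\geq e^{-2\pi^2s_0^2}/\pi$ on $[0,s_0]$, a numerical constant. Alternatively, continuity of $w$ together with positivity (nonzero characteristic-function difference at some $\xi$) and compactness of $[0,s_0]$ would work, but the explicit cosine test is cleaner and makes $c$ numerical. The $\sigma=0$ case is included, since $w(0)=c_0\geq1/2$ by Proposition~\ref{prop:cubic}.

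Finally I combine the two ranges. On $[0,s_0]$ we have $\min\{1,s^{-2}\}\geq s_0^{-2}$ (taking $s_0\geq1$), and the constant lower bound is comparable. On $[s_0,\infty)$ the minimum equals $s^{-2}$. This gives numerical constants $c,C$ independent of $a$ and $\sigma$.
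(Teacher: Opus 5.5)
Your proposal is correct. For the sharp limit it matches the paper: you apply Lemma~\ref{lem:moments} with $k=3$, use the moment values $(0,1,1,4)$ and $(0,1,2,7)$, and use $\norm{\varphi_\sigma'''}_1=\sigma^{-3}\norm{\varphi'''}_1$. The reduction $W_1=a\,w(\sigma/a)$ and the treatment of $s\geq s_0=\max\{1,2B/A\}$ are also the paper's.

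You depart from the paper on the compact range $s\in[0,s_0]$. The paper uses only soft information there:
\begin{itemize}
\item $D(r)=w(r)$ is $2\sqrt{2/\pi}$-Lipschitz, by coupling the Gaussian factors.
\item $D(r)$ is strictly positive, since cancelling the nonvanishing Gaussian characteristic function would force $\Law(Y)=\Law(Z)$.
\item Continuity and compactness then bound $w(s)/\min\{1,s^{-2}\}$ above and below.
\end{itemize}
You replace both soft steps with direct estimates:
\begin{itemize}
\item On the upper side you use the $W_1$-contraction $w(s)\leq w(0)=c_0\leq2$, which holds for all $s$ at once.
\item On the lower side you use the test function $\cos(2\pi x)/(2\pi)$. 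It exploits $Y\in\mathbb Z$ and $Z\in2\mathbb Z-\tfrac12$ to give $w(s)\geq e^{-2\pi^2s^2}/\pi$ for every $s\geq0$.
\end{itemize}
The lower bound is the paper's Fourier positivity argument made quantitative at the single frequency $2\pi$, where the two characteristic functions equal $1$ and $-1$.

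Your route gives explicit constants and avoids compactness. The cost is twofold. First, it relies on the lattice structure of this particular pair. Second, with $s_0\approx8.6$ the resulting $c\approx e^{-2\pi^2s_0^2}/\pi$ is of order $e^{-1450}$, so it is explicit only in principle. The paper's argument, by contrast, applies verbatim to any pair with matching first two moments, different third moments, and finite fourth moments.
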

\begin{proof}
Lemma~\ref{lem:moments} with $k=3$ and Gaussian scaling gives
\[
 \left|W_1(\Law(\sigma G+aY),\Law(\sigma G+aZ))-
                       \frac{\sqrt{8/(\pi e)}a^3}{6\sigma^2}\right|
       \leq\frac{11a^4}{24\sigma^3}\norm{\varphi'''}_1.
\]
Define
\[
 \mapdef{D}{[0,\infty)}{[0,\infty)}{r}{W_1(\Law(rG+Y),\Law(rG+Z))}.
\]
Scaling gives
\begin{align*}
 W_1(\Law(\sigma G+aY),\Law(\sigma G+aZ))
 &=aD(\sigma/a).\\
\intertext{Then the triangle inequality and a common coupling with $G$ yield,
using $\E\left[|G|\right]=\sqrt{2/\pi}$,}
 |D(r)-D(s)|
 &\leq
 W_1(\Law(rG+Y),\Law(sG+Y))
 +W_1(\Law(rG+Z),\Law(sG+Z))\\
 &\leq2\E\left[|G|\right]\,|r-s|
 =2\sqrt{2/\pi}|r-s|.
\end{align*}
If $D(r)=0$, cancelling the nonzero Gaussian characteristic function
would give $\Law(Y)=\Law(Z)$, contradicting Proposition~\ref{prop:cubic}.
Set $A=\sqrt{8/(\pi e)}/6>0$ and $B=11\norm{\varphi'''}_1/24$.
The first estimate with $a=1$ gives
\[
 |D(r)-Ar^{-2}|\leq Br^{-3}.
\]
For $R=\max\{1,2B/A\}$ and $r\geq R$,
$A/(2r^2)\leq D(r)\leq3A/(2r^2)$.
On $[0,R]$, continuity and positivity bound
$D(r)/\min\{1,r^{-2}\}$ above and away from zero, with the denominator
set to one at $r=0$. Scaling proves \eqref{eq:crossover}.
\end{proof}
Since $\frac{\theta(a)}{a^3}\xrightarrow[a\downarrow0]{}1$,
\[
 \lim_{a\downarrow0}
 \frac{W_1(\Law(\sigma G+aY),\Law(\sigma G+aZ))}{\theta(a)}
 =\frac{\sqrt{8/(\pi e)}}{6\sigma^2}>0.
\]
Thus the linear exponent is sharp, its constant grows at least as
$v^{-1}$ for $v=\sigma^2\downarrow0$, and $\sigma\lesssim a$ gives the
degenerate scale within this family. The time-profile rates of
Appendices~\ref{sup:sec:one-sided} and~\ref{sup:sec:stable} concern different classes.
\subsection{A uniform worst-case modulus.}
For $v\geq0$, let $\mathcal I_v\subset\cP_1(\R)$ be the centered
infinitely divisible laws with full-compensation triplet $(0,v,N)$ and
$\int_\R y^2N(\dd y)\leq1$. Define
\begin{equation}\label{eq:modulus-definition}
 \mathfrak M(v,\delta)=\sup\left\{W_1(\lambda_0,\lambda_1):
 \lambda_i\in\mathcal I_v,\quad d_{\FM}(y^2N_0,y^2N_1)\leq\delta\right\}.
\end{equation}
\begin{theorem}[Sharp joint dependence on discrepancy and smoothing]
\label{thm:modulus}
There are numerical constants $0<c\leq C<\infty$ such that
\begin{equation}\label{eq:modulus}
 c\,\frac{\delta}{v+\delta^{2/3}}
 \leq\mathfrak M(v,\delta)
 \leq C\,\frac{\delta}{v+\delta^{2/3}},
 \qquad 0\leq v\leq1,\quad 0<\delta\leq1.
\end{equation}
The lower bound already holds for pairs with equal total variances.
\end{theorem}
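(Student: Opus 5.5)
The plan is to prove the two bounds in \eqref{eq:modulus} separately. Both reduce to results already established: the equal-variance stability bound \eqref{eq:equal-Gaussian} gives the upper bound, after extra Gaussian smoothing when $v$ is small. The Poisson pair $(aY,aZ)$ of \eqref{eq:YZ}, together with Theorem~\ref{thm:crossover}, gives the lower bound. Throughout I use the elementary equivalence
\[
 \frac{\delta}{v+\delta^{2/3}}=\delta^{1/3}\,\frac{\delta^{2/3}}{v+\delta^{2/3}}
 \asymp\delta^{1/3}\min\Bigl\{1,\frac{\delta^{2/3}}{v}\Bigr\},
\]
with the minimum read as one when $v=0$. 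Consequently it suffices to prove bounds of order $\delta/v$ when $v\geq\delta^{2/3}$ and of order $\delta^{1/3}$ when $v<\delta^{2/3}$.

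\emph{Upper bound.} Fix $\lambda_0,\lambda_1\in\mathcal I_v$ with $d_{\FM}(\mu_0,\mu_1)\leq\delta$. Both laws are centered, so $m_0=m_1$.

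First suppose $v\geq\delta^{2/3}$, so in particular $v>0$. Then \eqref{eq:equal-Gaussian} applies with $v_0=v_1=v$ and gives $W_1\lesssim(v^{-1/2}+v^{-1})\delta$. Since $v\leq1$ we have $v^{-1/2}\leq v^{-1}$, so $W_1\lesssim\delta/v$.

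Now suppose $v<\delta^{2/3}$. Put $s=\delta^{2/3}$ and convolve both laws with $N(0,s)$. A common coupling with one Gaussian variable gives $W_1(\lambda_i,\lambda_i*N(0,s))\leq\sqrt s\,\E|G|\lesssim\delta^{1/3}$. The smoothed laws lie in the class with Gaussian variance $v+s\in[\delta^{2/3},2]$ and have the same jump parts $\mu_i$. Applying \eqref{eq:equal-Gaussian} to them, the terms $(v+s)^{-1/2}$ and $(v+s)^{-1}$ are bounded by $\delta^{-1/3}$ and $\delta^{-2/3}$ respectively. Since $\delta\leq1$, this gives $W_1\lesssim\delta^{1/3}$ between the smoothed laws. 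The triangle inequality then finishes this case.

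\emph{Lower bound.} Take $\lambda_0=\Law(\sqrt v\,G+aY)$ and $\lambda_1=\Law(\sqrt v\,G+aZ)$. Their triplets are $(0,v,\delta_a)$ and $(0,v,\tfrac14\delta_{2a})$, with weighted measures $a^2\delta_a$ and $a^2\delta_{2a}$. Both have jump variance $a^2$, so for $a\leq1$ they lie in $\mathcal I_v$ and have equal total variance $v+a^2$. This gives the final claim of the theorem.

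By \eqref{eq:cubic}, the discrepancy is $\theta(a)=2a^3/(2+a)$. This is continuous and increasing, with $\theta(1)=2/3$. For $\delta\leq2/3$ I choose $a\in(0,1]$ with $\theta(a)=\delta$, so that $a\asymp\delta^{1/3}$ because $2a^3/3\leq\theta(a)\leq a^3$. For $\delta\in(2/3,1]$ I take $a=1$ and use that $\mathfrak M(v,\cdot)$ is nondecreasing; this only changes constants. Theorem~\ref{thm:crossover} with $\sigma=\sqrt v$, which is valid uniformly in $\sigma\geq0$ including $\sigma=0$, yields
\[
 W_1(\lambda_0,\lambda_1)\geq c\,a\min\{1,a^2/v\}.
\]
Substituting $a\asymp\delta^{1/3}$ and using the equivalence at the start gives $\mathfrak M(v,\delta)\gtrsim\delta/(v+\delta^{2/3})$.

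\emph{Main difficulty.} I expect the only delicate point to be the small-$v$ upper regime. There \eqref{eq:equal-Gaussian} alone blows up like $v^{-1}$, and the argument must replace the Gaussian variance $v$ by $v+\delta^{2/3}$ at an additive cost of $O(\delta^{1/3})$. The smoothing step above handles this, and the bound $\delta\leq1$ is used there to compare the $v^{-1/2}$ and $v^{-1}$ terms. The lower bound is direct, since the uniformity of $c,C$ in Theorem~\ref{thm:crossover} is exactly what makes the constants numerical.
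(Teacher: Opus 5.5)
Your proposal is correct and follows the paper's proof essentially step for step. For the upper bound, both use Gaussian regularization at variance $\delta^{2/3}$ combined with the equal-variance bound \eqref{eq:equal-Gaussian}; for the lower bound, both use the Poisson pair $\sqrt v\,G+aY$, $\sqrt v\,G+aZ$ together with Theorem~\ref{thm:crossover}. The only cosmetic difference is that the paper takes $a=\delta^{1/3}$ directly. This choice is admissible because $\theta(a)=2a^3/(2+a)\leq a^3=\delta$, so your inversion of $\theta$ and the case split at $\delta=2/3$ are unnecessary.
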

\begin{proof}
For $\varepsilon>0$, independent Gaussian coupling gives
$W_1(\lambda_i,\lambda_i*g_\varepsilon)\leq \sqrt{2/\pi}\sqrt\varepsilon$.
The triangle inequality and Corollary~\ref{cor:Gaussian} give
\begin{align}
 W_1(\lambda_0,\lambda_1)
 &\leq
 W_1(\lambda_0,\lambda_0*g_\varepsilon)
 +W_1(\lambda_0*g_\varepsilon,\lambda_1*g_\varepsilon)
 +W_1(\lambda_1*g_\varepsilon,\lambda_1)
 \nonumber\\
 &\leq
 2\sqrt{2/\pi}\sqrt\varepsilon
 +W_1(\lambda_0*g_\varepsilon,\lambda_1*g_\varepsilon)
 \\ &\leq
 2\sqrt{2/\pi}\sqrt\varepsilon+
 \left(\frac{\sqrt{2/\pi}}{2\sqrt{v+\varepsilon}}+
                \frac{\sqrt{8/(\pi e)}}{6(v+\varepsilon)}\right)\delta.
 \label{eq:regularized-terminal}
\end{align}
If $v\geq\delta^{2/3}$, send $\varepsilon\downarrow0$ and use $v\leq1$
to obtain $C\delta/v$. Otherwise set $\varepsilon=\delta^{2/3}$ to
obtain $C\delta^{1/3}$. Since
$\min\{\delta^{1/3},\delta/v\}$ lies between
$\delta/(v+\delta^{2/3})$ and twice this quantity, the upper bound follows.
At $v=0$, we interpret this minimum as $\delta^{1/3}$.
For the lower bound, take $a=\delta^{1/3}$ and the pair
$\sqrt vG+aY$, $\sqrt vG+aZ$ of Theorem~\ref{thm:crossover}.
Both jump variances equal $a^2\leq1$, their discrepancy is
$2a^3/(2+a)\leq\delta$, and hence this pair is admissible in
\eqref{eq:modulus-definition}. Substituting $a=\delta^{1/3}$ in the
bound covering both regimes and using
$\max\{v,\delta^{2/3}\}\leq v+\delta^{2/3}$ therefore gives
\begin{align*}
 \mathfrak M(v,\delta)
 &\byeq{eq:modulus-definition}{\geq}
 W_1(\Law(\sqrt vG+aY),\Law(\sqrt vG+aZ))\\
 &\byeq{eq:crossover}{\geq}
 c\,a\min\{1,a^2/v\}\\
 &=c\min\{\delta^{1/3},\delta/v\}\\
 &\geq c\,\frac{\delta}{v+\delta^{2/3}}.
\end{align*}
At $v=0$, the minima have the conventions already specified.
The total variances agree.
\end{proof}
The class in \eqref{eq:modulus-definition} consists of additive terminal
laws with a common Gaussian variance. Its two regimes are $\delta/v$ and $\delta^{1/3}$.
\section{A multidimensional transport extension.}\label{sec:multitangent}
For an integer $d\geq1$, let $I_d$ be the identity matrix,
$\norm A_{\mathrm{op}}=\sup_{|x|=1}|Ax|$ for $A\in\R^{d\times d}$,
and $A\succeq B$ mean that the symmetric matrix $A-B$ is positive
semidefinite. Define
\[
 \mathcal X_1(\R^d)=\left\{r\in L^1(\R^d):
       \int_{\R^d} r(x)\dd x=0,\quad\int_{\R^d}|x|\,|r(x)|\dd x<\infty\right\},
 \quad
 \norm r_{\mathrm{KR}}=\sup_{\substack{h:\R^d\to\R\\\Lip(h)\leq1}}
             \left|\int_{\R^d} h(x)r(x)\dd x\right|.
\]
Let the reference law have full-compensation triplet $(m,V,N)$, with
$m\in\R^d$, $V\in\R^{d\times d}$ symmetric positive definite, and
$N\in\cM_f^+(\R^d)$ supported on
$A_{r_0,R_0}=\{y\in\R^d:r_0\leq|y|\leq R_0\}$, $0<r_0\leq R_0$.
Let $\Pi\in\cM_f^+(\R_y^d\times\R_z^d)$ have first marginal $N$ and
satisfy $|z|\leq Z_0<\infty$ almost everywhere. Let
$\eta\in\cM_f(\R^d)$ be supported on the same annulus,
$\alpha\in\R^d$, and $C\in\R^{d\times d}$ be symmetric.
For small admissible $\varepsilon\in\R$, set
\begin{equation}\label{eq:multi-lift}
 m_\varepsilon=m+\varepsilon\alpha,\quad
 V_\varepsilon=V+\varepsilon C\succeq\ell I_d,\quad
 N_\varepsilon=(y+\varepsilon z)_\#\Pi+\varepsilon\eta\geq0,
 \qquad\ell>0.
\end{equation}
For Borel $A\subset\R^d$ and $1\leq j,k\leq d$, define
\[
 J_j(A)=\int_{A\times\R^d}z_j\Pi(\dd y,\dd z),\quad
 S_{jk}(A)=\int_{A\times\R^d}z_jz_k\Pi(\dd y,\dd z),\quad
 \tau=\eta-\operatorname{div}_yJ\in\mathcal D'(\R^d),
\]
so $J\in\cM_f(\R^d;\R^d)$ and
$S\in\cM_f(\R^d;\R^{d\times d})$ is symmetric and positive semidefinite.
The pairing is
$\langle\phi,\tau\rangle=\int_{\R^d}\phi(y)\eta(\dd y)+\int_{\R^d}\nabla\phi(y)\cdot J(\dd y)$ for
$\phi\in C_c^\infty(\R^d)$. With
$A:B=\operatorname{tr}(A^\top B)$ for matrices, and
$R_f(x,y)=f(x+y)-f(x)-y\cdot\nabla f(x)$ for $f\in C_b^2(\R^d)$,
the first two generator coefficients are
\begin{align}
 \mathcal Af(x)&=\alpha\cdot\nabla f(x)+\tfrac12 C:\nabla^2f(x)
     +\int_{\R^d} R_f(x,y)\eta(\dd y)
 +\int_{\R^d}[\nabla f(x+y)-\nabla f(x)]\cdot J(\dd y),
                                      \label{eq:multi-tangent-operator}\\
 \mathcal Bf(x)&=\tfrac12\sum_{j,k}\int_{\R^d}\partial_{jk}f(x+y)S_{jk}(\dd y).
                                      \label{eq:multi-B}
\end{align}
Their spatial adjoints, for $u\in W^{2,1}(\R^d)$, are
\begin{align*}
 \mathcal A^*u(x)
 &=-\alpha\cdot\nabla u(x)+\tfrac12 C:\nabla^2u(x)
 +\int_{\R^d}[u(x-y)-u(x)+y\cdot\nabla u(x)]\eta(\dd y)\\
 &\quad-\sum_{j=1}^d\int_{\R^d}[\partial_j u(x-y)-\partial_j u(x)]J_j(\dd y),\\
 \mathcal B^*u(x)
 &=\frac12\sum_{j,k=1}^d\int_{\R^d}\partial_{jk}u(x-y)S_{jk}(\dd y).
\end{align*}
Gaussian smoothing ensures that the reference density $p$ belongs to $W^{k,1}(\R^d)$
for every integer $k\geq0$.
\begin{theorem}[First and second responses in $\R^d$]\label{thm:multi-tangent}
Under \eqref{eq:multi-lift}, put
\begin{equation}\label{eq:multi-response}
 r_1=\mathcal A^*p,\qquad r_2=\mathcal B^*p+\tfrac12\mathcal A^*r_1.
\end{equation}
Then $r_1,r_2\in\mathcal X_1(\R^d)$, and
\begin{equation}\label{eq:multi-two-jet}
 \big\|p_\varepsilon-p-\varepsilon r_1-\varepsilon^2r_2\big\|_{\mathrm{KR}}
       \leq C_0|\varepsilon|^3,\qquad
 W_1(\lambda_\varepsilon,\lambda)
       =\norm{\varepsilon r_1+\varepsilon^2r_2}_{\mathrm{KR}}
          +O(|\varepsilon|^3).
\end{equation}
The constant depends only on $d,\ell$, the support and velocity bounds,
$N(\R^d),\norm\eta_{\mathrm{TV}},|\alpha|,\norm C_{\mathrm{op}}$.
The first response vanishes exactly when $(\alpha,C,\tau)=0$, and
$\norm{r_1}_{\mathrm{KR}}$ defines a norm on these annular directions,
is invariant under simultaneous rotations, and contracts under common
independent infinitely divisible convolution.
If $\alpha=C=\eta=0$ and $J=0$, then
\begin{equation}\label{eq:multi-second}
 W_1(\lambda_\varepsilon,\lambda)
  =\varepsilon^2\norm{\mathcal B^*p}_{\mathrm{KR}}+O(|\varepsilon|^3),
 \qquad \int_{\R^{2d}}|z|^2\Pi(\dd y,\dd z)>0\ \Longrightarrow\
                 \norm{\mathcal B^*p}_{\mathrm{KR}}>0.
\end{equation}
\end{theorem}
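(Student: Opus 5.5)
The plan is to repeat the one-dimensional interpolation argument of Theorem~\ref{thm:two-jet} in $\R^d$, with Kantorovich--Rubinstein duality taking the place of CDF duality. Let $\Psi_\varepsilon$ denote the terminal exponent of $(m_\varepsilon,V_\varepsilon,N_\varepsilon)$, and let $Q_u$ have exponent $(1-u)\Psi+u\Psi_\varepsilon$. Each interpolating law keeps a Gaussian factor $N(0,\ell I_d)$, because $V_\varepsilon\succeq\ell I_d$ and $V\succeq\ell I_d$ for small $\varepsilon$. For $h\in\cS(\R^d)$ with $\Lip(h)\le1$, this gives $\norm{D^\beta Q_uh}_\infty\le\norm{\nabla h}_\infty\,\|D^{\beta'}g_{\ell}\|_{L^1(\R^d)}$ for every $|\beta|\ge1$, with a loss of one derivative. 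Taylor's formula in $u$ on the Fourier side, exactly as in Lemma~\ref{lem:interpolation}, gives
\[
(Q_1-Q_0)h=\mathcal D_\varepsilon Q_0h+\tfrac12\mathcal D_\varepsilon^2Q_0h+\tfrac12\int_0^1(1-u)^2\mathcal D_\varepsilon^3Q_uh\dd u .
\]
Here the exact generator difference is
\[
\mathcal D_\varepsilon f=\varepsilon\alpha\cdot\nabla f+\tfrac\varepsilon2 C:\nabla^2f+\varepsilon\int R_f(\cdot,y)\eta(\dd y)+\int[R_f(\cdot,y+\varepsilon z)-R_f(\cdot,y)]\Pi(\dd y,\dd z).
\]
Using the raw measure $N$ is harmless, since the support lies in the annulus: $|y|\ge r_0$, and $|y+\varepsilon z|\ge r_0/2$ once $|\varepsilon|Z_0\le r_0/2$.

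Next I would expand the last integrand in $\varepsilon$, using the bound $|z|\le Z_0$. We have $\partial_y R_f(x,y)=\nabla f(x+y)-\nabla f(x)$ and $\partial_y^2R_f=\nabla^2f(x+y)$. This gives $\norm{(\mathcal D_\varepsilon-\varepsilon\mathcal A-\varepsilon^2\mathcal B)f}_\infty\lesssim|\varepsilon|^3Z_0^3N(\R^d)\norm{\nabla^3f}_\infty$. It also gives first-order bounds $\norm{\mathcal D_\varepsilon f}_\infty,\norm{\mathcal Af}_\infty\lesssim|\varepsilon|$ (respectively $1$) times a sum of $\norm{\nabla^kf}_\infty$ for $k\le 2$. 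The constants involve $|\alpha|$, $\norm C_{\rm op}$, $\norm\eta_{\rm TV}R_0^2$ and $N(\R^d)R_0Z_0$. I would bound $|R_f|\le\tfrac12|y|^2\norm{\nabla^2f}_\infty$ on the annulus, and not use the weighted $\Phi$ representation. All these operators are translation invariant and commute. Exactly as before, $\mathcal D_\varepsilon^2-\varepsilon^2\mathcal A^2=(\mathcal D_\varepsilon-\varepsilon\mathcal A)(\mathcal D_\varepsilon+\varepsilon\mathcal A)$, so this difference is $O(|\varepsilon|^3)$ on smooth functions, and $\mathcal D_\varepsilon^3=O(|\varepsilon|^3)$ as well. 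Hence
\[
\langle h,p_\varepsilon-p\rangle=\varepsilon\,\mathcal AQ_0h(0)+\varepsilon^2(\mathcal B+\tfrac12\mathcal A^2)Q_0h(0)+O(|\varepsilon|^3),
\]
uniformly over $\Lip(h)\le1$. Adjoint transfer gives $\mathcal AQ_0h(0)=\langle h,\mathcal A^*p\rangle$, and similarly for the other terms, which yields $r_1,r_2$. Membership in $\mathcal X_1$ follows from three facts: $p$ and all its derivatives are in $L^1$ with finite first moment (Gaussian factor together with $\lambda\in\cP_1$); the shifts by $y$ are bounded; and each adjoint term is a derivative or a difference, so it integrates to zero. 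Approximating Lipschitz tests, as in Lemma~\ref{lem:approximation} but in $d$ dimensions, and taking the supremum give \eqref{eq:multi-two-jet}.

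For the kernel statement I would take Fourier transforms: $\widehat{r_1}(\xi)=\dot\Psi(-\xi)\widehat p(\xi)$, where
\[
\dot\Psi(\xi)=i\alpha\cdot\xi-\tfrac12\xi^\top C\xi+\int(e^{i\xi\cdot y}-1-i\xi\cdot y)\eta(\dd y)+\int i\xi(e^{i\xi\cdot y}-1)\cdot J(\dd y).
\]
Since $\widehat p$ never vanishes, $r_1=0$ forces $\dot\Psi\equiv0$. The annular support separates the polynomial part from the bounded oscillatory part. Differentiating once and twice at $0$ gives $\alpha=0$, together with $-C+(\text{second moments})=0$; the Hessian of $\dot\Psi$ is $-C-\widehat{\tau}$-type terms, which as in Proposition~\ref{prop:tangent-quotient} identify $\tau$. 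The main obstacle is disentangling $C$ from $\tau$, because $\operatorname{div}J$ and $\eta$ can carry mass near $0$ only through the annulus. I would first try this argument: $\nabla^2_\xi\dot\Psi(\xi)=-C-\int y y^\top e^{i\xi\cdot y}\eta(\dd y)-(\ldots)J$. The non-constant part is the Fourier transform of a finite measure supported away from $0$, times polynomials in $\xi$. A Riemann--Lebesgue or averaging argument, letting $|\xi|\to\infty$ in Cesàro mean, then kills it and isolates $C=0$, after which $\tau=0$ follows by Fourier uniqueness. Rotation invariance is a change of variables. Contraction follows from $r_1(p*\zeta)=r_1(p)*\zeta$ and the bound $\norm{r*\zeta}_{\rm KR}\le\norm r_{\rm KR}$, because $h*\check\zeta$ is $1$-Lipschitz.

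When $\alpha=C=\eta=0$ and $J=0$, we have $\mathcal A=0$, so $r_2=\mathcal B^*p$ and \eqref{eq:multi-second} follows. For positivity, note that $\widehat{\mathcal B^*p}(\xi)=-\tfrac12\,\xi^\top\widehat S(-\xi)\xi\,\widehat p(\xi)$. As $\xi=te\to0$ this behaves like $-\tfrac{t^2}{2}e^\top S(\R^d)e$, and $S(\R^d)$ is a nonzero positive semidefinite matrix with trace $\int|z|^2\dd\Pi>0$. Choosing $e$ with $e^\top S(\R^d)e>0$ gives a nonzero Fourier transform, hence a nonzero $r_2$. Since $\norm{r}_{\rm KR}\ge$ its pairing against suitable Lipschitz tests and vanishes only for $r=0$ in $\mathcal X_1$ (test with bounded Lipschitz $\cos(\xi\cdot x)/|\xi|$), positivity follows.
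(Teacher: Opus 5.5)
Apart from one step, your plan is the paper's proof. The shared parts are: third-order interpolation in $u$; the factorization $\mathcal D_\varepsilon^2-\varepsilon^2\mathcal A^2=(\mathcal D_\varepsilon-\varepsilon\mathcal A)(\mathcal D_\varepsilon+\varepsilon\mathcal A)$; derivative bounds from the factor $g_{\ell I_d}$; adjoint transfer and zero mass; Lipschitz approximation; rotation invariance and contraction; and positivity of $\mathcal B^*p$ from $\widehat{\mathcal B^*p}(te)/t^2\to-\tfrac12e^\top S(\R^d)e$.

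The gap is in the injectivity of $(\alpha,C,\tau)\mapsto r_1$, exactly where you locate the main obstacle. The non-constant part of $\nabla_\xi^2\dot\Psi$ is not bounded. Besides transforms of the finite measures $yy^\top\eta$ and $yJ^\top+Jy^\top$, it contains $-i\sum_l\xi_l\int_{\R^d}yy^\top e^{i\xi\cdot y}J_l(\dd y)$, which grows linearly in $\xi$. Riemann--Lebesgue does not apply, because $\eta$ and $J$ may be atomic, and moving atoms are the main case. Plain Ces\`aro means do not annihilate a linear factor times such a transform either. Already for $d=1$, $\eta=0$, $J=\delta_a$, one has $\dot\Psi''(\xi)=-C-2ae^{i\xi a}-ia^2\xi e^{i\xi a}$. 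Here $\frac1{2T}\int_{-T}^T(-ia^2\xi e^{i\xi a})\dd\xi=\frac{\sin(aT)}{T}-a\cos(aT)$, which does not tend to zero. So the averaging step does not isolate $C$, and the conclusion $C=0$, with the subsequent $\tau=0$, is not established as written.

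The missing idea is to separate the two parts by their supports in physical space, as the paper does. Write $\dot\Psi(\xi)=P(\xi)+\langle e^{i\xi\cdot y},\tau\rangle$, where $P(\xi)=i\alpha\cdot\xi-\tfrac12\xi^\top C\xi-\langle1,\tau\rangle-i\xi\cdot\langle y,\tau\rangle$ is a polynomial. If $\dot\Psi\equiv0$, take inverse Fourier transforms in $\cS'$. That of $P$ is supported at $\{0\}$. That of $\xi\mapsto\langle e^{i\xi\cdot y},\tau\rangle$ is the reflection of $\tau$, supported in the annulus $A_{r_0,R_0}$. Restricting to $\R^d\setminus\{0\}$ gives $\tau=0$, and then $P\equiv0$ gives $\alpha=C=0$ from its first and second derivatives at the origin.

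If you prefer to keep an averaging argument, replace the Ces\`aro mean by a Schwartz weight $T^{-d}\phi(\xi/T)$ with $\int\phi=1$. A term $\xi_l\int e^{i\xi\cdot y}\mu(\dd y)$, with $\mu$ supported in the annulus, then contributes $T\int\widehat{\psi_l}(-Ty)\mu(\dd y)$, where $\psi_l(\zeta)=\zeta_l\phi(\zeta)$. This tends to zero because $\widehat{\psi_l}$ decays faster than any power on $|y|\geq r_0$, which absorbs the factor $T$.
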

The proof is in Appendix~\ref{sup:sec:multivariate-proof}. The expansion is inside
the KR norm. For the matrix characteristic
$\boldsymbol\Gamma_\varepsilon=V_\varepsilon\delta_0+yy^\top N_\varepsilon$,
the first distributional coefficient is
\begin{equation}\label{eq:orientation-correction}
 \dot{\boldsymbol\Gamma}=C\delta_0+yy^\top\eta+yJ^\top+Jy^\top
                  -\sum_\ell\partial_{y_\ell}(yy^\top J_\ell).
\end{equation}
Indeed, for $F\in C_c^\infty(\R^d;\R^{d\times d})$,
\[
 \frac{\dd}{\dd\varepsilon}\bigg|_{\varepsilon=0}
 [(y+\varepsilon z)(y+\varepsilon z)^\top:F(y+\varepsilon z)]
 =(yz^\top+zy^\top):F(y)+yy^\top:DF(y)[z].
\]
Integrating and using the distributional product rule gives
\eqref{eq:orientation-correction}. The coefficient of $\varepsilon^2$ is
\[
 \int_{\R^{2d}}\!\left[zz^\top:F(y)+(yz^\top+zy^\top):DF(y)[z]
               +\tfrac12yy^\top:D^2F(y)[z,z]\right]\Pi(\dd y,\dd z),
\]
where $DF(y)[z]=\sum_\ell z_\ell\partial_\ell F(y)$ and
$D^2F(y)[z,z]=\sum_{j,k}z_jz_k\partial_{jk}F(y)$.
For raw balanced redistribution, $J=0$, $C=\eta=0$, and
\[
 \operatorname{Cov}(\lambda_\varepsilon)
 -\operatorname{Cov}(\lambda)
 =\int_{\R^{2d}}[(y+\varepsilon z)(y+\varepsilon z)^\top-yy^\top]\Pi(\dd y,\dd z)
 =\varepsilon^2S(\R^d).
\]
Variance-weighted one-dimensional remeshing instead preserves total
variance. The matrix characteristic above retains the required form
$V\delta_0+yy^\top N$. Positivity alone does not characterize such measures.
\section{Scope and conclusions.}\label{sec:scope}
For finite-variance infinitely divisible laws with a Gaussian factor,
the response is determined by
\[
 T_1=\mathsf T_p(\alpha,\eta,J),\qquad
 T_2=\mathsf Q_p(S)+\tfrac12\mathcal A^*T_1.
\]
A third displacement moment gives a cubic remainder. If
$\alpha=\eta-D_yJ=0$, a second displacement moment suffices for the
quadratic limit, whose coefficient is positive whenever $S\ne0$.
A Lindeberg condition extends this limit to varying redistribution
kernels. For $T_1\ne0$, analyticity gives the scalar second coefficient
in Theorem~\ref{thm:scalar-second}.
The balanced-grid error is comparable to $\mathcal V_h$, so an $h^2$
lower rate requires grid misalignment. The length formula applies to
admissible lifted curves. Atomic inverse estimates apply to distinct
atoms with positive masses, and the remeshing error bound compares plans
within its prescribed local class.
Appendix~\ref{sup:sec:state-response} gives a time-ordered state-dependent expansion
assuming bounded activity, compact displacement support, and a smooth
initial law. Its terminal response need not distinguish arbitrary
spatial kernels. Appendices~\ref{sup:sec:one-sided} and~\ref{sup:sec:stable} give one-sided Gaussian and stable
bounds. Appendices~\ref{sup:sec:matrix} and~\ref{sup:sec:factorized} treat matrix-valued stability and factorizable
state-dependent comparison, including a non-additive lower example.
Appendix~\ref{sup:sec:stable} also gives cutoff-free comparison under a finite first
moment. Appendices~\ref{sup:sec:approximation} and~\ref{sup:sec:analytic} provide approximation guarantees and analytic
details. Optimal one-sided logarithmic and finite-$q$ rates remain open
in those time-profile classes.

\appendix
\section{Proofs and stability extensions.}\label{app:extensions}
For each extension, we give the data and assumptions needed for the proof. We use $\cP(\R^d)$ for Borel probability measures and
$\cM_f(\R^d)$ for finite signed Borel measures. For a zero-mass
$\sigma\in\cM_f(\R^d)$ with $\int_{\R^d}|x|\,|\sigma|(\dd x)<\infty$,
\[
 \norm\sigma_{\mathrm{KR}}
 =\sup_{\substack{\Lip(h)\leq1\\h(0)=0}}
       \left|\int_{\R^d} h(x)\sigma(\dd x)\right|.
\]
In particular, $W_1(\lambda,\widetilde\lambda)
=\norm{\lambda-\widetilde\lambda}_{\mathrm{KR}}$ for
$\lambda,\widetilde\lambda\in\cP_1(\R^d)$.
The bounded-Lipschitz norm is the sum
$\norm f_{\mathrm{BL}}=\norm f_\infty+\Lip(f)$. Its dual on signed
measures is $\norm\cdot_{\mathrm{BL}^*}$, and
$d_{\mathrm{FM}}(\mu,\nu)=\norm{\mu-\nu}_{\mathrm{BL}^*}$.
We identify a density with its associated measure in these norms.
We use weak derivatives in Sobolev spaces and
$\norm f_{C_b^k}=\sum_{j=0}^k\norm{D^jf}_\infty$ in one dimension.
For a probability measure $\zeta$,
$(f*\zeta)(x)=\int_{\R^d} f(x-y)\zeta(\dd y)$ and
$\check\zeta(B)=\zeta(-B)$.
We use the Fourier convention
$\widehat f(\xi)=\int_{\R^d} e^{-i\xi\cdot x}f(x)\dd x$,
whereas characteristic exponents satisfy
$\E\left[e^{i\xi\cdot X}\right]=e^{\Psi(\xi)}$.
For $v>0$ and a symmetric positive definite matrix $V$, the Gaussian
densities used below are
\[
 \mapdef{g_v}{\R}{(0,\infty)}{x}
     {\dfrac{e^{-x^2/(2v)}}{\sqrt{2\pi v}}},
 \qquad
 \mapdef{g_V}{\R^d}{(0,\infty)}{x}
     {\dfrac{e^{-x^\top V^{-1}x/2}}
       {(2\pi)^{d/2}\sqrt{\det V}}}.
\]
We write $\varphi=g_1$ and $A:B=\operatorname{tr}(A^\top B)$.
\subsection{Centered Lipschitz approximation.}\label{app:analytic}
\begin{lemma}\label{lem:approximation}
For every one-Lipschitz $h:\R^d\to\R$ with $h(0)=0$, there exist
$h_n\in C_c^\infty(\R^d)$ such that $h_n(0)=0$, $\Lip(h_n)\leq1$,
$|h_n(x)|\leq|x|$, and $h_n\to h$ locally uniformly.
\end{lemma}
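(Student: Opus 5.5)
The plan is to build $h_n$ in three steps: mollify $h$, truncate it to compact support, and re-center at the origin. The Lipschitz constant must stay at most one at each step. The Lipschitz bound then gives $|h_n(x)|\leq|x|$ for free.

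First, regularization. Take a standard mollifier $\rho_\epsilon\in C_c^\infty(\R^d)$, nonnegative, with unit mass, supported in the ball of radius $\epsilon$. Set $\tilde h_\epsilon=h*\rho_\epsilon$. Because $\tilde h_\epsilon$ is an average of translates of $h$, it is smooth with $\Lip(\tilde h_\epsilon)\leq1$. It also satisfies $\norm{\tilde h_\epsilon-h}_\infty\leq\epsilon$, since $|h(x-y)-h(x)|\leq|y|\leq\epsilon$ on the support of $\rho_\epsilon$.

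Second, compact support. Multiplying by a cutoff would add a term of size $|\tilde h_\epsilon|\,|\nabla\chi_R|\approx R\cdot R^{-1}$ to the gradient, which can push the Lipschitz constant above one. I would avoid this by truncating in the range rather than multiplying. Let $c(t)=\operatorname{sgn}(t)\min\{|t|,R\}$ be the clipping function. The composition $c\circ\tilde h_\epsilon$ is one-Lipschitz but not smooth and not compactly supported. To fix both, I would instead use a smooth nondecreasing odd function $c_R$ with $0\leq c_R'\leq1$, $c_R(t)=t$ for $|t|\leq R$, and $c_R$ constant for $|t|\geq 2R$. This still leaves level sets that are not compact.

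A more reliable route for compact support is to replace $h$ first by $h^{(R)}(x)=\operatorname{sgn}(h(x))\min\{|h(x)|,(2R-|x|)_+\}$. One has to check that this is one-Lipschitz. The functions $\min(h,(2R-|x|)_+)$ and $\max(h,-(2R-|x|)_+)$ are one-Lipschitz as a min and a max of one-Lipschitz functions. Applying them on the respective sign regions glues to a continuous function; equivalently, $h^{(R)}=\max\bigl(\min(h,\psi),-\psi\bigr)$ with $\psi=(2R-|x|)_+$. The resulting function is one-Lipschitz and vanishes outside the ball $B_{2R}$. On $B_R$ it equals $h$, because there $|h(x)|\leq|x|\leq 2R-|x|$. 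Mollifying $h^{(R)}$ with $\epsilon\leq1$ then gives a $C_c^\infty$ function that is one-Lipschitz.

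Third, centering. Subtract the value at the origin, so $h_n=h^{(R)}*\rho_\epsilon-(h^{(R)}*\rho_\epsilon)(0)$. This may destroy compact support, since a nonzero constant is added. To avoid this, I would multiply the subtracted constant by a fixed bump equal to one near the origin and with small gradient, say $\theta(x/R)$ where $\theta$ equals one on $B_1$ and is supported in $B_2$. The correction is at most $\epsilon$ in size, so its gradient is $O(\epsilon/R)$. To keep $\Lip\leq1$ exactly, I then rescale by $(1+O(\epsilon/R))^{-1}$; the rescaling and the correction are both negligible in the limit. Taking $R=n$ and $\epsilon=1/n$, the functions satisfy $h_n(0)=0$, $\Lip(h_n)\leq1$, and hence $|h_n(x)|\leq|x|$. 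They converge to $h$ locally uniformly, since on $B_{n}$ the error is $O(1/n)$.

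The main obstacle is keeping the Lipschitz constant at most one while enforcing compact support and centering at the same time. The min/max truncation against the cone $\psi$, together with the final slight rescaling, is intended to handle this.
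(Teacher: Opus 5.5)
Your construction is correct, but it obtains compact support by a different mechanism than the paper. The paper clips the range, $h_R=\max\{-R,\min\{h,R\}\}$, then mollifies and subtracts the value at the origin. Only after that does it multiply by a cutoff $\chi(x/n^3)$, at a scale much larger than the clipping level $R=n$. The product rule then costs only $n^{-3}\cdot 2n\,\norm{\nabla\chi}_\infty=O(n^{-2})$ in the Lipschitz constant, and dividing by $1+2n^{-2}\norm{\nabla\chi}_\infty$ removes it. This separation of the clipping and cutoff scales is exactly how the paper defeats the $R\cdot R^{-1}$ obstruction you identified.

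You instead truncate against the cone $\psi=(2R-|x|)_+$. This gives a compactly supported function with no Lipschitz loss, agreeing with $h$ on $B_R$. Agreement on $B_R$ uses $|h(x)|\leq|x|$, i.e. the centering $h(0)=0$; the paper's range clipping does not need this. The price is that centering must come after truncation. You therefore need the localized correction $c\,\theta(x/R)$, with $|c|\leq\epsilon$ because $h^{(R)}(0)=0$, followed by a rescaling by $1+O(\epsilon/R)$. With $R=n$ and $\epsilon=1/n$, the rescaling error on $B_n$ is $O(n^{-2})\cdot n$. So the total error there is $O(1/n)$, matching the paper.

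The two routes compare as follows. In yours, the only inexact step is the tiny centering correction. The paper's route is slightly more generic, since its truncation does not depend on $h(0)=0$. In a write-up, drop the abandoned paragraph about the smooth range-clipping function $c_R$, since it plays no role in the final argument.
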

\begin{proof}
Choose a smooth compactly supported probability density $\eta$ and a
smooth cutoff $0\leq\chi\leq1$ equal to one on the unit ball.
Let $h_R=\max\{-R,\min\{h,R\}\}$,
$\eta_\varepsilon(x)=\varepsilon^{-d}\eta(x/\varepsilon)$, and
$f_{R,\varepsilon}=h_R*\eta_\varepsilon-(h_R*\eta_\varepsilon)(0)$.
Then $f_{R,\varepsilon}(0)=0$, $\Lip(f_{R,\varepsilon})\leq1$,
$\norm{f_{R,\varepsilon}}_\infty\leq2R$, and
$\norm{f_{R,\varepsilon}-h_R}_\infty\leq2\varepsilon\int_{\R^d}|z|\eta(z)\dd z$.
Set
\[
 h_n(x)=\frac{f_{n,n^{-1}}(x)\chi(x/n^3)}
                   {1+2n^{-2}\norm{\nabla\chi}_\infty}.
\]
The product rule and the Lipschitz and uniform bounds above give
\begin{align*}
 \norm{\nabla(f_{n,n^{-1}}\chi(\cdot/n^3))}_\infty
 &\leq
   \norm{\nabla f_{n,n^{-1}}}_\infty\norm\chi_\infty
   +n^{-3}\norm{f_{n,n^{-1}}}_\infty\norm{\nabla\chi}_\infty\\
 &\leq
   1+n^{-3}\norm{f_{n,n^{-1}}}_\infty\norm{\nabla\chi}_\infty\\
 &\leq
   1+2n^{-2}\norm{\nabla\chi}_\infty.
\end{align*}
Thus $\Lip(h_n)\leq1$, and $h_n(0)=0$ implies $|h_n(x)|\leq|x|$.
On each fixed ball, $h_n\to h$ uniformly because the cutoff eventually
equals one, the denominator tends to one, and the mollification error
is $O(n^{-1})$. For $\lambda\in\cP_1(\R^d)$, dominated convergence gives
$\int_{\R^d} h_n(x)\lambda(\dd x)\to\int_{\R^d} h(x)\lambda(\dd x)$.
\end{proof}
\subsection{Proof of the multivariate second-order response.}
\label{sup:sec:multivariate-proof}
We prove Theorem~\ref{thm:multi-tangent}. We first recall the data:
$m,\alpha\in\R^d$, symmetric matrices $V\succ0$ and $C$,
and finite measures $N\geq0$ and $\eta$ supported in
$A_{r_0,R_0}=\{y:r_0\leq|y|\leq R_0\}$, where $0<r_0\leq R_0<\infty$.
The finite positive measure $\Pi$ on $\R^d\times\R^d$ has first marginal
$N$ and satisfies $|z|\leq Z_0$ on its support. The admissible perturbations are
\[
 \begin{gathered}
 m_\varepsilon=m+\varepsilon\alpha,\qquad
 V_\varepsilon=V+\varepsilon C\succeq\ell I_d,\qquad\ell>0,\\
 N_\varepsilon=(y+\varepsilon z)_\#\Pi+\varepsilon\eta\geq0.
 \end{gathered}
\]
Write $\lambda_\varepsilon$ for the law with full-compensation triplet
$(m_\varepsilon,V_\varepsilon,N_\varepsilon)$, $p_\varepsilon$ for its
density, and $\lambda=\lambda_0$, $p=p_0$.
Define the vector measure $J$, symmetric matrix measure $S$, and compactly
supported distribution $\tau$ by
\[
 \begin{gathered}
 J(B)=\int_{B\times\R^d}z\Pi(\dd y,\dd z),\qquad S_{jk}(B)=\int_{B\times\R^d}z_jz_k\Pi(\dd y,\dd z),\\
 \tau=\eta-\operatorname{div}_yJ,\qquad \langle f,\tau\rangle=\int_{\R^d} f(y)\eta(\dd y)+\int_{\R^d}\nabla f(y)\cdot J(\dd y).
 \end{gathered}
\]
Here $B\subset\R^d$ is Borel and $j,k\in\{1,\ldots,d\}$.
\begin{proof}
For a smooth test function $f$, define
\[
 \mapdef{R_f}{\R^d\times\R^d}{\R}{(x,y)}
          {f(x+y)-f(x)-y\cdot\nabla f(x)}.
\]
Let $\mathcal D_\varepsilon$ be the exact difference between the terminal
L\'evy generators. Then
\begin{align*}
 \mathcal D_\varepsilon f(x) &=\varepsilon\alpha\cdot\nabla f(x) +\frac\varepsilon2 C:\nabla^2f(x) +\varepsilon\int_{\R^d} R_f(x,y)\eta(\dd y)\\
 &\quad+\int_{\R^{2d}}\bigl[R_f(x,y+\varepsilon z)-R_f(x,y)\bigr]\Pi(\dd y,\dd z).
\end{align*}
Taylor's formula in $y$ gives
\begin{align*}
 R_f(x,y+\varepsilon z)-R_f(x,y)
 &=\varepsilon z\cdot[\nabla f(x+y)-\nabla f(x)]
   +\frac{\varepsilon^2}{2}z^\top\nabla^2f(x+y)z\\
 &\quad+\frac{\varepsilon^3}{2}\int_0^1(1-r)^2
       D^3f(x+y+r\varepsilon z)[z,z,z]\dd r.
\end{align*}
Thus $\mathcal D_\varepsilon=\varepsilon\mathcal A+
\varepsilon^2\mathcal B+\mathcal E_\varepsilon$, where
\begin{align*}
 \mathcal Af(x)&=\alpha\cdot\nabla f(x)+\tfrac12C:\nabla^2f(x)
  +\int_{\R^d} R_f(x,y)\eta(\dd y)
  +\int_{\R^d}[\nabla f(x+y)-\nabla f(x)]\cdot J(\dd y),\\
 \mathcal Bf(x)&=\tfrac12\sum_{j,k=1}^d\int_{\R^d}
                 \partial_{jk}f(x+y)S_{jk}(\dd y),\\
 \norm{\mathcal E_\varepsilon f}_\infty
 &\leq C|\varepsilon|^3\max_{|\beta|=3}\norm{D^\beta f}_\infty.
\end{align*}
The support and velocity bounds also imply
\[
 \norm{\mathcal D_\varepsilon f}_\infty
  +|\varepsilon|\norm{\mathcal Af}_\infty
 \leq C|\varepsilon|\sum_{j=1}^2\norm{D^jf}_\infty,\qquad \norm{(\mathcal D_\varepsilon-\varepsilon\mathcal A)f}_\infty
 \leq C\varepsilon^2\norm{D^2f}_\infty.
\]
All these operators commute with translations and spatial derivatives.
Consequently,
\begin{align*}
 \mathcal D_\varepsilon^2-\varepsilon^2\mathcal A^2
 &=(\mathcal D_\varepsilon-\varepsilon\mathcal A)
   (\mathcal D_\varepsilon+\varepsilon\mathcal A),\\
 \norm{(\mathcal D_\varepsilon^2-\varepsilon^2\mathcal A^2)f}_\infty
 &\leq C|\varepsilon|^3\sum_{j=3}^4\norm{D^jf}_\infty,\\
 \norm{\mathcal D_\varepsilon^3f}_\infty
 &\leq C|\varepsilon|^3\sum_{j=3}^6\norm{D^jf}_\infty.
\end{align*}
Let $\pi_{u,\varepsilon}$ have exponent
$(1-u)\Psi+u\Psi_\varepsilon$, $u\in[0,1]$, and put
$Q_{u,\varepsilon}h(x)=\int_{\R^d} h(x+y)\pi_{u,\varepsilon}(\dd y)$, with
$Q_0=Q_{0,\varepsilon}$. For $h\in\cS(\R^d)$, Fourier inversion yields
\[
 \partial_u^jQ_{u,\varepsilon}h
       =\mathcal D_\varepsilon^jQ_{u,\varepsilon}h,
 \qquad j=1,2,3.
\]
Indeed, the Fourier integrands are bounded by
$C(1+|\xi|)^6|\widehat h(\xi)|$, since the exponents have nonpositive
real parts and at most quadratic growth. Taylor's formula in $u$ gives
\[
 (Q_{1,\varepsilon}-Q_0)h
 =\mathcal D_\varepsilon Q_0h
  +\tfrac12\mathcal D_\varepsilon^2Q_0h
  +\tfrac12\int_0^1(1-u)^2
               \mathcal D_\varepsilon^3Q_{u,\varepsilon}h\dd u.
\]
The Gaussian covariance of $\pi_{u,\varepsilon}$ is
$V+u\varepsilon C\succeq\ell I_d$. Factor its density as
$g_{\ell I_d}*\zeta_{u,\varepsilon}$ with $\zeta_{u,\varepsilon}$ a
probability law. For a multi-index $\beta$ with $1\leq|\beta|\leq6$,
choose $j$ with $\beta_j>0$. For $\Lip(h)\leq1$, Young's inequality
and the Lipschitz bound give
\[
 \norm{D^\beta Q_{u,\varepsilon}h}_\infty =\norm{(\partial_jh)*(D^{\beta-e_j}g_{\ell I_d})* \check\zeta_{u,\varepsilon}}_\infty \leq\norm{\partial_jh}_\infty \norm{D^{\beta-e_j}g_{\ell I_d}}_1 \leq \norm{D^{\beta-e_j}g_{\ell I_d}}_1,
\]
where $e_j$ is the $j$th coordinate multi-index. Substituting these bounds gives
\[
 \sup_{\Lip(h)\leq1}\left|
 \int_{\R^d} h(x)(\lambda_\varepsilon-\lambda)(\dd x)
 -\varepsilon\mathcal A Q_0h(0)
 -\varepsilon^2(\mathcal B+\tfrac12\mathcal A^2)Q_0h(0)
 \right|\leq C_0|\varepsilon|^3.
\]
We first take this supremum over Schwartz tests. The approximation below
extends it to Lipschitz tests as stated.
The spatial adjoints are
\begin{align*}
 \mathcal A^*u
 &=-\alpha\cdot\nabla u+\tfrac12 C:\nabla^2u
   +\int_{\R^d}[u(\cdot-y)-u+y\cdot\nabla u]\eta(\dd y)\\
 &\quad+\int_{\R^d}[\nabla u-\nabla u(\cdot-y)]\cdot J(\dd y),\\
 \mathcal B^*u&=\tfrac12\sum_{j,k=1}^d
                         \int_{\R^d}\partial_{jk}u(\cdot-y)S_{jk}(\dd y).
\end{align*}
Writing $p=g_V*\rho$ with $\rho\in\cP_1(\R^d)$, Fubini's theorem,
a change of variables, and the triangle inequality give, for every
multi-index $\beta$,
\begin{align*}
 \int_{\R^d}(1+|x|)|D^\beta p(x)|\dd x
 &\leq\int_{\R^d}\rho(\dd y)
                  \int_{\R^d}(1+|x|)|D^\beta g_V(x-y)|\dd x\\
 &=\int_{\R^d}\rho(\dd y)
                  \int_{\R^d}(1+|z+y|)|D^\beta g_V(z)|\dd z\\
 &\leq\left(1+\int_{\R^d}|y|\rho(\dd y)\right)
             \int_{\R^d}(1+|z|)|D^\beta g_V(z)|\dd z<\infty.
\end{align*}
Compact translations preserve this property. Each adjoint term integrates
to zero, so
\[
 r_1=\mathcal A^*p,\qquad
 r_2=\mathcal B^*p+\tfrac12(\mathcal A^*)^2p
\]
are zero-mass densities with finite first absolute moments. Integration by
parts gives
$\mathcal A Q_0h(0)=\int_{\R^d} h(x)r_1(x)\dd x$ and
$(\mathcal B+\mathcal A^2/2)Q_0h(0)=\int_{\R^d} h(x)r_2(x)\dd x$.
Lemma~\ref{lem:approximation} and these weighted integrability bounds extend the estimate to
all Lipschitz tests. Hence
\[
 \norm{p_\varepsilon-p-\varepsilon r_1-\varepsilon^2r_2}_{\mathrm{KR}}
 \leq C_0|\varepsilon|^3,
 \qquad
 \left|W_1(\lambda_\varepsilon,\lambda)
       -\norm{\varepsilon r_1+\varepsilon^2r_2}_{\mathrm{KR}}\right|
 \leq C_0|\varepsilon|^3.
\]
For injectivity,
\[
 \widehat r_1(\xi)=\dot\Psi(-\xi)\widehat p(\xi),\qquad
 \dot\Psi(\xi)=i\alpha\cdot\xi-\tfrac12\xi^\top C\xi
  +\langle e^{i\xi\cdot y}-1-i\xi\cdot y,\tau\rangle.
\]
Since $\widehat p=e^{\Psi(-\cdot)}$ never vanishes, $r_1=0$ implies
$\dot\Psi=0$. The inverse Fourier transform of the exponential term is
$\tau$ up to reflection, whereas the polynomial terms have inverse
transforms supported at $0$. Restricting to $\R^d\setminus\{0\}$ gives
$\tau=0$, because $\operatorname{supp}\tau\subset A_{r_0,R_0}$. The remaining
identity $i\alpha\cdot\xi-\xi^\top C\xi/2=0$ gives $\alpha=C=0$ by its
first and second derivatives at $\xi=0$. The converse follows from the
same multiplier identity. Thus the linear map
$(\alpha,C,\tau)\mapsto r_1$ is injective and its KR norm is a norm.
For an orthogonal matrix $U$, $h\mapsto h\circ U$ preserves the test class,
so simultaneous rotations preserve this norm. If $\zeta\in\cP_1$ is an
independent infinitely divisible convolution factor, then
the response becomes $r_1\mapsto r_1*\zeta$.
Since $\Lip(h*\check\zeta)\leq\Lip(h)$, Fubini's theorem and
Kantorovich duality give
\begin{align*}
 \norm{r_1*\zeta}_{\mathrm{KR}}
 &=\sup_{\Lip(h)\leq1}\left|\int_{\R^d} h(x)(r_1*\zeta)(x)\dd x\right|\\
 &=\sup_{\Lip(h)\leq1}\left|\int_{\R^d}(h*\check\zeta)(x)r_1(x)\dd x\right| \leq\norm{r_1}_{\mathrm{KR}},
\end{align*}
which proves contraction.
If $\alpha=C=\eta=0$ and $J=0$, then $\mathcal A=0$ and
$r_2=\mathcal B^*p$. Moreover,
\[
 \widehat{\mathcal B^*p}(\xi)
 =-\tfrac12\widehat p(\xi)\sum_{j,k=1}^d
                                     \xi_j\xi_k\widehat S_{jk}(\xi).
\]
The matrix $M=S(\R^d)=\int_{\R^{2d}} zz^\top\Pi(\dd y,\dd z)$ is positive semidefinite and
$\operatorname{tr}M=\int_{\R^{2d}}|z|^2\Pi(\dd y,\dd z)$. If this trace is positive, choose a
unit vector $e$ with $e^\top Me>0$. Continuity of the Fourier transforms
of the finite measures $S_{jk}$ yields
\[
 \lim_{t\to0}\frac{\widehat{\mathcal B^*p}(te)}{t^2}
 =-\tfrac12e^\top Me<0.
\]
Therefore $\mathcal B^*p\ne0$ and
$\norm{\mathcal B^*p}_{\mathrm{KR}}>0$. This positivity argument applies
to dispersion matrices generated by the positive lift $\Pi$. It does not
establish injectivity for arbitrary signed matrix measures.
\end{proof}
\subsection{A state-dependent second-order response.}\label{sup:sec:state-response}
State-dependent generators need not commute, so their second response is
ordered in physical time. Let $c\in C_b^3(\R)$ be nonnegative and let
$\Pi\in\cP(\R^2)$ have compact support. Its first marginal is $F$, and
$F_\varepsilon=(y+\varepsilon z)_\#\Pi$. Consider the generator
\begin{equation}\label{sup:eq:state-generator}
 \cL_\varepsilon f(x)=\tfrac12 f''(x)
       +c(x)\int_{\R}[f(x+y)-f(x)]F_\varepsilon(\dd y).
\end{equation}
We construct the process by bounded-rate thinning. We use the uncompensated
convention and do not assume centering: in the full-compensation convention
the drift is $c(x)\int_{\R^2}(y+\varepsilon z)\Pi(\dd y,\dd z)$.
Let $P_t^{\varepsilon,*}$ be its density semigroup and set
$P_t^*=P_t^{0,*}$. For $k\in\{0,1,2,3\}$, define the weighted Sobolev space
\[
 X_k=\left\{u\in W^{k,1}(\R):
             \sum_{j=0}^k\int_{\R}(1+|x|)|D^ju(x)|\dd x<\infty\right\},
 \qquad \norm u_{X_k}=\sum_{j=0}^k\norm{(1+|x|)D^ju}_1.
\]
Fix $T>0$ and a probability density $p_0\in X_3$. The first two forward
generator coefficients are
\begin{equation}\label{sup:eq:state-operators}
 \mathcal A^*u=-\int_{\R^2} zD(cu)(\cdot-y)\Pi(\dd y,\dd z),
 \qquad \mathcal B^*u=\tfrac12\int_{\R^2} z^2D^2(cu)(\cdot-y)\Pi(\dd y,\dd z).
\end{equation}
In particular, $\mathcal A^*:X_{k+1}\to X_k$ for $k\leq2$ and
$\mathcal B^*:X_{k+2}\to X_k$ for $k\leq1$ are bounded linear maps.
\begin{theorem}[Time-ordered second response with bounded jump activity]
\label{sup:thm:state-response}
Let $p_T^\varepsilon=P_T^{\varepsilon,*}p_0$. Define the Bochner integrals
\begin{align}
 q_1(T)&=\int_0^T P_{T-t}^*\mathcal A^*P_t^*p_0\dd t,
                                      \label{sup:eq:state-q1}\\
 q_2(T)&=\int_0^T P_{T-t}^*\mathcal B^*P_t^*p_0\dd t
 +\int_{0<s<t<T}P_{T-t}^*\mathcal A^*P_{t-s}^*
                         \mathcal A^*P_s^*p_0\dd s\dd t.
                                      \label{sup:eq:state-q2}
\end{align}
Then $q_1(T)\in X_2$, $q_2(T)\in X_1$, and, for $|\varepsilon|\leq1$,
\begin{equation}\label{sup:eq:state-remainder}
 \norm{p_T^\varepsilon-p_T^0-\varepsilon q_1(T)-\varepsilon^2q_2(T)}_{X_0}
                  \leq C_T|\varepsilon|^3.
\end{equation}
The finite constant $C_T$ depends only on $T$, the support bound of $\Pi$,
$\norm c_{C_b^3}$ and $\norm{p_0}_{X_3}$. Both coefficients have zero
integral. Their primitives
$H_j(x)=\int_{-\infty}^xq_j(T,z)\dd z$, $j=1,2$, belong to $L^1(\R)$ and
\begin{equation}\label{sup:eq:state-W1}
 \left|W_1(\Law(X_T^\varepsilon),\Law(X_T^0))
            -\norm{\varepsilon H_1+\varepsilon^2H_2}_1\right|
                  \leq C_T|\varepsilon|^3.
\end{equation}
If $\int_{\R^2} zf(y)\Pi(\dd y,\dd z)=0$ for every bounded measurable $f:\R\to\R$, then
$\mathcal A^*=0$, $H_1=0$, and the time-ordered double integral vanishes.
Consequently,
$W_1(\Law(X_T^\varepsilon),\Law(X_T^0))
 =\varepsilon^2\norm{H_2}_1+O(|\varepsilon|^3)$.
\end{theorem}
\begin{proof}
Let $T_au=u(\cdot-a)$. A change of variables and the inequality
$1+|x+a|\leq(1+|a|)(1+|x|)$ give
\[
 \norm{T_au}_{X_k}
 =
      \sum_{j=0}^k\int_{\R}(1+|x+a|)|D^ju(x)|\dd x \leq
      (1+|a|)\sum_{j=0}^k\int_{\R}(1+|x|)|D^ju(x)|\dd x =(1+|a|)\norm u_{X_k}.
\]
For the heat semigroup $\mathsf H_tu=g_t*u$, with $\mathsf H_0=I$,
apply this estimate under the convolution integral. Then Tonelli's
theorem and the first absolute Gaussian moment give, for $t>0$,
\[
 \norm{\mathsf H_tu}_{X_k}
 \leq\int_{\R} g_t(a)\norm{T_au}_{X_k}\dd a \leq
        \norm u_{X_k}\int_{\R}(1+|a|)g_t(a)\dd a =(1+\sqrt{2/\pi}\sqrt t)\norm u_{X_k}.
\]
The same bound holds at $t=0$.
Multiplication by $c$ is bounded on $X_k$ for $k\leq3$, by
$D^j(cu)=\sum_{i=0}^j\binom ji(D^{j-i}c)D^iu$.
Hence the jump adjoint
\[
 \mathcal J_\varepsilon^*u
 =\int_{\R^2} T_{y+\varepsilon z}(cu)\Pi(\dd y,\dd z)-cu
\]
is bounded on each $X_k$, uniformly over $|\varepsilon|\leq1$.
The perturbation series
\[
 P_t^{\varepsilon,*}
 =\mathsf H_t+\sum_{n\geq1}\int_{0<t_1<\cdots<t_n<t}
  \mathsf H_{t-t_n}\mathcal J_\varepsilon^*
  \mathsf H_{t_n-t_{n-1}}\cdots
  \mathcal J_\varepsilon^*\mathsf H_{t_1}\dd t_1\cdots\dd t_n
\]
converges in operator norm on $X_k$: its $n$th summand has norm at most
$M_T^{n+1}B_k^nt^n/n!$, where $M_T=1+\sqrt{2/\pi}\sqrt T$ and
$B_k=\sup_{|\varepsilon|\leq1}
\norm{\mathcal J_\varepsilon^*}_{X_k\to X_k}<\infty$.
Thus
\[
 \sup_{t\leq T,\,|\varepsilon|\leq1}
 \norm{P_t^{\varepsilon,*}}_{X_k\to X_k}\leq C_T,
 \qquad k=0,1,2,3.
\]
These strongly continuous semigroups agree with the bounded-rate process
and preserve positivity and mass.
Put $\Delta_\varepsilon=\mathcal J_\varepsilon^*-\mathcal J_0^*$.
For $v=cu$ and weak derivatives through order three,
\begin{align*}
 T_{y+\varepsilon z}v-T_yv
 &=-\varepsilon z\int_0^1T_{y+r\varepsilon z}Dv\dd r,\\
 T_{y+\varepsilon z}v-T_yv+\varepsilon zT_yDv
 &=\varepsilon^2z^2\int_0^1(1-r)
                     T_{y+r\varepsilon z}D^2v\dd r,\\
 T_{y+\varepsilon z}v-T_yv+\varepsilon zT_yDv
                  -\tfrac12\varepsilon^2z^2T_yD^2v
 &=-\tfrac12\varepsilon^3z^3\int_0^1(1-r)^2
                     T_{y+r\varepsilon z}D^3v\dd r.
\end{align*}
The shifts are uniformly bounded. Integrating against $\Pi$ therefore gives
\begin{align*}
 \norm{\Delta_\varepsilon u}_{X_k}
       &\leq C|\varepsilon|\norm u_{X_{k+1}} &&(k\leq2),\\
 \norm{(\Delta_\varepsilon-\varepsilon\mathcal A^*)u}_{X_k}
       &\leq C\varepsilon^2\norm u_{X_{k+2}} &&(k\leq1),\\
 \norm{(\Delta_\varepsilon-\varepsilon\mathcal A^*
                       -\varepsilon^2\mathcal B^*)u}_{X_0}
       &\leq C|\varepsilon|^3\norm u_{X_3}.
\end{align*}
Iterating Duhamel's identity
\[
 P_T^{\varepsilon,*}=P_T^*
       +\int_0^T P_{T-t}^*\Delta_\varepsilon
                                  P_t^{\varepsilon,*}\dd t
\]
gives, on $X_3$,
\begin{align*}
 P_T^{\varepsilon,*}-P_T^*
 &=\int_0^TP_{T-t}^*\Delta_\varepsilon P_t^*\dd t\\
 &\quad+\int_{0<s<t<T}P_{T-t}^*\Delta_\varepsilon P_{t-s}^*
                                      \Delta_\varepsilon P_s^*\dd s\dd t
       +R_{3,\varepsilon},\\
 R_{3,\varepsilon}
 &=\int_{0<r<s<t<T}P_{T-t}^*\Delta_\varepsilon P_{t-s}^*
       \Delta_\varepsilon P_{s-r}^*\Delta_\varepsilon
                         P_r^{\varepsilon,*}\dd r\dd s\dd t.
\end{align*}
Three applications of the generator difference successively map $X_3$ to $X_2$, $X_1$, and $X_0$, so
\[
 \norm{R_{3,\varepsilon}}_{X_3\to X_0}
 \leq
       C_T|\varepsilon|^3\int_0^T\int_0^t\int_0^s\dd r\dd s\dd t =C_T|\varepsilon|^3\frac{T^3}{6}.
\]
For the double integral, use the exact identity
\[
 \Delta_\varepsilon P_{t-s}^*\Delta_\varepsilon -\varepsilon^2\mathcal A^*P_{t-s}^*\mathcal A^* =(\Delta_\varepsilon-\varepsilon\mathcal A^*) P_{t-s}^*\Delta_\varepsilon +\varepsilon\mathcal A^*P_{t-s}^* (\Delta_\varepsilon-\varepsilon\mathcal A^*).
\]
Both terms have norm at most $C_T|\varepsilon|^3$ from $X_3$ to $X_0$:
the derivative losses are respectively $1+2$ and $2+1$.
In the single integral, the generator difference expands as
$\varepsilon\mathcal A^*+\varepsilon^2\mathcal B^*$ with the same cubic
error. Integrating over the time simplexes proves
\eqref{sup:eq:state-remainder} and \eqref{sup:eq:state-q1}--\eqref{sup:eq:state-q2}.
For $v\in W^{1,1}(\R)$, $\int_{\R} Dv(x)\dd x=0$. Applying this also to $Dv$ gives
$\int_{\R} D^2v(x)\dd x=0$ when $v\in W^{2,1}$. Thus both operators in
\eqref{sup:eq:state-operators} have zero mass. Since $P_t^*$ preserves mass, we obtain $\int_{\R} q_j(T,x)\dd x=0$.
For every zero-mass $u\in X_0$, its primitive satisfies
\[
 U(x)=\int_{-\infty}^xu(z)\dd z =
   \int_{\R}\bigl[\one_{\{z\leq x\}}-\one_{\{0\leq x\}}\bigr]u(z)\dd z.
\]
Then Tonelli's theorem and the length of the interval between zero and
$z$ give
\[
 \norm U_1
 \leq\int_{\R}|u(z)|\int_{\R}
       |\one_{\{z\leq x\}}-\one_{\{0\leq x\}}|\dd x\dd z =\int_{\R}|z||u(z)|\dd z \leq\norm u_{X_0}.
\]
The one-dimensional identity $W_1(\lambda,\widetilde\lambda)
=\norm{F_\lambda-F_{\widetilde\lambda}}_1$, where $F_\lambda$ is the CDF,
and the reverse triangle inequality now give \eqref{sup:eq:state-W1}.
Finally, conditional balance means that the signed measure
$B\mapsto\int_{\R^2}\one_B(y)z\Pi(\dd y,\dd z)$ vanishes, so \eqref{sup:eq:state-operators}
gives $\mathcal A^*=0$.
\end{proof}
A Gaussian initial density belongs to $X_3$. The next example gives a
positive quadratic coefficient for a nonconstant intensity.
\subsubsection{A nonzero state-dependent quadratic coefficient.}
\label{sup:sec:state-positive}
Take $c(x)=1+\tfrac12\sin x$, $p_0=g_1$, and
$\Pi=\tfrac12\delta_{(a,-1)}+\tfrac12\delta_{(a,1)}$ with $a\ne0$.
Then $F=\delta_a$, $F_\varepsilon=(\delta_{a-\varepsilon}+
\delta_{a+\varepsilon})/2$, and
\[
 \mathcal A^*=0,\qquad
 \mathcal B^*p=\tfrac12D^2(cp)(\cdot-a),\qquad
 q_2(T)=\int_0^TP_{T-t}^*\mathcal B^*P_t^*p_0\dd t.
\]
Strong continuity on $X_2$ and $X_0$, together with
$\mathcal B^*:X_2\to X_0$, implies
\[
 \norm{q_2(T)/T-\mathcal B^*p_0}_{X_0} \leq\sup_{0\leq t\leq T} \norm{P_{T-t}^*\mathcal B^*(P_t^*p_0-p_0)}_{X_0} +\sup_{0\leq t\leq T} \norm{(P_{T-t}^*-I)\mathcal B^*p_0}_{X_0} \longrightarrow0.
\]
Applying the primitive bound gives
\begin{equation}\label{sup:eq:state-positive-limit}
 \frac{H_2(T)}{T}\longrightarrow\tfrac12D(cp_0)(\cdot-a)
       \quad\text{in }L^1,\qquad
 \frac{\norm{H_2(T)}_1}{T}\longrightarrow\tfrac12\norm{D(cp_0)}_1>0.
\end{equation}
Here $H_2(T)$ denotes the primitive of $q_2(T)$, which is given explicitly by
\[
 D(cp_0)(x)=\left[\tfrac12\cos x-x\left(1+\tfrac12\sin x\right)\right]
                \varphi(x),\qquad D(cp_0)(0)=\tfrac12\varphi(0)>0.
\]
Thus $\norm{H_2(T)}_1>0$ for every sufficiently small fixed $T>0$.
\subsection{One-sided smoothing for non-additive comparisons.}
\label{sup:sec:one-sided}
Let $X^i$, $i\in\{0,1\}$, be real-valued Markov semimartingales with
initial laws in $\cP_1(\R)$ and full-compensation generators
\[
 \cL_i(t)f(x)=b_i(t,x)f'(x)+\tfrac12a_i(t,x)f''(x)
   +\int_{\R}\bigl[f(x+y)-f(x)-yf'(x)\bigr]\nu_i(t,x,\dd y).
\]
Here $b_i:[0,T]\times\R\to\R$ and
$a_i:[0,T]\times\R\to[0,\infty)$ are measurable, and $\nu_i$ are
measurable L\'evy kernels. We assume the integrability condition in
\eqref{eq:add-int} in the following expected form:
\begin{equation}\label{sup:eq:semimartingale-integrability}
 \E\left[\int_0^T\left[|b_i(t,X^i_{t-})|+a_i(t,X^i_{t-})
       +\int_{\R} y^2\nu_i(t,X^i_{t-},\dd y)\right]\dd t\right]<\infty.
\end{equation}
Writing $X_t^i=X_0^i+\int_0^tb_i(s,X^i_{s-})\dd s+M_t^i$, the
martingale isometry and Doob's inequality give
\begin{align*}
 \E\left[\sup_{t\leq T}|X^i_t|\right]
 &\leq\E\left[|X^i_0|\right]+\E\left[\int_0^T|b_i(t,X^i_{t-})|\dd t\right]
                    +2(\E\left[|M_T^i|^2\right])^{1/2}\\
 &=\E\left[|X^i_0|\right]+\E\left[\int_0^T|b_i(t,X^i_{t-})|\dd t\right]\\
 &\quad+2\left(\E\left[\int_0^T\left[a_i(t,X^i_{t-})
       +\int_{\R} y^2\nu_i(t,X^i_{t-},\dd y)\right]\dd t\right]\right)^{1/2}<\infty.
\end{align*}
Evaluate all discrepancies along $X^0$:
\begin{equation}\label{sup:eq:profiles}
\begin{aligned}
 r_b(t)&=\E\left[|b_0(t,X^0_{t-})-b_1(t,X^0_{t-})|\right],\\
 r_a(t)&=\E\left[|a_0(t,X^0_{t-})-a_1(t,X^0_{t-})|\right],\\
 r_\nu(t)&=\E\left[d_{\FM}\bigl(y^2\nu_0(t,X^0_{t-},\dd y),
                              y^2\nu_1(t,X^0_{t-},\dd y)\bigr)\right].
\end{aligned}
\end{equation}
Assume $r_b,r_a,r_\nu\in L^1(0,T)$ and set
\begin{equation}\label{sup:eq:profile-norms}
 \theta_0=W_1(\Law(X^0_0),\Law(X^1_0)),\qquad
 \theta_b=\norm{r_b}_{L^1(0,T)},\qquad
 \theta_a=\norm{r_a}_{L^1(0,T)},\qquad
 J_q=\norm{r_\nu}_{L^q(0,T)}.
\end{equation}
Whenever we use $J_q$, we assume $J_q<\infty$. These assumptions also
control model~$1$ along $X^0$. In particular, the admissible test $f=1$
gives
\[
 \E\left[\int_{\R} y^2\nu_1(t,X^0_{t-},\dd y)\right]
 \leq\E\left[\int_{\R} y^2\nu_0(t,X^0_{t-},\dd y)\right]+r_\nu(t),
\]
with analogous bounds for $|b_1|$ and $a_1$ using $r_b$ and $r_a$.
\begin{lemma}[Generator comparison]\label{sup:lem:comparison}
Let $P^1_{t,T}f(x)=\E\left[f(X_T^1)\mid X_t^1=x\right]$ and let
$v(t,x)=P^1_{t,T}f(x)$ solve
$\partial_tv+\cL_1(t)v=0$, $v(T)=f$.
Suppose localized It\^o's formula applies to $v(t,X^0_t)$, with stopped
martingales of zero expectation and uniformly integrable stopped terminal
values. If
$\E\left[\int_0^T|((\cL_0-\cL_1)v)(t,X^0_{t-})|\dd t\right]<\infty$, then
\begin{equation}\label{sup:eq:comparison}
 \E\left[f(X^0_T)\right]-\E\left[f(X^1_T)\right] =\E\left[P^1_{0,T}f(X^0_0)-P^1_{0,T}f(X^1_0)\right] +\int_0^T\E\left[((\cL_0(t)-\cL_1(t))v(t))(X^0_{t-})\right]\dd t.
\end{equation}
\end{lemma}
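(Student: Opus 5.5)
The plan is the classical backward-Kolmogorov (Duhamel-type) comparison, applying It\^o's formula to the process $t\mapsto v(t,X^0_t)$. The Markov property gives $v(0,X^1_0)=P^1_{0,T}f(X^1_0)$, so $\E\left[f(X^1_T)\right]=\E\left[P^1_{0,T}f(X^1_0)\right]$. The terminal condition $v(T)=f$ gives $\E\left[f(X^0_T)\right]=\E\left[v(T,X^0_T)\right]$. Subtracting, the identity \eqref{sup:eq:comparison} reduces to showing
\[
 \E\left[v(T,X^0_T)\right]-\E\left[v(0,X^0_0)\right]
 =\int_0^T\E\left[((\cL_0(t)-\cL_1(t))v(t))(X^0_{t-})\right]\dd t .
\]

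For this, I apply the localized It\^o formula for the semimartingale $X^0$ to $v(t,X^0_t)$. Its drift part is $(\partial_tv+\cL_0(t)v)(t,X^0_{t-})\dd t$, and the remaining terms form a local martingale (the continuous stochastic integral plus the compensated jump integral). Using the backward equation $\partial_tv=-\cL_1(t)v$, the drift becomes $((\cL_0(t)-\cL_1(t))v(t))(X^0_{t-})$.

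I then fix a localizing sequence $\sigma_n\uparrow T$, choosing it so that the stopped martingales have zero expectation, as the hypothesis provides. This gives
\[
 \E\left[v(T\wedge\sigma_n,X^0_{T\wedge\sigma_n})\right]-\E\left[v(0,X^0_0)\right]
 =\E\left[\int_0^{T\wedge\sigma_n}((\cL_0-\cL_1)v)(t,X^0_{t-})\dd t\right].
\]

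The final step is to let $n\to\infty$.
\begin{itemize}
\item On the left, the assumed uniform integrability of the stopped terminal values gives convergence to $\E\left[v(T,X^0_T)\right]=\E\left[f(X^0_T)\right]$. Almost sure convergence holds because $\sigma_n$ eventually equals $T$ on a set of full probability, or by right-continuity together with the localization exhausting $[0,T]$.
\item On the right, the integrability hypothesis $\E\left[\int_0^T|(\cL_0-\cL_1)v|\dd t\right]<\infty$ allows dominated convergence, and Fubini then exchanges expectation and time integration.
\end{itemize}

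I expect this passage to the limit, together with ensuring $\sigma_n$ reaches $T$, to be the only delicate point; both are supplied directly by the stated assumptions. The rest is bookkeeping of the It\^o decomposition. In particular, the jump compensator of $X^0$ under full compensation matches the integral term of $\cL_0$, because the $-yf'$ correction pairs with the compensated-jump martingale. Combining the limit identity with the Markov identity for $X^1$ yields \eqref{sup:eq:comparison}.
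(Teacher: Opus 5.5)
Your proposal is correct and follows the paper's proof essentially step for step: It\^o's formula for $v(t,X^0_t)$ combined with the backward equation, localization with zero-mean stopped martingales, uniform integrability on the left and dominated convergence on the right, then subtraction of $\E\left[f(X^1_T)\right]=\E\left[P^1_{0,T}f(X^1_0)\right]$. The only point to tidy is the localization: take $\tau_n=\sigma_n\wedge T$ with $\sigma_n\uparrow\infty$, as the paper does, so that $\tau_n=T$ eventually almost surely. Your alternative ``right-continuity'' justification would not by itself handle stopping times that increase strictly to $T$.
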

\begin{proof}
Choose localizing times $\tau_n=\sigma_n\wedge T$ with
$\sigma_n\uparrow\infty$. It\^o's formula and the backward equation give
\begin{align*}
 \E\left[v(\tau_n,X^0_{\tau_n})\right]-\E\left[v(0,X^0_0)\right]
 &=\E\left[\int_0^{\tau_n}(\partial_t+\cL_0(t))v(t,X^0_{t-})\dd t\right]\\
 &=\E\left[\int_0^T\one_{\{t\leq\tau_n\}}
                   ((\cL_0-\cL_1)v)(t,X^0_{t-})\dd t\right].
\end{align*}
By uniform integrability, the left-hand side converges to
$\E\left[f(X_T^0)\right]-\E\left[P_{0,T}^1f(X_0^0)\right]$. Dominated convergence gives the
integral without stopping on the right. Subtract
$\E\left[f(X_T^1)\right]=\E\left[P_{0,T}^1f(X_0^1)\right]$ to obtain the identity.
\end{proof}
For later use, Taylor's formula defines
\[
 \mapdef{\Phi_{v,x}}{\R}{\R}{y}
      {\displaystyle\int_0^1(1-r)v''(x+ry)\dd r}.
\]
It satisfies
\[
 v(x+y)-v(x)-yv'(x)=y^2\Phi_{v,x}(y).
\]
Since $\int_0^1(1-r)\dd r=1/2$ and
$\int_0^1r(1-r)\dd r=1/6$,
\begin{align*}
 \norm{\Phi_{v,x}}_{\BL}
 &=\norm{\Phi_{v,x}}_\infty+\Lip(\Phi_{v,x})\\
 &\leq
    \int_0^1(1-r)\norm{v''}_\infty\dd r
    +\int_0^1r(1-r)\norm{v'''}_\infty\dd r\\
 &=
       \tfrac12\norm{v''}_\infty+\tfrac16\norm{v'''}_\infty.
\end{align*}
Thus Lemma~\ref{lem:Taylor} gives the explicit comparison bound
\[
 \E\left[|((\cL_0-\cL_1)v)(t,X^0_{t-})|\right]
 \leq r_b(t)\norm{v'}_\infty+\tfrac12r_a(t)\norm{v''}_\infty
       +r_\nu(t)\left(\tfrac12\norm{v''}_\infty
                              +\tfrac16\norm{v'''}_\infty\right).
\]
\subsubsection{External regularization and its baseline rate.}
\label{sup:sec:external-regularization}
For $\Lip(h)\leq1$ and $\varepsilon>0$, set
$h_\varepsilon=h*g_\varepsilon$. With $G\sim N(0,1)$, the Lipschitz
property and the first absolute Gaussian moment give
\[
 |h_\varepsilon(x)-h(x)| =\left|\E\left[h(x+\sqrt\varepsilon G)-h(x)\right]\right| \leq\E\left[|h(x+\sqrt\varepsilon G)-h(x)|\right] \leq\sqrt\varepsilon\,\E\left[|G|\right] =\sqrt{2/\pi}\sqrt\varepsilon.
\]
Since $Dh\in L^\infty$ and $\norm{Dh}_\infty\leq1$,
$D^jh_\varepsilon=(Dh)*D^{j-1}g_\varepsilon$, so
\begin{equation}\label{sup:eq:external-gaussian}
 \norm{h-h_\varepsilon}_\infty\leq \sqrt{2/\pi}\sqrt\varepsilon,\qquad
 \norm{h'_\varepsilon}_\infty\leq1,\qquad
 \norm{h''_\varepsilon}_\infty\leq \sqrt{2/\pi}\varepsilon^{-1/2},\qquad
 \norm{h'''_\varepsilon}_\infty\leq \sqrt{8/(\pi e)}\varepsilon^{-1}.
\end{equation}
\begin{proposition}[External regularization]
\label{sup:prop:external-regularization}
Assume \eqref{sup:eq:comparison} for $h_\varepsilon$ and, for $k=1,2,3$,
\begin{equation}\label{sup:eq:derivative-transfer}
 \norm{D^kP^1_{t,T}f}_\infty
 \leq C_F\sum_{j=1}^k\norm{D^jf}_\infty,
 \qquad 0\leq t\leq T,
\end{equation}
for smooth $f$ of at most linear growth with bounded first three
derivatives. If $\theta_a,J_1\leq1$, then
\begin{equation}\label{sup:eq:external-rate}
 W_1(\Law(X^0_T),\Law(X^1_T))
 \leq CC_F\bigl(\theta_0+\theta_b+\theta_a^{1/2}+J_1^{1/3}\bigr).
\end{equation}
\end{proposition}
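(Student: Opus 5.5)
The plan is to combine the comparison identity \eqref{sup:eq:comparison} with an external Gaussian regularization of the test, and then optimize the regularization level separately for each source of discrepancy. Fix a one-Lipschitz $h$ with $h(0)=0$ and $\varepsilon>0$, and put $h_\varepsilon=h*g_\varepsilon$. By \eqref{sup:eq:external-gaussian}, replacing $h$ by $h_\varepsilon$ in both expectations costs at most $2\sqrt{2/\pi}\sqrt\varepsilon$. It then suffices to bound $|\E[h_\varepsilon(X_T^0)]-\E[h_\varepsilon(X_T^1)]|$ through \eqref{sup:eq:comparison} with $v(t)=P^1_{t,T}h_\varepsilon$.

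\emph{Initial term.} The transfer bound \eqref{sup:eq:derivative-transfer} with $k=1$ gives $\Lip(P^1_{0,T}h_\varepsilon)\leq C_F\norm{h'_\varepsilon}_\infty\leq C_F$. Kantorovich duality \eqref{eq:KR} then bounds the initial term by $C_F\theta_0$.

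\emph{Generator term.} Use the explicit comparison bound obtained after Lemma~\ref{sup:lem:comparison}, namely
\[
 \E\left[|((\cL_0-\cL_1)v)(t,X^0_{t-})|\right]\leq r_b\norm{v'}_\infty+\tfrac12 r_a\norm{v''}_\infty+r_\nu\left(\tfrac12\norm{v''}_\infty+\tfrac16\norm{v'''}_\infty\right).
\]
Insert \eqref{sup:eq:derivative-transfer} for $k=1,2,3$ and the Gaussian derivative bounds in \eqref{sup:eq:external-gaussian}. This gives $\norm{v'}_\infty\leq C_F$, $\norm{v''}_\infty\leq C_F(1+\varepsilon^{-1/2})$ and $\norm{v'''}_\infty\leq C_F(1+\varepsilon^{-1/2}+\varepsilon^{-1})$. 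Integrating in time yields
\[
 |\E h(X^0_T)-\E h(X^1_T)|\leq C C_F\left[\sqrt\varepsilon+\theta_0+\theta_b+\theta_a(1+\varepsilon^{-1/2})+J_1(1+\varepsilon^{-1/2}+\varepsilon^{-1})\right].
\]
The $\sqrt\varepsilon$ term carries $C_F$ only after assuming $C_F\geq1$, which we may arrange by enlarging $C_F$. The integrability hypothesis of Lemma~\ref{sup:lem:comparison} holds because these derivative bounds are uniform in $t$ and $r_b,r_a,r_\nu\in L^1$.

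\emph{Optimization, the main step.} A single $\varepsilon$ does not give the sum of powers at once. The simplest fix is to use $\varepsilon=\theta_a+J_1^{2/3}$. When $\varepsilon\le1$, this gives $\sqrt\varepsilon\le\theta_a^{1/2}+J_1^{1/3}$, and
\[
 \theta_a\varepsilon^{-1/2}\le\theta_a^{1/2},\qquad J_1\varepsilon^{-1}\le J_1^{1/3},\qquad J_1\varepsilon^{-1/2}\le J_1^{2/3}\le J_1^{1/3},
\]
while $\theta_a\le\theta_a^{1/2}$ and $J_1\le J_1^{1/3}$ because both are at most one. If $\varepsilon=0$, then $\theta_a=J_1=0$, and we let $\varepsilon\downarrow0$ in the bound. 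If $\varepsilon>1$, then $\theta_a^{1/2}+J_1^{1/3}\ge1/2$, and we simply take $\varepsilon=1$; all terms are then bounded by constants, which are dominated by $C(\theta_a^{1/2}+J_1^{1/3})$. Finally, Lemma~\ref{lem:approximation} and the finite first moments of $X_T^i$ (obtained from the Doob estimate above) pass from smooth to general Lipschitz tests. Taking the supremum in \eqref{eq:KR} gives \eqref{sup:eq:external-rate}.

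The step I expect to be most delicate is checking that $h_\varepsilon$ is admissible in the hypotheses: it must have linear growth and bounded first three derivatives for \eqref{sup:eq:derivative-transfer}, and \eqref{sup:eq:comparison} must hold for it. The first follows from \eqref{sup:eq:external-gaussian}, and the second is assumed in the statement.
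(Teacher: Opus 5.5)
Your proposal is correct and follows the paper's proof: regularize the test by $g_\varepsilon$, bound the initial term by $C_F\theta_0$, insert the transferred derivative bounds into the comparison identity, and balance $\varepsilon$. The paper chooses $\varepsilon=\max\{\theta_a,J_1^{2/3}\}$, which is automatically at most one and avoids your case split; also, $C_F\geq1$ is forced by taking $t=T$, $k=1$ in \eqref{sup:eq:derivative-transfer}, where $P^1_{T,T}$ is the identity, so no enlargement of $C_F$ is needed, and your final appeal to Lemma~\ref{lem:approximation} is redundant because $h*g_\varepsilon$ is already smooth for every Lipschitz $h$.
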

\begin{proof}
Put $v_\varepsilon(t)=P_{t,T}^1h_\varepsilon$. For
$0<\varepsilon\leq1$, \eqref{sup:eq:derivative-transfer} and
\eqref{sup:eq:external-gaussian} give
\[
 \norm{v_\varepsilon'}_\infty\leq C_F,\quad
 \norm{v_\varepsilon''}_\infty\leq CC_F\varepsilon^{-1/2},\quad
 \norm{v_\varepsilon'''}_\infty\leq CC_F\varepsilon^{-1}.
\]
The initial discrepancy is at most $C_F\theta_0$. Substituting the
bounds into \eqref{sup:eq:comparison}, adding
$2\sqrt{2/\pi}\sqrt\varepsilon$ for terminal regularization, and taking the
supremum over Lipschitz tests gives
\[
 W_1\byeq{sup:eq:comparison}{\leq}
 CC_F\bigl(\theta_0+\theta_b+\sqrt\varepsilon
                     +\theta_a\varepsilon^{-1/2}+J_1\varepsilon^{-1}\bigr).
\]
If $\varepsilon=\max\{\theta_a,J_1^{2/3}\}>0$, then
\[
 \sqrt\varepsilon\leq\theta_a^{1/2}+J_1^{1/3},\qquad
 \theta_a\varepsilon^{-1/2}\leq\theta_a^{1/2},\qquad
 J_1\varepsilon^{-1}\leq J_1^{1/3}.
\]
If both discrepancies vanish, let $\varepsilon\downarrow0$.
\end{proof}
For an additive reference, derivatives commute with convolution, so
\eqref{sup:eq:derivative-transfer} holds with $C_F=1$.
Section~\ref{sup:sec:flow} gives a flow criterion for the same bound.
The comparison identity \eqref{sup:eq:comparison} is a separate hypothesis.
\subsubsection{Time-resolved Gaussian smoothing.}
\label{sup:sec:time-smoothing}
\begin{assumption}[One-sided derivative bounds]\label{sup:ass:one-sided}
For every $\Lip(h)\leq1$ and $0<\varepsilon\leq1$, the comparison identity
holds for $v_\varepsilon(t)=P^1_{t,T}h_\varepsilon$. There is $C_S\geq1$,
independent of $h,\varepsilon,t$, such that
\begin{equation}\label{sup:eq:one-sided-derivatives}
 \norm{Dv_\varepsilon(t)}_\infty\leq C_S,\qquad
 \norm{D^2v_\varepsilon(t)}_\infty\leq C_S(\varepsilon+T-t)^{-1/2},\qquad
 \norm{D^3v_\varepsilon(t)}_\infty\leq C_S(\varepsilon+T-t)^{-1}.
\end{equation}
\end{assumption}
\begin{proposition}[An additive reference verifying the assumption]
\label{sup:prop:additive-comparison}
Suppose model~$1$ has deterministic characteristics satisfying
\eqref{eq:add-int} and $a_1(t)\geq\lambda>0$ almost everywhere.
Then Assumption~\ref{sup:ass:one-sided} holds for the comparisons above, with
$C_S=\max\{1,\sqrt{2/\pi}\kappa^{-1/2},\sqrt{8/(\pi e)}\kappa^{-1}\}$ and
$\kappa=\min\{1,\lambda\}$. The jump-size measure need not have a density.
\end{proposition}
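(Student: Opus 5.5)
Since model~1 is additive with deterministic characteristics, I will work with the explicit transition operator rather than with an abstract flow. For $0\leq t\leq T$ the increment $X_T^1-X_t^1$ is independent of $X_t^1$, and its law $\zeta_{t,T}$ is infinitely divisible with Gaussian variance $\int_t^Ta_1(s)\dd s\geq\lambda(T-t)$. Consequently
\[
 v_\varepsilon(t,x)=P^1_{t,T}h_\varepsilon(x)=\int_\R h(x+z)\,(g_\varepsilon*\zeta_{t,T})(\dd z).
\]
The composite law $g_\varepsilon*\zeta_{t,T}$ contains a Gaussian factor of variance $w:=\varepsilon+\lambda(T-t)\geq\kappa(\varepsilon+T-t)$, where $\kappa=\min\{1,\lambda\}$. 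I therefore write it as $g_w*\eta_{t}$ for some probability law $\eta_t\in\cP_1$. The jump part of $\zeta_{t,T}$ enters only through $\eta_t$, so no density for the jump-size measure is needed.

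The derivative bounds then follow as in Lemma~\ref{lem:convolution}. Differentiating under the convolution gives $D^jv_\varepsilon(t)=(Dh)*D^{j-1}g_w*\check\eta_t$, and Young's inequality together with $\norm{Dh}_\infty\leq1$ yields $\norm{D^jv_\varepsilon}_\infty\leq\norm{g_w^{(j-1)}}_1$. Combining this with \eqref{eq:gauss-L1} gives three bounds: $\norm{Dv_\varepsilon}_\infty\leq1$, $\norm{D^2v_\varepsilon}_\infty\leq\sqrt{2/\pi}\,\kappa^{-1/2}(\varepsilon+T-t)^{-1/2}$, and $\norm{D^3v_\varepsilon}_\infty\leq\sqrt{8/(\pi e)}\,\kappa^{-1}(\varepsilon+T-t)^{-1}$. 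Together these give \eqref{sup:eq:one-sided-derivatives} with the stated $C_S$.

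The main obstacle is verifying the comparison identity \eqref{sup:eq:comparison} under the hypotheses of Lemma~\ref{sup:lem:comparison}. I would proceed in four steps.
\begin{enumerate}
\item The function $v_\varepsilon$ has at most linear growth, since $|h(x)|\leq|h(0)|+|x|$ and $\zeta_{t,T}$ has a finite first moment. Its derivatives of orders one to three are bounded uniformly in $t$ for each fixed $\varepsilon>0$.
\item The map $t\mapsto v_\varepsilon(t,x)$ is absolutely continuous. It satisfies $\partial_tv_\varepsilon+\cL_1(t)v_\varepsilon=0$ for almost every $t$; I would check this in Fourier variables, where the exponent $\int_t^T\psi_1(s)\dd s$ is absolutely continuous in $t$, and then pass to Lipschitz $h$ via the approximation argument of Lemma~\ref{lem:approximation}.
\item Itô's formula for $v_\varepsilon(t,X^0_t)$ applies with localization. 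The bounded second derivative and \eqref{sup:eq:semimartingale-integrability} control the compensated jump integrand by $y^2\norm{D^2v_\varepsilon}_\infty$, so the local martingale parts are true martingales after stopping, with square-integrable Gaussian and jump parts.
\item Uniform integrability of the stopped terminal values follows from linear growth and $\E\left[\sup_t|X^0_t|\right]<\infty$, which was shown above. Integrability of $(\cL_0-\cL_1)v_\varepsilon$ along $X^0$ follows from the generator comparison bound, since $r_b,r_a,r_\nu\in L^1(0,T)$.
\end{enumerate}
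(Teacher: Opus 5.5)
Your proposal is correct and follows the paper's proof. The same factorization of the increment law through a Gaussian of variance at least $\kappa(\varepsilon+T-t)$ gives the three derivative bounds with the stated $C_S$. The comparison identity is then checked in the same way: backward equation, localized It\^o formula, integrability from the profile bounds, and uniform integrability from linear growth. The one difference is that the paper derives the backward equation from the increment Dynkin identity, not in Fourier variables.

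The one point you pass over is that $v_\varepsilon$ is only absolutely continuous in $t$, so the standard It\^o formula does not apply directly. The paper mollifies in time and passes to the limit by dominated convergence, and your uniform-in-$t$ derivative bounds and integrable generator bounds are exactly what that step needs.
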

\begin{proof}
Let $V_{t,T}=\int_t^Ta_1(s)\dd s$ and let
$\eta_{t,T}\in\cP_1(\R)$ be the drift-and-jump increment law.
Independence of the Gaussian and jump parts gives
\[
 v_\varepsilon(t)=h*g_{\varepsilon+V_{t,T}}*\check\eta_{t,T},
 \qquad\varepsilon+V_{t,T}\geq\kappa(\varepsilon+T-t).
\]
Convolution with a probability law does not increase the supremum norm. Applying
\eqref{eq:gauss-L1}, equivalently \eqref{sup:eq:external-gaussian} with variance
$\varepsilon+V_{t,T}$, proves the three derivative bounds.
For fixed $\varepsilon>0$, the increment Dynkin identity and translation
invariance give
\[
 v_\varepsilon(t)=h_\varepsilon+
                 \int_t^T\cL_1(s)v_\varepsilon(s)\dd s.
\]
Set $M(s)=a_1(s)+\int_{\R} y^2\nu_1(s,\dd y)\in L^1(0,T)$.
For $k=0,1,2$, Taylor's bound for the compensated jump term gives
\begin{align*}
 \norm{\cL_1(s)D^kv_\varepsilon(s)}_\infty
 &\leq|b_1(s)|\norm{D^{k+1}v_\varepsilon(s)}_\infty
       +\tfrac12M(s)\norm{D^{k+2}v_\varepsilon(s)}_\infty\\
 &\leq|b_1(s)|\norm{D^{k+1}h_\varepsilon}_\infty
       +\tfrac12M(s)\norm{D^{k+2}h_\varepsilon}_\infty.
\end{align*}
All derivatives of $h_\varepsilon$ of positive order are bounded. The
right-hand sides are deterministic and integrable. They justify absolute
continuity in time through two spatial derivatives. Mollifying in time
and then using dominated convergence gives the localized It\^o formula.
Equation~\eqref{sup:eq:semimartingale-integrability} bounds the model~$0$ terms.
The comparison integrand is integrable by the profile bounds.
Finally, increment first moments are uniformly bounded, so
\[
 |v_\varepsilon(\tau_n,X^0_{\tau_n})|
 \leq C_{h,\varepsilon}\bigl(1+\sup_{t\leq T}|X_t^0|\bigr).
\]
The right-hand side is integrable, so the family is uniformly integrable.
Lemma~\ref{sup:lem:comparison} now applies.
\end{proof}
\begin{theorem}[One-sided master estimate]\label{sup:thm:master}
Under Assumption~\ref{sup:ass:one-sided}, for every $0<\varepsilon\leq1$,
\begin{equation}\label{sup:eq:master}
\begin{aligned}
 W_1(\Law(X^0_T),\Law(X^1_T))
 \leq C\biggl[&\theta_0+\theta_b+\sqrt\varepsilon
       +\int_0^T\frac{r_a(t)}{\sqrt{\varepsilon+T-t}}\dd t\\
 &+\int_0^Tr_\nu(t)
       \bigl((\varepsilon+T-t)^{-1/2}+(\varepsilon+T-t)^{-1}\bigr)\dd t
       \biggr],
\end{aligned}
\end{equation}
where $C$ depends only on $C_S$.
\end{theorem}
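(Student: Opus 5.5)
The plan is to start from the comparison identity of Lemma~\ref{sup:lem:comparison}, applied to the regularized test $h_\varepsilon=h*g_\varepsilon$. Assumption~\ref{sup:ass:one-sided} makes the identity available for $v_\varepsilon(t)=P^1_{t,T}h_\varepsilon$ with every $\Lip(h)\leq1$ and $0<\varepsilon\leq1$. After centering $h$, we have $\norm{h-h_\varepsilon}_\infty\leq\sqrt{2/\pi}\sqrt\varepsilon$ by \eqref{sup:eq:external-gaussian}. Hence
\[
 |\E[h(X_T^0)]-\E[h(X_T^1)]|\leq 2\sqrt{2/\pi}\sqrt\varepsilon+|\E[h_\varepsilon(X_T^0)]-\E[h_\varepsilon(X_T^1)]|.
\]
The regularized difference splits, by \eqref{sup:eq:comparison}, into an initial term and a time integral.

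For the initial term, $P^1_{0,T}h_\varepsilon=v_\varepsilon(0)$ is $C_S$-Lipschitz by the first bound in \eqref{sup:eq:one-sided-derivatives}. Kantorovich duality then bounds this term by $C_S\theta_0$. For the time integral, I will use the explicit generator-comparison bound derived after Lemma~\ref{sup:lem:comparison} from Lemma~\ref{lem:Taylor}, which controls $\E|((\cL_0-\cL_1)v)(t,X^0_{t-})|$ by
\[
 r_b(t)\norm{v'}_\infty+\tfrac12r_a(t)\norm{v''}_\infty+r_\nu(t)\bigl(\tfrac12\norm{v''}_\infty+\tfrac16\norm{v'''}_\infty\bigr).
\]
Inserting the time-resolved bounds \eqref{sup:eq:one-sided-derivatives}, with weights $C_S$, $C_S(\varepsilon+T-t)^{-1/2}$ and $C_S(\varepsilon+T-t)^{-1}$, and integrating over $[0,T]$ gives the term $C_S\theta_b$, the $r_a$ integral with kernel $(\varepsilon+T-t)^{-1/2}$, and the $r_\nu$ integral with kernel $(\varepsilon+T-t)^{-1/2}+(\varepsilon+T-t)^{-1}$. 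These are exactly the terms in \eqref{sup:eq:master}, with $C$ a multiple of $C_S$ (using $C_S\geq1$ to absorb $2\sqrt{2/\pi}$).

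Two points need care. The first is measurability and the exchange of expectation with the time integral; here I will use the integrability hypothesis in Lemma~\ref{sup:lem:comparison}, the finiteness of $r_b,r_a,r_\nu\in L^1(0,T)$, and Fubini. The second is the passage from smooth tests to a general one-Lipschitz $h$. This is immediate, because the estimate holds directly for every Lipschitz $h$ through $h_\varepsilon$, and because the first moments of both terminal laws are finite by the Doob bound above. Taking the supremum over $h$ in \eqref{eq:KR} finishes the argument.

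The main obstacle is hidden in the hypotheses rather than in the estimates: the comparison identity itself is assumed. The only genuine step is to check that the pointwise Taylor bound is applied with $x=X^0_{t-}$ inside the expectation, so that the discrepancies evaluated along $X^0$ in \eqref{sup:eq:profiles} appear. Since the derivative bounds on $v_\varepsilon(t)$ are deterministic, they factor out of the expectation, and no further difficulty arises.
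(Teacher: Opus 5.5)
Your proposal is correct and follows essentially the same route as the paper. You regularize the test by $h_\varepsilon=h*g_\varepsilon$ and bound the initial term by $C_S\theta_0$ through the Lipschitz bound on $v_\varepsilon(0)$. You then insert the time-resolved derivative bounds into the weighted Taylor generator-comparison estimate, integrate in time, add the terminal smoothing cost $2\sqrt{2/\pi}\sqrt\varepsilon$, and take the supremum over one-Lipschitz tests.
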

\begin{proof}
Let $s=T-t$. Weighted Taylor testing and
\eqref{sup:eq:one-sided-derivatives} give
\[
 \E\left[|((\cL_0-\cL_1)v_\varepsilon)(t,X^0_{t-})|\right]
 \byeq{sup:eq:one-sided-derivatives}{\leq}
 C_S\left[r_b(t)+\frac{r_a(t)}{2\sqrt{\varepsilon+s}}
 +r_\nu(t)\left(\frac1{2\sqrt{\varepsilon+s}}
                           +\frac1{6(\varepsilon+s)}\right)\right].
\]
In \eqref{sup:eq:comparison}, the initial term is at most $C_S\theta_0$.
Integrate this estimate, add the terminal smoothing error
$2\sqrt{2/\pi}\sqrt\varepsilon$, and take the supremum over $\Lip(h)\leq1$.
\end{proof}
\begin{corollary}[Finite $q$ and the endpoint]\label{sup:cor:profile-rates}
Suppose Assumption~\ref{sup:ass:one-sided} holds and $\theta_a\leq1$. For
$1\leq q<\infty$ and $J_q\leq1$,
\begin{equation}\label{sup:eq:finite-q}
 W_1\leq C_{q,T,C_S}
   \bigl(\theta_0+\theta_b+\theta_a^{1/2}+J_q^{q/(q+2)}\bigr).
\end{equation}
For $J_\infty\leq e^{-1}$,
\begin{equation}\label{sup:eq:gaussian-profile-log}
 W_1\leq C_{T,C_S}
   \bigl(\theta_0+\theta_b+\theta_a^{1/2}
                +J_\infty(1+|\log J_\infty|)\bigr),
\end{equation}
where the last term is zero at $J_\infty=0$.
\end{corollary}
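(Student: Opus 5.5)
The plan is to start from the one-sided master estimate, Theorem~\ref{sup:thm:master}, and optimise over $\varepsilon\in(0,1]$. In \eqref{sup:eq:master}, the $r_a$ term is bounded crudely by $\varepsilon^{-1/2}\theta_a$, using $(\varepsilon+T-t)^{-1/2}\leq\varepsilon^{-1/2}$. The $r_\nu$ term is bounded by Hölder's inequality in time:
\[
 \int_0^Tr_\nu(t)(\varepsilon+T-t)^{-1}\dd t\leq J_q\,\norm{(\varepsilon+T-\cdot)^{-1}}_{L^{q'}(0,T)},
\]
where $q'$ is the conjugate exponent. The weaker singularity $(\varepsilon+s)^{-1/2}$ is treated the same way and is always dominated up to a constant depending on $T$. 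For $q>1$ one has $q'<\infty$, and the direct computation gives
\[
 \int_0^T(\varepsilon+s)^{-q'}\dd s\leq\frac{\varepsilon^{1-q'}}{q'-1},
\]
so the $L^{q'}$ norm is at most $C_q\varepsilon^{-1/q}$. For $q=1$ the norm is simply $\varepsilon^{-1}$. In both cases the bound becomes
\[
 W_1\leq C\bigl(\theta_0+\theta_b+\sqrt\varepsilon+\theta_a\varepsilon^{-1/2}+J_q\varepsilon^{-1/q}\bigr).
\]

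Next I balance the terms by choosing $\varepsilon=\max\{\theta_a,J_q^{2q/(q+2)}\}$, which lies in $(0,1]$ because $\theta_a,J_q\leq1$. With this choice:
\begin{itemize}
 \item $\theta_a\varepsilon^{-1/2}\leq\theta_a^{1/2}$;
 \item $J_q\varepsilon^{-1/q}\leq J_q^{1-2/(q+2)}=J_q^{q/(q+2)}$;
 \item $\sqrt\varepsilon\leq\theta_a^{1/2}+J_q^{q/(q+2)}$.
\end{itemize}
If both discrepancies vanish, I let $\varepsilon\downarrow0$. This yields \eqref{sup:eq:finite-q}. At $q=1$ it reproduces the $J_1^{1/3}$ rate of Proposition~\ref{sup:prop:external-regularization}, which is a useful consistency check.

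For the endpoint $q=\infty$, I bound $r_\nu\leq J_\infty$ pointwise and integrate directly:
\[
 \int_0^T(\varepsilon+s)^{-1}\dd s=\log\frac{\varepsilon+T}{\varepsilon}\leq C_T(1+|\log\varepsilon|).
\]
This gives
\[
 W_1\leq C\bigl(\theta_0+\theta_b+\sqrt\varepsilon+\theta_a\varepsilon^{-1/2}+J_\infty(1+|\log\varepsilon|)\bigr).
\]
I then take $\varepsilon=\max\{\theta_a,J_\infty^2\}$. This handles $\sqrt\varepsilon$ and the $\theta_a$ term as before. Since $\varepsilon\geq J_\infty^2$, we have $|\log\varepsilon|\leq2|\log J_\infty|$, so the logarithmic term is at most $J_\infty(1+2|\log J_\infty|)$. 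The hypothesis $J_\infty\leq e^{-1}$ guarantees $\varepsilon\leq1$ and keeps $|\log J_\infty|\geq1$, so constants absorb cleanly. When $J_\infty=0$ I take $\varepsilon=\theta_a$, or let $\varepsilon\downarrow0$ if that is also zero.

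The only delicate step is the Hölder computation near $q=1$, where $q'\to\infty$: the constant $1/(q'-1)^{1/q'}$ stays bounded but depends on $q$. It must not be extracted incorrectly. The time horizon $T$ enters only through the $(\varepsilon+s)^{-1/2}$ term and the upper limit of the logarithm, which explains the dependence of the constants on $q$, $T$ and $C_S$.
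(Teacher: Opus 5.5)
Your proposal is correct and follows the paper's proof essentially step for step. You apply H\"older in time to the master estimate \eqref{sup:eq:master} using $\norm{(\varepsilon+\cdot)^{-1}}_{L^{q'}(0,T)}\leq(q'-1)^{-1/q'}\varepsilon^{-1/q}$, choose $\varepsilon=\max\{\theta_a,J_q^{2q/(q+2)}\}$, and at the endpoint use the explicit logarithmic integral with $\varepsilon=\max\{\theta_a,J_\infty^2\}$.

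One small correction to your closing remark: the factor $(q'-1)^{-1/q'}$ tends to $1$ as $q\downarrow1$ and is harmless there. It blows up as $q\to\infty$ (so $q'\downarrow1$), which is why the endpoint needs the separate logarithmic bound.
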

\begin{proof}
For $1<q<\infty$, let $q'=q/(q-1)$. Direct integration gives
\[
 \norm{(\varepsilon+\cdot)^{-1}}_{L^{q'}(0,T)}^{q'}
 =\int_0^T(\varepsilon+s)^{-q'}\dd s =
    \frac{\varepsilon^{1-q'}-(\varepsilon+T)^{1-q'}}{q'-1} \leq\frac{\varepsilon^{1-q'}}{q'-1}.
\]
Taking the $q'$th root and using $1/q+1/q'=1$ gives
\[
 \norm{(\varepsilon+\cdot)^{-1}}_{L^{q'}(0,T)}
 \leq(q'-1)^{-1/q'}\varepsilon^{-1/q}.
\]
For $q=1$, the $L^\infty$ norm equals $\varepsilon^{-1}$.
H\"older's inequality, $u^{-1/2}\leq1+u^{-1}$, and
$\varepsilon\leq1$ therefore reduce \eqref{sup:eq:master} to
\[
 W_1\byeq{sup:eq:master}{\leq}
 C_{q,T,C_S}\bigl(\theta_0+\theta_b+\sqrt\varepsilon
       +\theta_a\varepsilon^{-1/2}+J_q\varepsilon^{-1/q}\bigr).
\]
Take $\varepsilon=\max\{\theta_a,J_q^{2q/(q+2)}\}$. Then
\[
 \sqrt\varepsilon\leq\theta_a^{1/2}+J_q^{q/(q+2)},\quad
 \theta_a\varepsilon^{-1/2}\leq\theta_a^{1/2},\quad
 J_q\varepsilon^{-1/q}\leq J_q^{q/(q+2)}.
\]
For $q=\infty$, compute instead
\[
 \int_0^T\frac{\dd s}{\varepsilon+s}
    =\log\frac{\varepsilon+T}{\varepsilon},\qquad
 \int_0^T\frac{\dd s}{\sqrt{\varepsilon+s}}
    =2(\sqrt{\varepsilon+T}-\sqrt\varepsilon)\leq2\sqrt T.
\]
With $\varepsilon=\max\{\theta_a,J_\infty^2\}$, the first two error
terms are bounded by $\theta_a^{1/2}+J_\infty$, and, when $J_\infty>0$,
\[
 J_\infty\log\frac{\varepsilon+T}{\varepsilon}
 \leq J_\infty\bigl[\log(1+T)+2|\log J_\infty|\bigr].
\]
If the chosen maximum vanishes, let $\varepsilon\downarrow0$.
\end{proof}
A restriction on the time profile can express the endpoint bound in terms of $J_1$.
For example, $r_\nu(t)\leq\Lambda J_1$ almost everywhere implies
$J_\infty\leq\Lambda J_1$, hence an error of order
$J_1(1+|\log J_1|)$ as $J_1\downarrow0$ for fixed $\Lambda$.
\begin{remark}[Operator singularity and terminal-law lower bounds]
\label{sup:rem:operator-singularity}
For $H_sh(x)=\E\left[h(x+\sqrt sG)\right]$, $s>0$, $G\sim N(0,1)$, and $h(x)=|x|$,
\[
 D^2|x|=2\delta_0,\qquad
 (H_sh)''(x)=2s^{-1/2}\varphi(x/\sqrt s),\qquad
 (H_sh)'''(x)=-2xs^{-3/2}\varphi(x/\sqrt s).
\]
Since $\sup_{r\geq0}r\varphi(r)=\varphi(1)$,
$\norm{(H_sh)'''}_\infty=2\varphi(1)s^{-1}$.
Thus the singularity in \eqref{sup:eq:master} is sharp as a uniform
operator bound on Lipschitz tests. Sharpness of the resulting logarithm
or finite-$q$ rates for terminal laws requires lower bounds under the same
smoothing and profile hypotheses. Proposition~\ref{prop:cubic} does not provide these lower bounds.
\end{remark}
For pure jump perturbations with equal initial laws, meaning
$\theta_0=\theta_b=\theta_a=0$, the condition
\begin{equation}\label{sup:eq:terminal-weight-profile}
 \int_0^T\frac{r_\nu(t)}{T-t}\dd t\leq C_0J_1
\end{equation}
gives a linear bound: since $(T-t)^{-1/2}\leq\sqrt T/(T-t)$,
letting $\varepsilon\downarrow0$ in \eqref{sup:eq:master} gives
$W_1\leq CC_0(1+\sqrt T)J_1$.
Support in $[0,T-\delta]$, $\delta>0$, gives $C_0=\delta^{-1}$.
The unweighted $L^1$ norm does not control the weighted integral.
\begin{proposition}[A stronger-discrepancy comparison]
\label{sup:prop:TV-comparison}
Suppose the comparison identity and the first two bounds in
\eqref{sup:eq:one-sided-derivatives} hold. For a pure jump perturbation with
equal initial laws, let
$r_{\mathrm{TV}}(t)
 =\E\left[\norm{y^2(\nu_0-\nu_1)(t,X^0_{t-},\dd y)}_{\mathrm{TV}}\right]\leq K$
and $\theta_{\mathrm{TV}}=\int_0^Tr_{\mathrm{TV}}(t)\dd t$.
Then $W_1\leq C\sqrt{K\theta_{\mathrm{TV}}}$.
\end{proposition}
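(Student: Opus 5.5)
The plan is to start from the comparison identity \eqref{sup:eq:comparison} for the regularized test $h_\varepsilon=h*g_\varepsilon$, with $v_\varepsilon(t)=P^1_{t,T}h_\varepsilon$. Equal initial laws remove the initial term, and a pure jump perturbation leaves only the jump part of $(\cL_0-\cL_1)v_\varepsilon$. At this point I will not use the bounded-Lipschitz Taylor field of Lemma~\ref{lem:Taylor}, since it requires a third derivative. Instead I write the jump difference as
\[
 \int_{\R}\bigl[v_\varepsilon(x+y)-v_\varepsilon(x)-yv_\varepsilon'(x)\bigr](\nu_0-\nu_1)(t,x,\dd y)
 =\int_{\R}\Phi_{v_\varepsilon,x}(y)\,y^2(\nu_0-\nu_1)(t,x,\dd y),
\]
and bound it by $\norm{\Phi_{v_\varepsilon,x}}_\infty$ times the total variation. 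This gives at most $\tfrac12\norm{D^2v_\varepsilon(t)}_\infty\norm{y^2(\nu_0-\nu_1)}_{\mathrm{TV}}$. By the second bound in \eqref{sup:eq:one-sided-derivatives}, this is at most $\tfrac12 C_S(\varepsilon+T-t)^{-1/2}$ times the total variation. Taking expectations along $X^0$ yields
\[
 |\E[h_\varepsilon(X_T^0)]-\E[h_\varepsilon(X_T^1)]|
 \leq\tfrac{C_S}{2}\int_0^T\frac{r_{\mathrm{TV}}(t)}{\sqrt{\varepsilon+T-t}}\dd t.
\]
Adding the terminal smoothing error $2\sqrt{2/\pi}\sqrt\varepsilon$ from \eqref{sup:eq:external-gaussian}, then passing to the supremum over $\Lip(h)\leq1$ (by Lemma~\ref{lem:approximation} and dominated convergence), gives
\[
 W_1\leq C\Bigl(\sqrt\varepsilon+\int_0^T\frac{r_{\mathrm{TV}}(t)}{\sqrt{\varepsilon+T-t}}\dd t\Bigr).
\]

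The main step is to optimize the singular integral using only $\norm{r_{\mathrm{TV}}}_1=\theta_{\mathrm{TV}}$ and $\norm{r_{\mathrm{TV}}}_\infty\leq K$. This is a bathtub argument. The weight $(\varepsilon+T-t)^{-1/2}$ is decreasing in the distance to $T$, so the integral is maximized by putting all the mass at height $K$ on $[T-\theta_{\mathrm{TV}}/K,T]$. The integral is therefore at most
\[
 K\int_0^{\theta_{\mathrm{TV}}/K}\frac{\dd s}{\sqrt{\varepsilon+s}}\leq 2K\sqrt{\theta_{\mathrm{TV}}/K}=2\sqrt{K\theta_{\mathrm{TV}}},
\]
uniformly in $\varepsilon>0$. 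Letting $\varepsilon\downarrow0$ then gives $W_1\leq C\sqrt{K\theta_{\mathrm{TV}}}$, with $C$ depending only on $C_S$. If $\theta_{\mathrm{TV}}/K>T$, the bound is even smaller, since $\sqrt{\varepsilon+T}\leq\sqrt{\theta_{\mathrm{TV}}/K}+\sqrt\varepsilon$.

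The obstacle I expect is justification rather than estimation. First, the comparison identity must hold for $h_\varepsilon$, and the integrability of $(\cL_0-\cL_1)v_\varepsilon$ along $X^0$ must be checked; the hypotheses grant both, and the integrand bound above shows it is integrable. Second, the limit $\varepsilon\downarrow0$ must be uniform over tests. It is, because the bathtub bound does not depend on $\varepsilon$, and the only $\varepsilon$-dependent term $\sqrt\varepsilon$ vanishes. Third, the bathtub step needs the rearrangement inequality for the decreasing weight in $s=T-t$. I will state it explicitly: for $0\leq r\leq K$ with $\int r=\theta$ and a decreasing weight $w$, we have $\int rw\leq K\int_0^{\theta/K}w$.
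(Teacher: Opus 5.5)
Your proposal is correct and follows the paper's proof. It uses the same reduction: only $\norm{\Phi_{v,x}}_\infty\leq\tfrac12\norm{v''}_\infty$ paired against the total variation, the second bound in \eqref{sup:eq:one-sided-derivatives}, the terminal smoothing error, and then a rearrangement bound on the singular time integral. The one difference is that final step, where you apply the bathtub principle directly to the decreasing weight. The paper instead combines the layer-cake formula, the level-set bound $\int_Es^{-1/2}\dd s\leq2\sqrt{|E|}$, and Cauchy--Schwarz, and both routes give exactly $2\sqrt{K\theta_{\mathrm{TV}}}$. Your separate case $\theta_{\mathrm{TV}}/K>T$ cannot occur, since $r_{\mathrm{TV}}\leq K$ forces $\theta_{\mathrm{TV}}\leq KT$.
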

\begin{proof}
For the total variation pairing, we need only
$\norm{\Phi_{v,x}}_\infty\leq\norm{v''}_\infty/2$. Thus
\[
 W_1\leq2\sqrt{2/\pi}\sqrt\varepsilon+
             \tfrac12C_S\int_0^T
                 \frac{r_{\mathrm{TV}}(t)}{\sqrt{\varepsilon+T-t}}\dd t.
\]
Let $r(s)=r_{\mathrm{TV}}(T-s)$ and
$m(u)=|\{s\in(0,T):r(s)>u\}|$, where $|E|$ denotes Lebesgue measure.
For any measurable $E\subset(0,T)$, decreasing rearrangement of
$s^{-1/2}$ gives
$\int_Es^{-1/2}\dd s\leq\int_0^{|E|}s^{-1/2}\dd s=2\sqrt{|E|}$.
Writing $r(s)$ as an integral over its level sets and applying Tonelli's theorem, we obtain
\begin{align*}
 \int_0^T r(s)s^{-1/2}\dd s
 &=\int_0^K\int_{\{s\in(0,T):r(s)>u\}}s^{-1/2}\dd s\dd u\\
 &\leq 2\int_0^K\sqrt{m(u)}\dd u\\
 &\leq 2\sqrt{K\int_0^K m(u)\dd u}\\
 &=2\sqrt{K\theta_{\mathrm{TV}}}.
\end{align*}
This is finite, so letting $\varepsilon\downarrow0$ proves the bound.
\end{proof}
\subsection{Rotation-invariant multivariate comparison.}\label{sup:sec:matrix}
Let $\lambda_i\in\cP_1(\R^d)$, $i\in\{0,1\}$, be additive terminal laws
with characteristic exponents
\[
 \Psi_i(\xi)=i\xi\cdot m_i-\tfrac12\xi^\top V_i\xi
     +\int_{\R^d}(e^{i\xi\cdot y}-1-i\xi\cdot y)N_i(\dd y),
 \qquad \int_{\R^d}|y|^2N_i(\dd y)<\infty.
\]
Here $m_i\in\R^d$, $V_i$ are symmetric positive semidefinite matrices, and
$N_i$ are L\'evy measures on $\R^d\setminus\{0\}$. Define the symmetric
matrix-valued finite measures
\begin{equation}\label{sup:eq:matrix-characteristics}
 \mu_i(\dd y)=yy^\top N_i(\dd y),\qquad \Gamma_i=V_i\delta_0+\mu_i.
\end{equation}
Their entries are finite signed measures because
$|y_jy_k|\leq|y|^2$, and
$\operatorname{tr}\mu_i(\R^d)=\int_{\R^d}|y|^2N_i(\dd y)$.
Let $\mathrm{Sym}_d$ be the vector space of real symmetric $d\times d$
matrices. For bounded Lipschitz fields $F:\R^d\to\mathrm{Sym}_d$, set
\begin{equation}\label{sup:eq:matrix-discrepancy}
\begin{aligned}
 \norm{F}_{\BL,\mathrm{op}}
   &=\sup_y\norm{F(y)}_{\mathrm{op}}
       +\sup_{y\neq z}\frac{\norm{F(y)-F(z)}_{\mathrm{op}}}{|y-z|},\\
 d_{\mathrm{mat}}(M_0,M_1)
   &=\sup_{\norm{F}_{\BL,\mathrm{op}}\leq1}
       \left|\int_{\R^d}\tr\bigl(F(y)(M_0-M_1)(\dd y)\bigr)\right|.
\end{aligned}
\end{equation}
Here $M_0,M_1$ are symmetric matrix-valued finite measures and
$\int_{\R^d}\operatorname{tr}(F(y)M(\dd y))=\sum_{j,k}\int_{\R^d} F_{jk}(y)M_{kj}(\dd y)$.
The discrepancy is finite, and it separates measures: fields
$F=fA$, with scalar bounded Lipschitz $f$ and constant
$A\in\mathrm{Sym}_d$, separate all symmetric components.
For an orthogonal matrix $U$, define
$M^U(B)=UM(U^\top B)U^\top$ and $F^U(y)=UF(U^\top y)U^\top$.
Then
\[
 \begin{gathered}
 \norm{F^U}_{\BL,\mathrm{op}}=\norm F_{\BL,\mathrm{op}},\qquad
 d_{\mathrm{mat}}(M_0^U,M_1^U)=d_{\mathrm{mat}}(M_0,M_1),\\
 \int_{\R^d}\operatorname{tr}(F^U(y)M^U(\dd y))=\int_{\R^d}\operatorname{tr}(F(y)M(\dd y)).
 \end{gathered}
\]
Let $\pi_u$ have exponent $\psi_u=(1-u)\Psi_1+u\Psi_0$,
$u\in[0,1]$, as in \eqref{eq:interpolation}. Assume that, for almost every $u$, $\pi_u$ has
a density $\rho_u\in W^{2,1}(\R^d)$ and
\begin{equation}\label{sup:eq:matrix-regularity}
 M_{1,d}=\int_0^1\sup_{|e|=1}\norm{D_e\rho_u}_1\dd u<\infty,
 \qquad
 M_{2,d}=\int_0^1\sup_{|e|=1}\norm{D_e^2\rho_u}_1\dd u<\infty.
\end{equation}
We assume that the derivative norms are measurable. Their directional suprema
equal the suprema over a countable dense subset of the unit sphere,
since $e\mapsto D_e\rho_u$ and $e\mapsto D_e^2\rho_u$ are continuous in
$L^1$. For a real matrix $A$, let
$\norm A_*=\operatorname{tr}\sqrt{A^\top A}$ be its nuclear norm.
\begin{theorem}[Matrix-valued terminal stability]\label{sup:thm:matrix}
Under these assumptions,
\begin{equation}\label{sup:eq:matrix-stability}
 W_1(\lambda_0,\lambda_1) \leq|m_0-m_1|+\frac{M_{1,d}}2\norm{V_0-V_1}_* +\left(\frac{M_{1,d}}2+\frac{M_{2,d}}6\right) d_{\mathrm{mat}}(\mu_0,\mu_1).
\end{equation}
We also have the joint bound
\begin{equation}\label{sup:eq:matrix-joint}
 W_1(\lambda_0,\lambda_1)
 \leq|m_0-m_1|+\left(\frac{M_{1,d}}2+\frac{M_{2,d}}6\right)
                    d_{\mathrm{mat}}(\Gamma_0,\Gamma_1).
\end{equation}
If $V_0,V_1$ are positive definite, put $V_u=(1-u)V_1+uV_0$.
Then
\begin{equation}\label{sup:eq:matrix-gaussian}
 M_{1,d}\leq \sqrt{2/\pi}\int_0^1\lambda_{\min}(V_u)^{-1/2}\dd u,
 \qquad
 M_{2,d}\leq \sqrt{8/(\pi e)}\int_0^1\lambda_{\min}(V_u)^{-1}\dd u,
\end{equation}
where $\lambda_{\min}$ denotes the smallest eigenvalue. A common lower bound
$V_0,V_1\succeq\ell I_d$, $\ell>0$, therefore gives
$M_{1,d}\leq \sqrt{2/\pi}\ell^{-1/2}$ and $M_{2,d}\leq \sqrt{8/(\pi e)}\ell^{-1}$,
with no explicit dimension factor.
\end{theorem}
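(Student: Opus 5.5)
The plan is to repeat the one-dimensional proof of Theorem~\ref{thm:terminal} and Corollary~\ref{cor:joint} in $\R^d$, taking care that every constant is dimension free. First, the multivariate interpolation identity, Lemma~\ref{lem:interpolation} in $\R^d$, holds by the same Fourier argument. For $h\in\cS(\R^d)$ it gives $\langle h,\lambda_0-\lambda_1\rangle=\int_0^1(\cA_0-\cA_1)Q_uh(0)\dd u$, with
\[
 \cA_if=m_i\cdot\nabla f+\tfrac12V_i:\nabla^2f+\int_{\R^d}[f(\cdot+y)-f-y\cdot\nabla f]N_i(\dd y).
\]
Next I write the Taylor field as a matrix field. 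For $f\in C_b^3$, set $\Phi_{f,x}(y)=\int_0^1(1-r)\nabla^2f(x+ry)\dd r\in\mathrm{Sym}_d$. Then
\[
 f(x+y)-f(x)-y\cdot\nabla f(x)=\tr\bigl(\Phi_{f,x}(y)\,yy^\top\bigr),\qquad \Phi_{f,x}(0)=\tfrac12\nabla^2f(x).
\]
Consequently $\cA_if(x)=m_i\cdot\nabla f(x)+\int_{\R^d}\tr(\Phi_{f,x}(y)\Gamma_i(\dd y))$. Both the separate and the joint bounds reduce to two estimates: a pairing bound $|\int\tr(F\,\dd M)|\le\norm F_{\BL,\mathrm{op}}d_{\mathrm{mat}}(M,0)$, which holds by definition, and the trace duality $|\tr(AB)|\le\norm A_{\mathrm{op}}\norm B_*$ for the Gaussian term.

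The key step is to bound $\norm{\Phi_{q_u,x}}_{\BL,\mathrm{op}}$ for $q_u=Q_uh$ with $\Lip(h)\le1$. Since $\nabla^2 q_u$ is symmetric, $\norm{\nabla^2q_u(z)}_{\mathrm{op}}=\sup_{|e|=1}|D_e^2q_u(z)|$. Writing $D_e^2q_u=(D_eh)*D_e\check\rho_u$ and using $|D_eh|\le1$ gives $\norm{\nabla^2q_u}_\infty\le\sup_e\norm{D_e\rho_u}_1$. For the Lipschitz part, $\norm{\nabla^2q_u(z)-\nabla^2q_u(z')}_{\mathrm{op}}\le|z-z'|\sup_{|e|=|w|=1}\norm{D_wD_e^2q_u}_\infty$. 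Here I expect the main obstacle: a mixed third derivative $D_wD_e^2$ is not controlled directly by the pure directional quantity $\sup_e\norm{D_e^2\rho_u}_1$.

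My first attempt at this obstacle is to bound instead the difference $D_e^2q_u(z)-D_e^2q_u(z')$ for a fixed direction $e$. Taking the direction $w=(z-z')/|z-z'|$, I have $D_e^2q_u(z)-D_e^2q_u(z')=\int_0^1 D_w D_e^2q_u(\cdot)\,|z-z'|\dd t$. I then move $D_w$ onto $h$ (where $|D_wh|\le1$) and $D_e^2$ onto $\rho_u$. This gives $|D_wD_e^2q_u|\le\norm{D_e^2\rho_u}_1$, so the bound holds uniformly in $w$ and $e$. Since the operator norm of a symmetric difference is the supremum over $e$ of $|e^\top(\cdot)e|$, this yields $\Lip_{\mathrm{op}}(\nabla^2q_u)\le\sup_e\norm{D_e^2\rho_u}_1$, and hence $\Lip(\Phi_{q_u,x})\le\frac16\sup_e\norm{D_e^2\rho_u}_1$ after using $\int r(1-r)\dd r=1/6$. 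So the obstacle dissolves because one derivative always goes on $h$. Combined with $\norm{\Phi}_\infty\le\frac12\sup_e\norm{D_e\rho_u}_1$ and $|m\cdot\nabla q_u|\le|m|$, integrating in $u$ gives \eqref{sup:eq:matrix-stability}, and the same computation applied to $\Gamma_i$ gives \eqref{sup:eq:matrix-joint}. I then pass from Schwartz tests to Lipschitz tests by Lemma~\ref{lem:approximation} together with dominated convergence.

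For \eqref{sup:eq:matrix-gaussian}, I factor $\rho_u=g_{\lambda I_d}*\zeta_u$ with $\lambda=\lambda_{\min}(V_u)$, since $V_u-\lambda I_d\succeq0$. By Young's inequality, $\norm{D_e^k\rho_u}_1\le\norm{D_e^kg_{\lambda I_d}}_1$. This quantity equals the one-dimensional norm $\norm{g_\lambda^{(k)}}_1$, because the marginal in direction $e$ is $g_\lambda$ and the orthogonal coordinates integrate out. Applying \eqref{eq:gauss-L1} then gives both bounds with no factor of $d$, and the case $V_i\succeq\ell I_d$ follows from $V_u\succeq\ell I_d$.
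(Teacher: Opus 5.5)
Your proposal is correct and follows the paper's proof essentially step by step: the same Fourier interpolation identity, the same matrix Taylor field paired through $d_{\mathrm{mat}}$ and the nuclear/operator trace duality, and the same handling of the mixed third derivative by writing $D_wD_e^2q_u=(D_wh)*D_e^2\check\rho_u$ and then taking the supremum of the quadratic forms $e^\top(\cdot)e$. The only variation is in \eqref{sup:eq:matrix-gaussian}: the paper computes $\norm{D_e^jg_{V_u}}_1$ exactly from the law of $e^\top V_u^{-1}Z$ and then uses $\sup_{|e|=1}e^\top V_u^{-1}e=\lambda_{\min}(V_u)^{-1}$, whereas you factor out the isotropic Gaussian $g_{\lambda_{\min}(V_u)I_d}$, as the paper itself does in the proof of Theorem~\ref{thm:multi-tangent}, and this gives the same constants.
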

\begin{proof}
For $h\in\cS(\R^d)$ with $\Lip(h)\leq1$, set
$q_u=h*\check\rho_u$ and
$A_j(u)=\sup_{|e|=1}\norm{D_e^j\rho_u}_1$, $j=1,2$.
For unit vectors $e,z$, differentiation gives
\[
 D_e^2q_u=(D_eh)*D_e\check\rho_u,\qquad
 D_zD_e^2q_u=(D_zh)*D_e^2\check\rho_u.
\]
Translation is continuous for the $L^1$ factors, so these distributional
derivatives are continuous and hence classical. Since
$\norm{D_eh}_\infty\leq1$ and
$\norm A_{\mathrm{op}}=\sup_{|e|=1}|e^\top Ae|$ for $A\in\mathrm{Sym}_d$,
\begin{align*}
 \norm{\nabla q_u}_\infty&\leq1,\qquad
 \norm{\nabla^2q_u}_{\infty,\mathrm{op}}\leq A_1(u),\\
 |e^\top[\nabla^2q_u(x)-\nabla^2q_u(x')]e|
 &\leq|x-x'|\sup_{|z|=1}\norm{D_zD_e^2q_u}_\infty
 \leq A_2(u)|x-x'|.
\end{align*}
Taking the supremum in $e$ gives
$\Lip_{\mathrm{op}}(\nabla^2q_u)\leq A_2(u)$.
The matrix Taylor field
\[
 F_{u,x}(y)=\int_0^1(1-r)\nabla^2q_u(x+ry)\dd r
\]
satisfies
\begin{align*}
 \sup_y\norm{F_{u,x}(y)}_{\mathrm{op}}&\leq\tfrac12A_1(u),\\
 \norm{F_{u,x}(y)-F_{u,x}(y')}_{\mathrm{op}}
 &\leq A_2(u)|y-y'|\int_0^1r(1-r)\dd r
   =\tfrac16A_2(u)|y-y'|,\\
 q_u(x+y)-q_u(x)-y\cdot\nabla q_u(x)
 &=y^\top F_{u,x}(y)y,\qquad
 F_{u,x}(0)=\tfrac12\nabla^2q_u(x).
\end{align*}
Thus $\norm{F_{u,x}}_{\BL,\mathrm{op}}
\leq A_1(u)/2+A_2(u)/6$.
Let $\mathcal D$ be the difference of the terminal generators and write
$\delta m=m_0-m_1$, $\delta V=V_0-V_1$, and
$\delta\mu=\mu_0-\mu_1$. Fourier differentiation as in Lemma~\ref{lem:interpolation} gives
\[
 \int_{\R^d} h(x)(\lambda_0-\lambda_1)(\dd x)=\int_0^1\mathcal Dq_u(0)\dd u.
\]
Indeed, $|e^{ia}-1-ia|\leq a^2/2$ gives
$|\Psi_i(\xi)|\leq|m_i||\xi|+\tfrac12(\norm{V_i}_{\mathrm{op}}
+\int_{\R^d}|y|^2N_i(\dd y))|\xi|^2$, so
$C(|\xi|+|\xi|^2)|\widehat h(\xi)|$ is an integrable majorant.
Using $y^\top Fy=\operatorname{tr}(Fyy^\top)$, we have
\[
 \mathcal Dq_u(0)=\delta m\cdot\nabla q_u(0)
 +\tfrac12\operatorname{tr}(\delta V\nabla^2q_u(0))
 +\int_{\R^d}\operatorname{tr}(F_{u,0}(y)\delta\mu(\dd y)).
\]
The inequality $|\operatorname{tr}(AB)|\leq\norm A_*\norm B_{\mathrm{op}}$,
together with the matrix discrepancy and the derivative bounds above,
therefore gives
\begin{align*}
 |\mathcal Dq_u(0)|
 &\byeq{sup:eq:matrix-discrepancy}{\leq}
   |\delta m|\norm{\nabla q_u}_\infty
   +\tfrac12\norm{\delta V}_*\norm{\nabla^2q_u}_{\infty,\mathrm{op}} +\norm{F_{u,0}}_{\BL,\mathrm{op}}
                                  d_{\mathrm{mat}}(\mu_0,\mu_1)\\
 &\leq
   |\delta m|+\tfrac12A_1(u)\norm{\delta V}_* +\left(\tfrac12A_1(u)+\tfrac16A_2(u)\right)
                                  d_{\mathrm{mat}}(\mu_0,\mu_1).
\end{align*}
Integrating in $u$ proves \eqref{sup:eq:matrix-stability}.
For the joint bound, write instead
\[
 \mathcal Dq_u(0)=\delta m\cdot\nabla q_u(0)
             +\int_{\R^d}\operatorname{tr}\bigl(F_{u,0}(y)(\Gamma_0-\Gamma_1)(\dd y)\bigr),
\]
using $F_{u,0}(0)=\nabla^2q_u(0)/2$.
Lemma~\ref{lem:approximation} extends the two estimates from Schwartz tests to all Lipschitz
tests. The first moments justify dominated convergence.
For $V\succ0$, $|e|=1$, and $a=e^\top V^{-1}e$, direct differentiation gives
\[
 D_eg_V(x)=-(e^\top V^{-1}x)g_V(x),\qquad
 D_e^2g_V(x)=\bigl[(e^\top V^{-1}x)^2-a\bigr]g_V(x).
\]
If $Z\sim N(0,V)$, then $e^\top V^{-1}Z\overset{\mathrm{law}}=\sqrt aG$
for $G\sim N(0,1)$. Hence
\begin{align*}
 \norm{D_eg_V}_1
 &=\E\left[|e^\top V^{-1}Z|\right] =\sqrt a\,\E\left[|G|\right] =\sqrt{2/\pi}\sqrt a,\\
 \norm{D_e^2g_V}_1
 &=\E\left[\left|(e^\top V^{-1}Z)^2-a\right|\right] =a\E\left[|G^2-1|\right] =4a\varphi(1)=\sqrt{8/(\pi e)}a.
\end{align*}
The second identity follows from
$\int_1^\infty(x^2-1)\varphi(x)\dd x=
[-x\varphi(x)]_1^\infty=\varphi(1)$ and $\E\left[G^2-1\right]=0$.
Since $\rho_u=g_{V_u}*\zeta_u$ for a probability law $\zeta_u$,
Young's inequality gives
$\norm{D_e^j\rho_u}_1\leq\norm{D_e^jg_{V_u}}_1$, $j=1,2$.
Finally,
$\sup_{|e|=1}e^\top V_u^{-1}e=\lambda_{\min}(V_u)^{-1}$.
Taking suprema and then integrating proves \eqref{sup:eq:matrix-gaussian}.
\end{proof}
For $d=1$, $d_{\mathrm{mat}}=d_{\FM}$.
Writing $\mu_i^{jk}(\dd y)=y_jy_kN_i(\dd y)$ gives the coordinate bounds
\begin{equation}\label{sup:eq:matrix-coordinates}
 d_{\mathrm{mat}}(\mu_0,\mu_1)
 \leq\sum_{j,k}d_{\FM}^{(d)}(\mu_0^{jk},\mu_1^{jk}),\qquad
 \norm{V_0-V_1}_*\leq\sum_{j,k}|V_0^{jk}-V_1^{jk}|.
\end{equation}
Indeed, $|F_{jk}(y)|\leq\norm{F(y)}_{\mathrm{op}}$ and
$|F_{jk}(y)-F_{jk}(z)|\leq\norm{F(y)-F(z)}_{\mathrm{op}}$, so every
entry of an admissible $F$ has scalar BL norm at most one.
For the nuclear norm, write $A=\sum_{j,k}A_{jk}e_je_k^\top$ and use
$\norm{e_je_k^\top}_*=1$.
The matrix formulation is invariant under a change of orthonormal basis.
Its characteristics, including their total trace, may still grow with $d$.
\subsection{Factorizable state-dependent kernels.}\label{sup:sec:factorized}
Consider the processes of section~\ref{sup:sec:one-sided}, with the same drift
and diffusion coefficients, under
\eqref{sup:eq:semimartingale-integrability}. To transfer derivatives to
the forward density, assume that, for almost every $t\in[0,T]$ and every
$x\in\R$,
\begin{equation}\label{sup:eq:factor-decomposition}
 K_t(x,\dd y):=y^2(\nu_0-\nu_1)(t,x,\dd y)
   =\sum_{j=1}^m r_j(t,x)\kappa_{j,t}(\dd y)
   \quad\text{in }\cM_f(\R).
\end{equation}
Here $m\in\N$, the factors are jointly measurable,
$r_j:[0,T]\times\R\to\R$, $r_j(t,\cdot)\in W^{2,\infty}(\R)$, and
$\kappa_{j,t}\in\cM_f(\R)$. With weak spatial derivatives and
$\norm r_{W^{2,\infty}}=\norm r_\infty+\norm{r'}_\infty+
\norm{r''}_\infty$, assume
\begin{equation}\label{sup:eq:factor-cost}
 \mathcal D:=\int_0^T\sum_{j=1}^m
       \norm{r_j(t)}_{W^{2,\infty}}
       \norm{\kappa_{j,t}}_{\BL^*}\dd t<\infty.
\end{equation}
\begin{assumption}[Forward--backward regularity]
\label{sup:ass:factor-regularity}
For every $h\in C_b^\infty(\R)$ with $\Lip(h)\leq1$, the comparison
identity \eqref{sup:eq:comparison} holds for $v(t,\cdot)=P^1_{t,T}h$,
with an integrable scalar integrand. For $t<T$, $v(t,\cdot)\in C_b^3(\R)$,
and
\begin{equation}\label{sup:eq:factor-backward}
 \sup_{0\leq t\leq T/2}
   \bigl(\norm{v''(t)}_\infty+\norm{v'''(t)}_\infty\bigr)
 +\sup_{0\leq t<T}\norm{v'(t)}_\infty\leq B.
\end{equation}
For $T/2\leq t\leq T$, $\Law(X_t^0)$ has a density
$p_t\in W^{2,1}(\R)$ satisfying
\begin{equation}\label{sup:eq:factor-forward}
 \sup_{T/2\leq t\leq T}
   \bigl(\norm{p_t'}_1+\norm{p_t''}_1\bigr)\leq F.
\end{equation}
The constants $B\geq1$ and $F\geq0$ are finite and independent of $h$.
\end{assumption}
Theorem~\ref{sup:thm:factor-intensity} verifies this assumption for a
state-dependent intensity model.
\begin{lemma}[Transfer to a forward factor]\label{sup:lem:factor-transfer}
If $q\in W^{2,1}(\R)$, $w\in L^\infty(\R)$, and $r\in[0,1]$, then
the map
\[
 \mapdef{A_r}{\R}{\R}{y}
       {-\displaystyle\int_\R q'(z-ry)w(z)\dd z}
\]
satisfies
\begin{equation}\label{sup:eq:factor-transfer}
 \norm{A_r}_\infty\leq\norm{q'}_1\norm w_\infty,
 \qquad
 \Lip(A_r)\leq r\norm{q''}_1\norm w_\infty.
\end{equation}
If $w=v'$ with bounded $v',v''$, then
$A_r(y)=\int_\R q(x)v''(x+ry)\dd x$.
\end{lemma}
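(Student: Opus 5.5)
The sup bound is immediate from a Hölder-type estimate. For every $y$, $|A_r(y)|\leq\int_\R|q'(z-ry)|\,|w(z)|\dd z\leq\norm w_\infty\norm{q'(\cdot-ry)}_1$, and translation invariance of the $L^1$ norm gives $\norm{q'(\cdot-ry)}_1=\norm{q'}_1$. This proves the first half of \eqref{sup:eq:factor-transfer}.

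For the Lipschitz bound I will use the difference of translates and the fundamental theorem of calculus in $W^{1,1}$, as in the proof of Proposition~\ref{prop:reference-continuity}. For $y,y'\in\R$, the weak derivative $q''\in L^1$ gives
\[
 \norm{q'(\cdot-ry)-q'(\cdot-ry')}_1\leq r|y-y'|\,\norm{q''}_1 .
\]
To justify this, write $q'(x-ry)-q'(x-ry')=-\int_0^1 r(y-y')\,q''(x-ry'-sr(y-y'))\dd s$ for almost every $x$. The absolutely continuous representative of $q'$ makes this valid, or one can first prove it for smooth $q$ and pass to the limit by density in $W^{2,1}$. Then apply Fubini and translation invariance. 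Multiplying by $\norm w_\infty$ gives $|A_r(y)-A_r(y')|\leq r\norm{q''}_1\norm w_\infty|y-y'|$. When $r=0$, the map $A_0$ is constant and the bound is trivial.

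The identity for $w=v'$ is an integration by parts. Substituting $x=z-ry$ gives $A_r(y)=-\int_\R q'(x)v'(x+ry)\dd x$. Moving the derivative onto $v'$ then yields $\int_\R q(x)v''(x+ry)\dd x$.

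The one delicate point is the vanishing of boundary terms. Since $q\in W^{1,1}(\R)$, it has an absolutely continuous representative with $q(x)\to0$ as $|x|\to\infty$, because $q$ is integrable and $q(x)=\int_{-\infty}^xq'$ has limits at $\pm\infty$. Since $v'$ is bounded, $q v'(\cdot+ry)$ tends to zero at $\pm\infty$. The product $q\,v'(\cdot+ry)$ is absolutely continuous on compacts with integrable derivative $q'v'+qv''$, both terms being $L^1$ since $v',v''$ are bounded. So integrating over $[-L,L]$ and letting $L\to\infty$ gives the formula.

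I expect no step to be a real obstacle. The only care needed is the use of absolutely continuous representatives in the translation estimate and in the integration by parts.
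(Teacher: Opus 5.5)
Your proposal is correct and follows the paper's proof essentially step for step. You use H\"older plus translation invariance for the sup bound, the $W^{1,1}$ translation estimate $\norm{f(\cdot-a)-f}_1\leq|a|\norm{f'}_1$ with $f=q'$ and $a=r(y-y')$ for the Lipschitz bound, and the substitution $z=x+ry$ followed by integration by parts for the identity. The one difference is cosmetic: you remove the boundary terms using $q(x)\to0$ at $\pm\infty$, whereas the paper uses a compactly supported cutoff. Phrase that decay argument as "$q(x)=q(0)+\int_0^xq'$ has limits at both ends, and these vanish because $q\in L^1$"; writing $q=\int_{-\infty}^x q'$ already assumes the limit at $-\infty$ is zero.
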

\begin{proof}
For $f\in W^{1,1}(\R)$ and $a\in\R$, the absolutely continuous
representative satisfies
\[
 f(z-a)-f(z)=-a\int_0^1f'(z-\theta a)\dd\theta,
 \qquad
 \norm{f(\cdot-a)-f}_1\leq|a|\norm{f'}_1.
\]
First, H\"older's inequality and translation invariance of the $L^1$ norm
give, for $y,\widetilde y\in\R$,
\[
 |A_r(y)|
 \leq\norm{q'(\cdot-ry)}_1\norm w_\infty =\norm{q'}_1\norm w_\infty.
\]
Then H\"older's inequality and the translation estimate above give
\[
 |A_r(y)-A_r(\widetilde y)|
 \leq\norm w_\infty
       \norm{q'(\cdot-ry)-q'(\cdot-r\widetilde y)}_1 \leq
       r|y-\widetilde y|\norm{q''}_1\norm w_\infty.
\]
For $w=v'$, weak integration by parts and $z=x+ry$ yield
\[
 \int_\R q(x)v''(x+ry)\dd x
 =-\int_\R q'(x)v'(x+ry)\dd x =-\int_\R q'(z-ry)v'(z)\dd z =A_r(y).
\]
All products are integrable. A compactly supported cutoff justifies
integration by parts against the bounded, noncompact test $v'$.
\end{proof}
\begin{theorem}[Linear bound for a factorizable difference]
\label{sup:thm:factor-bound}
Under \eqref{sup:eq:semimartingale-integrability},
\eqref{sup:eq:factor-decomposition}--\eqref{sup:eq:factor-cost}, and
assumption~\ref{sup:ass:factor-regularity},
\begin{equation}\label{sup:eq:factor-bound}
 W_1\bigl(\Law(X_T^0),\Law(X_T^1)\bigr)
 \leq B\theta_0+C_{B,F}\mathcal D,
 \qquad
 C_{B,F}=B\max\left\{\frac12,\frac{2F}{3},
                       \frac12+\frac F3,\frac16\right\}.
\end{equation}
\end{theorem}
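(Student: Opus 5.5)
The plan is to start from the comparison identity \eqref{sup:eq:comparison} with $v(t,\cdot)=P^1_{t,T}h$, for $h\in C_b^\infty(\R)$ with $\Lip(h)\leq1$. Assumption~\ref{sup:ass:factor-regularity} provides this identity.

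\emph{Reduction to the jump term.} The initial term is at most $\sup_t\norm{v'(t)}_\infty\,\theta_0\leq B\theta_0$: take a $W_1$-optimal coupling of the initial laws and use that $v(0)$ is $B$-Lipschitz. Since drift and diffusion coincide, only the jump part of $\cL_0-\cL_1$ survives. By Lemma~\ref{lem:Taylor} and \eqref{sup:eq:factor-decomposition}, the integrand is
\[
 ((\cL_0-\cL_1)v)(t,x)=\int_\R\Phi_{v(t),x}(y)K_t(x,\dd y)
 =\sum_{j=1}^m r_j(t,x)\,\langle\Phi_{v(t),x},\kappa_{j,t}\rangle .
\]
I will then split the time integral at $T/2$.

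\emph{Early times, $t\leq T/2$.} Here I use the backward bounds \eqref{sup:eq:factor-backward}. They give $\norm{\Phi_{v(t),x}}_{\BL}\leq\frac12\norm{v''}_\infty+\frac16\norm{v'''}_\infty$, which is controlled by $B$ with the coefficients $\tfrac12$ and $\tfrac16$. Together with $|r_j|\leq\norm{r_j(t)}_{W^{2,\infty}}$, the expected integrand is at most
\[
 B\max\{\tfrac12,\tfrac16\}\sum_j\norm{r_j(t)}_{W^{2,\infty}}\norm{\kappa_{j,t}}_{\BL^*}.
\]

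\emph{Late times, $t\geq T/2$.} Here the backward third derivative is unavailable, so I move the derivatives onto the forward density. By Fubini,
\[
 \E\bigl[r_j(t,X^0_{t-})\langle\Phi_{v(t),X^0_{t-}},\kappa_{j,t}\rangle\bigr]
 =\Bigl\langle\int_0^1(1-r)A_r\dd r,\ \kappa_{j,t}\Bigr\rangle,
 \qquad A_r(y)=\int_\R q(x)v''(x+ry)\dd x,
\]
with $q=p_t\,r_j(t)$. I then apply Lemma~\ref{sup:lem:factor-transfer} with $w=v'$, so $\norm w_\infty\leq B$.

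The product rule gives $q\in W^{2,1}$ with
\[
 \norm{q'}_1\leq\norm{r_j'}_\infty+F\norm{r_j}_\infty,\qquad
 \norm{q''}_1\leq\norm{r_j''}_\infty+2F\norm{r_j'}_\infty+F\norm{r_j}_\infty,
\]
using $\norm{p_t}_1=1$. Combining \eqref{sup:eq:factor-transfer} with $\int_0^1(1-r)\dd r=\tfrac12$ and $\int_0^1r(1-r)\dd r=\tfrac16$, the field $\int_0^1(1-r)A_r\dd r$ has $\BL$ norm at most
\[
 B\bigl(\tfrac12\norm{q'}_1+\tfrac16\norm{q''}_1\bigr).
\]
Grouping by $\norm{r_j}_\infty$, $\norm{r_j'}_\infty$ and $\norm{r_j''}_\infty$ yields the coefficients $\tfrac F2+\tfrac F6=\tfrac{2F}3$, then $\tfrac12+\tfrac F3$, then $\tfrac16$. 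This is where $C_{B,F}$ comes from. Integrating both time regimes gives $B\theta_0+C_{B,F}\mathcal D$.

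\emph{Extension to Lipschitz tests.} I approximate a general one-Lipschitz $h$ by smooth bounded one-Lipschitz functions, as in Lemma~\ref{lem:approximation}. The finite first moments then let me pass to the limit and take the supremum in \eqref{eq:KR}.

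\emph{Main obstacle.} I expect the hardest step to be the late-time Fubini and the identification with Lemma~\ref{sup:lem:factor-transfer}. I must justify exchanging the expectation, the $y$-integral against the signed measure $\kappa_{j,t}$ and the $r$-integral. I will also check that $q=p_tr_j$ genuinely lies in $W^{2,1}$, using weak derivatives of $r_j\in W^{2,\infty}$, and that the integration by parts against the noncompact test $v'$ is valid; the cutoff argument in the lemma covers the latter. Integrability follows from finiteness of $|\kappa_{j,t}|$ and boundedness of $v''$ and $r_j$, together with joint measurability in $t$ from \eqref{sup:eq:factor-cost}.
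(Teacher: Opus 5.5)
Your proposal is correct and follows essentially the same route as the paper. You use the comparison identity with $v(t)=P^1_{t,T}h$ and bound the initial term by $B\theta_0$. You then split at $T/2$, using the backward bounds on $[0,T/2]$ and Lemma~\ref{sup:lem:factor-transfer} with $q=p_t\,r_j(t)$ and $w=v'(t)$ on $[T/2,T)$, which gives exactly the coefficients in $C_{B,F}$, and you finish with Lemma~\ref{lem:approximation}.
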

\begin{proof}
Fix $h\in C_b^\infty(\R)$ with $\Lip(h)\leq1$ and put
$v(t)=P^1_{t,T}h$. Taylor's integral remainder gives
\[
 v(t,x+y)-v(t,x)-yv'(t,x)
 =y^2\Phi_{v(t),x}(y),\qquad
 \Phi_{v(t),x}(y)=\int_0^1(1-r)v''(t,x+ry)\dd r.
\]
The jump term in \eqref{sup:eq:comparison} is therefore
\begin{equation}\label{sup:eq:factor-pairing}
 \int_0^T\sum_j\langle\Psi_{j,t},\kappa_{j,t}\rangle\dd t,
 \qquad
 \Psi_{j,t}(y)=\E\left[
 r_j(t,X_t^0)\int_0^1(1-r)v''(t,X_t^0+ry)\dd r\right].
\end{equation}
For $t\leq T/2$, Lemma~\ref{lem:Taylor} and
$\int_0^1(1-r)\dd r=1/2$, $\int_0^1r(1-r)\dd r=1/6$ give
\begin{align*}
 \norm{\Psi_{j,t}}_{\BL}
 &\byeq{eq:Phi-norm}{\leq}\norm{r_j(t)}_\infty
       \left(\frac12\norm{v''(t)}_\infty
                    +\frac16\norm{v'''(t)}_\infty\right)\\
 &\leq\frac{\norm{r_j(t)}_\infty}{2}
               \bigl(\norm{v''(t)}_\infty+\norm{v'''(t)}_\infty\bigr) \byeq{sup:eq:factor-backward}{\leq}\frac B2\norm{r_j(t)}_\infty.
\end{align*}
For $T/2\leq t<T$, abbreviate $r_j=r_j(t,\cdot)$ and set
$q=p_tr_j\in W^{2,1}(\R)$. Since $\norm{p_t}_1=1$,
\begin{align*}
 q'&=p_t'r_j+p_tr_j',&
 \norm{q'}_1&\leq F\norm{r_j}_\infty+\norm{r_j'}_\infty,\\
 q''&=p_t''r_j+2p_t'r_j'+p_tr_j'',&
 \norm{q''}_1&\leq F\norm{r_j}_\infty
                  +2F\norm{r_j'}_\infty+\norm{r_j''}_\infty.
\end{align*}
Apply lemma~\ref{sup:lem:factor-transfer} with $w=v'(t)$ and integrate in $r$:
\[
 \norm{\Psi_{j,t}}_{\BL}
 \byeq{sup:eq:factor-transfer}{\leq}\norm{v'(t)}_\infty
        \int_0^1(1-r)\bigl(\norm{q'}_1+r\norm{q''}_1\bigr)\dd r \byeq{sup:eq:factor-backward}{\leq}
 B\left[\frac{2F}{3}\norm{r_j}_\infty
       +\left(\frac12+\frac F3\right)\norm{r_j'}_\infty
       +\frac16\norm{r_j''}_\infty\right].
\]
On both time intervals,
$\norm{\Psi_{j,t}}_{\BL}\leq
 C_{B,F}\norm{r_j(t)}_{W^{2,\infty}}$. Hence
\[
 \left|\int_0^T\sum_j\langle\Psi_{j,t},\kappa_{j,t}\rangle\dd t\right|
 \leq C_{B,F}\int_0^T\sum_j
      \norm{r_j(t)}_{W^{2,\infty}}
      \norm{\kappa_{j,t}}_{\BL^*}\dd t=C_{B,F}\mathcal D.
\]
The initial term is bounded by
$\Lip(v(0))W_1(\Law(X_0^0),\Law(X_0^1))\leq B\theta_0$.
Lemma~\ref{lem:approximation} and the finite terminal first moments extend the estimate to
all one-Lipschitz tests. Kantorovich duality completes the proof.
\end{proof}
\begin{corollary}[Countable factorizations]\label{sup:cor:factor-countable}
The same bound holds with $m=\infty$ if the series in
\eqref{sup:eq:factor-decomposition} converges in $\BL^*$ for almost every
$t$ and every $x$, and the series version of \eqref{sup:eq:factor-cost}
is integrable.
\end{corollary}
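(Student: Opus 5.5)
The plan is to rerun the proof of Theorem~\ref{sup:thm:factor-bound} with the finite sum replaced by a series, and to justify the exchange of summation and pairing at each fixed time by $\BL^*$ convergence. Fix $h\in C_b^\infty(\R)$ with $\Lip(h)\leq1$, and let $v(t)=P^1_{t,T}h$ and $\Phi_{v(t),x}$ be as in that proof. For almost every $t$ and every $x$, set $K_t^{(n)}(x)=\sum_{j\leq n}r_j(t,x)\kappa_{j,t}$. By hypothesis $K_t^{(n)}(x)\to K_t(x)$ in $\BL^*$. Lemma~\ref{lem:Taylor} gives $\norm{\Phi_{v(t),x}}_{\BL}<\infty$, hence
\[
 \langle\Phi_{v(t),x},K_t(x)\rangle=\sum_{j\geq1}r_j(t,x)\langle\Phi_{v(t),x},\kappa_{j,t}\rangle .
\]

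Next I would take the expectation over $X_t^0$ and interchange it with the sum. The $j$th term is bounded by
\[
 \norm{r_j(t)}_\infty\,\norm{\Phi_{v(t),\cdot}}_{\BL}\,\norm{\kappa_{j,t}}_{\BL^*}.
\]
For $t<T$, Assumption~\ref{sup:ass:factor-regularity} does not bound $\sup_x\norm{\Phi_{v(t),x}}_{\BL}$ on $[T/2,T)$. So I will instead bound $|\langle\Psi_{j,t},\kappa_{j,t}\rangle|\leq\norm{\Psi_{j,t}}_{\BL}\norm{\kappa_{j,t}}_{\BL^*}\leq C_{B,F}\norm{r_j(t)}_{W^{2,\infty}}\norm{\kappa_{j,t}}_{\BL^*}$, which is exactly the per-term estimate proved in Theorem~\ref{sup:thm:factor-bound}. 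That estimate holds for each $j$ separately and never uses finiteness of $m$.

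The main obstacle is the interchange
\[
 \E\Bigl[\sum_jr_j(t,X_t^0)\langle\Phi_{v(t),X_t^0},\kappa_{j,t}\rangle\Bigr]=\sum_j\langle\Psi_{j,t},\kappa_{j,t}\rangle .
\]
Fubini needs $\E\sum_j|r_j(t,X_t^0)\langle\Phi,\kappa_{j,t}\rangle|<\infty$, and near $T$ the pointwise bound on $\Phi$ is not uniform. To handle this I will use the partial sums instead of absolute convergence. For fixed $t<T$, $v(t)\in C_b^3$, so $\norm{\Phi_{v(t),x}}_{\BL}\leq c_t$ uniformly in $x$. The $\BL^*$ convergence, with $|\langle\Phi_{v(t),x},K_t^{(n)}(x)-K_t(x)\rangle|\leq c_t\norm{K_t^{(n)}(x)-K_t(x)}_{\BL^*}$, then gives pointwise convergence in $x$. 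The partial sums are dominated by $c_t\sup_n\norm{K_t^{(n)}(x)}_{\BL^*}\leq c_t\sum_j\norm{r_j(t)}_\infty\norm{\kappa_{j,t}}_{\BL^*}$, which is finite for almost every $t$ by the series cost condition. Dominated convergence in $x$ therefore lets $\E$ pass through the limit of partial sums. Each finite partial sum equals $\sum_{j\leq n}\langle\Psi_{j,t},\kappa_{j,t}\rangle$, and this converges absolutely by the per-term bound.

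Finally, integrate in $t$. The resulting integrand is bounded by $C_{B,F}\sum_j\norm{r_j(t)}_{W^{2,\infty}}\norm{\kappa_{j,t}}_{\BL^*}$, which is integrable by assumption and has integral $\mathcal D$. The comparison identity \eqref{sup:eq:comparison} holds by Assumption~\ref{sup:ass:factor-regularity} with an integrable integrand. The initial term is bounded by $B\theta_0$ as before, and Lemma~\ref{lem:approximation} together with Kantorovich duality extends the estimate to all one-Lipschitz tests, giving \eqref{sup:eq:factor-bound} with $m=\infty$.
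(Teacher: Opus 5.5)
Your proposal is correct and follows essentially the paper's argument: partial sums $\sum_{j\leq n}r_j(t,x)\kappa_{j,t}$ with $\BL^*$ convergence at fixed $t<T$, dominated convergence under $\Law(X_t^0)$ using the non-uniform bound $\bigl(\tfrac12\norm{v''(t)}_\infty+\tfrac16\norm{v'''(t)}_\infty\bigr)S(t)$, and then the per-term estimate $C_{B,F}\norm{r_j(t)}_{W^{2,\infty}}\norm{\kappa_{j,t}}_{\BL^*}$ from Theorem~\ref{sup:thm:factor-bound} to get an integrable majorant in $t$. The only cosmetic difference is ordering: you first bound the pointwise limit $I(t)$ by the absolutely convergent series and then integrate, whereas the paper applies dominated convergence in $t$ to the partial-sum integrals $\int_0^T I_n(t)\dd t$.
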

\begin{proof}
For almost every fixed $t<T$, define the finite signed kernels
$K_{n,t}(x)=\sum_{j=1}^nr_j(t,x)\kappa_{j,t}$ and the summable majorant
\[
 S(t)=\sum_{j\geq1}\norm{r_j(t)}_{W^{2,\infty}}
                       \norm{\kappa_{j,t}}_{\BL^*},
 \qquad S\in L^1(0,T).
\]
Since $\Phi_{v(t),x}\in\BL(\R)$, convergence in $\BL^*$ gives
$\langle\Phi_{v(t),x},K_{n,t}(x)\rangle
\to\langle\Phi_{v(t),x},K_t(x)\rangle$. Moreover,
\[
 |\langle\Phi_{v(t),x},K_{n,t}(x)\rangle|
 \leq\left(\frac12\norm{v''(t)}_\infty
           +\frac16\norm{v'''(t)}_\infty\right)S(t),
\]
so dominated convergence applies under $\Law(X_t^0)$. Writing the
resulting expectations as $I_n(t)\to I(t)$,
theorem~\ref{sup:thm:factor-bound} gives $|I_n(t)|\leq C_{B,F}S(t)$.
Applying dominated convergence once more gives
\[
 \int_0^TI(t)\dd t=\lim_{n\to\infty}\int_0^TI_n(t)\dd t,
 \qquad
 \left|\int_0^TI(t)\dd t\right|\leq C_{B,F}\mathcal D.
\]
The initial term is unchanged. We do not need convergence in total variation.
\end{proof}
The infimum of $\mathcal D$ over admissible measurable factorizations
also bounds the jump contribution. Pointwise duality gives
\[
 \norm{K_t(x)}_{\BL^*}
 \leq\sum_j|r_j(t,x)|\norm{\kappa_{j,t}}_{\BL^*},\qquad
 \int_0^T\E\left[\norm{K_t(X_t^0)}_{\BL^*}\right]\dd t\leq\mathcal D.
\]
There is no converse under these assumptions: $\mathcal D$ also controls
spatial variation. Thus the theorem does not give a linear bound in the
pointwise weak discrepancy for arbitrary kernels.
\subsubsection{A verified state-dependent intensity model.}
\label{sup:sec:factor-intensity}
For $i\in\{0,1\}$, let $c_i\in C_b^3(\R)$ be nonnegative and
$F_i\in\cP(\R\setminus\{0\})$ satisfy
\begin{equation}\label{sup:eq:factor-centered}
 \int_\R yF_i(\dd y)=0,
 \qquad s_i:=\int_\R y^2F_i(\dd y)<\infty.
\end{equation}
The generator
\begin{equation}\label{sup:eq:factor-generator}
 \cL_i f(x)=\frac12f''(x)
          +c_i(x)\int_\R[f(x+y)-f(x)]F_i(\dd y)
\end{equation}
is realized by Brownian motion and bounded-rate thinning: candidate jumps
arrive at rate $\norm{c_i}_\infty$ and are accepted at state $x$ with
probability $c_i(x)/\norm{c_i}_\infty$ (no jumps if $c_i=0$).
There are finitely many jumps on bounded time intervals. Centering makes
the full-compensation drift zero. Write
$\mu_i(\dd y)=y^2F_i(\dd y)\in\cM_f^+(\R)$ and
$\norm c_{C_b^k}=\sum_{j=0}^k\norm{D^jc}_\infty$.
\begin{theorem}[Weak jump-law stability with state-dependent intensities]
\label{sup:thm:factor-intensity}
Let $X^i$ have generator \eqref{sup:eq:factor-generator} and initial law
in $\cP_1(\R)$. For every $T>0$, a finite constant $C_T$, depending only
on $T$, the $C_b^3$ bounds of $c_i$, and $\int_{\R}|y|F_i(\dd y)$, satisfies
\begin{equation}\label{sup:eq:factor-intensity-bound}
 W_1\bigl(\Law(X_T^0),\Law(X_T^1)\bigr) \leq C_T\bigl[\theta_0 +T\norm{c_0-c_1}_{W^{2,\infty}}s_0 +T\norm{c_1}_{W^{2,\infty}}d_{\FM}(\mu_0,\mu_1)\bigr].
\end{equation}
Discrete jump laws and nonnegative $C_b^3$ intensities are allowed. No
bound on $\norm{F_0-F_1}_{\mathrm{TV}}$ is required.
\end{theorem}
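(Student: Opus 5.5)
We plan to deduce the bound from Theorem~\ref{sup:thm:factor-bound} by verifying Assumption~\ref{sup:ass:factor-regularity} for model~$1$ and exhibiting an explicit finite factorization of the weighted kernel difference.

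\textbf{Factorization.} Since $\nu_i(t,x,\dd y)=c_i(x)F_i(\dd y)$, we have
\[
 K_t(x)=y^2(\nu_0-\nu_1)=c_0(x)\mu_0-c_1(x)\mu_1=(c_0-c_1)(x)\,\mu_0+c_1(x)(\mu_0-\mu_1).
\]
This gives $m=2$, $r_1=c_0-c_1$, $\kappa_1=\mu_0$, $r_2=c_1$, and $\kappa_2=\mu_0-\mu_1$. Because $\norm{\mu_0}_{\BL^*}=s_0$ (test with $f=1$), the cost is
\[
 \mathcal D=T\bigl(\norm{c_0-c_1}_{W^{2,\infty}}s_0+\norm{c_1}_{W^{2,\infty}}d_{\FM}(\mu_0,\mu_1)\bigr),
\]
which is exactly the bracket in \eqref{sup:eq:factor-intensity-bound}. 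Centering \eqref{sup:eq:factor-centered} makes the full-compensation drifts vanish, so the drift terms agree. The integrability \eqref{sup:eq:semimartingale-integrability} follows from $s_i<\infty$ and the boundedness of $c_i$.

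\textbf{Backward regularity \eqref{sup:eq:factor-backward}.} For $v(t)=P^1_{t,T}h$, write $P^{1,*}$ as the Dyson--Phillips series around the heat semigroup, as in the proof of Theorem~\ref{sup:thm:state-response}, but now in its backward form on $C_b^k$. The jump operator $f\mapsto c_1\int[f(\cdot+y)-f]F_1(\dd y)$ is bounded on $C_b^k$ for $k\le3$, with norm depending on $\norm{c_1}_{C_b^3}$. The series therefore gives $\norm{v'(t)}_\infty\le C_T$ uniformly, since $\Lip(h)\le1$. For $t\le T/2$, the heat factor acts over a time of at least $T/2$, so Duhamel with the smoothing bounds \eqref{sup:eq:external-gaussian} controls $v''$ and $v'''$ by $C_T$. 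The integrable singularities $(T-s)^{-1/2}$ and $(T-s)^{-1}$ appear only in the terms with $s$ near $T$. Those terms are handled by placing the derivatives on the heat factor nearest the test, where $T-s\ge T/2$ whenever $t\le T/2$.

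\textbf{Forward regularity \eqref{sup:eq:factor-forward}, the comparison identity, and the main obstacle.} For $t\ge T/2$, expand $p_t=P_t^{0,*}\Law(X^0_0)$ by the forward Dyson series. The initial law is only in $\cP_1$, so there is no $X_3$ data; instead, every term begins with $\mathsf H_{t_1}$ applied to a measure. The first-order terms with $t_1\ge T/4$ are smoothed by that initial heat factor. For $t_1<T/4$, the last heat factor $\mathsf H_{t-t_n}$ smooths whenever $t-t_n\ge T/4$. The remaining simplex pieces are resolved by splitting time at a gap of length at least $T/(4(n+1))$, which costs at most a factor $(n+1)$ per term against $n!$. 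Collecting terms gives $\norm{p_t'}_1+\norm{p_t''}_1\le C_T$, with $C_T$ depending on $\norm{c_0}_\infty$.

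I expect this $n$-dependent gap-splitting bookkeeping to be the main obstacle. To avoid it, I would first try using the heat kernel factorization $P^{0,*}_t=P^{0,*}_{t-T/4}\circ P^{0,*}_{T/4}$ together with the uniform $W^{2,1}$ smoothing of $P^{0,*}_s$ for $s\ge T/4$, obtained from Duhamel around $\mathsf H_s$.

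The comparison identity Lemma~\ref{sup:lem:comparison} is obtained as in Proposition~\ref{sup:prop:additive-comparison}. The function $v$ is $C^{1,2}$ with bounded derivatives, there are finitely many jumps, and first moments are finite, which gives the localization and uniform integrability. Theorem~\ref{sup:thm:factor-bound} then yields the result with $C_T=\max(B,C_{B,F})$. The dependence on $\int|y|F_i$ enters through the jump-operator bounds.
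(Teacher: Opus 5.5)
Your architecture is the paper's. You use the same factorization $K_t(x)=(c_0-c_1)(x)\mu_0+c_1(x)(\mu_0-\mu_1)$ with $\norm{\mu_0}_{\BL^*}=s_0$, you verify Assumption~\ref{sup:ass:factor-regularity}, and you apply Theorem~\ref{sup:thm:factor-bound}. The substance of the proof is the regularity verification, and your backward step has two problems there.

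\textbf{The gradient bound is not uniform in $h$.} Boundedness of $B_1f=c_1R_1f$, with $R_1f=\int_\R[f(\cdot+y)-f]F_1(\dd y)$, on the inhomogeneous spaces $C_b^k$ only gives $\norm{v'(t)}_\infty\le C_T\norm h_{C_b^1}$. That bound involves $\norm h_\infty$. Theorem~\ref{sup:thm:factor-bound} needs constants independent of $h$ over all one-Lipschitz tests, and the approximants of Lemma~\ref{lem:approximation} have growing sup norms. The missing device is to write $R_1f=J_1(f')$, where $J_1w=\int_\R\int_0^yw(\cdot+z)\dd z\,F_1(\dd y)$ satisfies $\norm{J_1w}_\infty\le m_1\norm w_\infty$ with $m_1=\int_\R|y|F_1(\dd y)$. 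Then $D(B_1f)=c_1R_1(f')+c_1'J_1(f')$. Let $P_s$ be the model-$1$ semigroup, so $v(t)=P_{T-t}h$, and set $w_s=DP_sh$. Then $w_s$ solves the closed equation $\partial_sw=\tfrac12D^2w+\mathcal Cw$, where $\mathcal Cw=c_1R_1w+c_1'J_1w$ and $\norm{\mathcal Cw}_\infty\le(2\norm{c_1}_\infty+m_1\norm{c_1'}_\infty)\norm w_\infty$. Gronwall applied to the mild form then gives $\norm{w_s}_\infty\le e^{Ks}\Lip(h)$. This is exactly where $\int|y|F_1$ enters. Your last sentence gestures at it, but the bound you actually state does not deliver it.

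\textbf{The argument for $v''$ and $v'''$ fails as written.} The singularity $(T-s)^{-1}$ is not integrable. In $P_{T-t}h=\mathsf H_{T-t}h+\int_t^TP_{s-t}B_1\mathsf H_{T-s}h\,\dd s$, the heat factor adjacent to the test, $\mathsf H_{T-s}$, has small time precisely near $s=T$, where that singularity sits. So the condition $T-s\ge T/2$ fails on the relevant range. More generally, placing all derivatives on one factor of a first-order Duhamel formula leaves a nonintegrable $r^{-1}$ at one end of the time integral.

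The paper closes this by a bootstrap in the mild form $w_s=\mathsf H_sh'+\int_0^s\mathsf H_{s-r}\mathcal Cw_r\dd r$. Putting one derivative on the heat kernel gives $\norm{Dw_s}_\infty\le A_Ts^{-1/2}$. Next, put one derivative on the heat kernel and one on $\mathcal Cw_r$, using $\norm{D\mathcal Cw_r}_\infty\le K'(\norm{w_r}_\infty+\norm{Dw_r}_\infty)$ and $\int_0^s(s-r)^{-1/2}r^{-1/2}\dd r=\pi$. This gives $\norm{D^3P_sh}_\infty\le C_Ts^{-1}$, hence \eqref{sup:eq:factor-backward} for $s=T-t\ge T/2$.

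Your own gap-splitting would also repair the step. In the $n$th backward Dyson term, some heat factor has time at least $s/(n+1)$. Derivatives pass through the $B_1$'s by the homogeneous Leibniz bounds above, which need $\norm{c_1}_{C_b^3}$ and $m_1$. Since the regions partition the simplex, the $n$th term costs at most $C^n(1+(n+1)/s)s^n/n!$, which is summable.

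\textbf{The forward part.} Your gap-splitting is valid there for the same reason. The paper's $L^1$ bootstrap is shorter: it works on $p_t=\mathsf H_t\zeta+\int_0^t\mathsf H_{t-s}B^*p_s\dd s$ with $B^*p=(c_0p)*F_0-c_0p$ and $\norm{D(B^*p)}_1\le2\norm{c_0'}_\infty\norm p_1+2\norm{c_0}_\infty\norm{p'}_1$. Note that the constant needs derivatives of $c_0$, not just $\norm{c_0}_\infty$ as you claim. Your $T/4$ factorization alternative is circular, since it presupposes the fixed-time $W^{2,1}$ smoothing it is meant to supply.
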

\begin{proof}
We verify assumption~\ref{sup:ass:factor-regularity}. Omit the index $i$, let
$\mathrm{BUC}(\R)$ denote bounded uniformly continuous functions with the
supremum norm, and define
\[
 H_tf=f*g_t,\quad
 Rf=\int_\R[f(\cdot+y)-f]F(\dd y),\quad Bf=cRf.
\]
On $\mathrm{BUC}(\R)$, $\norm B\leq2\norm c_\infty$. The absolutely
convergent heat perturbation series is
\[
 P_t=\sum_{n=0}^\infty P_t^{(n)},\quad P_t^{(0)}=H_t,\quad
 P_t^{(n+1)}=\int_0^tH_{t-s}BP_s^{(n)}\dd s,\quad
 \norm{P_t^{(n)}}\leq\frac{(2t\norm c_\infty)^n}{n!}.
\]
It solves $P_t=H_t+\int_0^tH_{t-s}BP_s\dd s$ and gives the semigroup
of the bounded-rate process.
For $h\in C_b^\infty(\R)$, put $w_s=DP_sh$,
$m_1=\int_\R|y|F(\dd y)$, and
\[
 Jw=\int_\R\int_0^yw(\cdot+z)\dd zF(\dd y),\qquad
 \norm{Jw}_\infty\leq m_1\norm w_\infty.
\]
The inner integral is oriented when $y<0$. Since $Rf=J(Df)$,
$D(Rf)=R(Df)$, and $D(Jw)=J(Dw)$, differentiating the backward equation
gives
\begin{equation}\label{sup:eq:factor-gradient-equation}
 \partial_sw_s=\frac12D^2w_s+\mathcal Cw_s,
 \qquad \mathcal Cw=cRw+c'Jw.
\end{equation}
In particular,
\[
 D\mathcal Cw=c'Rw+cR(Dw)+c''Jw+c'J(Dw).
\]
Consequently,
\begin{equation}\label{sup:eq:factor-gradient-bounds}
 \norm{\mathcal Cw}_\infty\leq K\norm w_\infty,
 \qquad
 \norm{D\mathcal Cw}_\infty
 \leq K'\bigl(\norm w_\infty+\norm{Dw}_\infty\bigr),
\end{equation}
where
\[
 K=2\norm c_\infty+m_1\norm{c'}_\infty,\qquad
 K'=\max\{K,2\norm{c'}_\infty+m_1\norm{c''}_\infty\}.
\]
Apply heat contraction to the mild form
$w_s=H_sh'+\int_0^sH_{s-r}\mathcal Cw_r\dd r$.
Then the operator bound and Gronwall's inequality give
\[
 \norm{w_s}_\infty
 \leq
     \norm{h'}_\infty+\int_0^s\norm{\mathcal Cw_r}_\infty\dd r \byeq{sup:eq:factor-gradient-bounds}{\leq}
     \norm{h'}_\infty+K\int_0^s\norm{w_r}_\infty\dd r \leq e^{Ks}\norm{h'}_\infty.
\]
Using $\norm{Dg_s}_1=\sqrt{2/\pi}s^{-1/2}$ and
$\norm{D^2g_s}_1=\sqrt{8/(\pi e)}s^{-1}$, distribute the derivatives as
\[
 Dw_s=(Dg_s)*h'+\int_0^s(Dg_{s-r})*\mathcal Cw_r\dd r,\qquad D^2w_s=(D^2g_s)*h'
             +\int_0^s(Dg_{s-r})*D\mathcal Cw_r\dd r.
\]
The corresponding bounds are
\begin{align*}
 \norm{Dw_s}_\infty
 &\leq \sqrt{2/\pi}s^{-1/2}\norm{h'}_\infty
      +\sqrt{2/\pi}K\int_0^s(s-r)^{-1/2}\norm{w_r}_\infty\dd r,\\
 \norm{D^2w_s}_\infty
 &\leq \sqrt{8/(\pi e)}s^{-1}\norm{h'}_\infty
      +\sqrt{2/\pi}K'\int_0^s(s-r)^{-1/2}
                  (\norm{w_r}_\infty+\norm{Dw_r}_\infty)\dd r.
\end{align*}
For $0<s\leq T$, the first inequality gives
\[
 \norm{Dw_s}_\infty
 \leq \sqrt{2/\pi}\bigl(s^{-1/2}+2Ke^{KT}\sqrt s\bigr)\norm{h'}_\infty
 \leq A_Ts^{-1/2}\norm{h'}_\infty,
 \quad A_T=\sqrt{2/\pi}(1+2KTe^{KT}).
\]
Substituting into the second inequality gives
\[
 \norm{D^2w_s}_\infty
 \leq\bigl[\sqrt{8/(\pi e)}s^{-1}
       +\sqrt{2/\pi}K'(2e^{KT}\sqrt s+\pi A_T)\bigr]\norm{h'}_\infty,
\]
because
\[
 \int_0^s(s-r)^{-1/2}\dd r=2\sqrt s,\qquad
 \int_0^s(s-r)^{-1/2}r^{-1/2}\dd r=\pi.
\]
Thus, with constants depending only on the stated parameters,
\begin{equation}\label{sup:eq:factor-semigroup-derivatives}
 \norm{DP_sh}_\infty\leq C_T\Lip(h),\qquad
 \norm{D^2P_sh}_\infty\leq C_Ts^{-1/2}\Lip(h),\qquad
 \norm{D^3P_sh}_\infty\leq C_Ts^{-1}\Lip(h).
\end{equation}
These integrable bounds justify differentiating the series in the
distributional sense. Continuity of the heat convolutions gives classical derivatives.
For $v(t)=P_{T-t}h$, $T-t\geq T/2$ on the first half of the interval,
which verifies \eqref{sup:eq:factor-backward}.
For an arbitrary initial law $\zeta\in\cP(\R)$, the forward series has
a density $p_t\geq0$ for $t>0$, with $\norm{p_t}_1=1$ and
\begin{equation}\label{sup:eq:factor-forward-mild}
 p_t=H_t\zeta+\int_0^tH_{t-s}B^*p_s\dd s,
 \qquad B^*p=(cp)*F-cp.
\end{equation}
Here $(u*F)(x)=\int_\R u(x-y)F(\dd y)$ and
$H_t\zeta(x)=\int_\R g_t(x-z)\zeta(\dd z)$. For $p\in W^{1,1}(\R)$,
\[
 D(B^*p)=(c'p+cp')*F-(c'p+cp'),
\]
so Young's inequality gives
\begin{equation}\label{sup:eq:factor-adjoint-bounds}
 \norm{B^*p}_1\leq2\norm c_\infty\norm p_1,
 \qquad
 \norm{D(B^*p)}_1
 \leq2\norm{c'}_\infty\norm p_1+2\norm c_\infty\norm{p'}_1.
\end{equation}
Differentiate \eqref{sup:eq:factor-forward-mild} as above. Young's
inequality, followed by \eqref{sup:eq:factor-adjoint-bounds}, yields
\begin{align*}
 \norm{p_t'}_1
 &\leq\norm{Dg_t}_1
       +\int_0^t\norm{Dg_{t-s}}_1\norm{B^*p_s}_1\dd s\\
 &\byeq{sup:eq:factor-adjoint-bounds}{\leq}
       \sqrt{2/\pi}t^{-1/2}
       +2\sqrt{2/\pi}\norm c_\infty\int_0^t(t-s)^{-1/2}\dd s\\
 &=\sqrt{2/\pi}t^{-1/2}
                     +4\sqrt{2/\pi}\norm c_\infty\sqrt t \leq A_T^*t^{-1/2},
 \qquad A_T^*=\sqrt{2/\pi}(1+4T\norm c_\infty).
\end{align*}
Using this estimate for the second derivative gives
\begin{align*}
 \norm{p_t''}_1
 &\leq\norm{D^2g_t}_1
       +\int_0^t\norm{Dg_{t-s}}_1\norm{D(B^*p_s)}_1\dd s\\
 &\byeq{sup:eq:factor-adjoint-bounds}{\leq}
       \sqrt{8/(\pi e)}t^{-1}
       +2\sqrt{2/\pi}\int_0^t(t-s)^{-1/2}
       (\norm{c'}_\infty+\norm c_\infty A_T^*s^{-1/2})\dd s\\
 &=\sqrt{8/(\pi e)}t^{-1}
       +4\sqrt{2/\pi}\norm{c'}_\infty\sqrt t
       +2\pi\sqrt{2/\pi}\norm c_\infty A_T^* \leq C_Tt^{-1}.
\end{align*}
The integrable bounds justify the weak derivatives and imply
$p_t\in W^{2,1}(\R)$, even for singular $F$. On $[T/2,T]$ these bounds
give \eqref{sup:eq:factor-forward}.
Bounded intensities and $s_i<\infty$ give
\eqref{sup:eq:semimartingale-integrability}. In particular,
\[
 \E\left[\int_0^T\int_{\R} y^2\nu_i(X_t^i,\dd y)\dd t\right]
 \leq T\norm{c_i}_\infty s_i<\infty.
\]
The backward equation and localized It\^o formula give the comparison
identity for smooth bounded tests. The path-supremum bound gives uniform
integrability of the stopped variables. Finally,
\[
 c_0\mu_0-c_1\mu_1=(c_0-c_1)\mu_0+c_1(\mu_0-\mu_1),
 \qquad \norm{\mu_0}_{\BL^*}=\mu_0(\R)=s_0,
\]
and hence
\[
 \mathcal D\leq T\norm{c_0-c_1}_{W^{2,\infty}}s_0
       +T\norm{c_1}_{W^{2,\infty}}d_{\FM}(\mu_0,\mu_1).
\]
Apply theorem~\ref{sup:thm:factor-bound}.
\end{proof}
Without centering, the full-compensation drift is
$b_i(x)=c_i(x)\int_{\R} yF_i(\dd y)$ and contributes a drift discrepancy.
The argument above covers bounded finite activity. Infinite activity
requires additional regularity estimates.
\subsubsection{A sharp non-additive moving-atom example.}
\label{sup:sec:factor-moving-atom}
Let $c(x)=1+\frac12\sin x$ and, for $0<a\leq\frac12$, define
\begin{equation}\label{sup:eq:factor-atom-laws}
 F_0^a=\frac12\delta_{-a}+\frac12\delta_a,
 \qquad F_1^a=\frac45\delta_{-a/2}+\frac15\delta_{2a}.
\end{equation}
Their first three moments are
\[
 \int_{\R} yF_i^a(\dd y)=0,\qquad \int_{\R} y^2F_i^a(\dd y)=a^2,\qquad
 \int_{\R} y^3F_0^a(\dd y)=0,\quad \int_{\R} y^3F_1^a(\dd y)=\frac32a^3.
\]
The weighted measures
$\mu_0^a=\frac{a^2}{2}(\delta_{-a}+\delta_a)$ and
$\mu_1^a=\frac{a^2}{5}\delta_{-a/2}+\frac{4a^2}{5}\delta_{2a}$
have disjoint supports and equal mass $a^2$. Therefore
\begin{equation}\label{sup:eq:factor-atom-discrepancies}
 \norm{\mu_0^a-\mu_1^a}_{\mathrm{TV}}=2a^2,
 \qquad
 \frac34a^3\leq d_{\FM}(\mu_0^a,\mu_1^a)\leq3a^3.
\end{equation}
Indeed, $f(y)=\frac12\max\{-1,\min\{y,1\}\}$ has
$\norm f_{\BL}=1$ and
$|\langle f,\mu_0^a-\mu_1^a\rangle|=3a^3/4$. Conversely, for
$\norm f_{\BL}\leq1$, equal masses give
\[
 |\langle f,\mu_0^a-\mu_1^a\rangle|
 =|\langle f-f(0),\mu_0^a-\mu_1^a\rangle|
 \leq\int_{\R}|y|^3(F_0^a+F_1^a)(\dd y)=\frac{27}{10}a^3\leq3a^3.
\]
\begin{proposition}[Linear order with a nonconstant intensity]
\label{sup:prop:factor-atom}
Let $X^{i,a}$ start at zero and have generator
$\cL_i^af=\frac12f''+c\int_{\R}[f(\cdot+y)-f]F_i^a(\dd y)$.
For every $T>0$,
\begin{equation}\label{sup:eq:factor-atom-response}
 \E\left[\sin X_T^{0,a}\right]-\E\left[\sin X_T^{1,a}\right]
    =\frac T4e^{-T/2}a^3+O_T(a^4).
\end{equation}
Consequently, there are $a_T>0$ and $0<c_T\leq C_T<\infty$ such that
\begin{equation}\label{sup:eq:factor-atom-order}
 c_Ta^3\leq W_1\bigl(\Law(X_T^{0,a}),\Law(X_T^{1,a})\bigr)
       \leq C_Ta^3,\qquad 0<a\leq a_T.
\end{equation}
The linear power of the weak jump-law discrepancy is therefore sharp
for this state-dependent family.
\end{proposition}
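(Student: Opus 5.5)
The plan is to compute the expansion \eqref{sup:eq:factor-atom-response} with the comparison identity of Lemma~\ref{sup:lem:comparison}, with the test $f=\sin$. The two-sided bound \eqref{sup:eq:factor-atom-order} then follows from this expansion and from Theorem~\ref{sup:thm:factor-intensity}.

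For $f=\sin$, put $v^a(t)=P^{1,a}_{t,T}\sin$. Both models have the same diffusion and the same intensity $c$, and their initial laws coincide. So the comparison identity reduces to
\[
 \E\left[\sin X_T^{0,a}\right]-\E\left[\sin X_T^{1,a}\right]
 =\int_0^T\E\left[c(X_t^{0,a})\,\bigl((R_0^a-R_1^a)v^a(t)\bigr)(X_t^{0,a})\right]\dd t,
 \qquad R_i^af=\int_\R[f(\cdot+y)-f]F_i^a(\dd y).
\]
Its hypotheses are verified exactly as in the proof of Theorem~\ref{sup:thm:factor-intensity}. I will then Taylor expand in $y$ up to order four. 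The first and second moments of $F_0^a,F_1^a$ agree, so
\[
 (R_0^a-R_1^a)f=-\tfrac16\cdot\tfrac32a^3f'''+E^af,\qquad
 \norm{E^af}_\infty\leq\tfrac1{24}\norm{f^{(4)}}_\infty\int_\R y^4(F_0^a+F_1^a)(\dd y)\leq Ca^4\norm{f^{(4)}}_\infty .
\]
The third-moment term is therefore $-\tfrac{a^3}{4}f'''$.

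Next I pass to the limiting Brownian objects. The key input is that $v^a(t)$ is close in $C_b^4$ to the heat evolution $\mathsf H_{T-t}\sin=e^{-(T-t)/2}\sin$, uniformly in $t$, with error $O(a^2)$. For this I will use the Duhamel formula $P^{1,a}_{s}=\mathsf H_s+\int_0^s\mathsf H_{s-r}\,cR_1^a\,P^{1,a}_r\dd r$. Two facts make it work. First, $\norm{R_1^af}_\infty\leq\tfrac{a^2}2\norm{f''}_\infty$, since $F_1^a$ is centered with variance $a^2$. Second, the $C_b^k$ norms of $P^{1,a}_r\sin$ are bounded uniformly in $a\leq\frac12$ and $r\leq T$, for $k\leq6$. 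That bound comes from a Gronwall argument on the derivative equations, as in \eqref{sup:eq:factor-gradient-equation}. There is no singularity here because $\sin$ is smooth, and $R_1^a$ commutes with $D$.

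In the same way, $\E[g(X_t^{0,a})]=\E[g(B_t)]+O(a^2)\norm{g}_{C_b^2}$ for smooth $g$, by the forward Duhamel identity. Inserting both approximations, the integrand equals
\[
 \tfrac{a^3}{4}e^{-(T-t)/2}\,\E\left[c(B_t)\cos B_t\right]+O(a^4).
\]
We have $\E[\cos B_t]=e^{-t/2}$ and $\E[\sin B_t\cos B_t]=\tfrac12\E[\sin 2B_t]=0$. Hence the expectation is $e^{-t/2}$, the integrand becomes $\tfrac{a^3}4e^{-T/2}$, and integrating over $[0,T]$ gives $\tfrac T4e^{-T/2}a^3+O_T(a^4)$.

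For \eqref{sup:eq:factor-atom-order}, the lower bound uses that $\sin$ is $1$-Lipschitz. By Kantorovich duality, the Wasserstein distance is at least $\tfrac T8e^{-T/2}a^3$ once $a\leq a_T$ absorbs the $O_T(a^4)$ term. The upper bound is Theorem~\ref{sup:thm:factor-intensity} with $c_0=c_1=c$ and $\theta_0=0$. It gives $C_TT\norm c_{W^{2,\infty}}d_{\FM}(\mu_0^a,\mu_1^a)\leq 3C_TT\norm c_{W^{2,\infty}}a^3$ by \eqref{sup:eq:factor-atom-discrepancies}. The constant $C_T$ is uniform in $a\leq\frac12$, because $\int_\R|y|F_i^a(\dd y)\leq2a\leq1$.

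The step I expect to be the main obstacle is the uniform-in-$a$ control of $v^a$ and of $\Law(X^{0,a}_t)$. Specifically, one needs uniform $C_b^6$ bounds on $P^{1,a}_r\sin$ despite the nonconstant $c$. Differentiating $cR_1^a$ produces terms $c^{(j)}R_1^aD^{k}$, and all of these must be bounded by $a^2$ times derivatives of order two higher. I would handle this by writing $R_1^a=J\circ D$ with $J$ as in the proof of Theorem~\ref{sup:thm:factor-intensity}, and running Gronwall on the vector $(D^kv^a)_{k\leq6}$.
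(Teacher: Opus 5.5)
Your argument is correct, but it decomposes the difference differently from the paper.

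\textbf{The paper's route.} Write $B_i^a=cR_i^a$. The paper applies Duhamel twice to each semigroup separately,
$P_T^{i,a}h=\mathsf H_Th+\int_0^T\mathsf H_tB_i^a\mathsf H_{T-t}h\dd t+\mathcal E_i^a(h)$.
It bounds each second-order term by $C_Ta^4$, using $\norm{B_i^af}_{C_b^k}\leq Ca^2\norm f_{C_b^{k+2}}$ twice. Only then does it subtract and Taylor expand $B_0^a-B_1^a$.

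\textbf{Your route.} You start from the mixed identity
$P_T^{0,a}-P_T^{1,a}=\int_0^TP_t^{0,a}(B_0^a-B_1^a)P_{T-t}^{1,a}\dd t$,
in the form of the comparison identity. You keep the small factor $B_0^a-B_1^a$ exact and replace both outer semigroups by heat semigroups, at cost $O(a^2)$ each. The leading term is the same, $\int_0^T\mathsf H_t(B_0^a-B_1^a)\mathsf H_{T-t}\sin(0)\dd t$.

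\textbf{Comparison.} Your version is slightly sharper. The cross terms are $O(a^5)$, so the whole $O(a^4)$ error comes from the fourth-moment Taylor remainder of $B_0^a-B_1^a$. It also needs derivative bounds only for $P^{1,a}$. The forward replacement $\E[g(X_t^{0,a})]=\E[g(B_t)]+O(a^2)\norm g_{C_b^2}$ uses only that $P^{0,a}$ is a contraction. The paper's route, by contrast, needs $C_b^4$ bounds for both $i=0,1$. In exchange, the paper's route is purely analytic and avoids the It\^o and localization hypotheses of Lemma~\ref{sup:lem:comparison}. You can avoid them too by using the analytic mixed Duhamel formula above instead of the comparison identity. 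Your upper bound via Theorem~\ref{sup:thm:factor-intensity} and your Kantorovich lower bound with the test $\sin$ coincide with the paper's.

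\textbf{The step you flag as the main obstacle is easier than you expect.} Here $c=1+\tfrac12\sin x$ is smooth and $F_1^a$ is a probability measure. So Leibniz gives $\norm{cR_1^af}_{C_b^k}\leq C_k\norm f_{C_b^k}$ uniformly in $a$. The heat perturbation series then yields $\sup_{r\leq T,\,a\leq1/2}\norm{P_r^{1,a}\sin}_{C_b^k}\leq C_{k,T}$ directly; this is how the paper obtains its $C_b^4$ bound. The $J\circ D$ Gronwall structure is needed in Theorem~\ref{sup:thm:factor-intensity} only because there the test is merely Lipschitz and time-singular bounds are required. The factor $a^2$ enters only in the closeness estimate, through
$D^k(cR_1^ag)=\sum_j\binom kj c^{(k-j)}R_1^aD^jg$ together with $\norm{R_1^aD^jg}_\infty\leq\tfrac{a^2}{2}\norm{D^{j+2}g}_\infty$.
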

\begin{proof}
The upper bound follows uniformly in $a$ from
theorem~\ref{sup:thm:factor-intensity} and
\eqref{sup:eq:factor-atom-discrepancies}. Put
$B_i^af=c\int_{\R}[f(\cdot+y)-f]F_i^a(\dd y)$. Centering gives
\[
 B_i^af(x)=c(x)\int_{\R} y^2\int_0^1(1-r)f''(x+ry)\dd rF_i^a(\dd y).
\]
Leibniz's rule and $\int_{\R} y^2F_i^a(\dd y)=a^2$ imply
\begin{equation}\label{sup:eq:factor-atom-operator}
 \norm{B_i^af}_{C_b^k}\leq Ca^2\norm f_{C_b^{k+2}},
 \qquad k=0,2.
\end{equation}
On $\mathrm{BUC}^4(\R)$, the space of functions with bounded uniformly
continuous derivatives through order four, $\norm{B_i^a}\leq C$
uniformly in $i,a$. Heat contraction and the perturbation series give
$\sup_{s\leq T,i,a}\norm{P_s^{i,a}\sin}_{C_b^4}\leq C_T$.
Applying Duhamel's formula twice gives
\[
 P_T^{i,a}h
 =H_Th+\int_0^TH_tB_i^aH_{T-t}h\dd t+\mathcal E_i^a(h),\qquad \mathcal E_i^a(h)
 =\int_0^T\int_0^{T-t}
       H_tB_i^aH_sB_i^aP_{T-t-s}^{i,a}h\dd s\dd t.
\]
For $h=\sin$, \eqref{sup:eq:factor-atom-operator} gives
\[
 \norm{H_tB_i^aH_sB_i^aP_{T-t-s}^{i,a}\sin}_\infty
 \byeq{sup:eq:factor-atom-operator}{\leq}
 Ca^2\norm{B_i^aP_{T-t-s}^{i,a}\sin}_{C_b^2}
 \leq C_Ta^4,
\]
so $\norm{\mathcal E_i^a(\sin)}_\infty\leq C_Ta^4$.
The third-order Taylor formula gives
\[
 B_i^af=\frac{a^2}{2}cf''
        +\frac16\left(\int_{\R} y^3F_i^a(\dd y)\right)cf'''
        +\mathcal R_i^a(f),\quad
 \norm{\mathcal R_i^a(f)}_\infty
 \leq\frac{\norm c_\infty}{24}\norm{f^{(4)}}_\infty
                       \int_{\R} y^4F_i^a(\dd y).
\]
Since the fourth moments are $a^4$ and $13a^4/4$, respectively,
\[
 (B_0^a-B_1^a)f=-\frac14a^3cf'''+R_a(f),\qquad
 \norm{R_a(f)}_\infty\leq Ca^4\norm{f^{(4)}}_\infty.
\]
Now $H_{T-t}\sin=e^{-(T-t)/2}\sin$, $D^3\sin=-\cos$, and
\[
 H_t(c\cos)(0)=H_t\cos(0)+\frac12H_t(\sin\cos)(0)=e^{-t/2},
\]
because $g_t$ is even and $\sin\cos$ is odd. Subtracting the Duhamel
expansions and evaluating at zero gives
\[
 \E\left[\sin X_T^{0,a}\right]-\E\left[\sin X_T^{1,a}\right]
 =\frac{a^3}{4}\int_0^Te^{-(T-t)/2}e^{-t/2}\dd t+O_T(a^4)
 =\frac T4e^{-T/2}a^3+O_T(a^4).
\]
Since $\Lip(\sin)=1$ and $Te^{-T/2}/4>0$, duality gives the lower
bound for sufficiently small $a$.
\end{proof}
Sharpness holds at each fixed $T$ for this family. It does not identify
the factorization cost with a pointwise weak norm for arbitrary kernels.
\subsection{Infinite variance and stable smoothing.}\label{sup:sec:stable}
\subsubsection{Canonical truncation and a hybrid discrepancy.}
\label{sup:sec:finite-canonical}
Let $b_i:[0,T]\to\R$, $a_i:[0,T]\to[0,\infty)$, and the L\'evy
kernels $\nu_i$ be deterministic and measurable, with
\begin{equation}\label{sup:eq:finite-integrability}
 \int_0^T\left[|b_i(t)|+a_i(t)
      +\int_{\{y\in\R:|y|\leq1\}}y^2\nu_i(t,\dd y)
      +\int_{\{y\in\R:|y|>1\}}|y|\nu_i(t,\dd y)\right]\dd t<\infty.
\end{equation}
For $f\in C^2(\R)$ with bounded $f',f''$, the truncation
$\chi(y)=y\one_{\{|y|\leq1\}}$ defines the canonical generator
\begin{equation}\label{sup:eq:finite-generator}
 \cL_i(t)f(x)=b_i(t)f'(x)+\frac{a_i(t)}2f''(x)
   +\int_\R[f(x+y)-f(x)-\chi(y)f'(x)]\nu_i(t,\dd y).
\end{equation}
When both conventions are defined, the generators agree only if
\begin{equation}\label{sup:eq:finite-drift-convention}
 b_i^{\mathrm{can}}(t)=b_i^{\mathrm{full}}(t)
                -\int_{\{y\in\R:|y|>1\}}y\nu_i(t,\dd y).
\end{equation}
Thus a jump-law perturbation may also change the canonical drift.
For an initial point $x_i\in\R$, define the integrated L\'evy measure
$N_i(A)=\int_0^T\nu_i(t,A)\dd t$ for Borel
$A\subset\R\setminus\{0\}$ and put
\[
 \ell_i=x_i+\int_0^Tb_i(t)\dd t\in\R,\qquad
 v_i=\int_0^Ta_i(t)\dd t\geq0,\qquad
 \E\left[X_T^i\right]=\ell_i+\int_{\{y\in\R:|y|>1\}}yN_i(\dd y).
\]
The small- and large-jump weighted measures are
\begin{equation}\label{sup:eq:finite-split-measures}
 \mu_i^S(\dd y)=y^2\one_{\{|y|\leq1\}}N_i(\dd y),\qquad
 \mu_i^L(\dd y)=|y|\one_{\{|y|>1\}}N_i(\dd y).
\end{equation}
Both belong to $\cM_f^+(\R)$. Set
$D_S=d_{\FM}(\mu_0^S,\mu_1^S)$ and
$D_L=d_{\FM}(\mu_0^L,\mu_1^L)$.
\begin{theorem}[Linear terminal bound with infinite variance]
\label{sup:thm:finite-split}
Interpolate the canonical terminal exponents
\[
 \Psi_i(\xi)=i\ell_i\xi-\frac12v_i\xi^2
           +\int_\R(e^{i\xi y}-1-i\xi\chi(y))N_i(\dd y),
 \qquad \xi\in\R.
\]
Under \eqref{sup:eq:finite-integrability} and
Assumption~\ref{ass:interpolation} for these interpolation laws,
\begin{equation}\label{sup:eq:finite-split-bound}
 W_1\bigl(\Law(X_T^0),\Law(X_T^1)\bigr)
 \leq|\ell_0-\ell_1|+\frac{M_1}{2}|v_0-v_1|
         +\left(\frac{M_1}{2}+\frac{M_2}{6}\right)D_S+3D_L.
\end{equation}
\end{theorem}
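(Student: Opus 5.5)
I will repeat the proof of Theorem~\ref{thm:terminal}, using the canonical terminal operators
\[
 \cA_if=\ell_if'+\frac{v_i}2f''+\int_\R[f(\cdot+y)-f-\chi(y)f']N_i(\dd y)
\]
in place of the full-compensation ones. The first step is the interpolation identity $(Q_1-Q_0)h=\int_0^1(\cA_0-\cA_1)Q_uh\dd u$ for Schwartz $h$, proved as in Lemma~\ref{lem:interpolation}. From \eqref{sup:eq:finite-integrability} one gets $|e^{i\xi y}-1-i\xi\chi(y)|\leq\xi^2y^2/2$ for $|y|\leq1$ and $\leq2+|\xi||y|\leq(2+|\xi|)|y|$ for $|y|>1$. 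Hence $|\Psi_i(\xi)|\leq C(1+|\xi|+|\xi|^2)$, and $\operatorname{Re}\psi_u\leq0$ supplies the integrable majorant for differentiating in $u$. Since $\pi_u$ has first moment only through the large jumps, I will use that $\int_{|y|>1}|y|N_i<\infty$ to guarantee $\pi_u\in\cP_1$, which Lemma~\ref{lem:convolution} needs.

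Next, for $q_u=Q_uh$ with $\Lip(h)\leq1$, I split the generator difference at $y=0$ as
\[
 (\cA_0-\cA_1)q_u(0)=(\ell_0-\ell_1)q_u'(0)+\tfrac12(v_0-v_1)q_u''(0)+\langle\Phi_{q_u,0}\one_{\{|y|\leq1\}},\mu_0^S-\mu_1^S\rangle+\int_{|y|>1}[q_u(y)-q_u(0)](N_0-N_1)(\dd y).
\]
The first two terms and the small-jump term are bounded exactly as in \eqref{eq:calc-first-generator-bound}, with $\Phi$ paired against $\mu^S$. Here I must check that Lemma~\ref{lem:Taylor} still applies although $\mu^S$ is restricted to $[-1,1]$. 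Since both measures are carried by $[-1,1]$, I can pair them with $\Phi_{q_u,0}$ itself, which is BL on all of $\R$, so the restriction does no harm and this gives the $(M_1/2+M_2/6)D_S$ term. For the large jumps I write $q_u(y)-q_u(0)=|y|\,G_u(y)$ with $G_u(y)=(q_u(y)-q_u(0))/|y|$ on $|y|>1$, so that the last term becomes $\langle G_u,\mu_0^L-\mu_1^L\rangle$.

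The main obstacle is to show that some extension of $G_u$ from $\{|y|>1\}$ has $\norm{\cdot}_{\BL}\leq3$ uniformly in $u$. This extension is needed because $\mu^L$ lives on $|y|>1$. Since $\Lip(q_u)\leq1$, we get $|G_u|\leq1$. For $y,y'$ of the same sign with $|y|,|y'|\geq1$,
\[
 G_u(y)-G_u(y')=\frac{q_u(y)-q_u(y')}{|y|}+(q_u(y')-q_u(0))\Big(\frac1{|y|}-\frac1{|y'|}\Big),
\]
and each piece is at most $|y-y'|$, so the Lipschitz constant is at most $2$ on each half-line. Across the gap $[-1,1]$, I extend $G_u$ linearly on $[-1,1]$. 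The slope is at most $|G_u(1)-G_u(-1)|/2\leq1$, so the extension is globally $2$-Lipschitz and bounded by $1$, giving $\norm{G_u}_{\BL}\leq3$. This yields $3D_L$ with no dependence on $M_1$ or $M_2$.

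Integrating these bounds in $u$ with Assumption~\ref{ass:interpolation} then gives \eqref{sup:eq:finite-split-bound} for Schwartz tests. To pass to general centered Lipschitz tests I use Lemma~\ref{lem:approximation} and dominated convergence, which is valid because the terminal laws have finite first moments by \eqref{sup:eq:finite-integrability}. Kantorovich duality \eqref{eq:KR} then finishes the proof.
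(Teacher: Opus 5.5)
Your proposal is correct and follows the paper's proof essentially step by step. Both arguments run the interpolation identity with the canonical operators, bound the drift, Gaussian and small-jump terms as in Theorem~\ref{thm:terminal} by pairing $\Phi_{q_u,0}$ against $\mu_0^S-\mu_1^S$, and control the large jumps by the normalized increment $[q_u(y)-q_u(0)]/|y|$, extended linearly across $[-1,1]$ to a function with $\BL$ norm at most $3$. One small difference: chaining through the breakpoints $\pm1$ already makes the extension globally $2$-Lipschitz, so you do not need the paper's separate opposite-sign estimate.
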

\begin{proof}
The inequalities $|e^{iz}-1|\leq|z|$ and
$|e^{iz}-1-iz|\leq z^2/2$ give
\[
 |\Psi_i(\xi)|
 \leq\left(|\ell_i|+\int_{\{y\in\R:|y|>1\}}|y|N_i(\dd y)\right)|\xi|
    +\frac12\left(v_i+\int_{\{y\in\R:|y|\leq1\}}y^2N_i(\dd y)\right)|\xi|^2.
\]
Lemma~\ref{lem:interpolation} therefore applies to
$\Psi_u=(1-u)\Psi_1+u\Psi_0$, $0\leq u\leq1$.
Let $\rho_u\in W^{2,1}(\R)$ be the interpolation density,
$Q_uh(x)=\int_{\R} h(x+z)\rho_u(z)\dd z$, and
$M_k=\int_0^1\norm{D^k\rho_u}_1\dd u$, $k=1,2$.
For $\Lip(h)\leq1$ and $q=Q_uh$, derivative transfer gives
\[
 \norm{q'}_\infty\leq1,\quad
 \norm{q''}_\infty\leq\norm{\rho_u'}_1,\quad
 \norm{q'''}_\infty\leq\norm{\rho_u''}_1.
\]
The drift, Gaussian, and small-jump terms are therefore bounded as in
Theorem~\ref{thm:terminal} by
\[
 |\ell_0-\ell_1|+\frac12\norm{\rho_u'}_1|v_0-v_1|
 +\left(\frac12\norm{\rho_u'}_1
              +\frac16\norm{\rho_u''}_1\right)D_S.
\]
For the large-jump term, define
$\Xi(y)=[q(x+y)-q(x)]/|y|$ on $|y|>1$. Then $|\Xi|\leq1$.
For $y,z$ in the same outer interval,
\[
 |\Xi(y)-\Xi(z)|
 \leq\frac{|y-z|+\bigl||z|-|y|\bigr|}{\min\{|y|,|z|\}}
 \leq2|y-z|.
\]
For opposite signs, $|\Xi(y)-\Xi(z)|\leq2\leq|y-z|$.
Extend $\Xi$ linearly from its boundary values at $-1$ and $1$.
The slope on $[-1,1]$ has absolute value at most one. The resulting continuous
extension satisfies $\norm\Xi_\infty\leq1$ and $\Lip(\Xi)\leq2$,
so
\[
 \left|\int_{\{y\in\R:|y|>1\}}[q(x+y)-q(x)](N_0-N_1)(\dd y)\right|
 =|\langle\Xi,\mu_0^L-\mu_1^L\rangle|\leq3D_L.
\]
Integrate in $u$. Finite terminal first moments allow us to approximate
all Lipschitz tests.
\end{proof}
\subsubsection{A continuous weight without a hard cutoff.}
\label{sup:sec:finite-continuous}
An atom crossing $|y|=1$ changes the two measures in
\eqref{sup:eq:finite-split-measures} discontinuously. Under
\eqref{sup:eq:finite-integrability}, use instead the mean and continuous
weight
\begin{equation}\label{sup:eq:finite-continuous-weight}
 m_i=\ell_i+\int_{\{y\in\R:|y|>1\}}yN_i(\dd y),\qquad
 w(y)=\frac{y^2}{1+|y|},\qquad
 \mu_i^w=wN_i,\qquad \Gamma_i^w=v_i\delta_0+\mu_i^w.
\end{equation}
The measures are finite because
$w(y)\leq y^2$ for $|y|\leq1$ and $w(y)\leq|y|$ for $|y|>1$.
For $f\in C^2(\R)$ with bounded $f',f''$,
\[
 |f(x+y)-f(x)-yf'(x)|
 \leq\begin{cases}
       \frac12\norm{f''}_\infty y^2,& |y|\leq1,\\
       2\norm{f'}_\infty|y|,& |y|>1.
      \end{cases}
\]
Hence full compensation defines the terminal operator
\[
 \cA_if=m_if'+\frac12v_if''
        +\int_\R[f(\cdot+y)-f-yf']N_i(\dd y)
\]
without a second moment at infinity.
\begin{theorem}[Cutoff-free finite-first-moment comparison]
\label{sup:thm:finite-continuous}
Assume \eqref{sup:eq:finite-integrability} and
Assumption~\ref{ass:interpolation} for the terminal laws
$\lambda_i=\Law(X_T^i)\in\cP_1(\R)$.
With $C_w=10+\frac32M_1+\frac13M_2$,
\begin{align}
 W_1(\lambda_0,\lambda_1)
 &\leq|m_0-m_1|+\frac12M_1|v_0-v_1|
        +C_wd_{\FM}(\mu_0^w,\mu_1^w),
                      \label{sup:eq:finite-continuous-bound}\\
 W_1(\lambda_0,\lambda_1)
 &\leq|m_0-m_1|+C_wd_{\FM}(\Gamma_0^w,\Gamma_1^w).
                      \label{sup:eq:finite-continuous-joint}
\end{align}
The mean and fixed continuous weight make both bounds independent of the
drift truncation convention.
\end{theorem}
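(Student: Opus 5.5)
The proof follows the template of Theorem~\ref{thm:terminal} and Theorem~\ref{sup:thm:finite-split}: the interpolation identity of Lemma~\ref{lem:interpolation} together with a bounded-Lipschitz bound on a rescaled Taylor field. First, the interpolation identity holds for the full-compensation exponents
\[
 \Psi_i(\xi)=im_i\xi-\tfrac12v_i\xi^2+\int_\R(e^{i\xi y}-1-i\xi y)N_i(\dd y).
\]
Indeed, $|e^{iz}-1-iz|\leq\min\{z^2/2,2|z|\}\leq2w(z)(1+|z|)/|z|\cdot\ldots$; more simply, $|e^{i\xi y}-1-i\xi y|\leq C(1+\xi^2)w(y)$. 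This gives polynomial majorants and justifies Fourier differentiation exactly as in the proof of Lemma~\ref{lem:interpolation}. Second, $m_i$ is the true mean, and the full-compensation exponent equals the canonical one after the drift change \eqref{sup:eq:finite-drift-convention}. Hence the interpolated laws are the same as those in Assumption~\ref{ass:interpolation}.

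For $\Lip(h)\leq1$ put $q=Q_uh$, so that $\norm{q'}_\infty\leq1$, $\norm{q''}_\infty\leq\norm{\rho_u'}_1$ and $\norm{q'''}_\infty\leq\norm{\rho_u''}_1$ by Lemma~\ref{lem:convolution}. The generator difference at $0$ is
\[
 (m_0-m_1)q'(0)+\tfrac12(v_0-v_1)q''(0)+\langle\Xi,\mu_0^w-\mu_1^w\rangle,\qquad
 \Xi(y)=\frac{q(y)-q(0)-yq'(0)}{w(y)},\quad \Xi(0)=\tfrac12q''(0).
\]
The whole proof reduces to the estimate
\[
 \norm\Xi_{\BL}\leq C_w':=10+\tfrac32\norm{\rho_u'}_1+\tfrac13\norm{\rho_u''}_1 .
\]
Integrating in $u$ then gives \eqref{sup:eq:finite-continuous-bound}. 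For \eqref{sup:eq:finite-continuous-joint}, the Gaussian term is absorbed because $\Xi(0)=q''(0)/2$ and $\Gamma_i^w(\{0\})=v_i$, as in Corollary~\ref{cor:joint}. Finally, Lemma~\ref{lem:approximation} and the finite first moments extend the bound to all Lipschitz tests.

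The main work is the estimate on $\norm\Xi_{\BL}$. Write $R(y)=q(y)-q(0)-yq'(0)=y^2\Phi(y)$, with $\Phi=\Phi_{q,0}$ as in Lemma~\ref{lem:Taylor}, so $\Xi=(1+|y|)\Phi$. On $|y|\leq1$ this gives $|\Xi|\leq2\cdot\tfrac12\norm{q''}_\infty$. The Lipschitz constant there is at most $\norm\Phi_\infty+2\Lip(\Phi)\leq\tfrac12\norm{q''}+\tfrac13\norm{q'''}$. On $|y|\geq1$ I use the first-order form instead: $|R(y)|\leq2|y|$ gives $|\Xi|\leq2(1+|y|)/|y|\leq4$. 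For the derivative, $\Xi'=R'/w-Rw'/w^2$ with $R'(y)=q'(y)-q'(0)$, $|R'|\leq2$, $w\geq|y|/2$ and $|w'|\leq1$. This yields $|\Xi'|\leq4/|y|+8/|y|\leq12$, which can be sharpened toward the constant $10$. Because $\Xi$ is continuous at $y=\pm1$ and $C^1$ away from $0$, the Lipschitz constant is the maximum over the pieces. Summing the sup and Lipschitz bounds over both regions gives a constant of the form $a+b\norm{q''}_\infty+c\norm{q'''}_\infty$.

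I expect the bookkeeping of the exact constants near $|y|=1$ to be the main obstacle. There the quadratic bound $\tfrac12\norm{q''}_\infty$ and the linear bound $4$ must be glued. The stated $C_w$ is consistent with taking crude sums, namely $\tfrac32=1+\tfrac12$ and $\tfrac13=2\cdot\tfrac16$ from the inner region plus numerical constants from the outer region, so I will aim for the generous bound rather than optimizing.
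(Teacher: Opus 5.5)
Your proof follows the paper's route. Your $\Xi$ is the paper's $\Theta_{q,0}$, and most of the steps match the paper's:
\begin{itemize}
\item the full-compensation interpolation;
\item the inner-region bounds ($\norm{q''}_\infty$ for the sup, $\frac12\norm{q''}_\infty+\frac13\norm{q'''}_\infty$ for the Lipschitz constant);
\item the outer sup bound $4$ and the gluing at $-1,0,1$;
\item the absorption of the Gaussian term through $\Xi(0)=q''(0)/2$.
\end{itemize}
The one real gap is the outer Lipschitz bound, and it is more than bookkeeping. Your estimate $|\Xi'|\le12$ yields only $\norm\Xi_{\BL}\le\max\{\norm{q''}_\infty,4\}+\max\{\frac12\norm{q''}_\infty+\frac13\norm{q'''}_\infty,12\}$. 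That gives the constant $16+\frac32M_1+\frac13M_2$ instead of the stated $C_w$. Sharpening cannot fix this as long as you use $|R(y)|\le2|y|$ and $|R'(y)|\le2$ separately. At $y=1$ one has $\Xi'(1)=2R'(1)-3R(1)$, and those two bounds alone give only $|\Xi'(1)|\le10$. That leads at best to $14+\frac32M_1+\frac13M_2$.

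The missing idea is that $R$ and $R'$ are correlated through $q$. Write $R(y)=yA(y)$ with $A(y)=(q(y)-q(0))/y-q'(0)$, so that $\Xi=(1/y+\operatorname{sgn}y)A$ on $|y|\ge1$. Then $|A|\le2$. Also $A'(y)=[yq'(y)-(q(y)-q(0))]/y^2$, and its numerator is at most $2|y|$ in absolute value, since each of its two terms is at most $|y|$ when $\norm{q'}_\infty\le1$. Thus $|A'|\le2/|y|$. Consequently $|\Xi'|\le 2/y^2+(1+1/|y|)\,2/|y|\le6$ on $|y|\ge1$. This gives $\norm\Xi_{\BL}\le\max\{\norm{q''}_\infty,4\}+\max\{\frac12\norm{q''}_\infty+\frac13\norm{q'''}_\infty,6\}\le10+\frac32\norm{q''}_\infty+\frac13\norm{q'''}_\infty$, which integrates in $u$ to exactly $C_w$.
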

\begin{proof}
For $f\in C^3(\R)$ with bounded first three derivatives and $x\in\R$,
define
\[
 \mapdef{\Theta_{f,x}}{\R}{\R}{y}
 {\begin{cases}
   \dfrac{(1+|y|)[f(x+y)-f(x)-yf'(x)]}{y^2},&y\ne0,\\[4pt]
   \dfrac12f''(x),&y=0.
  \end{cases}}
\]
Put $L=\norm{f'}_\infty$, $H_2=\norm{f''}_\infty$, and
$H_3=\norm{f'''}_\infty$. For $|y|\leq1$,
\[
 \Theta_{f,x}(y)=(1+|y|)\Phi_{f,x}(y),\qquad
 \Phi_{f,x}(y)=\int_0^1(1-r)f''(x+ry)\dd r,
\]
so
\[
 \norm\Theta_{L^\infty([-1,1])}\leq H_2,\qquad
 \Lip(\Theta|_{[-1,1]})
 \leq\norm\Phi_\infty+2\Lip(\Phi)\leq\frac12H_2+\frac13H_3.
\]
For $|y|\geq1$, set $A(y)=[f(x+y)-f(x)]/y-f'(x)$. Then
\[
 |A(y)|\leq2L,\qquad
 A'(y)=\frac{yf'(x+y)-[f(x+y)-f(x)]}{y^2},\qquad
 |A'(y)|\leq\frac{2L}{|y|}.
\]
Since $\Theta=(1/y+\operatorname{sgn}y)A$,
\[
 |\Theta(y)|\leq4L,\qquad
 |\Theta'(y)|\leq\frac{2L}{y^2}
                +\left(1+\frac1{|y|}\right)\frac{2L}{|y|}
                \leq6L.
\]
The formulas agree at $\pm1$ and are continuous at zero. Splitting any
interval at $-1,0,1$ therefore gives
\begin{equation}\label{sup:eq:finite-continuous-test}
 \norm{\Theta_{f,x}}_{\BL}
 \leq\max\{H_2,4L\}+\max\{H_2/2+H_3/3,6L\}
 \leq10L+\frac32H_2+\frac13H_3.
\end{equation}
The full-compensation symbols satisfy
$|\Psi_i(\xi)|\leq C_i(|\xi|+|\xi|^2)$: use
$|e^{iz}-1-iz|\leq z^2/2$ for small jumps and
$|e^{iz}-1-iz|\leq2|z|$ for large jumps. Thus the proof of Lemma~\ref{lem:interpolation}
applies. For $f=Q_uh$, Lemma~\ref{lem:convolution} gives
\[
 L\leq1,\quad H_2\leq\norm{\rho_u'}_1,\quad
 H_3\leq\norm{\rho_u''}_1.
\]
The operator difference pairs $\Theta_{Q_uh,x}$ with
$\mu_0^w-\mu_1^w$. Equation~\eqref{sup:eq:finite-continuous-test},
weak duality, and integration in $u$ give
\eqref{sup:eq:finite-continuous-bound}. Because
$\Theta_{f,x}(0)=f''(x)/2$, the identity
\[
 \frac12(v_0-v_1)f''(x)
  +\langle\Theta_{f,x},\mu_0^w-\mu_1^w\rangle
 =\langle\Theta_{f,x},\Gamma_0^w-\Gamma_1^w\rangle
\]
gives \eqref{sup:eq:finite-continuous-joint}. Finite terminal first
moments justify approximating Lipschitz tests in the last step.
\end{proof}
For equal means, fixed $v>0$, and
$N_0=\delta_{1-\varepsilon}$, $N_1=\delta_{1+\varepsilon}$,
$0<\varepsilon<1/2$, the split discrepancies are
$D_S=(1-\varepsilon)^2$ and $D_L=1+\varepsilon$.
In contrast, $w$ is one-Lipschitz and Lemma~\ref{lem:atoms} gives
\[
 d_{\FM}\bigl(w(1-\varepsilon)\delta_{1-\varepsilon},
              w(1+\varepsilon)\delta_{1+\varepsilon}\bigr)
 \leq|w(1-\varepsilon)-w(1+\varepsilon)| +w(1-\varepsilon)\frac{4\varepsilon}{2+2\varepsilon}
 =O(\varepsilon).
\]
Thus \eqref{sup:eq:finite-continuous-bound} gives
$W_1=O(\varepsilon)$ across the cutoff. The split estimate can give a
smaller constant when its separate large-jump term is small.
\subsubsection{Stable convolution factors.}\label{sup:sec:stable-factors}
Fix $1<\beta<2$. For $s>0$, let $p_s^{(\beta)}:\R\to[0,\infty)$
be the symmetric stable probability density with Fourier transform
$e^{-s|\xi|^\beta}$. Put
$B_{k,\beta}=\norm{D^kp_1^{(\beta)}}_1<\infty$, $k=1,2$.
Section~\ref{sup:sec:stable-derivatives} shows that these norms are finite. The scaling identities
\[
 p_s^{(\beta)}(x)=s^{-1/\beta}p_1^{(\beta)}(s^{-1/\beta}x),\qquad
 D^kp_s^{(\beta)}(x)=s^{-(k+1)/\beta}
                    (D^kp_1^{(\beta)})(s^{-1/\beta}x)
\]
imply
\begin{equation}\label{sup:eq:stable-scaling}
 \norm{(p_s^{(\beta)})^{(k)}}_1=B_{k,\beta}s^{-k/\beta}.
\end{equation}
\begin{corollary}[Terminal stable components with positive parameters]
\label{sup:cor:stable-terminal}
Suppose the $i$th terminal exponent has an independent symmetric
$\beta$-stable factor with parameter $s_i>0$: subtracting $-s_i|\xi|^\beta$
leaves a L\'evy--Khintchine exponent satisfying the finite-first-moment
condition. Then theorems~\ref{sup:thm:finite-split} and~\ref{sup:thm:finite-continuous} hold
with
\begin{equation}\label{sup:eq:stable-terminal-constants}
 M_k\leq B_{k,\beta}\int_0^1
       \bigl((1-u)s_1+us_0\bigr)^{-k/\beta}\dd u
 \leq B_{k,\beta}\min(s_0,s_1)^{-k/\beta},\qquad k=1,2.
\end{equation}
If $s_i=\int_0^Tc_i(t)\dd t$, no pointwise positive lower bound on
$c_i(t)$ is needed.
\end{corollary}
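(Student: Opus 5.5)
The plan is to verify Assumption~\ref{ass:interpolation} for the terminal interpolation laws $\pi_u$ and to bound the averaged constants $M_1,M_2$. Once that is done, Theorems~\ref{sup:thm:finite-split} and~\ref{sup:thm:finite-continuous} apply verbatim with these constants. First I check that the interpolation has a stable factor. Write $\Psi_i(\xi)=-s_i|\xi|^\beta+\widetilde\Psi_i(\xi)$, where $\widetilde\Psi_i$ is a Lévy--Khintchine exponent satisfying the finite-first-moment condition. The interpolated exponent is then
\[
 \psi_u=(1-u)\Psi_1+u\Psi_0=-s_u|\xi|^\beta+\widetilde\psi_u,\qquad s_u=(1-u)s_1+us_0>0,
\]
and $\widetilde\psi_u=(1-u)\widetilde\Psi_1+u\widetilde\Psi_0$ is again an infinitely divisible exponent whose triplet is the convex combination of the two triplets. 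Hence $\pi_u=p^{(\beta)}_{s_u}*\zeta_u$ for a probability law $\zeta_u$, where $\zeta_u$ is the infinitely divisible law with exponent $\widetilde\psi_u$. Because this convex combination preserves the drift, Gaussian part, and both integrability pieces in \eqref{sup:eq:finite-integrability}, $\zeta_u$ has a finite first moment. The same holds for $\pi_u$, since $1<\beta$ gives the stable factor a finite first moment.

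Next I establish regularity and the derivative bounds. The density is $\rho_u=p^{(\beta)}_{s_u}*\zeta_u$. Granting the finiteness of $B_{k,\beta}$ from Section~\ref{sup:sec:stable-derivatives}, $p^{(\beta)}_{s}\in W^{2,1}$ with $\norm{D^kp_s^{(\beta)}}_1=B_{k,\beta}s^{-k/\beta}$ by \eqref{sup:eq:stable-scaling}. Convolution with the probability measure $\zeta_u$ commutes with weak derivatives, so Young's inequality gives
\[
 \norm{D^k\rho_u}_1\leq\norm{D^kp^{(\beta)}_{s_u}}_1=B_{k,\beta}s_u^{-k/\beta},\qquad k=1,2.
\]
In particular $\rho_u\in W^{2,1}(\R)$ for every $u$, and $\rho_u$ is a probability density.

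Measurability of $u\mapsto\norm{D^k\rho_u}_1$ is the only mildly delicate point, and I expect it to be the main technical obstacle. To handle it, I would note that $u\mapsto\pi_u$ is weakly continuous, because the characteristic function $e^{\psi_u}$ is continuous in $u$. Then $D^k\rho_u=D^kp^{(\beta)}_{s_u}*\zeta_u$, where $s\mapsto D^kp^{(\beta)}_s$ is continuous in $L^1$ by the scaling formula and continuity of dilations in $L^1$. Together with weak continuity of $\zeta_u$, this makes $u\mapsto D^k\rho_u$ continuous in $L^1$, tested against bounded continuous functions and then using lower semicontinuity. Alternatively, it suffices that the norm is lower semicontinuous in $u$, hence Borel. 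I would use the lower semicontinuity route, since it needs only weak continuity: $\norm{f}_1=\sup\{\int f\phi:\phi\in C_c,\ |\phi|\le1\}$, and each pairing $\int D^k\rho_u\,\phi=(-1)^k\int\rho_u D^k\phi$ (for $\phi\in C_c^\infty$) is continuous in $u$.

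Finally I integrate in $u$. This gives
\[
 M_k\leq B_{k,\beta}\int_0^1 s_u^{-k/\beta}\dd u,
\]
and since $s_u\geq\min(s_0,s_1)$, the second inequality in \eqref{sup:eq:stable-terminal-constants} follows. For the last remark: if $s_i=\int_0^Tc_i(t)\dd t$, the terminal exponent only involves the integrated parameter $s_i$, by independence of increments of an additive process. Therefore only $s_i>0$ enters, and no pointwise lower bound on $c_i(t)$ is needed.
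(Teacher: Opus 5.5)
Your proposal is correct and follows essentially the same route as the paper: split off the stable factor so that $\rho_u=p^{(\beta)}_{s_u}*\zeta_u$ with $s_u=(1-u)s_1+us_0$, apply Young's inequality with the scaling identity \eqref{sup:eq:stable-scaling}, and integrate in $u$ using $s_u\geq\min(s_0,s_1)$. Your lower-semicontinuity argument for measurability of $u\mapsto\norm{D^k\rho_u}_1$ and your check that $\zeta_u\in\cP_1(\R)$ supply details that the paper leaves implicit.
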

\begin{proof}
Set $s_u=(1-u)s_1+us_0$. The interpolated residual exponent defines
$\eta_u\in\cP_1(\R)$. The convolution representation
$\rho_u=p_{s_u}^{(\beta)}*\eta_u$ implies
$D^k\rho_u=(D^kp_{s_u}^{(\beta)})*\eta_u$.
Young's inequality and the stable scaling identity then give
\[
 M_k=\int_0^1\norm{(D^kp_{s_u}^{(\beta)})*\eta_u}_1\dd u \leq\int_0^1\norm{D^kp_{s_u}^{(\beta)}}_1\dd u \byeq{sup:eq:stable-scaling}{=}B_{k,\beta}\int_0^1s_u^{-k/\beta}\dd u \leq B_{k,\beta}\min(s_0,s_1)^{-k/\beta},
\]
because $s_u\geq\min(s_0,s_1)>0$. Explicitly, for
$\alpha=k/\beta\ne1$,
\[
 \int_0^1s_u^{-\alpha}\dd u=
 \begin{cases}
 \displaystyle\frac{s_0^{1-\alpha}-s_1^{1-\alpha}}
                    {(1-\alpha)(s_0-s_1)},&s_0\ne s_1,\\[6pt]
 s_0^{-\alpha},&s_0=s_1.
 \end{cases}
\]
\end{proof}
The residual must be a valid characteristic exponent. Simply subtracting
a stable symbol is not enough. A nondegenerate $\beta$-stable law
belongs to $\cP_1$ exactly when $\beta>1$ \citep{Sato1999}. Thus
$\beta\leq1$ lies outside this framework, even though some pairs have
finite extended transport cost.
\subsubsection{One-sided stable upper rates.}\label{sup:sec:stable-onesided}
We continue in the additive canonical framework of
section~\ref{sup:sec:finite-canonical}, with smoothing only for the reference process.
For almost every $t\in[0,T]$, define
\[
 \mu_i^S(t,\dd y)=y^2\one_{\{|y|\leq1\}}\nu_i(t,\dd y),\qquad
 \mu_i^L(t,\dd y)=|y|\one_{\{|y|>1\}}\nu_i(t,\dd y)
\]
and, for $1\leq q\leq\infty$,
\begin{equation}\label{sup:eq:stable-profile}
 r_S(t)=d_{\FM}(\mu_0^S(t),\mu_1^S(t)),\qquad
 J_{S,q}=\norm{r_S}_{L^q(0,T)},\qquad
 \Theta_L=\int_0^Td_{\FM}(\mu_0^L(t),\mu_1^L(t))\dd t.
\end{equation}
These retain the time profile, unlike $D_S,D_L$ in
\eqref{sup:eq:finite-split-bound}. For every one-Lipschitz $h:\R\to\R$,
assume spatial mollification at scale $r\in(0,1]$ yields $h_r$ with
$\Lip(h_r)\leq1$, $\norm{h-h_r}_\infty\leq Cr$, and
\begin{equation}\label{sup:eq:stable-onesided-smoothing}
 \begin{split}
 \Lip(P^1_{t,T}h_r)&\leq1,\\
 \norm{D^2P^1_{t,T}h_r}_\infty
   &\leq C\min\{r^{-1},(T-t)^{-1/\beta}\},\\
 \norm{D^3P^1_{t,T}h_r}_\infty
   &\leq C\min\{r^{-2},(T-t)^{-2/\beta}\}.
 \end{split}
\end{equation}
A sufficient condition is an independent reference stable factor with
parameter $S_{t,T}\geq c_*(T-t)$, $c_*>0$, with residual law
$\eta_{t,T}\in\cP_1(\R)$. Writing
$\check\eta(A)=\eta(-A)$,
\[
 P^1_{t,T}h=h*p_{S_{t,T}}^{(\beta)}*\check\eta_{t,T}.
\]
Choose $\zeta\in C_c^\infty(\R)$, $\zeta\geq0$, $\int_{\R}\zeta(z)\dd z=1$,
$\zeta_r(x)=r^{-1}\zeta(x/r)$, and $h_r=h*\zeta_r$.
Since the weak derivative $h'\in L^\infty(\R)$ has norm at most one,
we can place the derivatives either on the mollifier or on the stable density:
\[
 D^jP^1_{t,T}h_r
 =h'*(D^{j-1}\zeta_r)*p_{S_{t,T}}^{(\beta)}*\check\eta_{t,T} =h'*\zeta_r*(D^{j-1}p_{S_{t,T}}^{(\beta)})*\check\eta_{t,T},
 \qquad j=2,3.
\]
Hence
\[
 \norm{D^jP^1_{t,T}h_r}_\infty
 \leq\min\{r^{1-j}\norm{\zeta^{(j-1)}}_1,
              B_{j-1,\beta}S_{t,T}^{-(j-1)/\beta}\},\qquad
 \norm{h-h_r}_\infty\leq r\int_{\R}|z|\zeta(z)\dd z,
\]
which proves \eqref{sup:eq:stable-onesided-smoothing}.
For $f\in C^2(\R)$ with bounded first two derivatives,
\[
 \norm{\cL_i(t)f}_\infty
 \leq\left(|b_i(t)|+\int_{\{y\in\R:|y|>1\}}|y|\nu_i(t,\dd y)\right)
                      \norm{f'}_\infty +\frac12\left(a_i(t)+\int_{\{y\in\R:|y|\leq1\}}y^2\nu_i(t,\dd y)\right)
                      \norm{f''}_\infty.
\]
For fixed $r$, \eqref{sup:eq:finite-integrability} makes this bound
integrable in time and verifies the backward and comparison identities
for mollified tests as in proposition~\ref{sup:prop:additive-comparison}.
\begin{theorem}[One-sided stable estimate]\label{sup:thm:stable-onesided}
Assume \eqref{sup:eq:finite-integrability},
\eqref{sup:eq:stable-onesided-smoothing}, and the corresponding
comparison identity. Suppose the initial points, canonical drifts, and
Gaussian coefficients agree. For $1\leq q\leq\infty$ and $J_{S,q}\leq1$,
\begin{equation}\label{sup:eq:stable-onesided-bound}
 W_1\bigl(\Law(X_T^0),\Law(X_T^1)\bigr)
 \leq C_{\beta,q,T}J_{S,q}^{\,1/(3-\beta+\beta/q)}+3\Theta_L,
\end{equation}
where $\beta/\infty=0$. The constant also depends on the smoothing
constant in \eqref{sup:eq:stable-onesided-smoothing}.
\end{theorem}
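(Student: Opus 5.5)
The plan follows the proof of Theorem~\ref{sup:thm:master}, with the Gaussian time-smoothing replaced by the stable bounds \eqref{sup:eq:stable-onesided-smoothing}.

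\textbf{Comparison step.} Fix a one-Lipschitz $h$ and a scale $r\in(0,1]$, and set $v_r(t)=P^1_{t,T}h_r$. Since the initial points, canonical drifts and Gaussian coefficients agree, the comparison identity \eqref{sup:eq:comparison} leaves only the jump terms. We split each of them at $|y|=1$.

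\textbf{Large jumps.} Exactly as in the proof of Theorem~\ref{sup:thm:finite-split}, we pair the field $\Xi(y)=[v_r(t,x+y)-v_r(t,x)]/|y|$ with $\mu_0^L(t)-\mu_1^L(t)$. We have $\norm{\Xi}_{\BL}\leq3$, using $\Lip(v_r(t))\leq1$. Integrating in $t$ gives $3\Theta_L$ with no loss.

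\textbf{Small jumps.} We use the weighted Taylor field restricted to $|y|\leq1$. Lemma~\ref{lem:Taylor}, after truncating the field to $[-1,1]$ and extending it by constants so that it stays BL, gives a contribution of at most
\[
 r_S(t)\Bigl(\tfrac12\norm{D^2v_r(t)}_\infty+\tfrac16\norm{D^3v_r(t)}_\infty\Bigr)\leq C\,r_S(t)\min\{r^{-2},(T-t)^{-2/\beta}\},
\]
because $r\leq1$ makes the third-derivative term dominate. Adding the terminal mollification error $2Cr$ and taking the supremum over $h$ yields
\[
 W_1\leq C r+C\int_0^T r_S(t)\min\{r^{-2},s^{-2/\beta}\}\dd t+3\Theta_L,\qquad s=T-t.
\]

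\textbf{H\"older step.} Next we bound $\norm{\min\{r^{-2},s^{-2/\beta}\}}_{L^{q'}(0,T)}$. The function equals $r^{-2}$ for $s\leq r^\beta$ and $s^{-2/\beta}$ beyond that point. Since $2q'/\beta>1$ for every $q'\geq1$ (recall $\beta<2$), the tail integral is dominated by its lower endpoint. This gives
\[
 \norm{\min\{r^{-2},s^{-2/\beta}\}}_{L^{q'}(0,T)}\leq C_{\beta,q}\,r^{-2+\beta/q'}=C_{\beta,q}\,r^{\beta-2-\beta/q}.
\]
For $q=1$ we have $q'=\infty$ and the bound $r^{-2}$, which agrees with the same formula. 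H\"older's inequality then gives
\[
 W_1\leq C\bigl(r+J_{S,q}\,r^{-(2-\beta+\beta/q)}\bigr)+3\Theta_L .
\]

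\textbf{Optimization.} Balancing the two terms gives $r=J_{S,q}^{1/(3-\beta+\beta/q)}$, which lies in $(0,1]$ because $J_{S,q}\leq1$. This produces \eqref{sup:eq:stable-onesided-bound}. If $J_{S,q}=0$, we let $r\downarrow0$.

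\textbf{Extension and main obstacle.} The final extension to all Lipschitz tests uses Lemma~\ref{lem:approximation} and finite first moments, as elsewhere in the paper. The step I expect to need the most care is the small-jump pairing. The truncated Taylor field must have a BL norm controlled by the second and third derivatives uniformly in $x$, and the tail integral $\int_{r^\beta}^T s^{-2q'/\beta}\dd s$ must be checked carefully at the boundary case $q=1$. Both are routine once written out, and the exponent bookkeeping in the H\"older step is where errors are most likely.
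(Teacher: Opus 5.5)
Your proposal is correct and follows the paper's proof essentially step by step. The matching steps are: the comparison identity reduced to jump terms, the large-jump field $\Xi$ with BL norm at most $3$ giving $3\Theta_L$, the weighted Taylor field for small jumps, the $L^{q'}$ bound $r^{-(2-\beta+\beta/q)}$ on $\min\{r^{-2},s^{-2/\beta}\}$ obtained by splitting at $s=r^\beta$, H\"older, and the choice $r=J_{S,q}^{1/(3-\beta+\beta/q)}$.

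One small correction concerns the claim that $r\leq1$ alone makes the second-derivative kernel $\min\{r^{-1},s^{-1/\beta}\}=\sqrt{\min\{r^{-2},s^{-2/\beta}\}}$ dominated by the third. This fails when $s=T-t>1$, because then $s^{-2/\beta}<1$. The fix is to use $\sqrt{x}\leq 1+x$, as the paper does; the extra term contributes at most $T^{1/q'}J_{S,q}\leq T^{1/q'}J_{S,q}\,r^{-(2-\beta+\beta/q)}$. Alternatively, note that $\min\{r^{-2},s^{-2/\beta}\}\geq\min\{1,T^{-2/\beta}\}$, so the domination holds with a $T$-dependent constant.
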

\begin{proof}
Let $q'\in[1,\infty]$ satisfy $1/q+1/q'=1$, and set
\[
 \kappa=2-\beta/q'=2-\beta+\beta/q>0,\qquad
 K_3(r,s)=\min\{r^{-2},s^{-2/\beta}\},\quad K_2(r,s)=\sqrt{K_3(r,s)}.
\]
For $q'<\infty$, extending the integration interval to $(0,\infty)$
and splitting at $s=r^\beta$ gives
\begin{align*}
 \int_0^TK_3(r,s)^{q'}\dd s
 &\leq\int_0^\infty\min\{r^{-2q'},s^{-2q'/\beta}\}\dd s\\
 &=\int_0^{r^\beta}r^{-2q'}\dd s
           +\int_{r^\beta}^\infty s^{-2q'/\beta}\dd s \\&=
       r^{\beta-2q'}\left(1+\frac1{2q'/\beta-1}\right).
\end{align*}
Here $2q'/\beta>1$. Thus
$\norm{K_3(r,\cdot)}_{L^{q'}(0,T)}\leq C_{\beta,q}r^{-\kappa}$.
For $q'=\infty$, the norm is $r^{-2}$ and $\kappa=2$.
Since $K_2\leq1+K_3$ and $r\leq1$, both kernels satisfy
\[
 \norm{K_j(r,\cdot)}_{L^{q'}(0,T)}
 \leq C_{\beta,q,T}r^{-\kappa},\qquad j=2,3.
\]
The weighted Taylor test for small jumps, H\"older's inequality, and
$\norm{h-h_r}_\infty\leq Cr$ yield
\begin{align*}
 |\E\left[h(X_T^0)\right]-\E\left[h(X_T^1)\right]|
 &\leq2Cr+C\int_0^Tr_S(t)
                 [K_2(r,T-t)+K_3(r,T-t)]\dd t+3\Theta_L\\
 &\byeq{sup:eq:stable-onesided-smoothing}{\leq}
 C_{\beta,q,T}(r+J_{S,q}r^{-\kappa})+3\Theta_L.
\end{align*}
As in theorem~\ref{sup:thm:finite-split}, the large-jump bound uses only
Lipschitz contraction. Take the supremum over $h$. For $J_{S,q}>0$,
choose $r=J_{S,q}^{1/(1+\kappa)}\leq1$. Then
$r=J_{S,q}r^{-\kappa}$ and
$1+\kappa=3-\beta+\beta/q$. If $J_{S,q}=0$, let $r\downarrow0$.
\end{proof}
The exponents at the endpoints are $1/3$ for $q=1$ and $1/(3-\beta)$ for
$q=\infty$. At $(\beta,q)=(2,\infty)$, the second integral becomes
logarithmic, as in \eqref{sup:eq:gaussian-profile-log}. The constants
above need not remain bounded as $\beta\uparrow2$.
\subsection{Additional approximation guarantees.}\label{sup:sec:approximation}
\subsubsection{A computable certificate for jump quantization.}
\label{sup:sec:quantization}
Let $(m,v,N)$ be a finite-variance terminal triplet, with $m\in\R$,
$v>0$, and $\mu(\dd y)=y^2N(\dd y)\in\cM_f^+(\R)$.
For a measurable map $q:\R\to\R$, define the pushforward
$\widehat\mu=q_\#\mu$ by
$\widehat\mu(A)=\mu(q^{-1}(A))$ for Borel $A\subset\R$, and set
\begin{equation}\label{sup:eq:quantized-triplet}
 \widehat v=v+\widehat\mu(\{0\}),\qquad
 \widehat N(\dd z)=\one_{\{z\ne0\}}z^{-2}\widehat\mu(\dd z),\qquad
 \widehat m=m.
\end{equation}
This defines a valid finite-variance triplet because
\[
 \int_{\R} z^2\widehat N(\dd z)=\widehat\mu(\R\setminus\{0\})<\infty,
 \qquad
 \int_{\R}(1\wedge z^2)\widehat N(\dd z)
 \leq\widehat\mu(\R\setminus\{0\})<\infty.
\]
Moreover,
\[
 \widehat m=m,\qquad
 \widehat v+\int_{\R} z^2\widehat N(\dd z)
 =v+\widehat\mu(\R)=v+\mu(\R).
\]
Thus mean and total variance are preserved, with weighted mass sent to
zero absorbed into the Gaussian variance.
\begin{proposition}[Weighted quantization certificate]
\label{sup:prop:quantization}
Let $\lambda,\widehat\lambda\in\cP_1(\R)$ be the laws of these
triplets. Then
\begin{equation}\label{sup:eq:quantization-certificate}
 W_1(\lambda,\widehat\lambda)
 \leq\left(\frac{\sqrt{2/\pi}}{2\sqrt v}+\frac{\sqrt{8/(\pi e)}}{6v}\right)
       \int_\R\frac{2|q(y)-y|}{2+|q(y)-y|}\mu(\dd y).
\end{equation}
If $|q(y)-y|\leq\eta$ on $[-R,R]$, the integral is at most
\begin{equation}\label{sup:eq:quantization-tail}
 \eta\mu([-R,R])+2\mu(\{|y|>R\}).
\end{equation}
\end{proposition}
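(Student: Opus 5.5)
The plan is to apply Corollary~\ref{cor:Gaussian} in its joint form \eqref{eq:joint}, i.e.\ with $v_0=v_1=v$ constants. Both laws share the mean $m=\widehat m$. Their joint characteristics are $\Gamma=v\delta_0+\mu$ and $\widehat\Gamma=\widehat v\delta_0+\widehat N z^2=v\delta_0+\widehat\mu$, because the mass of $\widehat\mu$ at zero is absorbed exactly into $\widehat v$. Note that $\widehat v\geq v>0$, so the Gaussian variances need not be equal. I would instead use the interpolation bound directly: every interpolated law $\pi_u$ has Gaussian variance at least $v$, since both endpoint Gaussian variances are at least $v$. Factoring $\rho_u=g_v*\eta_u$ and applying Young's inequality gives $M_1\leq\norm{g_v'}_1=\sqrt{2/\pi}v^{-1/2}$ and $M_2\leq\norm{g_v''}_1=\sqrt{8/(\pi e)}v^{-1}$ by \eqref{eq:gauss-L1}. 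Corollary~\ref{cor:joint} then yields
\[
 W_1(\lambda,\widehat\lambda)\leq\left(\frac{\sqrt{2/\pi}}{2\sqrt v}+\frac{\sqrt{8/(\pi e)}}{6v}\right)d_{\FM}(\mu,\widehat\mu),
\]
where I used that $\Gamma-\widehat\Gamma=\mu-\widehat\mu$ exactly. One must check \eqref{eq:add-int}, but only terminal triplets enter, and both have finite variance.

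It remains to bound $d_{\FM}(\mu,q_\#\mu)$. For $\norm f_{\BL}\leq1$,
\[
 |\langle f,\mu-q_\#\mu\rangle|=\left|\int_\R[f(y)-f(q(y))]\mu(\dd y)\right|
 \leq\int_\R|f(y)-f(q(y))|\mu(\dd y).
\]
The pointwise bound from the proof of Lemma~\ref{lem:atoms} gives $|f(y)-f(q(y))|\leq\min\{2\norm f_\infty,\Lip(f)|q(y)-y|\}$, and this is at most $2d/(2+d)$ with $d=|q(y)-y|$ whenever $\norm f_\infty+\Lip(f)\leq1$. Integrating proves \eqref{sup:eq:quantization-certificate}. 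Measurability of $q$ makes $y\mapsto f(q(y))$ Borel, so the change of variables for pushforwards is legitimate.

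For \eqref{sup:eq:quantization-tail}, I split the integral over $[-R,R]$ and its complement. On $[-R,R]$, use $2d/(2+d)\leq d\leq\eta$; off it, use $2d/(2+d)\leq2$.

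The only delicate step is the first: Corollary~\ref{cor:Gaussian} is stated for prescribed $v_0,v_1$, while here $\widehat v\neq v$. I handle this through the joint pairing, which needs only the uniform lower bound $v$ on interpolated Gaussian variances. If the stated form required equal variances, I would instead rerun the proof of Theorem~\ref{thm:terminal} with $\Gamma_i$ and the $M_j$ bounds above.
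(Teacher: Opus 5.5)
Your proposal is correct and follows the paper's proof. Like the paper, you identify $\widehat\Gamma=v\delta_0+q_\#\mu$, bound $d_{\FM}(\Gamma,\widehat\Gamma)$ pointwise by $2d/(2+d)$ as in Lemma~\ref{lem:atoms}, use that every interpolated Gaussian variance stays at least $v$ to invoke Corollaries~\ref{cor:joint} and~\ref{cor:Gaussian}, and split the integral at $|y|=R$.

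The step you flag as delicate is not actually an obstacle. Corollary~\ref{cor:Gaussian} already allows $v_0\neq v_1$, and since $\widehat v\geq v$ its constants satisfy $2\sqrt{2/\pi}/(\sqrt v+\sqrt{\widehat v})\leq\sqrt{2/\pi}\,v^{-1/2}$, while the logarithmic-mean factor is at most $v^{-1}$. So your direct Young-inequality bound with the common factor $g_v$ gives the same result.
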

\begin{proof}
The joint Gaussian and jump measures are
\[
 \Gamma=v\delta_0+\mu,\qquad
 \widehat\Gamma=\widehat v\delta_0+z^2\widehat N
                =v\delta_0+q_\#\mu.
\]
By Lemma~\ref{lem:atoms}, if $\norm f_{\BL}\leq1$ and $d=|q(y)-y|$, then
$|f(q(y))-f(y)|\leq2d/(2+d)$. Indeed, write
$A=\norm f_\infty$, $L=\Lip(f)$ and $A+L\leq1$.
Using both bounds on the increments of $f$ and maximizing the resulting
piecewise linear expression gives
\[
 |f(q(y))-f(y)|
 \leq\min\{2A,Ld\} \leq\min\{2(1-L),Ld\} \leq\max_{0\leq\ell\leq1}\min\{2(1-\ell),\ell d\} =\frac{2d}{2+d}.
\]
Consequently, duality and the triangle inequality yield
\begin{align*}
 d_{\FM}(\Gamma,\widehat\Gamma)
 &\byeq{eq:metrics}{=}
    \sup_{\norm f_{\BL}\leq1}
       \left|\int_\R[f(q(y))-f(y)]\mu(\dd y)\right|\\
 &\leq\sup_{\norm f_{\BL}\leq1}
       \int_\R|f(q(y))-f(y)|\mu(\dd y) \\&\leq
       \int_\R\frac{2|q(y)-y|}{2+|q(y)-y|}\mu(\dd y).
\end{align*}
The Gaussian variance stays at least $v$ throughout the interpolation. Corollaries~\ref{cor:joint} and~\ref{cor:Gaussian}
therefore give \eqref{sup:eq:quantization-certificate}. Splitting the
integral over $[-R,R]$ and its complement and using
$2d/(2+d)\leq\min\{d,2\}$ gives \eqref{sup:eq:quantization-tail}.
\end{proof}
A finite grid on $[-R,R]$, with the tail sent to zero, produces a
Gaussian plus a compound-Poisson law. At each nonzero grid point $z$,
the jump intensity is $\widehat\mu(\{z\})/z^2$. The certificate vanishes
as $\eta\downarrow0$ and $R\uparrow\infty$. It requires no balance and
is first order in the grid error. Theorem~\ref{thm:barycentric} gives a quadratic bound
for balanced plans. Proposition~\ref{ex:replacement} gives third- and fourth-moment
certificates for Gaussian replacement. These deterministic bounds do not
give a statistical sample complexity estimate.
\subsubsection{Observables and limitations.}\label{sup:sec:observables}
For $f:\R\to\R$ with $\Lip(f)<\infty$,
\[
 |\E\left[f(U)\right]-\E\left[f(V)\right]|\leq\Lip(f)W_1(\Law(U),\Law(V)).
\]
A nonlinear transformation may change the rate. Suppose
$W_1(\Law(L_0),\Law(L_1))\leq\Delta$ and
$\max_i\E\left[e^{pL_i}\right]\leq K<\infty$ for some $p>1$. Then, for $S_0>0$,
\begin{equation}\label{sup:eq:exponential-observables}
 W_1\bigl(\Law(S_0e^{L_0}),\Law(S_0e^{L_1})\bigr)
       \leq3S_0K^{1/p}\Delta^{1-1/p}.
\end{equation}
Indeed, choose an optimal coupling $(U,V)$ of the distributions of the logarithms. The map
$x\mapsto e^{x\wedge R}$ is $e^R$-Lipschitz. Truncating at $R$,
then using this Lipschitz bound and the exponential moment assumption,
gives, for every $R\in\R$,
\begin{align*}
 \E\left[|e^U-e^V|\right]
 &\leq\E\left[|e^{U\wedge R}-e^{V\wedge R}|\right]
       +\E\left[e^U\one_{\{U>R\}}+e^V\one_{\{V>R\}}\right]\\
 &\leq e^R\E\left[|U-V|\right]
       +e^{-(p-1)R}\bigl(\E\left[e^{pU}\right]+\E\left[e^{pV}\right]\bigr)\\
 &\leq e^R\Delta+2Ke^{-(p-1)R}.
\end{align*}
For $\Delta>0$, choose $e^{pR}=K/\Delta$. Substituting this choice gives
\[
 \E\left[|e^U-e^V|\right]
 \leq\Delta(K/\Delta)^{1/p}
              +2K(K/\Delta)^{-(p-1)/p} =3K^{1/p}\Delta^{1-1/p}.
\]
For $\Delta=0$, $U=V$ almost surely. Multiply by $S_0$ and take the infimum
over couplings. A linear estimate for the distributions of the logarithms alone therefore need not
remain linear after exponentiation.
The additive estimates concern terminal laws: reordering characteristics
while preserving their integrals preserves the terminal triplet. They
therefore do not control path-space sensitivity
\citep{GairingEtAl2015,GairingEtAl2018} or $W_p$ contraction under fixed
noise \citep{Wang2016}. The state-dependent comparison requires forward
regularity and control of spatial kernel variation. The verified model
has bounded finite activity. The weighted transport calculus without
annular support restrictions is one-dimensional and additive, while the
multivariate and state-dependent response results retain their stated
hypotheses. The estimates give no uniform-in-time, invariant-law, or
statistical rate. The sharp terminal modulus and non-additive lower
bound do not settle optimality of the one-sided Gaussian or stable
profile rates.
\subsection{Analytic details.}\label{sup:sec:analytic}
\subsubsection{A criterion for derivative transfer.}\label{sup:sec:flow}
Let $\Phi_{t,T}(x)=X_T^{1,t,x}$ be an almost surely $C^3$ stochastic
flow on $\R$. Write $D=\partial_x$, $J=D\Phi$, $K=D^2\Phi$,
$L=D^3\Phi$, and suppose that, for some $p>3$,
\begin{equation}\label{sup:eq:flow-moments}
 \sup_{0\leq t\leq T,\,x\in\R}
       \E\left[|J|^p+|K|^p+|L|^p\right]<\infty.
\end{equation}
For smooth $f:\R\to\R$ with bounded first three derivatives and finite
expectations, $P^1_{t,T}f(x)=\E\left[f(\Phi_{t,T}(x))\right]$ satisfies
\begin{align*}
 DP^1_{t,T}f&=\E\left[f'(\Phi)J\right],\qquad D^2P^1_{t,T}f=\E\left[f''(\Phi)J^2+f'(\Phi)K\right],\\
 D^3P^1_{t,T}f&=\E\left[f'''(\Phi)J^3+3f''(\Phi)JK+f'(\Phi)L\right].
\end{align*}
To justify each derivative, set $G(x)=f(\Phi_{t,T}(x))$. For
$j\in\{1,2,3\}$ and $\varepsilon\ne0$,
\[
 \frac{D^{j-1}G(x+\varepsilon)-D^{j-1}G(x)}{\varepsilon}
 =\int_0^1D^jG(x+r\varepsilon)\dd r.
\]
The chain rule gives the three integrands displayed above. Under
\eqref{sup:eq:flow-moments}, set
$M=\sup_{t,x}\E\left[|J|^p+|K|^p+|L|^p\right]$. H\"older's inequality gives
\[
 \norm{J^j}_{L^{p/j}(\Omega)}\leq M^{j/p},\qquad
 \norm{JK}_{L^{p/3}(\Omega)}
 \leq\norm J_{L^p(\Omega)}\norm K_{L^p(\Omega)}\leq M^{2/p},
\]
where the second estimate also uses the embedding
$L^{p/2}(\Omega)\subset L^{p/3}(\Omega)$ on a probability space.
The terms involving only $K$ or $L$ satisfy the same integrability
requirement. Minkowski's inequality then bounds the difference
quotients uniformly in $L^{p/j}(\Omega)$. Since $p/j>1$, they are
uniformly integrable. Pathwise $C^3$ regularity and convergence in
expectation justify each differentiation and give
\eqref{sup:eq:derivative-transfer}. Spatial derivative transfer alone
does not imply the comparison identity, which also uses
Lemma~\ref{sup:lem:comparison}. See \citet{BretonPrivault2024}.
\subsubsection{Integrability of stable density derivatives.}
\label{sup:sec:stable-derivatives}
For $1<\beta<2$ and $k\in\{1,2\}$, define
\[
 \mapdef{m_k}{\R}{\mathbb C}{\xi}{(i\xi)^ke^{-|\xi|^\beta}}.
\]
Write
$a(\xi)=e^{-|\xi|^\beta}$. For $\xi\ne0$,
\[
 a'(\xi)=-\beta\operatorname{sgn}(\xi)|\xi|^{\beta-1}a(\xi),\qquad
 a''(\xi)=\bigl[\beta^2|\xi|^{2\beta-2}
              -\beta(\beta-1)|\xi|^{\beta-2}\bigr]a(\xi).
\]
Hence
\[
 m_1''(\xi)=i\bigl(2a'(\xi)+\xi a''(\xi)\bigr)
                  =O(|\xi|^{\beta-1}),\qquad m_2''(\xi)=-2a(\xi)-4\xi a'(\xi)-\xi^2a''(\xi)
                  =-2+O(|\xi|^\beta)
 \qquad(\xi\to0).
\]
Both $m_k$ and $m_k'$ are continuous at zero. At infinity, the
functions and their derivatives decay exponentially, so
$m_k,m_k''\in L^1(\R)$. In the Fourier convention
$\widehat f(\xi)=\int_\R e^{-i\xi x}f(x)\dd x$,
\[
 D^kp_1^{(\beta)}(x)=\frac1{2\pi}\int_\R e^{i\xi x}m_k(\xi)\dd\xi
 =-\frac1{2\pi x^2}\int_\R e^{i\xi x}m_k''(\xi)\dd\xi
 \quad(x\ne0).
\]
Neither integration by parts produces a boundary term, including at
zero, because $m_k,m_k'$ are continuous there. Therefore
\begin{align*}
 |(p_1^{(\beta)})^{(k)}(x)|
 &\leq\frac1{2\pi}
       \min\{\norm{m_k}_1,|x|^{-2}\norm{m_k''}_1\},\\
 B_{k,\beta}:=\norm{(p_1^{(\beta)})^{(k)}}_1
 &\leq\frac1{2\pi}\left(2\norm{m_k}_1
                  +2\norm{m_k''}_1\int_1^\infty x^{-2}\dd x\right)
 =\frac{\norm{m_k}_1+\norm{m_k''}_1}{\pi}<\infty.
\end{align*}
The stable scaling formula then gives
$\norm{(p_s^{(\beta)})^{(k)}}_1=B_{k,\beta}s^{-k/\beta}$.
\subsubsection{Physical-time interpolation.}\label{sup:sec:physical-time}
For the additive models, suppose
$\widetilde Q_t=P^0_{0,t}P^1_{t,T}$ has increment density
$\widetilde\rho_t\in W^{2,1}(\R)$. For almost every $t\in[0,T]$, set
\[
 r_b(t)=|b_0(t)-b_1(t)|,\quad r_a(t)=|a_0(t)-a_1(t)|,\quad
 r_\nu(t)=d_{\FM}(y^2\nu_0(t),y^2\nu_1(t)).
\]
Then, whenever the right-hand side is finite,
\begin{equation}\label{sup:eq:physical-time}
 W_1(\lambda_0,\lambda_1) \leq|x_0-x_1|+\int_0^Tr_b(t)\dd t +\frac12\int_0^T\norm{\widetilde\rho_t'}_1r_a(t)\dd t +\int_0^T\left( \frac12\norm{\widetilde\rho_t'}_1 +\frac16\norm{\widetilde\rho_t''}_1\right)r_\nu(t)\dd t.
\end{equation}
Indeed, let $\psi_i(t,\xi)$ denote the additive full-compensation symbol.
For fixed $\xi\in\R$, the mixed exponent is absolutely continuous in
$t$, with
\[
 \widetilde\Psi_t(\xi)=\int_0^t\psi_0(s,\xi)\dd s
                      +\int_t^T\psi_1(s,\xi)\dd s,\qquad
 \partial_t\widetilde\Psi_t(\xi)=\psi_0(t,\xi)-\psi_1(t,\xi).
\]
Thus
$\partial_te^{\widetilde\Psi_t}=
 (\psi_0-\psi_1)e^{\widetilde\Psi_t}$.
Fourier inversion gives the generator comparison with
$\widetilde Q_t h$. Derivative transfer gives
\[
 \norm{D\widetilde Q_t h}_\infty\leq1,\quad
 \norm{D^2\widetilde Q_t h}_\infty\leq\norm{\widetilde\rho_t'}_1,\quad
 \norm{D^3\widetilde Q_t h}_\infty\leq\norm{\widetilde\rho_t''}_1
 \qquad(\Lip(h)\leq1).
\]
Weighted Taylor testing and integration in $t$ yield
\eqref{sup:eq:physical-time}. The initial shift costs $|x_0-x_1|$.
Physical-time interpolation need not preserve the cancellations in \eqref{eq:time-cancellation}.
Outside the Gaussian case, equivalence of the two smoothing criteria is
not established.

\begingroup
\small
\begin{thebibliography}{99}
\bibitem[Villani(2009)]{Villani2009}
C.~Villani.
\newblock \emph{Optimal Transport: Old and New}.
\newblock Springer, Berlin, 2009.
\newblock \doi{10.1007/978-3-540-71050-9}.
\bibitem[Breton and Privault(2022)]{BretonPrivault2022}
J.-C. Breton and N.~Privault.
\newblock {Wasserstein} distance estimates for stochastic integrals by
  forward--backward stochastic calculus.
\newblock \emph{Potential Analysis}, 56\penalty0 (1):\penalty0 1--20, 2022.
\newblock \doi{10.1007/s11118-020-09874-0}.
\bibitem[Breton and Privault(2024)]{BretonPrivault2024}
J.-C. Breton and N.~Privault.
\newblock {Wasserstein} distance estimates for jump-diffusion processes.
\newblock \emph{Stochastic Processes and their Applications}, 172:\penalty0
  104334, 2024.
\newblock \doi{10.1016/j.spa.2024.104334}.
\bibitem[Houdr{\'e} et~al.(1998)Houdr{\'e}, P{\'e}rez-Abreu, and
  Surgailis]{HoudrePerezAbreuSurgailis1998}
C.~Houdr{\'e}, V.~P{\'e}rez-Abreu, and D.~Surgailis.
\newblock Interpolation, correlation identities, and inequalities for
  infinitely divisible variables.
\newblock \emph{Journal of Fourier Analysis and Applications}, 4:\penalty0
  651--668, 1998.
\newblock \doi{10.1007/BF02479672}.
\bibitem[Last(2014)]{Last2014}
G.~Last.
\newblock Perturbation analysis of {Poisson} processes.
\newblock \emph{Bernoulli}, 20\penalty0 (2):\penalty0 486--513, 2014.
\newblock \doi{10.3150/12-BEJ494}.
\bibitem[Pag{\`e}s and Wilbertz(2012)]{PagesWilbertz2012}
G.~Pag{\`e}s and B.~Wilbertz.
\newblock Intrinsic stationarity for vector quantization: Foundation of dual
  quantization.
\newblock \emph{SIAM Journal on Numerical Analysis}, 50\penalty0 (2):\penalty0
  747--780, 2012.
\newblock \doi{10.1137/110827041}.
\bibitem[Goldfeld et~al.(2024)Goldfeld, Kato, Nietert, and
  Rioux]{GoldfeldEtAl2024}
Z.~Goldfeld, K.~Kato, S.~Nietert, and G.~Rioux.
\newblock Limit distribution theory for smooth {$p$}-{Wasserstein} distances.
\newblock \emph{The Annals of Applied Probability}, 34\penalty0 (2):\penalty0
  2447--2487, 2024.
\newblock \doi{10.1214/23-AAP2028}.
\bibitem[Kang and Lim(2025)]{KangLim2025}
W.~Y. Kang and T.~S. Lim.
\newblock On optimal {Markovian} couplings of {L\'evy} processes.
\newblock Preprint, 2025.
\newblock \href{https://arxiv.org/abs/2509.23086}{arXiv:2509.23086}.
\bibitem[Lim and Ooi(2026)]{LimOoi2026}
T.~S. Lim and R.~S. Ooi.
\newblock Optimal couplings of {L\'evy} processes in the class of immersion couplings.
\newblock Preprint, 2026.
\newblock \href{https://arxiv.org/abs/2606.24290}{arXiv:2606.24290}.
\bibitem[Gairing et~al.(2015)Gairing, H{\"o}gele, Kosenkova, and
  Kulik]{GairingEtAl2015}
J.~Gairing, M.~H{\"o}gele, T.~Kosenkova, and A.~Kulik.
\newblock Coupling distances between {L\'evy} measures and applications to
  noise sensitivity of {SDE}.
\newblock \emph{Stochastics and Dynamics}, 15\penalty0 (2):\penalty0 1550009,
  2015.
\newblock \doi{10.1142/S0219493715500094}.
\bibitem[Gairing et~al.(2018)Gairing, H{\"o}gele, and
  Kosenkova]{GairingEtAl2018}
J.~Gairing, M.~H{\"o}gele, and T.~Kosenkova.
\newblock Transportation distances and noise sensitivity of multiplicative
  {L\'evy} {SDE} with applications.
\newblock \emph{Stochastic Processes and their Applications}, 128\penalty0
  (7):\penalty0 2153--2178, 2018.
\newblock \doi{10.1016/j.spa.2017.09.003}.
\bibitem[Mariucci and Rei{\ss}(2018)]{MariucciReiss2018}
E.~Mariucci and M.~Rei{\ss}.
\newblock {Wasserstein} and total variation distance between marginals of
  {L\'evy} processes.
\newblock \emph{Electronic Journal of Statistics}, 12\penalty0 (2):\penalty0
  2482--2514, 2018.
\newblock \doi{10.1214/18-EJS1456}.
\bibitem[Alfonsi et~al.(2018)Alfonsi, Corbetta, and
  Jourdain]{AlfonsiCorbettaJourdain2018}
A.~Alfonsi, J.~Corbetta, and B.~Jourdain.
\newblock Evolution of the {Wasserstein} distance between the marginals of two
  {Markov} processes.
\newblock \emph{Bernoulli}, 24\penalty0 (4A):\penalty0 2461--2498, 2018.
\newblock \doi{10.3150/17-BEJ934}.
\bibitem[Sato(1999)]{Sato1999}
K.-i. Sato.
\newblock \emph{{L\'evy} Processes and Infinitely Divisible Distributions}.
\newblock Cambridge University Press, Cambridge, 1999.
\bibitem[Applebaum(2009)]{Applebaum2009}
D.~Applebaum.
\newblock \emph{{L\'evy} Processes and Stochastic Calculus}.
\newblock Cambridge University Press, Cambridge, second edition, 2009.
\newblock \doi{10.1017/CBO9780511809781}.
\bibitem[Wang(2016)]{Wang2016}
J.~Wang.
\newblock {$L^p$}-{Wasserstein} distance for stochastic differential
equations driven by {L\'evy} processes.
\newblock \emph{Bernoulli}, 22\penalty0 (3):\penalty0 1598--1616, 2016.
\newblock \doi{10.3150/15-BEJ705}.
\end{thebibliography}
\endgroup
\end{document}